\documentclass[a4paper,11pt]{article}

\usepackage[margin=1in]{geometry}
\usepackage{setspace}
\usepackage{amsmath}
\usepackage{amssymb}
\usepackage{amsthm}
\usepackage{mathrsfs}
\usepackage{esint}
\usepackage{xcolor}
\usepackage{hyperref}

\newtheorem{theorem}{Theorem}[section]
\newtheorem{proposition}[theorem]{Proposition}
\newtheorem{corollary}[theorem]{Corollary}
\newtheorem{lemma}[theorem]{Lemma}
\newtheorem{definition}[theorem]{Definition}
\newtheorem{remark}[theorem]{Remark}

\hypersetup{
    hidelinks,
    pdftitle={On the Hessian Conjecture in Lorentzian Signature: Constant Pivots and Hesse Systems},
    pdfauthor={Hanwen Liu}
}

\begin{document}

\title{\texorpdfstring{\textbf{On the Hessian Conjecture in Lorentzian Signature: Constant Pivots and Hesse Systems}}{On the Hessian Conjecture in Lorentzian Signature: Constant Pivots and Hesse Systems}}
\author{Hanwen Liu\\
{\small Mathematics Institute, University of Warwick}\\
{\small \href{mailto:hanwen.liu@warwick.ac.uk}{\texttt{hanwen.liu@warwick.ac.uk}}}}
\date{}

\maketitle

\begin{abstract}
As a close relative of the Jacobian conjecture, the Hessian conjecture in dimension $n$ states that the local Legendre transform of a polynomial solution to the Monge--Amp\`ere equation $\det(\operatorname{Hess}(\phi))=\pm1$ is also a polynomial solution. The general Hessian conjecture is false for $n\geq5$, while in Riemannian signature it follows from the J\"orgens--Calabi--Pogorelov theorem.

We study the four-dimensional Hessian conjecture in Lorentzian signature. For a polynomial potential $\phi$ in four real variables whose Hessian matrix has index $1$ and determinant $-1$, we define a constant pivot for $\phi$ to be a nonzero constant vector $\xi$ such that the second directional derivative $D_\xi^2\phi$ is constant. We then prove that the gradient mapping of every potential admitting a pivot is a polynomial automorphism, and that a pivot always exists when $\phi$ decomposes into homogeneous pieces as $\phi=\phi_d+\phi_{d-1}+\phi_2+\phi_1+\phi_0$ with $d\geq4$. More generally, we prove the same conclusion when $$\phi=\phi_d+\cdots+\phi_{d-k}+\phi_2+\phi_1+\phi_0,$$ where $k\geq0$ and $d\geq4k+3$. Then, we associate with each potential a linear system of quadrics, called the Hesse system, and a canonical homomorphism $\mu_\phi$. We prove that the existence of a pivot is equivalent to $\operatorname{rank}(\mu_\phi)\leq55$. As an application, we prove that the gradient mapping is a polynomial automorphism whenever the Hesse system has complex dimension at most 4. We also show that if ${\det(\operatorname{Hess}(\phi-\phi_2))\equiv0}$, then the potential $\phi$ admits a pivot. After that, we then give an analytic degeneracy criterion for $\operatorname{Hess}(\phi-\phi_2)$. Finally, we prove the Hessian conjecture in this setting for every polynomial potential of degree at most five.
\end{abstract}

\begin{center}
\textbf{Keywords:} Jacobian conjecture, polynomial automorphism, Monge–Ampère equation.

\textbf{Mathematics Subject Classification:} Primary 14R15; Secondary 14R10, 15A63.
\end{center}

\tableofcontents
\onehalfspacing
\raggedbottom

\section{Introduction}

\subsection{Historical Background and Overview}

For a polynomial function $\phi$, the Jacobian determinant of its gradient mapping is $\det(\operatorname{Hess}(\phi))$. The Hessian conjecture asks whether the gradient mapping is a polynomial automorphism whenever this determinant is a nonzero constant. In this article, we study the four-dimensional real case under the additional assumption that the Hessian has index $1$.

The modern formulation of the Hessian conjecture was introduced by Meng \cite{Meng2006}. It belongs to the symmetric-gradient branch of Keller's Jacobian conjecture \cite{Keller1939}. More precisely, the Jacobian conjecture in dimension $n$ implies the Hessian conjecture in dimension $n$, whereas the Hessian conjecture in dimension $2n$ implies the Jacobian conjecture in dimension $n$.
Thus, the two universal conjectures are equivalent when all dimensions are considered, but this equivalence is not dimension-preserving. The second implication is obtained by applying the Hessian conjecture to the doubled potential $\langle F(x),y\rangle$ associated with a polynomial Keller mapping $F$ \cite{Meng2006,deBondtEssen2005}.

The connection with the Jacobian conjecture is also reflected in its reduction theory. The degree reduction of Bass, Connell and Wright, followed by the symmetric reduction of de Bondt and van den Essen, reduces the all-dimensional problem, after increasing the dimension, to mappings of the form $x-\nabla P(x)$, where {the polynomial} $P$ is a homogeneous quartic polynomial and $\operatorname{Hess}(P)$ is nilpotent \cite{BassConnellWright1982,deBondtEssen2005}. Zhao subsequently reformulated this Hessian-nilpotent case as a vanishing conjecture for iterated Laplacians of powers of $P$ \cite{Zhao2007}. These reductions explain why constant-determinant Hessians form a central testing ground for the Jacobian problem.

This conjecture should not be confused with Hesse's older claim concerning homogeneous forms whose Hessian determinant vanishes identically. Gordan and Noether proved the cone conclusion for forms in at most four variables and constructed counterexamples from five variables onward; a modern account is given in \cite{BricalliFavalePirola2023}. The homogeneous Hesse theorem in the valid low-dimensional range is nevertheless an important tool in the present article, because the highest homogeneous part of a constant-Hessian potential has vanishing Hessian determinant.

The first low-dimensional results were obtained by Dillen in two variables and by de Bondt in three variables \cite{Dillen1991,deBondt2015}. Over the real numbers, Meng also proved the conjecture whenever the Hessian is positive or negative definite at one point \cite{Meng2006}. This agrees with the global rigidity of convex solutions to the constant Monge--Amp\`ere equation supplied by the theorems of J\"orgens, Calabi and Pogorelov \cite{Jorgens1954,Calabi1958,Pogorelov1972}. Consequently, Lorentzian signature is the first indefinite case and provides a natural next setting for the problem.

The status changed substantially in 2026. Building on Alp\"oge's three-variable counterexample to the Jacobian conjecture, a recent preprint of Meng and Yang constructs an explicit polynomial in five variables, of degree $14$ and Hessian determinant $128$, whose gradient mapping is not injective \cite{MengYang2026}. Stabilisation then gives counterexamples in every dimension at least five. Together with the preceding positive results, this leaves the Hessian conjecture open only in dimension four; similarly, the Jacobian conjecture is open only in dimension two, and the four-dimensional Hessian conjecture implies the two-dimensional Jacobian conjecture. The four-dimensional Lorentzian problem studied here therefore lies in the remaining dimension and includes, besides the quartic case, the two-layer and multi-layer classes described below.

The arguments are organised around one elementary device: a fixed direction in which the second derivative of the potential is constant. Such a direction will be called a constant pivot. The causal type of a pivot determines the corresponding inversion argument. A timelike pivot forces the potential to be quadratic, a lightlike pivot gives an explicit normal form, and a spacelike pivot reduces the problem to the known three-dimensional case.

The first part of the article develops two methods for producing constant pivots. The method of highest weight detects a pivot from a non-singular Newton face. A separate degree-by-degree argument proves the existence of a pivot for potentials having only two nonlinear homogeneous layers. After the Hesse-system criteria, this argument is extended to a consecutive block of $k+1$ nonlinear homogeneous layers whenever $d\geq4k+3$.

The second part introduces the Hesse system, which records all nonconstant Hessian differences as a linear system of quadrics. We obtain a finite complex rank criterion for the existence of a pivot and prove that, in Lorentzian signature, a complex pivot always produces a real one. This gives a directly computable criterion and, as an application, settles every potential whose Hesse system has complex dimension at most four.

The remaining sections treat singular Hesse systems and return to multi-layer potentials, followed by the singular-residual criterion. The main text concludes with the general quintic case, while the appendix records a null normal form for potentials admitting a pivot.

We finally record an equivalent Legendre-transform formulation of the Hessian conjecture. Let $\kappa:=\det(\operatorname{Hess}(\phi))$ be a nonzero constant. Around any point, write $p=\nabla\phi(x)$ and denote its local inverse by $x=x(p)$. The local Legendre transform is
$$
\phi^*(p):=\langle x(p),p\rangle-\phi(x(p)).
$$
Computation then yields $\nabla\phi^*(p)=x(p)$ and ${\operatorname{Hess}(\phi^*)_p=\operatorname{Hess}(\phi)_x^{-1}}$, so in particular it holds that $\det(\operatorname{Hess}(\phi^*))=\kappa^{-1}$. Thus, the Hessian conjecture is equivalently the assertion that the local Legendre transform of a polynomial solution to the Monge--Amp\`ere equation extends to a polynomial solution on the whole affine space. In the present Lorentzian setting, this is the local classical Legendre transform rather than the convex Fenchel transform.

The remainder of this section fixes the notation and records the restrictions imposed on the highest homogeneous part of the potential.

\subsection{Notations and Conventions}

Throughout this article, we use the following notation.

By abuse of notation, the symbol $\mathbb{R}^n$ in this manuscript will denote either the $n$-dimensional real vector space, or its underlying affine variety $\mathbb{A}^n$ over $\mathbb{R}$, depending on the exact context. We then denote by $\langle-,-\rangle$ the standard inner product on $\mathbb{R}^n$. {In dimension four, we put $V:=\mathbb{R}^4$, $V^*:=\operatorname{span}_{\mathbb{R}}\{x_1,\dots,x_4\}$ and $V_{\mathbb{C}}:=V\otimes_{\mathbb{R}}\mathbb{C}$.}

For any nonzero polynomial $\phi\in\mathbb{R}[x_1,\dots,x_n]$ of degree $d$, we write $\phi_k$ for the homogeneous piece of $\phi$ of degree $k$, so that {we have} $\phi=\phi_d+\cdots+\phi_0$ and $\phi_k(\lambda x)=\lambda^k\phi_k(x)$.

{We put}
$$
{\phi_{\geq3}:=\sum_{j\geq3}\phi_j=\phi-(\phi_2+\phi_1+\phi_0).}
$$

For any $\phi\in\mathbb{R}[x_1,\dots,x_n]$, we write $\nabla\phi$ for the gradient vector field of $\phi$ and $\operatorname{Hess}(\phi)$ for the Hessian matrix of $\phi$ in the coordinate system $x=(x_1,\dots,x_n)$.

For a tensor field $T$ on $\mathbb{R}^n$, as usual, its specific value at $x\in\mathbb{R}^n$ shall be denoted by $T_x$. For example, we shall write $H_\phi(x):=\operatorname{Hess}(\phi)_x$ and $H_{\phi_i}(x):=\operatorname{Hess}(\phi_i)_x$. We also write $G:=\operatorname{Hess}(\phi_2)$ and $R_\phi(x):=\operatorname{Hess}(\phi_{\geq3})_x$. When $G$ is non-singular, we write $L_\phi(x):=G^{-1}R_\phi(x)$.

One exception is that, in this article, we sometimes also regard the gradient $\nabla\phi$ of $\phi\in\mathbb{R}[x_1,\dots,x_n]$ as a regular map from $\mathbb{R}^n$ to $\mathbb{R}^n$ itself. In this special case, we will always explicitly call $\nabla\phi\colon\mathbb{R}^n\rightarrow\mathbb{R}^n$ a gradient mapping, and instead of the tensor notation $\nabla\phi_x$, the image of a point $x\in\mathbb{R}^n$ under the regular map $\nabla\phi$ will be denoted by $\nabla\phi(x)$.

\begin{definition}[Unimodular Hessian Potential]\label{unimodular_hessian_potential}
A polynomial function $\phi\in\mathbb{R}[x_1,\dots,x_n]$ is termed a unimodular Hessian potential of index $q$ if $\det(\operatorname{Hess}(\phi))=\pm1$ and {$\operatorname{ind}(\operatorname{Hess}(\phi))=q$}.
\end{definition}

\begin{definition}[Constant Pivot]\label{constant_pivot}
For any $\phi\in\mathbb{R}[x_1,\dots,x_n]$, a constant pivot for $\phi$ is a nonzero constant vector {$\xi\in\mathbb{R}^n$} for which {$\langle \operatorname{Hess}(\phi)\xi,\xi\rangle$} is a constant real number.
\end{definition}

For a nonzero constant vector {$\xi\in\mathbb{R}^n$}, we write {$D_\xi$} for the directional derivative along {$\xi$}. Then, equivalently, a constant pivot for $\phi\in\mathbb{R}[x_1,\dots,x_n]$ is a nonzero constant vector {$\xi\in\mathbb{R}^n$} such that {$D_\xi^2\phi$} is a constant.

The constant Hessian-determinant condition first imposes a restriction on the highest homogeneous piece of the potential.

\begin{lemma}\label{top_hessian_singularity}
Let $\phi\in\mathbb{R}[x_1,\dots,x_4]$ be a unimodular Hessian potential of index $1$ and degree $d\geq3$. Then, the matrix $\operatorname{Hess}(\phi_d)$ is singular.
\end{lemma}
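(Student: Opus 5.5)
We want to show that $\det(\operatorname{Hess}(\phi_d))\equiv0$ as a polynomial. The plan is to compare homogeneous components of the identity $\det(\operatorname{Hess}(\phi))=\pm1$.

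First expand by homogeneity. Write $\operatorname{Hess}(\phi)=\sum_{k=0}^{d}\operatorname{Hess}(\phi_k)$, where each entry of $\operatorname{Hess}(\phi_k)$ is homogeneous of degree $k-2$ (or zero). By multilinearity of the determinant in the columns, $\det(\operatorname{Hess}(\phi))$ is a sum of mixed determinants. Each column is chosen from some $\operatorname{Hess}(\phi_{k_j})$, and the resulting term is homogeneous of degree $\sum_j(k_j-2)$.

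The unique contribution of maximal degree $4(d-2)$ comes from choosing every column from $\operatorname{Hess}(\phi_d)$. Hence the homogeneous component of degree $4(d-2)$ of $\det(\operatorname{Hess}(\phi))$ equals $\det(\operatorname{Hess}(\phi_d))$.

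Since $d\ge3$, we have $4(d-2)\ge4>0$. The determinant of $\operatorname{Hess}(\phi)$ is the constant $\pm1$, so every positive-degree component of it vanishes. Therefore $\det(\operatorname{Hess}(\phi_d))\equiv0$, i.e.\ $\operatorname{Hess}(\phi_d)$ is singular at every point, which is the statement.

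The index-$1$ hypothesis plays no role here; only $d\ge3$ and the constancy of the determinant are used. The only point needing care is to justify that no other choice of columns reaches degree $4(d-2)$, and this is immediate because $k_j\le d$ for every $j$, with equality in the degree count forcing $k_j=d$ for all $j$. I expect no real obstacle. If one prefers an alternative, one can substitute $x\mapsto \lambda x$, note that $\det(\operatorname{Hess}(\phi))(\lambda x)=\lambda^{4(d-2)}\det(\operatorname{Hess}(\phi_d))(x)+O(\lambda^{4(d-2)-1})$, and let $\lambda\to\infty$.
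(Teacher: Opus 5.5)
Your proposal is correct and follows the same approach as the paper: both identify $\det(\operatorname{Hess}(\phi_d))$ as the homogeneous component of degree $4(d-2)>0$ of the constant $\det(\operatorname{Hess}(\phi))$, which must therefore vanish. Your column-multilinearity bookkeeping just makes explicit the degree count that the paper states in one line.
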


\begin{proof}
The homogeneous component of degree $4(d-2)$ of $\det(\operatorname{Hess}(\phi))$ is {$\det(\operatorname{Hess}(\phi_d))$}.
Since $\det(\operatorname{Hess}(\phi))$ is constant, we obtain that this component vanishes.
\end{proof}

This singularity allows us to isolate the variables which genuinely occur in the highest homogeneous piece.

\begin{lemma}\label{essential_space}
Let $\phi\in\mathbb{R}[x_1,\dots,x_4]$ be a unimodular Hessian potential of index $1$ and degree $d\geq3$. Then, there exists a unique minimal linear subspace $${V_{\mathrm{ess}}(\phi_d)\subseteq V^*}$$ of dimension {$r_{\mathrm{ess}}(\phi_d)\in\{1,2,3\}$} such that for some polynomial $F$ in {$r_{\mathrm{ess}}(\phi_d)$} variables and some {ordered basis $u$} of {$V_{\mathrm{ess}}(\phi_d)$}, it holds that $F(u)=\phi_d(x)$.
\end{lemma}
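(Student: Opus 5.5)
The plan is to define the essential space intrinsically as the span of first partial derivatives of order $d-1$, that is,
$$
V_{\mathrm{ess}}(\phi_d):=\operatorname{span}_{\mathbb{R}}\{\partial^\alpha\phi_d : |\alpha|=d-1\}\subseteq V^*,
$$
and to prove three things about it. First, $\phi_d$ is a polynomial in any basis of this space. Second, any subspace $W\subseteq V^*$ with $\phi_d\in\mathbb{R}[W]$ contains it, which gives existence, minimality and uniqueness together. Third, its dimension lies in $\{1,2,3\}$.

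Here $\partial^\alpha\phi_d$ is linear, so the span does lie in $V^*$. Dually, I will use the apolar description: $V_{\mathrm{ess}}(\phi_d)$ is the annihilator of
$$
K:=\{\xi\in V: D_\xi\phi_d\equiv0\}.
$$
I expect proving this identification to be the main technical step.

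\textbf{Step 1: $V_{\mathrm{ess}}^\perp=K$.} If $D_\xi\phi_d=0$, then every $(d-1)$-st derivative of $\phi_d$ is killed by $D_\xi$, so $\xi$ annihilates $V_{\mathrm{ess}}$. Conversely, suppose $\xi$ annihilates every $\partial^\alpha\phi_d$ with $|\alpha|=d-1$. Then $\partial^\alpha D_\xi\phi_d=0$ for all such $\alpha$. Since $D_\xi\phi_d$ is homogeneous of degree $d-1$, it equals a combination of its $(d-1)$-st derivatives times monomials (characteristic zero, Euler/Taylor), so $D_\xi\phi_d=0$.

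\textbf{Step 2: $\phi_d$ factors through $V/K$.} Choose coordinates so that $K=\operatorname{span}(e_{m+1},\dots,e_4)$. Then $\phi_d$ is independent of $x_{m+1},\dots,x_4$. Hence $\phi_d=F(u)$, where $u$ is a basis of $K^\perp=V_{\mathrm{ess}}$ and $F$ is a polynomial in $m=\dim V_{\mathrm{ess}}$ variables.

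\textbf{Step 3: minimality.} If $\phi_d=G(w)$ for some basis $w$ of a subspace $W\subseteq V^*$, then every $\xi\in W^\perp$ satisfies $D_\xi\phi_d=0$. So $W^\perp\subseteq K$, and therefore $V_{\mathrm{ess}}=K^\perp\subseteq W$. This gives uniqueness and minimality at once.

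\textbf{Step 4: dimension bounds.} Since $d\geq3$, $\phi_d\neq0$, so $K\neq V$ and $r_{\mathrm{ess}}\geq1$. For the upper bound, suppose $r_{\mathrm{ess}}=4$, so $K=0$. I would try to contradict Lemma~\ref{top_hessian_singularity}, but that does not work: that lemma says $\det\operatorname{Hess}(\phi_d)\equiv0$, and a form can have vanishing Hessian determinant while still depending essentially on all variables. This is exactly where I expect the main obstacle.

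To close the gap I will invoke the Gordan--Noether theorem recalled in the introduction: a form in at most four variables with identically vanishing Hessian determinant is a cone. That is, there is a nonzero $\xi$ with $D_\xi\phi_d\equiv0$ (over $\mathbb{C}$ first). Since $\phi_d$ is real, the real and imaginary parts of a complex such $\xi$ also lie in $K\otimes\mathbb{C}$, so at least one gives a nonzero real element of $K$. Hence $K\neq0$ and $r_{\mathrm{ess}}\leq3$.
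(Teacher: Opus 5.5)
Your proof is correct and follows the paper's argument. The paper likewise sets $K_d=\{v\in V: D_v\phi_d=0\}$, defines $V_{\mathrm{ess}}(\phi_d):=\operatorname{Ann}(K_d)$, and gets minimality from $\operatorname{Ann}(E)\subseteq K_d$. It obtains $K_d\neq0$ from Lemma~\ref{top_hessian_singularity} together with the four-variable homogeneous Hesse (Gordan--Noether) theorem, taking real and imaginary parts of the complex cone direction. Your additional description of $V_{\mathrm{ess}}(\phi_d)$ as the span of the $(d-1)$-st partials is valid but unnecessary. Contrary to your expectation, it is not the main technical step; the only substantive input is the Hesse theorem in your Step~4.
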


\begin{proof}
{Let $K_d:=\{v\in V\mid D_v\phi_d=0\}$.} By Lemma~\ref{top_hessian_singularity} and the homogeneous Hesse theorem in four variables \cite{deBondtEssen2004}, there exists a nonzero complex vector $${\zeta=a+\sqrt{-1}b\in V_{\mathbb{C}}}$$ such that {$D_\zeta\phi_d=0$}. Since $\phi_d$ has real coefficients, we have that {$D_a\phi_d=0$}. We also have that {$D_b\phi_d=0$}.
Consequently, the vector space {$K_d$} is nonzero. Denote its annihilator space by $${V_{\mathrm{ess}}(\phi_d):=\operatorname{Ann}(K_d)\subseteq V^*.}$$

It is then readily seen that $\phi_d$ is a polynomial on {$V_{\mathrm{ess}}(\phi_d)$}. If $\phi_d$ is a polynomial on another linear subspace {$E\subseteq V^*$}, then we have that {$\operatorname{Ann}(E)\subseteq K_d$}, and hence we obtain that {$$V_{\mathrm{ess}}(\phi_d)\subseteq E.$$} Therefore, the vector space {$V_{\mathrm{ess}}(\phi_d)$} is unique and minimal. Since $\phi_d$ is nonzero and {$K_d$} is nonzero, we obtain that {$1\leq r_{\mathrm{ess}}(\phi_d)\leq3$}.
Moreover, homogeneity and $d\geq3$ show that {$K_d$} is precisely the common kernel of the matrices {$H_{\phi_d}(x)$} for $x\in\mathbb{R}^4$.
\end{proof}

\begin{definition}\label{essential_rank}
For a unimodular Hessian potential $\phi\in\mathbb{R}[x_1,\dots,x_4]$ of index $1$ and degree $d\geq3$, the unique minimal vector space {$V_{\mathrm{ess}}(\phi_d)$} in Lemma~\ref{essential_space} is termed the space of essential parameters of $\phi_d$, and its dimension {$r_{\mathrm{ess}}(\phi_d):=\dim_\mathbb{R}(V_{\mathrm{ess}}(\phi_d))$} is said to be the essential rank of $\phi_d$.
\end{definition}

The Lorentzian signature imposes a further restriction when the degree is odd.

\begin{lemma}\label{lorentz_parity}
Let $\phi\in\mathbb{R}[x_1,\dots,x_4]$ be a unimodular Hessian potential of index $1$ and degree $d\geq3$. Assume that $d$ is an odd integer. Then, the essential rank of $\phi_d$ is at most 2.
\end{lemma}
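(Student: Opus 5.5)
The plan is to argue by contradiction: assume $r_{\mathrm{ess}}(\phi_d)=3$ and use the oddness of $d$ to produce, far out along a suitable ray, at least two negative directions of $\operatorname{Hess}(\phi)$. This is incompatible with index $1$.

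\emph{Normalisation.} By Lemma~\ref{essential_space}, after a real linear change of coordinates we may write $\phi_d(x)=F(x_1,x_2,x_3)$. Here $F$ is a form of degree $d$ in three variables that is not a polynomial in fewer linear forms. Such a change of coordinates preserves both the index and the constancy of the Hessian determinant up to a positive factor, so nothing is lost. Let $W:=\operatorname{span}\{e_1,e_2,e_3\}$, so that $\operatorname{Hess}(\phi_d)_x$ restricted to $W$ is $\operatorname{Hess}(F)$.

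\emph{Nondegeneracy of the block.} I claim $\det(\operatorname{Hess}(F))\not\equiv0$. Otherwise the homogeneous Hesse theorem in three variables, which is valid in the Gordan--Noether range used in the proof of Lemma~\ref{essential_space}, gives a nonzero complex vector $\zeta\in\mathbb{C}^3$ with $D_\zeta F=0$. Taking real and imaginary parts as in Lemma~\ref{essential_space}, we get a nonzero real vector annihilating $F$. Hence $F$ depends on at most two linear forms, which contradicts minimality of $V_{\mathrm{ess}}(\phi_d)$. So we may fix $x_0\in\mathbb{R}^4$ such that $A:=\operatorname{Hess}(F)_{x_0}$ is a nonsingular symmetric $3\times3$ matrix, with $p$ positive and $q$ negative eigenvalues, $p+q=3$.

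\emph{Oddness flips the signature.} Since $d$ is odd, the entries of $\operatorname{Hess}(\phi_d)$ are homogeneous of odd degree $d-2$. Hence $\operatorname{Hess}(F)_{-x_0}=-A$, which has $q$ positive and $p$ negative eigenvalues.

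\emph{Asymptotics along rays.} For $t>0$ and $\varepsilon\in\{\pm1\}$ we have
$$
t^{-(d-2)}\operatorname{Hess}(\phi)_{\varepsilon t x_0}\big|_W=\varepsilon A+O(t^{-1}).
$$
Because $A$ is nonsingular, for $t$ large the left side has the same inertia as $\varepsilon A$. The index of a real symmetric matrix is at least the index of its restriction to any subspace, since a maximal negative definite subspace of the restriction is negative definite for the whole form. Index $1$ therefore gives $q\leq1$ from $\varepsilon=1$ and $p\leq1$ from $\varepsilon=-1$. Then $p+q\leq2<3$, a contradiction, and hence $r_{\mathrm{ess}}(\phi_d)\leq2$.

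The only step needing care is the second one, namely that essential rank $3$ forces the $3\times3$ Hessian of $F$ to be generically nondegenerate. This is precisely the homogeneous Hesse theorem in three variables together with the real-part argument already used in Lemma~\ref{essential_space}. Everything else is a perturbation and inertia argument.
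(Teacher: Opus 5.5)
Your proof is correct and uses the same ingredients as the paper: the rescaled ray limit $t^{2-d}H_\phi(tx)\to H_{\phi_d}(x)$ with the index-$1$ bound, the sign flip $H_{\phi_d}(-x)=-H_{\phi_d}(x)$ coming from odd $d$, and the ternary homogeneous Hesse theorem. The paper applies these in the opposite order. It first shows $\operatorname{rank}(H_{\phi_d}(x))\leq2$ at every point and then invokes Hesse. You invoke Hesse first to obtain a nonsingular $3\times3$ block, and then compare the inertia of restrictions.
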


\begin{proof}
Now, fix some $x\in\mathbb{R}^4$.
For $t>0$, we have that
$$
t^{2-d}{H_\phi(tx)}\rightarrow {H_{\phi_d}(x)}
$$
as $t\rightarrow+\infty$. Since every matrix on the left has index $1$, the matrix {$H_{\phi_d}(x)$} has at most one negative eigenvalue. Applying the same argument at $-x$, we obtain that {$H_{\phi_d}(-x)$} has at most one negative eigenvalue. Since $d$ is odd, we have that {$H_{\phi_d}(-x)=-H_{\phi_d}(x)$}. Thus, the matrix {$H_{\phi_d}(x)$} also has at most one positive eigenvalue, and hence we obtain that {$\operatorname{rank}(H_{\phi_d}(x))\leq2$}.
If {$r_{\mathrm{ess}}(\phi_d)=3$}, then we would have that $\phi_d$ is an essential ternary form with singular Hessian. The ternary homogeneous Hesse theorem would reduce it to at most two essential parameters, which is a contradiction.
\end{proof}

Having isolated these highest-degree restrictions, we now explain why the existence of a constant pivot is sufficient for polynomial inversion.

\section{Constant Pivots}

The role of a constant pivot depends on its causal type. Timelike pivots are rigid, lightlike pivots admit an explicit normal form, and spacelike pivots reduce the problem to three variables.

\begin{lemma}\label{timelike_pivot_rigidity}
Let $\phi\in\mathbb{R}[x_1,\dots,x_4]$ be a unimodular Hessian potential of index $1$. Suppose that a constant vector {$\xi\neq0$} satisfies {$\langle\operatorname{Hess}(\phi)\xi,\xi\rangle=a<0$}.
Then, the polynomial $\phi$ is quadratic.
\end{lemma}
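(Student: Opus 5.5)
The plan is to straighten the pivot into a coordinate direction, split off the pivot variable with a Schur complement, and then use the index condition twice: once to kill the mixed terms and once to reduce to a definite three-variable problem, where J\"orgens--Calabi--Pogorelov applies.

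\emph{Step 1: coordinates adapted to the pivot.} After a linear change of coordinates with constant Jacobian, I may assume $\xi=e_4$. Rescaling changes $\det(\operatorname{Hess}(\phi))$ only by a nonzero constant factor and preserves the index, so this is harmless. Write $t:=x_4$ and $y:=(x_1,x_2,x_3)$. The condition $\partial_t^2\phi\equiv a$ integrates to
$$
\phi=\tfrac{a}{2}t^2+t\,f(y)+g(y),
$$
with polynomials $f,g$ in three variables. The Hessian then has the block form
$$
H_\phi=\begin{pmatrix}t\operatorname{Hess}(f)+\operatorname{Hess}(g) & \nabla f\\ \nabla f^{T} & a\end{pmatrix}.
$$

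\emph{Step 2: Schur complement and inertia.} Since $a\neq0$, set
$$
S(t,y):=t\operatorname{Hess}(f)_y+\operatorname{Hess}(g)_y-a^{-1}\nabla f(y)\nabla f(y)^{T}.
$$
Then $\det H_\phi=a\det S$, so $\det S\equiv\pm a^{-1}$ is a nonzero constant. By Haynsworth inertia additivity, $\operatorname{ind}(H_\phi)=\operatorname{ind}(a)+\operatorname{ind}(S)=1+\operatorname{ind}(S)$. Since $\operatorname{ind}(H_\phi)=1$ and $S$ is non-singular, $S(t,y)$ is positive definite for every $(t,y)\in\mathbb{R}^4$.

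\emph{Step 3: $f$ is affine.} Fix $y$. The matrix $S(t,y)$ is affine in $t$ with linear coefficient $\operatorname{Hess}(f)_y$. If $\operatorname{Hess}(f)_y$ had a nonzero eigenvalue with eigenvector $v$, then $\langle S(t,y)v,v\rangle$ would be a nonconstant affine function of $t$. It would become negative for $t\to+\infty$ or $t\to-\infty$, contradicting positive definiteness. Hence $\operatorname{Hess}(f)\equiv0$, so $f=c+\langle b,y\rangle$ for constants $c$ and $b$.

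\emph{Step 4: reduction to a definite problem in three variables.} With $f$ affine, $S=\operatorname{Hess}(h)$, where $h:=g-\frac{1}{2a}\langle b,y\rangle^2$ is a polynomial in $y$. So $h$ is a polynomial on $\mathbb{R}^3$ whose Hessian is everywhere positive definite, with constant determinant. Thus $h$ is an entire convex solution of $\det(\operatorname{Hess}(h))=\text{const}>0$. By the J\"orgens--Calabi--Pogorelov theorem (equivalently Meng's definite case, cited in the introduction), $h$ is quadratic. Therefore $g=h+\frac{1}{2a}\langle b,y\rangle^2$ is quadratic, and $\phi=\tfrac a2t^2+ct+t\langle b,y\rangle+g(y)$ is quadratic.

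The main point needing care is Step 2. One must check that the Schur complement inertia formula forces $S$ to be positive definite, rather than merely positive semidefinite. This is where $\det S\neq0$ is used. The only external input is the final appeal to the Calabi--Pogorelov rigidity in Step 4. If one wants to avoid it, I would try to argue directly: the top homogeneous part $h_m$ of $h$ has positive semidefinite Hessian by the scaling limit used in Lemma~\ref{lorentz_parity}, and one would then attempt to force $m\le 2$. I expect this direct route to be harder than simply citing the theorem.
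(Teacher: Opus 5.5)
Your proof is correct and follows essentially the paper's route: straighten the pivot, use the Schur complement and the index to get positive definiteness, kill the Hessian of the $t$-coefficient by positivity of an affine pencil, and finish with J\"orgens--Calabi--Pogorelov. The only difference is the order. You show $\operatorname{Hess}(f)=0$ first, pointwise in $t$, so JCP is applied once to a single function $h$. The paper instead applies JCP to the family $L_s(u)=c(u)-(b(u)-s)^2/(2a)$ for every $s$, deduces that $c-b^2/(2a)$ and $b$ have degree at most two, and only then uses the positive-definite pencil to get $\operatorname{Hess}(b)=0$.
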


\begin{proof}
After a unimodular linear change of coordinates, write $x=(u,t)$ and {$\xi=\partial_t$}. Integrating twice with respect to $t$, we obtain that $$\phi(u,t)=\frac{a}{2}t^2+b(u)t+c(u).$$
Put $s:={\partial_t\phi}=at+b(u)$ and define
$$
L_s(u):=c(u)-\frac{(b(u)-s)^2}{2a}\in\mathbb{R}[u_1,u_2,u_3].
$$
The Schur-complement identities give that $L_s$ is strictly convex. We also have that
$$
\det(\operatorname{Hess}(L_s))=-\frac{1}{a}.
$$
For every fixed $s\in\mathbb{R}$, the J\"orgens--Calabi--Pogorelov theorem \cite{Jorgens1954,Calabi1958,Pogorelov1972} shows that $L_s$ is quadratic. Now, put $$Q(u):=c(u)-\frac{b(u)^2}{2a}.$$
We then have that
$$
L_s(u)=Q(u)+\frac{s}{a}b(u)-\frac{s^2}{2a}.
$$
Consequently, the polynomials $Q$ and $b$ have degree at most two. Since the Hessian pencil of $L_s$ is positive definite for every $s\in\mathbb{R}$, we obtain that $\operatorname{Hess}(b)=0$. Thus, the polynomial $b$ is affine and the polynomial $c$ is quadratic. Therefore, we conclude that $\phi$ is quadratic.
\end{proof}

We next consider the lightlike case. Although the potential need not be quadratic, it has a normal form from which the inverse of its gradient can be read off.

\begin{lemma}\label{lightlike_pivot_normal_form}
Let $\phi\in\mathbb{R}[x_1,\dots,x_4]$ be a unimodular Hessian potential of index $1$. Suppose that a constant vector {$\xi\neq0$} satisfies {$\langle\operatorname{Hess}(\phi)\xi,\xi\rangle=0$}. After a linear change of coordinates and addition of an affine polynomial, there exist variables $t,s$ and $y:=(y_1,y_2)$ such that
$$
\phi(t,s,y)=\frac{1}{2}\langle A(s)y,y\rangle+\langle\beta(s),y\rangle+c(s)+ts,
$$
where the entries of $A$ and $\beta$ and the function $c$ are all polynomial in the variable $s$, while the matrix $A(s)$ is positive definite and has determinant one.

Moreover, the gradient mapping $\nabla\phi$ is a polynomial automorphism.
\end{lemma}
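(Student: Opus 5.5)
The plan is to straighten the pivot, show that the coefficient of the linear variable must itself be affine, and then apply the two-dimensional J\"orgens theorem fibrewise.

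\textbf{Straightening the pivot.} After a linear change of coordinates I would write $x=(t,u)$ with $u=(u_1,u_2,u_3)$ and $\xi=\partial_t$. Since $D_\xi^2\phi=0$, integrating twice in $t$ gives
$$
\phi(t,u)=t\,b(u)+c(u).
$$
The Hessian in the basis $(\partial_t,\partial_u)$ has a zero in the $(t,t)$ corner, off-diagonal block $\nabla b$, and lower block $M:=t\operatorname{Hess}(b)+\operatorname{Hess}(c)$. In particular $\nabla b$ never vanishes, since otherwise the first row of the Hessian would be zero at some point.

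\textbf{Using the signature to make $b$ affine.} At each point, $\xi$ is $H_\phi$-null. Because the index is $1$, the form $H_\phi$ restricted to the $H_\phi$-orthogonal complement $\xi^{\perp}$ is positive semidefinite with radical spanned by $\xi$. Now
$$
\xi^{\perp}=\operatorname{span}(\xi)\oplus T_u,\qquad T_u:=\ker(\nabla b(u)^{T})\subset\mathbb{R}^3_u,
$$
so $M|_{T_u}=t\operatorname{Hess}(b)|_{T_u}+\operatorname{Hess}(c)|_{T_u}$ is positive definite for every $t\in\mathbb{R}$. Letting $t\to\pm\infty$ forces $\operatorname{Hess}(b)|_{T_u}=0$. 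This says the second fundamental form of every level surface of $b$ vanishes, so each level surface is a union of pieces of affine planes. I expect this to be the main obstacle: one must argue carefully that $b=f(\ell(u))$ for a linear form $\ell$ and a one-variable polynomial $f$ with $f'$ nowhere zero. The route I would take is the following.
\begin{itemize}
\item The gradient direction $\nabla b/|\nabla b|$ is constant along each level surface, since its derivative in a tangent direction $v$ is proportional to the projection of $\operatorname{Hess}(b)v$ onto $T_u$.
\item Hence connected level components are open pieces of planes.
\item Disjointness of distinct level sets, together with real-analyticity of $b$, then forces all these planes to be parallel.
\end{itemize}
Setting $s:=\ell(u)$ and choosing complementary linear coordinates $y=(y_1,y_2)$, we get $\phi=t f(s)+c(s,y)$.

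\textbf{Fibrewise J\"orgens and the determinant.} Expanding the determinant along the $t$-row gives
$$
\det(\operatorname{Hess}\phi)=-f'(s)^2\det(\operatorname{Hess}_y c).
$$
By the positivity above, $\operatorname{Hess}_y c$ is positive definite. Its determinant therefore equals $1/f'(s)^2$, which is a positive constant on each slice $s=\mathrm{const}$.

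For fixed $s$, the two-dimensional J\"orgens theorem \cite{Jorgens1954} shows that $c(s,\cdot)$ is quadratic in $y$. Extracting coefficients gives
$$
c=\tfrac12\langle A(s)y,y\rangle+\langle\beta(s),y\rangle+c_0(s),
$$
with $A$, $\beta$ and $c_0$ polynomial in $s$. Now $\det A=1/f'^2$ is a polynomial, which is impossible unless $f'$ is a nonzero constant. Rescaling $t$ and $s$ (and adjusting $y$ by a linear map) normalises $f(s)=s$ after absorbing an affine term, and gives $\det A=1$. This is the claimed normal form.

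\textbf{Inversion.} The gradient mapping is
$$
(t,s,y)\mapsto\bigl(s,\;t+\tfrac12\langle A'(s)y,y\rangle+\langle\beta'(s),y\rangle+c'(s),\;A(s)y+\beta(s)\bigr).
$$
Given $(p_t,p_s,p_y)$, I would solve successively:
\begin{itemize}
\item $s=p_t$;
\item $y=\operatorname{adj}(A(s))\,(p_y-\beta(s))$, which is polynomial because $\det A=1$;
\item $t=p_s-\tfrac12\langle A'(s)y,y\rangle-\langle\beta'(s),y\rangle-c'(s)$.
\end{itemize}
This is a polynomial inverse, so $\nabla\phi$ is a polynomial automorphism.
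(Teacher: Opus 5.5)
Your proposal is correct and follows the paper's proof essentially step by step. You straighten the null pivot to $\phi=t\,b(u)+c(u)$, use positivity of the screen $(t\operatorname{Hess}(b)+\operatorname{Hess}(c))|_{\ker(db)}$ for all $t$ to kill $\operatorname{Hess}(b)$ on level tangents, and deduce $b=f(s)$ from flat, parallel level planes. You then use the bordered determinant $-f'(s)^2\det(\operatorname{Hess}_y c)$ together with polynomiality to force $f'$ constant, apply J\"orgens fibrewise, and invert via the same triangular formulas with the adjugate. The only differences are cosmetic: you get parallelism of the level planes from analyticity rather than the paper's open-and-closed argument, and you apply J\"orgens before rather than after showing $f'$ is constant.
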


\begin{proof}
Put {$\xi=\partial_t$}. Since {$\partial_t^2\phi=0$}, we may write $\phi(u,t)=a(u)t+b(u)$, where the vector $u$ belongs to $\mathbb{R}^3$. In the ordered coordinate system $(u,t)$, we have the block form
$$
\operatorname{Hess}(\phi)=
\begin{pmatrix}
\operatorname{Hess}(b)+\operatorname{Hess}(a)t&\nabla a\\
{\nabla a^\top}&0
\end{pmatrix},
$$
whose non-singularity gives $da\neq0$. Write {$K:=\ker(da_x)$}. The positive screen of the lightlike vector $\partial_t$ is
$$
{(\operatorname{Hess}(b)_x+t\operatorname{Hess}(a)_x)|_{K}.}
$$
Since this affine pencil is positive definite for every $t\in\mathbb{R}$, we obtain that {$\operatorname{Hess}(a)_x|_{K}=0$}.
The second fundamental form of every level surface $\Sigma$ of $a=a(u)$ therefore vanishes. Thus, the unit normal of $\Sigma$ is constant on each of its connected components $S_1,\dots,S_m$, so that each such component lies in an affine plane and is open therein. Since each component $S_i$ is also closed therein, it equals the plane. Distinct level planes are disjoint and hence parallel. Consequently, there exists a linear coordinate $s$ such that $a=f(s)$.
Writing $b_s(y):=b(s,y)$ for each $s\in\mathbb{R}$ and expanding the bordered determinant gives
$$
-f'(s)^2\det(\operatorname{Hess}(b_s))=\kappa
$$
for some constant $\kappa\in\mathbb{R}^\times$. Both factors are polynomials whose product is a nonzero constant. Thus, the polynomial $f'$ is a nonzero constant, and we have that $\det(\operatorname{Hess}(b_s))$ is a positive constant. After subtracting the affine summand $f(0)t$ and rescaling $s$, we may assume that $a(s)=s$. We then have that $\operatorname{Hess}(b_s)>0$. After a constant linear change of the screen variables, we may also assume that $\det(\operatorname{Hess}(b_s))=1$.
For every fixed $s\in\mathbb{R}$, J\"orgens' theorem \cite{Jorgens1954} shows that $b_s(y)$ is quadratic in $y$. Since $b=b(u)$ is a polynomial, we obtain that
$$
b(s,y)=\frac{1}{2}\langle A(s)y,y\rangle+\langle\beta(s),y\rangle+c(s).
$$
This proves the normal form.

Write {$p:=(\partial_1\phi,\partial_2\phi)$}{, where we put $\partial_j:=\partial/\partial y_j$}. We first recover $s$ from {$s=\partial_t\phi$}, and then recover $y$ from $y=A(s)^{-1}(p-\beta(s))$. Finally, we recover $t$ from
$$
t={\partial_s\phi}-\frac{1}{2}\langle A'(s)y,y\rangle-\langle\beta'(s),y\rangle-c'(s).
$$
Since $\det(A(s))=1$, the matrix $A(s)^{-1}$ is polynomial as it is then precisely the adjugate of $A(s)$. Hence the formulas above give a polynomial inverse of $\nabla\phi$.
\end{proof}

\subsection{Auxiliary Matrix Lemmas}

The arguments below also require two elementary consequences of Lorentzian signature and of the homogeneous Hesse theorem.

\begin{lemma}\label{lorentz_flat_pencil}
Let $A$ be a real symmetric matrix of signature $(3,1)$, and let $B$ be a real symmetric matrix. Suppose that $\det(A+sB)=\det(A)$ for every $s\in\mathbb{R}$. Then, we have that $\operatorname{rank}(B)\leq2$.
\end{lemma}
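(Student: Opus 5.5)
Since $A$ is non-singular, we may factor $\det(A+sB)=\det(A)\det(I+sA^{-1}B)$. The hypothesis then says that $\det(I+sM)=1$ for every real $s$, where $M:=A^{-1}B$. Expanding in $s$, the coefficients are the elementary symmetric functions of the eigenvalues of $M$, and all of them vanish. Hence the characteristic polynomial of $M$ is $\lambda^4$, so $M$ is nilpotent. The task therefore reduces to showing that a nilpotent operator which is self-adjoint with respect to a Lorentzian form has rank at most $2$. Note that $M$ is self-adjoint for the bilinear form $g(v,w):=\langle Av,w\rangle$, because $g(Mv,w)=\langle Bv,w\rangle$ is symmetric in $v,w$.

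The key step is an isotropy argument. Set $W:=\operatorname{im}(M)$, and assume for contradiction that $\operatorname{rank}(M)\geq3$. Since $M$ is nilpotent on $\mathbb{R}^4$, its rank is at most $3$, so $\operatorname{rank}(M)=3$ and $M$ consists of a single Jordan block. Then $M^3\neq0$ and $M^4=0$.

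Pick $v$ with $M^3v\neq0$, and put $e_k:=M^kv$ for $k=0,1,2,3$. This is a basis of $\mathbb{R}^4$. Self-adjointness gives $g(e_i,e_j)=g(v,M^{i+j}v)$, which depends only on $i+j$ and vanishes whenever $i+j\geq4$. So in this basis the Gram matrix of $g$ is a Hankel matrix that is zero below the anti-diagonal.

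In particular, $\operatorname{span}\{e_2,e_3\}$ is a $2$-dimensional totally isotropic subspace for $g$, since $g(e_2,e_2)$, $g(e_2,e_3)$ and $g(e_3,e_3)$ all have index sum at least $4$. But a non-degenerate form of signature $(3,1)$ has maximal isotropic subspaces of dimension $\min(3,1)=1$. This is a contradiction, so $\operatorname{rank}(B)=\operatorname{rank}(M)\leq2$.

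I expect no serious obstacle. The only points needing care are the following.
\begin{itemize}
\item The case $\operatorname{rank}(M)=3$ for a nilpotent $4\times4$ matrix forces a single Jordan block; this is standard.
\item Non-degeneracy of $g$ holds because $A$ is invertible.
\item The Witt index bound for signature $(3,1)$: a totally isotropic plane would meet the $3$-dimensional positive definite subspace nontrivially, since $2+3>4$, which is impossible.
\end{itemize}
An alternative check for rank $3$ is that $\ker M^2\supseteq\operatorname{im}M^2$, which already gives a $2$-dimensional isotropic subspace. This is exactly the statement that $g(M^2v,M^2w)=g(v,M^4w)=0$.
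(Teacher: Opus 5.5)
Your proof is correct, but it takes a different route from the paper's.

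\textbf{The paper's route (inertia and a limit).} Since $\det(A+sB)$ never vanishes on the real line, the signature of $A+sB$ stays $(3,1)$ for all $s\in\mathbb{R}$. Dividing by $\pm s$ and letting $s\to+\infty$ shows that $B$ and $-B$ each have at most one negative eigenvalue, so $\operatorname{rank}(B)\leq2$. This argument uses only the non-vanishing of $\det(A+sB)$ for real $s$, not its constancy. It works verbatim in any dimension with index one, and it also shows that $B$ has at most one eigenvalue of each sign.

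\textbf{Your route (nilpotency and isotropy).} You use the full polynomial identity $\det(I+sA^{-1}B)=1$ to conclude that $M=A^{-1}B$ is nilpotent and self-adjoint for $g=\langle A\cdot,\cdot\rangle$. You then exclude a single $4\times4$ Jordan block because $\operatorname{span}\{M^2v,M^3v\}=\operatorname{im}(M^2)$ would be a totally isotropic plane, which Witt index one forbids. This is purely algebraic and gives the extra conclusion that $A^{-1}B$ is nilpotent. It is the same mechanism the paper itself uses later: the nilpotency of $G^{-1}A$ in Corollary~\ref{low_dimensional_Hesse_system}, and the totally-isotropic-plane contradictions in Proposition~\ref{projective_reduction}.

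The points you flagged are all handled correctly: rank three forces a single Jordan block, $g$ is non-degenerate, and the Witt index bound follows by intersecting with the positive $3$-space.
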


\begin{proof}
The pencil $A+sB$ is non-singular and has signature $(3,1)$ for every $s$. Dividing by $s>0$ and letting $s\rightarrow+\infty$, we obtain that $B$ has at most one negative eigenvalue. Now, replacing $s$ by $-s$, then dividing by $s>0$, and finally letting $s\rightarrow+\infty$, we obtain that $-B$ has at most one negative eigenvalue. Thus, the matrix $B$ has at most one positive eigenvalue. Therefore, we conclude that $\operatorname{rank}(B)\leq2$.
\end{proof}

\begin{lemma}\label{small_hessian_kernel}
Let {$P$} be a homogeneous polynomial of degree at least two in four variables over a field of characteristic zero. 
\begin{enumerate}
    \item [1.] If the rank of {$\operatorname{Hess}(P)$} at a generic point is at most 2, then we have that {$P$} depends on at most 2 linear forms. In particular, the matrix field {$\operatorname{Hess}(P)$} has a constant kernel of dimension at least 2.
    \item [2.] If the rank of {$\operatorname{Hess}(P)$} at a generic point is at most 1, then we have that {$P$} depends on at most 1 linear form. In particular, the matrix field {$\operatorname{Hess}(P)$} has a constant kernel of dimension at least 3.
\end{enumerate}
\end{lemma}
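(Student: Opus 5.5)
Both parts come from one classical fact: a homogeneous form whose Hessian has generic rank $r$ is a cone over at most $r$ variables, provided $r\le 2$. This fact holds in any number of variables, so the Gordan--Noether restriction to four variables is not needed here. The ``in particular'' clauses then follow as in the proof of Lemma~\ref{essential_space}. If $P=F(\ell_1,\dots,\ell_m)$ with $m\le r$ linearly independent linear forms $\ell_i$, then the common kernel $K$ of the linear forms $\ell_1,\dots,\ell_m$ satisfies $D_vP=0$ for all $v\in K$. Hence $\operatorname{Hess}(P)_x v=\nabla(D_vP)(x)=0$ for every $x$, so $K$ is a constant kernel of dimension $4-m\ge 4-r$.

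For part 2, I would first observe that generic rank $\le1$ forces all $2\times2$ minors of $\operatorname{Hess}(P)$ to vanish identically. If $\operatorname{Hess}(P)\equiv0$, then $P$ is affine, and homogeneity of degree $\ge2$ gives $P=0$, which is trivial. Otherwise, on the dense open set where the rank is $1$, write $\operatorname{Hess}(P)=\lambda\, w w^\top$. The gradient map $\nabla P$ then has rank-one differential, so its image is (locally) a curve $C$. Euler's formula $\nabla P(x)=\frac{1}{d-1}\operatorname{Hess}(P)_x x$ shows that $\nabla P(x)$ spans the image of the Hessian at $x$. So the tangent line of $C$ at $\nabla P(x)$ is spanned by the point itself, and $C$ is contained in a line through the origin. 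Hence $\nabla P(x)=g(x)c$ for a fixed vector $c$. Taking $\ell:=\langle c,x\rangle$, every $v\perp c$ satisfies $D_vP=0$, so $P$ depends only on $\ell$. This argument is valid over any field of characteristic zero after passing to the algebraic closure and using generic points, or it can be done purely algebraically via the vanishing minors.

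For part 1, I would argue in the same way with the polar (gradient) image $Z:=\overline{\nabla P(\mathbb{A}^4)}$, which has dimension equal to the generic Hessian rank, here at most $2$. By Euler's formula, $Z$ is a cone: it is stable under scaling, and $\nabla P(x)$ lies in the tangent space $T_{\nabla P(x)}Z=\operatorname{im}\operatorname{Hess}(P)_x$. The key claim is that $Z$ is contained in a linear subspace of dimension $\dim Z\le2$. If $\dim Z\le1$, then $Z$ is a line by part 2. If $\dim Z=2$, then $Z$ is a cone over a projective curve $\Gamma\subset\mathbb{P}^3$, and I must show that $\Gamma$ is a line. Then $\nabla P$ takes values in a fixed plane $W$, and $P$ depends only on the two linear forms $\langle w_i,x\rangle$ with $w_i$ a basis of $W$, since $D_vP=0$ for $v\perp W$.

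The main obstacle is showing that $\Gamma$ is a line. My first attempt will be the standard Gordan--Noether/Hesse argument via reciprocity of polar maps. For a generic point $y=\nabla P(x)\in Z$, the fibre $\nabla P^{-1}(y)$ is (an open piece of) a linear space. Specifically, it is the translate of $(T_yZ)^\perp$, because $\nabla P$ is constant along the directions of $\ker\operatorname{Hess}(P)_x=(T_yZ)^\perp$, and these kernel directions integrate to linear spaces. This follows from the identity $D_u(\operatorname{Hess}P)v$ being symmetric, which forces the kernel distribution to be totally geodesic. Then I would show that $T_yZ$ is constant along $\Gamma$. Here I use that $P$ is recovered from $\nabla P$ by Euler's formula, $P=\frac1d\langle x,\nabla P(x)\rangle$. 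The condition that $\langle x,y\rangle$ stays constant on each fibre forces $x\perp T_yZ$ for $x$ in the fibre. Since fibres fill $\mathbb{A}^4$, this yields an annihilator of $T_yZ$ that is independent of $y$. If this route stalls, I would fall back on citing the classical result that forms with Hessian rank $\le 2$, or in $\le4$ variables, are cones of the appropriate size, as in \cite{BricalliFavalePirola2023,deBondtEssen2004}.
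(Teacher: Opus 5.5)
Part~2 and the ``in particular'' clauses are fine. The polar image of $\nabla P$ is an irreducible one-dimensional cone, hence a line, and this is a self-contained substitute for the binary step of the Hesse theorem.

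The genuine gap is in part~1, at exactly the step you flag as the main obstacle. Your set-up there is correct, including the claim that the fibre of $\nabla P$ through a generic $x$ is an open piece of $x+(T_yZ)^{\perp}$ with $y=\nabla P(x)$; the symmetry of the third derivatives does make the kernel distribution totally geodesic. The next deduction fails, however. Constancy of $\langle x,y\rangle$ along $x+(T_yZ)^{\perp}$ is automatic, because $y\in T_yZ$. It does not force $x\perp T_yZ$. Indeed, $x\perp T_yZ$ would give $\langle x,\nabla P(x)\rangle=d\,P(x)=0$ on a dense set, i.e.\ $P=0$.

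Consequently nothing in your sketch shows that $T_yZ$ is independent of the ruling, i.e.\ that the curve $\Gamma$ is a line. This is not a technicality. If $\Gamma$ spans a plane or all of $\mathbb{P}^3$, then $P$ has three or four essential variables and identically vanishing Hessian determinant in them. Ruling out exactly this situation is the content of the Gordan--Noether (homogeneous Hesse) theorem, so your direct route has not avoided it.

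Part~1 therefore stands only through your fallback, and that fallback is exactly the paper's proof. Generic rank at most $2<4$ gives $\det(\operatorname{Hess}(P))=0$, so the Hesse theorem in four variables makes $P$ a form in at most three essential variables. In those variables the Hessian still has generic rank at most $2<3$, so its determinant vanishes and the ternary theorem removes another variable; one further binary step gives part~2. The resulting constant kernel descends from the algebraic closure because it is the solution space of a linear system over the ground field.

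Your remark that the rank-two statement holds in any number of variables, making Gordan--Noether unnecessary, is not established by anything in your sketch. It is also not needed, since the lemma is stated in four variables.
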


\begin{proof}
Apply the homogeneous Hesse theorem successively to the space of essential parameters. Whenever its dimension is larger than the generic Hessian rank, its Hessian determinant vanishes and one further inessential direction can be removed. The resulting constant kernel descends from the algebraic closure, since it is the solution space of a homogeneous linear system over the ground field. The proof is therefore completed.
\end{proof}

\subsection{Polynomial Inversion via Pivots}

Together with the two causal lemmas above, the three-variable Hessian theorem now settles every constant pivot.

\begin{proposition}[Constant-Pivot Theorem]\label{constant_pivot_inversion}
Let $\phi\in\mathbb{R}[x_1,\dots,x_4]$ be a unimodular Hessian potential of index $1$. If $\phi$ admits a constant pivot, then the gradient mapping $\nabla\phi\colon\mathbb{R}^4\rightarrow\mathbb{R}^4$ is a polynomial automorphism.
\end{proposition}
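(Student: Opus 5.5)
Let $\xi$ be a constant pivot, so that $a:=\langle\operatorname{Hess}(\phi)\xi,\xi\rangle$ is a real constant. The proof splits into three cases according to the sign of $a$, that is, the causal type of $\xi$.

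\emph{Timelike case, $a<0$.} Lemma~\ref{timelike_pivot_rigidity} shows that $\phi$ is quadratic. Then $\nabla\phi$ is an affine map whose linear part $\operatorname{Hess}(\phi)$ has determinant $\pm1$, so it is a polynomial automorphism.

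\emph{Lightlike case, $a=0$.} This is exactly Lemma~\ref{lightlike_pivot_normal_form}, whose last assertion gives the conclusion. The normalisation there involves a linear change of coordinates and the addition of an affine polynomial. Both only compose $\nabla\phi$ with affine automorphisms on either side, so they preserve the property of being a polynomial automorphism.

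\emph{Spacelike case, $a>0$.} Choose a unimodular linear change of coordinates $x=(u,t)$ with $\xi=\partial_t$ and $u\in\mathbb{R}^3$. Integrating twice gives
$$\phi(u,t)=\tfrac{a}{2}t^2+b(u)t+c(u).$$
Put $s:=\partial_t\phi=at+b(u)$. Mimicking the timelike proof, define
$$L_s(u):=c(u)-\frac{(b(u)-s)^2}{2a}.$$
The Schur complement of the $tt$-entry of $\operatorname{Hess}(\phi)$ is precisely $\operatorname{Hess}(L_s)$ evaluated at $s=\partial_t\phi$. Hence $\det(\operatorname{Hess}(L_s))=\pm1/a$, a nonzero constant independent of $s$. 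Moreover, removing a positive direction from an index-$1$ form leaves index $1$, so $L_s$ is a Hessian potential in three variables of index $1$.

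Write $L_s=Q+\tfrac{s}{a}b-\tfrac{s^2}{2a}$ with $Q:=c-\tfrac{b^2}{2a}$. Then $\operatorname{Hess}(L_s)=\operatorname{Hess}(Q)+\tfrac{s}{a}\operatorname{Hess}(b)$. The three-variable Hessian theorem of de Bondt \cite{deBondt2015}, applied for each fixed $s$ (after rescaling to determinant $\pm1$), shows that $\nabla_u L_s$ is a polynomial automorphism of $\mathbb{R}^3$ for each $s$.

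To invert $\nabla\phi$, let $(p,q)=(\nabla_u\phi,\partial_t\phi)$. Then $s=q$ is known, and a direct computation gives $\nabla_u L_s(u)=p-\tfrac{q}{a}\nabla b\cdot 0+\dots$. More carefully, with $t=(s-b)/a$ one has
$$\nabla_u\phi=\nabla c+t\nabla b=\nabla_u L_s(u).$$
So $u=(\nabla_u L_q)^{-1}(p)$, and then $t=(q-b(u))/a$.

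\emph{Main obstacle.} The remaining step is to check that the inverse of $\nabla_u L_s$ depends polynomially on the parameter $s$, not merely for each fixed $s$. I would handle this by working over the field $\mathbb{C}(s)$ or the ring $\mathbb{R}[s]$. The key input is that the Keller map $\nabla_u L_s$ has constant Jacobian $\pm1/a$ independent of $s$. Its inverse has degree bounded uniformly in $s$, for example by Gabber's bound $\deg F^{-1}\le(\deg F)^{2}$. The coefficients of the inverse are then determined by a linear system with polynomial coefficients in $s$ and unique solution for every real $s$, so they are rational in $s$ without real poles. To upgrade this to polynomial dependence, I would apply de Bondt's three-dimensional result over the algebraically closed field $\overline{\mathbb{C}(s)}$, since it is algebraic in nature. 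This makes $\nabla_u L_s$ invertible over $\mathbb{C}(s)[u]$. Since the Jacobian is a unit, the inverse is integral over $\mathbb{C}[s]$, and the inverse of $\nabla\phi$ is then polynomial.
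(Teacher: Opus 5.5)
Your proposal is correct and follows the paper's proof. It uses the same causal case split, the same $L_s$ with $\nabla_u\phi=\nabla_u L_s$ at $s=\partial_t\phi$, and de Bondt's three-variable theorem over a function field in $s$ (the paper uses $\mathbb{R}(s)$); the paper then concludes by the birational Keller theorem for $(u,s)\mapsto(\nabla_u L_s,s)$. Your integrality claim replacing that citation is true, but it needs its one-line reason: if the $\mathbb{C}(s)$-inverse $G$ had a pole of order $k>0$ at some $s_0\in\mathbb{C}$, its leading Laurent coefficient $G_{-k}\neq0$ would satisfy $G_{-k}\circ\nabla_u L_{s_0}=0$, which is impossible because $\nabla_u L_{s_0}$ has constant nonzero Jacobian and hence is dominant (your real-$s$ linear-system argument on its own only excludes real poles).
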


\begin{proof}
Let {$\xi$} be a constant pivot for $\phi$, and put {$a:=\langle\operatorname{Hess}(\phi)\xi,\xi\rangle$}.
If $a<0$, Lemma~\ref{timelike_pivot_rigidity} shows that $\phi$ is quadratic. If $a=0$, Lemma~\ref{lightlike_pivot_normal_form} already gives a polynomial inverse.

It remains to assume that $a>0$. After a unimodular linear change of coordinates, write $x=(u,t)$ and {$\xi=\partial_t$}. We have that $$\phi(u,t)=\frac{a}{2}t^2+b(u)t+c(u).$$
Put $s:={\partial_t\phi}=at+b(u)$ and define
$$
L_s(u):=c(u)-\frac{(b(u)-s)^2}{2a}.
$$
At a fixed $s\in\mathbb{R}$, the Schur-complement identities show that $\nabla L_s$ is the $u$-component of $\nabla\phi$ after the substitution $$t=\frac{s-b(u)}{a},$$ and also give $\det(\operatorname{Hess}(L_s))=-1/a$. By de Bondt's three-variable theorem \cite{deBondt2015}, applied over the rational function field $\mathbb{R}(s)$, the gradient mapping $\nabla L_s$ has a rational inverse. Hence the polynomial map that sends $(u,s)$ to $(\nabla L_s,s)$ is birational and has nonzero constant Jacobian determinant. By the birational Keller theorem \cite{Keller1939}, this mapping is in fact a polynomial automorphism. It then follows immediately that $\nabla\phi$ itself is also a polynomial automorphism.
\end{proof}

Thus, it remains to find a constant pivot. We shall do this first by highest weights and degree separation, and later by a complex linear system of quadrics.

\section{A Theorem on Two-Layer Potentials}

We now prove the main degree-theoretic result. In degrees at least six, dilation separates the two nonconstant Hessian layers. The quintic and quartic cases require additional arguments according to the essential rank.

We first record a convexity lemma which will be used in the quintic rank-one case.

\begin{lemma}\label{convex_cylinder}
Let $f\in\mathbb{R}[x_1,\dots,x_n]$ be a convex polynomial function such that $\det(\operatorname{Hess}(f))=0$. Then, there exists a nonzero constant vector $v$ such that $\operatorname{Hess}(f)v=0$.
\end{lemma}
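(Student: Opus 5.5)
Since $f$ is a convex polynomial, each Hessian $H(x):=\operatorname{Hess}(f)_x$ is positive semidefinite, so a vector $v$ satisfies $H(x)v=0$ exactly when $\langle H(x)v,v\rangle=D_v^2f(x)=0$. The plan is therefore to find a constant direction $v\neq0$ along which $f$ is affine on every line, i.e. $D_v^2f\equiv0$. That is equivalent to $f$ being a polynomial in the linear forms that annihilate $v$, together with an affine term in the $v$-direction.

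The first step is to rule out the case where $H(x_0)$ is nonsingular at some point $x_0$; there the determinant would be nonzero, contradicting $\det(\operatorname{Hess}(f))\equiv0$. So $\ker H(x)\neq0$ for every $x$. Next I would use convexity to show that the kernel of $H$ at a point of maximal rank is translation invariant. Fix a point $x_0$, take $v\in\ker H(x_0)$, and consider the convex one-variable polynomial
$$g(s):=f(x_0+sv).$$
We have $g''(0)=0$ and $g''\geq0$. That alone does not make $g$ affine (consider $s^4$), so a global argument is needed.

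The global argument uses the gradient map $\nabla f$. Convexity gives the monotonicity inequality
$$\langle\nabla f(x)-\nabla f(y),x-y\rangle\geq0.$$
Also, since $H\geq0$, we have $H(x)v=0$ iff $D_v^2f(x)=0$. Consider the subspace
$$W:=\operatorname{span}\{\nabla f(x)-\nabla f(y)\mid x,y\in\mathbb{R}^n\}.$$
This is the span of the images of all Hessians, because $\nabla f(x)-\nabla f(y)=\int_0^1H(y+\tau(x-y))(x-y)\,d\tau$, and derivatives of $\nabla f$ lie in $W$. If $W\neq\mathbb{R}^n$, then any $v\in W^\perp$ is nonzero and satisfies $\langle H(x)w,v\rangle=0$ for all $w$, so $H(x)v=0$ for every $x$, and we are done. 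So the main obstacle is to show $W\neq\mathbb{R}^n$ when $\det H\equiv0$.

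I would argue by contradiction. Suppose $W=\mathbb{R}^n$, i.e. the Hessians have no common kernel. For convex $f$, the operator $\sum_iH(x_i)$ is positive semidefinite and its kernel is $\bigcap_i\ker H(x_i)$. Choosing finitely many points $x_i$ makes this intersection zero, so
$$M:=\sum_iH(x_i)$$
is positive definite. The difficulty is that a sum of singular PSD matrices can be definite without giving a contradiction by itself. So I would use polynomiality together with the following lemma: for a convex function, $\ker H(x)$ equals the tangent space of the maximal affine subspace through $x$ on which $f$ is affine, at least generically.

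The concrete attempt is as follows. Take $x_0$ where $\operatorname{rank}H$ is maximal, so $\operatorname{rank}H=r<n$ on a Zariski-open set. The kernel distribution $K(x)=\ker H(x)$ has constant rank there. A standard Monge--Ampère/developability argument for convex functions of constant Hessian rank shows that $K$ is integrable, that its leaves are open pieces of affine subspaces, and that $\nabla f$ is constant along the leaves. By polynomiality, $f$ restricted to the affine line $x_0+\mathbb{R}v$, for $v\in K(x_0)$, is a polynomial with vanishing second derivative on an open interval, and is therefore affine on the whole line. A convex function that is affine on a full line $x_0+\mathbb{R}v$ has $D_v^2f\equiv0$ everywhere. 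To see this, note that $h(x):=f(x+sv)-f(x)-s\,D_vf(x_0)$ is bounded along rays by convexity, which forces $D_v^2f=0$ on all parallel lines; this is the classical fact that the lineality space of a convex function's recession is global. This last step, together with the leaf-affineness argument, is the part I expect to need the most care. Once it is in place, $H(x)v=0$ for all $x$, which gives the required constant vector.
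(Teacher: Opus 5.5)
Your proof is correct. The detour through $W$ and the contradiction hypothesis $W=\mathbb{R}^n$ is never actually used: your ``concrete attempt'' is already a direct proof.

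Its last two steps coincide with the paper's. First, a polynomial that is affine on an interval of a line is affine on the whole line. Second, a convex function affine on a full line $x_0+\mathbb{R}v$ has $D_v^2f\equiv0$. For the second step, replace the vague ``bounded along rays'' remark with the paper's two-line argument. For arbitrary $x$, convexity gives $f(x_0)+ct=f(x_0+tv)\geq f(x)+\langle\nabla f(x),x_0+tv-x\rangle$. Dividing by $t$ and letting $t\rightarrow\pm\infty$ gives $\langle\nabla f(x),v\rangle=c$, and differentiating gives $\operatorname{Hess}(f)_xv=0$.

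The genuine difference lies in how you produce the first segment on which $f$ is affine. The paper argues globally. If $f$ were strictly convex, $\nabla f$ would be injective, so its image would be open by invariance of domain. Yet that image has measure zero by Sard's theorem, because $\det(\operatorname{Hess}(f))\equiv0$. Hence $f$ is affine on some segment, in a direction the argument does not control.

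You argue locally, on the Zariski-open set where $\operatorname{rank}(H)$ attains its maximum $r<n$. The developability fact you cite there should be proved rather than invoked; it takes three lines and needs no convexity. Take local sections $X,Y$ of $\ker H$ and any $Z$. Differentiating $HX=0$ gives $\langle(D_ZH)X,Y\rangle=-\langle D_ZX,HY\rangle=0$. By symmetry of the third derivative, $(D_XH)Y=0$, and hence $H(D_XY)=0$. So $\ker H$ is closed under the flat connection, its leaves are open pieces of $x_0+\ker H(x_0)$, and $\nabla f$ is constant on them.

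Your route buys more. Every $v\in\ker H(x_0)$ works, so the common kernel of $\operatorname{Hess}(f)$ equals the generic kernel and has dimension exactly $n-r$. The paper reaches this only in Lemma~\ref{general_quintic_rank_one}, by repeatedly applying the present lemma on quotients. The paper's route buys brevity: it replaces the differential-geometric computation with two soft topological theorems.
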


\begin{proof}
If $f$ were strictly convex, then we would have that $\nabla f$ is injective. Invariance of domain would make its image open, whereas Sard's theorem would make the same image of Lebesgue measure zero. Thus, the function $f$ has to be affine on a segment parallel to a vector $v\neq0$. Its restriction to the containing line is a polynomial and is therefore affine on the whole line. We write $$f(x_0+tv)=f(x_0)+ct.$$
For arbitrary $x$, the supporting hyperplane inequality gives
$$
f(x_0+tv)\geq f(x)+\langle\nabla f(x),x_0+tv-x\rangle.
$$
Letting $t\rightarrow+\infty$ and then $t\rightarrow-\infty$, we obtain that $\langle\nabla f(x),v\rangle=c$.
Differentiation gives the assertion.
\end{proof}

\subsection{The Method of Highest Weight}

The highest-weight method detects a lightlike pivot whenever a weighted leading part retains nonzero Hessian determinant. We begin with the elementary weight calculation behind this observation.

\begin{lemma}\label{weighted_hessian_face}
Assign to $x_1,\dots,x_4$ {the positive integral weight tuple $\omega:=(\omega_1,\dots,\omega_4)$}. Suppose that {$\operatorname{in}_{\omega}(\phi)$} has weight $N$. If {$\det(\operatorname{Hess}(\operatorname{in}_{\omega}(\phi)))\neq0$}, then {this determinant} is the highest weight component of $\det(\operatorname{Hess}(\phi))$ and has weight {$4N-2(\omega_1+\omega_2+\omega_3+\omega_4)$}.
\end{lemma}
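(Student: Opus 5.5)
The plan is to decompose $\phi$ into weighted homogeneous pieces and track weights through the Hessian entries and the Leibniz expansion of the determinant. Write $\phi=\sum_{m\leq N}\phi^{(m)}$, where $\phi^{(m)}$ is $\omega$-weighted homogeneous of weight $m$ and $\phi^{(N)}=\operatorname{in}_\omega(\phi)$. The key observation is that differentiation lowers weight in a controlled way. If $g$ is weighted homogeneous of weight $m$, then $\partial_i\partial_j g$ is either zero or weighted homogeneous of weight $m-\omega_i-\omega_j$. Hence every entry $H_{ij}$ of $\operatorname{Hess}(\phi)$ splits as a sum of weighted pieces of weight at most $N-\omega_i-\omega_j$. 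The piece of weight exactly $N-\omega_i-\omega_j$ is the $(i,j)$ entry of $\operatorname{Hess}(\operatorname{in}_\omega(\phi))$.

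Next I would expand $\det(\operatorname{Hess}(\phi))=\sum_{\sigma}\operatorname{sgn}(\sigma)\prod_{i=1}^4 H_{i\sigma(i)}$. For each permutation $\sigma$, every monomial in the product has weight at most
$$\sum_i\bigl(N-\omega_i-\omega_{\sigma(i)}\bigr)=4N-2(\omega_1+\omega_2+\omega_3+\omega_4).$$
This bound is the same for every $\sigma$, because $\sigma$ is a bijection. Moreover, the component of exactly this weight arises only by taking the top piece of each factor. Summing over $\sigma$, the weight-$(4N-2\sum\omega_i)$ component of $\det(\operatorname{Hess}(\phi))$ therefore equals $\det(\operatorname{Hess}(\operatorname{in}_\omega(\phi)))$, and all other contributions have strictly smaller weight.

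Finally, since $\det(\operatorname{Hess}(\operatorname{in}_\omega(\phi)))\neq0$ by hypothesis, this top component is nonzero. It is therefore the highest weight component of $\det(\operatorname{Hess}(\phi))$, and its weight is $4N-2(\omega_1+\cdots+\omega_4)$. It is also weighted homogeneous of that weight, being the determinant of a matrix whose $(i,j)$ entry has weight $N-\omega_i-\omega_j$.

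There is no real obstacle here. The only point needing care is the uniformity of the weight bound across permutations, which is exactly where the symmetric form $\omega_i+\omega_j$ and the bijectivity of $\sigma$ enter. I would also remark that positivity of the weights guarantees that weight components of a polynomial are well defined and finite in number, so "highest weight component" makes sense.
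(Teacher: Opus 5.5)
Your proof is correct and is essentially the paper's argument. Both track the weight $N-\omega_i-\omega_j$ of the $(i,j)$ Hessian entry through the determinant expansion, and both observe that any term involving a lower-weight piece of $\phi$ has strictly smaller weight. Your Leibniz-expansion bookkeeping, with the uniform bound across permutations, just spells out in detail what the paper states in one line.
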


\begin{proof}
The $(i,j)$-th entry of {$\operatorname{Hess}(\operatorname{in}_{\omega}(\phi))$} has weight {$N-\omega_i-\omega_j$}. Therefore, every term in {$\det(\operatorname{Hess}(\operatorname{in}_{\omega}(\phi)))$} has weight {$4N-2(\omega_1+\omega_2+\omega_3+\omega_4)$}, while every term in $\det(\operatorname{Hess}(\phi))$ involving a lower weight component of $\phi$ has smaller weight. The desired statement follows.
\end{proof}

We now express the equality case in terms of the Newton polytope.

For a polynomial $\phi\in\mathbb{R}[x_1,\dots,x_n]$, we denote its support, namely the set of its exponent vectors, by $\operatorname{supp}(\phi)$, and the convex hull of its support by $\operatorname{Newt}(\phi)$. Traditionally, the convex body $\operatorname{Newt}(\phi)$ is nothing but the Newton polytope of $\phi\in\mathbb{R}[x_1,\dots,x_n]$. Given positive weights {$\omega=(\omega_1,\dots,\omega_n)$}, we also denote by {$\operatorname{in}_{\omega}(\phi)$} the component of $\phi\in\mathbb{R}[x_1,\dots,x_n]$ having the highest weight.

\begin{proposition}\label{newton_polytope_criterion}
Let $\phi\in\mathbb{R}[x_1,\dots,x_4]$ be a polynomial function with constant Hessian determinant $\det(\operatorname{Hess}(\phi))=\kappa\in\mathbb{R}^\times$. Let {$\operatorname{in}_{\omega}(\phi)$} have weight $N$ for some positive weights {$\omega=(\omega_1,\dots,\omega_4)$}, and put {$c:=(1/2,\dots,1/2)$}. Then, it holds that {$c\in \operatorname{Newt}(\phi)$}.
Moreover, the following conditions are equivalent:
\begin{enumerate}
    \item [1.] {$\det(\operatorname{Hess}(\operatorname{in}_{\omega}(\phi)))\neq0$}.
    \item [2.] {$2N=\omega_1+\omega_2+\omega_3+\omega_4$}.
    \item [3.] The exposed face {$F:=\{\alpha\in \operatorname{Newt}(\phi)\mid\langle \omega,\alpha\rangle=N\}$} contains {$c$}.
\end{enumerate}
The above conditions also imply {$\det(\operatorname{Hess}(\operatorname{in}_{\omega}(\phi)))=\kappa$}. Equivalently, after normalising $N=1$, the existence of the vector {$c$} in the face $F$ is precisely the feasibility of positive numbers {$\omega_1,\dots,\omega_4$} satisfying
$$
{\frac{1}{2}\sum_{i=1}^4\omega_i=1,}
$$
and {$\langle \omega,\alpha\rangle\leq1$} for every $\alpha\in\operatorname{supp}(\phi)$.
\end{proposition}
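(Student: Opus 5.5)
The plan is to prove the membership $c\in\operatorname{Newt}(\phi)$ directly from the Leibniz expansion of the determinant. The three equivalences and the identity $\det(\operatorname{Hess}(\operatorname{in}_{\omega}(\phi)))=\kappa$ should then come from comparing weights, using Lemma~\ref{weighted_hessian_face} for one direction and a sharpened form of its proof for the other.

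\emph{Step 1: $c\in\operatorname{Newt}(\phi)$.} Write $\phi=\sum_\alpha a_\alpha x^\alpha$. Expand
$$\det(\operatorname{Hess}(\phi))=\sum_{\sigma}\operatorname{sgn}(\sigma)\prod_{i=1}^4\partial_i\partial_{\sigma(i)}\phi.$$
Each product is a sum of terms, each coming from four support exponents $\alpha^{(1)},\dots,\alpha^{(4)}$. Such a term is a multiple of a monomial of exponent
$$\sum_i\alpha^{(i)}-\sum_i(e_i+e_{\sigma(i)})=\sum_i\alpha^{(i)}-(2,2,2,2).$$
The determinant equals $\kappa\neq0$, so its constant term is nonzero. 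Hence at least one term with $\sum_i\alpha^{(i)}=(2,2,2,2)$ survives, and then $c=\tfrac14\sum_i\alpha^{(i)}$ is a convex combination of support points. Consequently $\langle\omega,c\rangle\leq N$ for every positive $\omega$, that is, $\sum\omega_i\leq 2N$ always.

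\emph{Step 2: the equivalences.} The implication $1\Rightarrow2$ follows from Lemma~\ref{weighted_hessian_face}. If $\det(\operatorname{Hess}(\operatorname{in}_{\omega}(\phi)))\neq0$, it is the top weight component of the constant $\kappa$, so its weight $4N-2\sum\omega_i$ must equal $0$, and it equals $\kappa$.

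For $2\Rightarrow1$, I would refine the proof of that lemma. Every term of $\det(\operatorname{Hess}(\phi))$ has weight at most $4N-2\sum\omega_i$. The component of exactly this weight is $\det(\operatorname{Hess}(\operatorname{in}_{\omega}(\phi)))$, whether or not it vanishes, since the entries of weight exactly $N-\omega_i-\omega_j$ are those of $\operatorname{Hess}(\operatorname{in}_\omega(\phi))$. Under condition 2 this top weight is $0$, which is the weight of the constant $\kappa$. Therefore $\det(\operatorname{Hess}(\operatorname{in}_{\omega}(\phi)))=\kappa\neq0$, which also gives the identity claimed after the list.

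Finally, $2\Leftrightarrow3$ is immediate. We have $\langle\omega,c\rangle=\tfrac12\sum\omega_i$, and by Step 1 $c\in\operatorname{Newt}(\phi)$, so $c\in F$ holds exactly when $\langle\omega,c\rangle=N$.

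\emph{Step 3: the feasibility reformulation.} Suppose the face $F$ contains $c$. Rescaling $\omega$ to get $N=1$ gives $\tfrac12\sum\omega_i=1$, and $\langle\omega,\alpha\rangle\leq1$ on the support by definition of $N$. Conversely, suppose positive $\omega$ satisfy both conditions. Then $N=\max_{\operatorname{supp}}\langle\omega,\alpha\rangle\leq1$, while Step 1 gives $N\geq\langle\omega,c\rangle=1$. Hence $N=1$ and $c\in F$.

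The main obstacle is that Lemma~\ref{weighted_hessian_face} is phrased for integral weights, whereas the feasibility statement allows real ones. I would handle this by observing that the weight bookkeeping in Steps 1 and 2 never uses integrality, so it holds verbatim for real positive weights. Alternatively, the feasible region is a rational polyhedron, so it contains rational points when nonempty, and one can clear denominators.
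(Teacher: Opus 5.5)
Your proposal is correct and is essentially the paper's argument. The paper obtains $c\in\operatorname{Newt}(\phi)$ from a nonzero permutation term of $\det(\operatorname{Hess}(\phi_2))=\kappa$, which is exactly the constant term of your Leibniz expansion (there the four exponents are forced to be $e_i+e_{\sigma(i)}$), and it derives the equivalences and the value $\kappa$ from the rescaling $\phi^t(x):=t^N\phi(t^{-\omega_1}x_1,\dots,t^{-\omega_4}x_4)$, with $\det(\operatorname{Hess}(\phi^t))=\kappa t^{2m}$, $m:=2N-\sum_i\omega_i\geq0$ and $\phi^t\rightarrow\operatorname{in}_{\omega}(\phi)$ as $t\rightarrow0$, which is just the analytic form of your top-weight comparison and works verbatim for real positive weights, so your integrality concern is harmless.
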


\begin{proof}
Denote by $(e_1,\dots,e_4)$ the standard orthonormal basis of $\mathbb{R}^4$, and write $P:=\operatorname{Newt}(\phi)$. Since $\det(G)=\kappa\neq0$, some permutation term in $\det(G)$ is nonzero. Choose $\sigma\in\mathfrak{S}_4$ such that $$\prod_{i=1}^4G_{\sigma(i)i}\neq0.$$ Then, the monomial $x_ix_{\sigma(i)}$ occurs in $\phi_2$ for every $i$, and
$$
{c=\frac{1}{4}\sum_{i=1}^4e_i+\frac{1}{4}\sum_{i=1}^4e_{\sigma(i)}\in P.}
$$
Thus, we have that
$$
{N\geq\langle \omega,c\rangle=\frac{1}{2}\sum_{i=1}^4\omega_i.}
$$
Put {$m:=2N-(\omega_1+\omega_2+\omega_3+\omega_4)$} and, for {$t>0$}, put
$$
{\phi^t(x):=t^N\phi(t^{-\omega_1}x_1,\dots,t^{-\omega_4}x_4).}
$$
Then, the polynomial {$\phi^t$} converges to {$\operatorname{in}_{\omega}(\phi)$} as {$t\rightarrow0$}, and we have that
$$
{\det(\operatorname{Hess}(\phi^t))
=\kappa t^{2m}.}
$$
The preceding inequality shows that the exponent $m$ is nonnegative. Passing to the limit as {$t\rightarrow0$}, we obtain that {$\det(\operatorname{Hess}(\operatorname{in}_{\omega}(\phi)))\neq0$} if and only if $m=0$, and in this case it equals $\kappa$. The vanishing of the exponent is equivalent to {$N=\langle \omega,c\rangle$}, which is also equivalent to the exposed face $F$ containing {$c$}. 

The final formulation then follows by normalising $N=1$.
\end{proof}

When the equivalent conditions above hold, one variable has weight greater than half the weight of the potential. The corresponding pure second derivative must therefore vanish.

\begin{theorem}\label{nonsingular_face_pivot}
Let $\phi\in\mathbb{R}[x_1,\dots,x_4]$ be a nonquadratic polynomial such that $\det(\operatorname{Hess}(\phi))\in\mathbb{R}^\times$. Suppose that
$$
{\det(\operatorname{Hess}(\operatorname{in}_{\omega}(\phi)))\neq0}
$$
for some positive weights {$\omega=(\omega_1,\dots,\omega_4)$}. Then some vector {$\xi\in\mathbb{R}^4$} satisfies {$D_\xi^2\phi=0$}.
\end{theorem}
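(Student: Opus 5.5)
The plan is to use Proposition~\ref{newton_polytope_criterion} to turn the hypothesis into a linear relation among the weights, and then argue by weights that a coordinate direction is a pivot. We will take $\xi$ to be a standard basis vector $e_j$ and show that $x_j^2$ divides no monomial of $\phi$.

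\textbf{Step 1 (weight relation).} Since $\det(\operatorname{Hess}(\phi))=\kappa\in\mathbb{R}^\times$ and $\det(\operatorname{Hess}(\operatorname{in}_{\omega}(\phi)))\neq0$, the equivalence of conditions 1 and 2 in Proposition~\ref{newton_polytope_criterion} gives
$$
2N=\omega_1+\omega_2+\omega_3+\omega_4,
$$
where $N$ is the weight of $\operatorname{in}_{\omega}(\phi)$. Because $\operatorname{in}_{\omega}(\phi)$ is the highest weight component, every $\alpha\in\operatorname{supp}(\phi)$ satisfies $\langle\omega,\alpha\rangle\leq N$.

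\textbf{Step 2 (some weight is large).} We show that some $\omega_j>N/2$. Suppose instead that $\omega_i\leq N/2$ for all $i$. Since the four weights sum to $2N$, each must equal $N/2$. Then every $\alpha\in\operatorname{supp}(\phi)$ satisfies
$$
\frac{N}{2}|\alpha|=\langle\omega,\alpha\rangle\leq N,
$$
so $|\alpha|\leq2$. Hence $\phi$ has degree at most two, which contradicts the assumption that $\phi$ is nonquadratic. This use of nonquadraticity is the only real subtlety in the argument.

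\textbf{Step 3 (the pivot).} Fix $j$ with $\omega_j>N/2$. Any monomial $x^\alpha$ with $\alpha_j\geq2$ has weight
$$
\langle\omega,\alpha\rangle\geq2\omega_j>N,
$$
because the weights are positive. No such monomial can occur in $\operatorname{supp}(\phi)$, since every support element has weight at most $N$. Therefore $\partial_j^2\phi=0$, and $\xi:=e_j$ satisfies $D_\xi^2\phi=0$.

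No step is expected to be hard; the proof reduces to the bookkeeping above once Proposition~\ref{newton_polytope_criterion} is invoked. The resulting pivot is lightlike, so Lemma~\ref{lightlike_pivot_normal_form} then applies.
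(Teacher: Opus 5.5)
Your proof is correct and is essentially the paper's own argument: Proposition~\ref{newton_polytope_criterion} gives $\sum_i\omega_i=2N$, nonquadraticity rules out all four weights being $N/2$, so some $\omega_j>N/2$, and then no monomial of $\phi$ is divisible by $x_j^2$, whence $\partial_j^2\phi=0$. Your closing remark about Lemma~\ref{lightlike_pivot_normal_form} is not part of the proof. It also does not follow from this theorem alone, since that lemma needs the unimodular index-$1$ hypothesis, which this theorem does not assume.
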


\begin{proof}
Let $N$ be the highest weight of $\phi$. Proposition~\ref{newton_polytope_criterion} gives
$$
{\sum_{i=1}^4\omega_i=2N.}
$$
Choose $j$ so that {$\omega_j=\max\{\omega_1,\omega_2,\omega_3,\omega_4\}$}. Then, we have that {$\omega_j\geq N/2$}. Equality would force every weight to equal $N/2$, and hence would force $\phi$ to be quadratic. Therefore, it must be the case that {$\omega_j>N/2$}. Notice that every monomial component of {$\partial_j^2\phi$} has weight at most {$N-2\omega_j<0$}, which gives {$\partial_j^2\phi=0$}, as all weights are assumed to be positive. Thus, the vector {$\xi=\partial_j$}{, where we put $\partial_j:=\partial/\partial x_j$,} is a constant lightlike pivot.
\end{proof}

This criterion produces a pivot from a single non-singular Newton face. The next section treats two-layer potentials for which no such face need be chosen.

\subsection{The High-Degree Case}

We retain the notation $H_{\phi_k}(x)$ introduced above.

With this notation, dilation separates the two nonconstant Hessian layers, except for one explicitly controlled collision.

\begin{lemma}\label{dilation_separation}
Let $\phi\in\mathbb{R}[x_1,\dots,x_4]$ be a unimodular Hessian potential of index $1$ and degree $d\geq6$. Suppose that
$$
\phi=\phi_d+\phi_{d-1}+\phi_2+\phi_1+\phi_0.
$$
Let {$r_{\mathrm{ess}}(\phi_d)$} be the essential rank of $\phi_d$. If $d\geq7$, or if $d=6$ and {$r_{\mathrm{ess}}(\phi_d)\leq2$}, then 
$$
\det({H_{\phi_2}}+s{H_{\phi_{d-1}}(x)}+t{H_{\phi_d}(x)})=\det({H_{\phi_2}})
$$
for every $x\in\mathbb{R}^4$ and $s,t\in\mathbb{R}$.
\end{lemma}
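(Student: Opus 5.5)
The plan is to exploit the constancy of $\det(\operatorname{Hess}(\phi))$ along rays through the origin, and then separate the two Hessian layers by comparing powers of the dilation parameter. Fix $x\in\mathbb{R}^4$ and write $G:=H_{\phi_2}$, $A:=H_{\phi_{d-1}}(x)$ and $B:=H_{\phi_d}(x)$. By homogeneity we have $H_{\phi_{d-1}}(\lambda x)=\lambda^{d-3}A$ and $H_{\phi_d}(\lambda x)=\lambda^{d-2}B$. Since $\phi_1$ and $\phi_0$ do not contribute to the Hessian, this gives
$$
\det(G+\lambda^{d-3}A+\lambda^{d-2}B)=\det(H_\phi(\lambda x))=\det(G)
$$
for every $\lambda\in\mathbb{R}$.

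Next, expand the polynomial
$$
P(s,t):=\det(G+sA+tB)-\det(G)=\sum_{1\leq a+b\leq4}c_{ab}\,s^at^b,
$$
where the coefficients $c_{ab}$ depend on $x$. Substituting $s=\lambda^{d-3}$ and $t=\lambda^{d-2}$ sends the monomial $s^at^b$ to $\lambda^{a(d-3)+b(d-2)}$, and the resulting polynomial in $\lambda$ vanishes identically. It therefore suffices to check that distinct pairs $(a,b)$ produce distinct exponents, except in controlled cases. Suppose that $a(d-3)+b(d-2)=a'(d-3)+b'(d-2)$, that is, $(a-a')(d-3)=(b'-b)(d-2)$. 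Since $\gcd(d-3,d-2)=1$, the integer $d-2$ divides $a-a'$. But $|a-a'|\leq4$. If $d\geq7$, then $d-2>4$, which forces $a=a'$ and then $b=b'$. Hence every $c_{ab}$ vanishes, so $P\equiv0$, and this is the claim.

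The case $d=6$ is the one genuine obstacle. Here $d-3=3$ and $d-2=4$, so a collision requires $|a-a'|=4$ and $|b-b'|=3$. Within the constraint $a+b\leq4$, the only possibility is the pair $(4,0)$ and $(0,3)$, both giving exponent $12$. Thus every coefficient except $c_{40}+c_{03}$ is forced to vanish.

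To break the tie I will use the hypothesis $r_{\mathrm{ess}}(\phi_d)\leq2$. By Lemma~\ref{essential_space}, $\phi_d$ is a polynomial in at most two linear forms. Hence $B=H_{\phi_d}(x)$ has rank at most $2$ for every $x$, so all $3\times3$ minors of $B$ vanish. The coefficient $c_{03}$ of $t^3$ in $\det(G+tB)$ is a sum of products of $3\times3$ minors of $B$ with complementary entries of $G$, so $c_{03}=0$. Combined with $c_{40}+c_{03}=0$, this gives $c_{40}=\det(A)=0$ as well. Therefore $P\equiv0$ in this case too.

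Since $x$ was arbitrary, the identity holds for every $x\in\mathbb{R}^4$ and all $s,t\in\mathbb{R}$.
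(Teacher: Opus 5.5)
Your proof is correct and follows the paper's argument. You restrict to rays $\lambda x$, separate the coefficients of $s^it^j$ via injectivity of $(i,j)\mapsto i(d-3)+j(d-2)$ on $\{i+j\leq4\}$, and for $d=6$ resolve the single $s^4$/$t^3$ collision using $\operatorname{rank}(H_{\phi_6})\leq2$. Your coprimality argument just makes explicit the injectivity and collision claims that the paper asserts without detail.
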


\begin{proof}
For every fixed $x\in\mathbb{R}^4$, the substitution of $x$ by $\lambda x$ gives
$$
\det({H_{\phi_2}}+\lambda^{d-3}{H_{\phi_{d-1}}(x)}+\lambda^{d-2}{H_{\phi_d}(x)})=\det({H_{\phi_2}}).
$$
The coefficient of $s^it^j$ in $\det({H_{\phi_2}}+s{H_{\phi_{d-1}}(x)}+t{H_{\phi_d}(x)})$ is multiplied by $\lambda^{i(d-3)+j(d-2)}$, where the integers $i,j$ are nonnegative and have sum at most four.

Suppose first that $d\geq7$. The map sending $(i,j)$ to $i(d-3)+j(d-2)$ is injective on the set $I:=\{(i,j)\in\mathbb{Z}^2:i,j\geq0,i+j\leq4\}$, and hence every nonconstant coefficient of $s^it^j$ in
$$
\det({H_{\phi_2}}+s{H_{\phi_{d-1}}(x)}+t{H_{\phi_d}(x)})
$$
must vanish.

Suppose that $d=6$ and {$r_{\mathrm{ess}}(\phi_d)\leq2$}. The map sending $(i,j)$ to $i(d-3)+j(d-2)$ fails to be one-to-one only over $12\in\mathbb{Z}$, with the collision arising from the coefficients of $s^4$ and $t^3$, while the latter coefficient vanishes because {$\operatorname{rank}(H_{\phi_6})\leq2$}, and hence the former coefficient also has to vanish.

The desired identity then follows.
\end{proof}

We now introduce another suggestive notation: {For a matrix field $T$ on $\mathbb{R}^n$,} the vector space 
$$
{\mathcal{K}(T):=\bigcap_{x\in\mathbb{R}^n}\ker(T_x)}
$$
is termed the common kernel of {$T_x$} for $x\in\mathbb{R}^n$.

We shall combine the preceding determinant identity with the common kernels of the two highest Hessian layers.

\begin{lemma}\label{high_degree_pivot}
Let $\phi\in\mathbb{R}[x_1,\dots,x_4]$ be a unimodular Hessian potential of index $1$ and degree $d\geq6$. Suppose that
$$
\phi=\phi_d+\phi_{d-1}+\phi_2+\phi_1+\phi_0.
$$
Then, the polynomial function $\phi$ admits a constant pivot.
\end{lemma}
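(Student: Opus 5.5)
The plan is to produce a vector $\xi\neq0$ with $D_\xi\phi_d=D_\xi\phi_{d-1}=0$. For such a vector, $D_\xi^2\phi=\langle H_{\phi_2}\xi,\xi\rangle$ is constant, so $\xi$ is a constant pivot. Since $d-1\geq5\geq3$, the remark at the end of the proof of Lemma~\ref{essential_space} shows that for a homogeneous piece of degree at least three, lying in the common kernel $\mathcal{K}(\operatorname{Hess}(\phi_k))$ is the same as $D_\xi\phi_k=0$. So it suffices to show that $\mathcal{K}(\operatorname{Hess}(\phi_d))\cap\mathcal{K}(\operatorname{Hess}(\phi_{d-1}))\neq0$.

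\textbf{Main case.} Assume $d\geq7$, or $d=6$ with $r_{\mathrm{ess}}(\phi_d)\leq2$.
\begin{enumerate}
\item[1.] Lemma~\ref{dilation_separation} gives $\det(H_{\phi_2}+sH_{\phi_{d-1}}(x)+tH_{\phi_d}(x))=\det(H_{\phi_2})$ for all $s,t$ and $x$.
\item[2.] Since $H_{\phi_2}$ has signature $(3,1)$, Lemma~\ref{lorentz_flat_pencil} gives $\operatorname{rank}(\sigma H_{\phi_{d-1}}(x)+\tau H_{\phi_d}(x))\leq2$ for all real $\sigma,\tau$ and all $x$.
\item[3.] Taking $(\sigma,\tau)=(0,1)$ and $(1,0)$, Lemma~\ref{small_hessian_kernel} shows that $K_d:=\mathcal{K}(\operatorname{Hess}(\phi_d))$ and $K_{d-1}:=\mathcal{K}(\operatorname{Hess}(\phi_{d-1}))$ both have dimension at least two.
\item[4.] If $\phi_{d-1}=0$, then $K_d\neq0$ already suffices. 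If $K_d\cap K_{d-1}\neq0$, we are done.
\item[5.] Otherwise $V=K_d\oplus K_{d-1}$ with both summands two-dimensional. Choose coordinates $(u,w)$ with $K_d=\operatorname{span}\{\partial_{w_1},\partial_{w_2}\}$ and $K_{d-1}=\operatorname{span}\{\partial_{u_1},\partial_{u_2}\}$. Then $\phi_d=\phi_d(u)$ and $\phi_{d-1}=\phi_{d-1}(w)$, so $H_{\phi_d}(x)+H_{\phi_{d-1}}(x)$ is block diagonal.
\item[6.] Since $u$ and $w$ vary independently, step 2 gives $\operatorname{rank}H_{\phi_d}(u)+\operatorname{rank}H_{\phi_{d-1}}(w)\leq2$ at a generic point. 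Neither piece is zero and both have degree at least three, so each generic rank is at least one. Hence both generic ranks equal one.
\item[7.] Part 2 of Lemma~\ref{small_hessian_kernel} then upgrades both kernels to dimension three. Two three-dimensional subspaces of $V$ meet nontrivially, contradicting $K_d\cap K_{d-1}=0$.
\end{enumerate}

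\textbf{Residual case $d=6$, $r_{\mathrm{ess}}(\phi_6)=3$.} I expect this case to be the main obstacle. Here $K_6=\mathbb{R}k$ is a line and $\phi_6$ is an essential ternary form. By the ternary Hesse theorem, the Hessian determinant $\delta$ of $\phi_6$ on a complement of $k$ is a nonzero polynomial.

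Expanding in $\lambda$ as in Lemma~\ref{dilation_separation}, every coefficient except the collision at $\lambda^{12}$ vanishes. In particular the coefficients of $t$, $t^2$, $s$, $s^2$, $s^3$ and the mixed terms vanish. The $\lambda^{12}$ relation reads
$$
\det(H_{\phi_5}(x))+c_3(x)=0.
$$
Since $\operatorname{adj}(H_{\phi_6}(x))$ is a multiple of $kk^\top$, one should have $c_3(x)=\langle H_{\phi_2}k,k\rangle\,\delta(x)$ up to a nonzero normalising constant.

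I would then split into two subcases.
\begin{enumerate}
\item[(a)] If $\langle H_{\phi_2}k,k\rangle\neq0$, derive a contradiction. First, use the vanishing of the $t$- and $t^2$-coefficients of $\det(H_{\phi_2}+tH_{\phi_6}(x))$ together with the index constraint: $H_{\phi_6}(x)$ is a limit of index-one matrices, so it has at most one negative eigenvalue. Evaluate at points where $H_{\phi_6}(x)$ is semidefinite on $k^\perp$. Second, compare the signs of $\det(H_{\phi_5}(x))$ and $\delta(x)$ under $x\mapsto-x$: the quintic determinant is even of degree $12$, while Lemma~\ref{lorentz_parity}-type reasoning forces $\operatorname{rank}H_{\phi_5}\leq2$. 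The latter would make $\det(H_{\phi_5})=0$, hence $\delta\equiv0$, which is a contradiction.
\item[(b)] If $\langle H_{\phi_2}k,k\rangle=0$, then $c_3\equiv0$. The $\lambda^{12}$ relation gives $\det(H_{\phi_5})=0$, so the full separated identity holds again, and the main-case argument applies verbatim.
\end{enumerate}
If subcase (a) resists this treatment, I would fall back on showing directly that the collision identity forces $\operatorname{rank}H_{\phi_6}\leq2$, using Lemma~\ref{lorentz_flat_pencil} on the pencil in $t$ alone.
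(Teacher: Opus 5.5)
\textbf{Main case.} Your argument for $d\geq7$, or $d=6$ with $r_{\mathrm{ess}}(\phi_6)\leq2$, is correct. It differs mildly from the paper's. The paper extracts the coefficient of $s^{4-r}t^r$ to show that $H_{\phi_{d-1}}|_{K_d}$ is singular, then argues through the quotient $\mathbb{R}^4/K_{d-1}$. You instead apply Lemma~\ref{lorentz_flat_pencil} to the combined pencil $H_{\phi_{d-1}}+H_{\phi_d}$ and use the block-diagonal splitting when $K_d\cap K_{d-1}=0$, which is clean.

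\textbf{Residual case.} The gap is in the case $d=6$, $r_{\mathrm{ess}}(\phi_6)=3$. Subcase (b) is actually empty. If $\langle H_{\phi_2}k,k\rangle=0$, the full separated identity holds, and Lemma~\ref{lorentz_flat_pencil} then forces $\operatorname{rank}H_{\phi_6}\leq2$, contradicting generic rank three. So everything rests on (a), and (a) is not proved.
\begin{itemize}
\item The claim that ``Lemma~\ref{lorentz_parity}-type reasoning forces $\operatorname{rank}H_{\phi_5}\leq2$'' has no support. That argument works through the limit $t^{2-d}H_\phi(tx)\to H_{\phi_d}(x)$, which sees only the top piece $\phi_6$. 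The subleading piece $\phi_5$ inherits no signature bound from it.
\item The sign comparison under $x\mapsto-x$ is vacuous, since $\det H_{\phi_5}$ and $\delta$ are both even of degree $12$.
\item The ``semidefinite on $k^\perp$'' step ends without a stated conclusion.
\item Your fallback is circular. The determinant $\det(H_{\phi_2}+tH_{\phi_6}(x))=\det(H_{\phi_2})+c_3(x)t^3$ is constant in $t$ only when $c_3\equiv0$, which is exactly the empty subcase (b).
\end{itemize}

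\textbf{The missing idea.} You need neither a contradiction nor a vector in $K_6\cap K_5$. You already noted that the mixed coefficients vanish. The coefficient of $st^3$ (i.e.\ of $\lambda^{15}$, with $(i,j)=(1,3)$) is involved in no collision. It takes exactly one row from $H_{\phi_5}$ and three rows from $H_{\phi_6}$, and no row from $H_{\phi_2}$. In a basis adapted to $E_6\oplus\mathbb{R}k$ it equals a nonzero constant times $\delta(x)\langle H_{\phi_5}(x)k,k\rangle$. Since $\delta\not\equiv0$, this gives $\langle H_{\phi_5}(x)k,k\rangle\equiv0$. Together with $H_{\phi_6}(x)k=0$, it shows that $D_k^2\phi=\langle H_{\phi_2}k,k\rangle$ is constant, so $k$ is a pivot. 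This is the paper's argument.

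Note that $k$ need not lie in $K_5$. So your opening reduction to $\mathcal{K}(\operatorname{Hess}(\phi_d))\cap\mathcal{K}(\operatorname{Hess}(\phi_{d-1}))\neq0$ is stronger than needed. In this case you should aim only for $H_{\phi_6}\xi=0$ and $\langle H_{\phi_5}\xi,\xi\rangle\equiv0$.
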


\begin{proof}
{Put $r:=r_{\mathrm{ess}}(\phi_d)$.} Also, let {$K_d:=\mathcal{K}(H_{\phi_d})$}, and choose a complementary direction space {$E_d$}. We claim that the restriction {$H_{\phi_d}|_{E_d}$} is generically non-singular. Indeed, otherwise the homogeneous Hesse theorem on the space of essential parameters would reduce its dimension. 

We also have that {$H_{\phi_2}=H_\phi(0)$}, and hence the symmetric matrix {$H_{\phi_2}$} has signature $(3,1)$.

Suppose first that $d=6$ and {$r=3$}. Let {$\xi\in K_6$} be a spanning vector of {$K_6$}. The coefficient of $\lambda^{15}$ in the polynomial
$$
\det({H_{\phi_2}}+\lambda^3{H_{\phi_5}(x)}+\lambda^4{H_{\phi_6}(x)})
$$
is a nonzero constant multiple of {$\det(H_{\phi_6}(x)|_{E_6})\langle H_{\phi_5}(x)\xi,\xi\rangle$}. Since the factor {$\det(H_{\phi_6}(x)|_{E_6})$} is generically nonzero, we obtain that {$\langle H_{\phi_5}(x)\xi,\xi\rangle=0$}. Since we also have that {$H_{\phi_6}(x)\xi=0$}, the vector {$\xi$} is a constant pivot.

It remains to consider the cases covered by Lemma~\ref{dilation_separation}. In these cases, Lemma~\ref{lorentz_flat_pencil} shows that the ranks of {$H_{\phi_d}(x)$} and {$H_{\phi_{d-1}}(x)$} are at most two. In particular, Lemma~\ref{small_hessian_kernel} gives {$r\leq2$}.

Let {$K_{d-1}:=\mathcal{K}(H_{\phi_{d-1}})$}. Then, Lemma~\ref{small_hessian_kernel} gives {$\dim(K_{d-1})\geq2$}. The coefficient of {$s^{4-r}t^r$} in the determinant identity of Lemma~\ref{dilation_separation} is a nonzero constant multiple of
$$
{\det(H_{\phi_d}(x)|_{E_d})\det(H_{\phi_{d-1}}(x)|_{K_d})}.
$$
Consequently, the restricted operator {$H_{\phi_{d-1}}|_{K_d}$} is singular.

If {$r=1$}, dimension counting gives {$K_{d-1}\cap K_d\neq0$}. Suppose now that {$r=2$}. If the rank of {$H_{\phi_{d-1}}$} at a generic point is at most one, then Lemma~\ref{small_hessian_kernel} gives {$\dim(K_{d-1})\geq3$}, and hence again we obtain that {$K_{d-1}\cap K_d\neq0$}. Otherwise, if {$K_{d-1}\cap K_d=0$}, then the natural homomorphism from {$K_d$} to {$\mathbb{R}^4/K_{d-1}$} is an isomorphism, and hence the form induced by {$H_{\phi_{d-1}}$} on {$\mathbb{R}^4/K_{d-1}$} is generically non-singular, but under the preceding isomorphism, its pullback is the restriction {$H_{\phi_{d-1}}|_{K_d}$}, which is a contradiction.

So, we may now choose a nonzero vector {$\xi\in K_{d-1}\cap K_d$}. Then, we have {$H_{\phi_d}(x)\xi=0$}. As we also have {$H_{\phi_{d-1}}(x)\xi=0$}, it follows that {$D_\xi^2\phi=\langle H_{\phi_2}\xi,\xi\rangle$} is a constant.
\end{proof}

This proves the existence of a pivot in every degree at least six. For quintics, Lemma~\ref{lorentz_parity} leaves only essential ranks one and two.

\subsection{Two-Layer Quintic Potentials}

The argument below concerns the two-layer quintic case. We return to the general quintic case near the end of the main text.

The rank-two case reduces to a positive semi-definite singular 2 by 2 matrix of quadratic forms. We first record the required dichotomy.

\begin{lemma}\label{quadratic_matrix_dichotomy}
Let $C=C(x)$ be a 2 by 2 symmetric matrix whose entries are homogeneous quadratic polynomials in $n$ variables with real coefficients. Suppose that $C$ is positive semi-definite at every real point and that $\det(C)=0$. Then, either the common kernel of $C(x)$ for $x\in\mathbb{R}^n$ is non-trivial, or the matrix $C$ has the form {$C=\rho\,\ell\otimes\ell$}, where the real constant $\rho$ is positive and the two components of the {$\ell$} are independent linear forms.
\end{lemma}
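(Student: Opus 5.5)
The plan is to write $C=\begin{pmatrix}a&b\\ b&c\end{pmatrix}$ with $a,b,c$ real quadratic forms and to argue by unique factorisation in $\mathbb{R}[x_1,\dots,x_n]$. Positive semi-definiteness gives $a\geq0$ and $c\geq0$ as functions on $\mathbb{R}^n$. The hypothesis $\det(C)=0$ is the polynomial identity $ac=b^2$.

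First I dispose of the degenerate cases. If $a\equiv0$, then $b^2=0$ forces $b\equiv0$, so $C(x)e_1=0$ for every $x$ and the common kernel is non-trivial. The case $c\equiv0$ is symmetric. Henceforth I assume $a,c\neq0$.

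Next I split according to whether $a$ is irreducible over $\mathbb{R}$.

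\emph{Case 1: $a$ is irreducible.} Then $a\mid b^2$ forces $a\mid b$. Since $\deg b=\deg a=2$, this gives $b=\lambda a$ for some $\lambda\in\mathbb{R}$. Substituting into $ac=b^2$ gives $c=\lambda^2a$, so
$$
C=a\begin{pmatrix}1&\lambda\\ \lambda&\lambda^2\end{pmatrix}.
$$
The constant vector $(-\lambda,1)^\top$ then lies in every kernel. The same argument applies if $c$ is irreducible.

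\emph{Case 2: $a$ and $c$ are both reducible.} Write $a$ as a product of two real linear forms. A product of two non-proportional nonzero linear forms changes sign on $\mathbb{R}^n$, which contradicts $a\geq0$. Hence $a=\alpha\ell_1^2$ with $\alpha>0$, and likewise $c=\gamma\ell_2^2$ with $\gamma>0$. Then $b^2=\alpha\gamma\,\ell_1^2\ell_2^2$, so $b=\varepsilon\sqrt{\alpha\gamma}\,\ell_1\ell_2$ for some sign $\varepsilon=\pm1$. Putting $\ell:=(\sqrt{\alpha}\,\ell_1,\varepsilon\sqrt{\gamma}\,\ell_2)$ gives $C=\ell\otimes\ell$.

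If $\ell_1$ and $\ell_2$ are proportional, say $\ell_2=\mu\ell_1$, then $C=\ell_1^2\,vv^\top$ for a fixed vector $v=(\sqrt{\alpha},\varepsilon\sqrt{\gamma}\mu)^\top$. Any nonzero constant vector orthogonal to $v$ is then a common kernel vector. Otherwise the two components of $\ell$ are independent linear forms, and $C=\rho\,\ell\otimes\ell$ with $\rho=1$, which is the second alternative of the lemma.

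The only step that needs care is the reducible case. The point there is that semi-definiteness pins a reducible nonnegative quadratic form down to a square, and this is precisely what produces the rank-one tensor structure. Everything else is routine divisibility and degree counting.
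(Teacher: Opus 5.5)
Your proof is correct and follows essentially the paper's route: unique factorisation in $\mathbb{R}[x_1,\dots,x_n]$ applied to $ac=b^2$, degree counting, and semi-definiteness to fix signs. The paper packages the factorisation as $(a,b,c)=(\lambda p^2,\lambda pq,\lambda q^2)$ and splits on whether $\lambda$ or $p,q$ carry the degree. You instead split on irreducibility of $a$ or $c$ and use $a,c\geq0$ to force reducible diagonal entries to be squares, which is a more explicit unpacking of the same argument.
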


\begin{proof}
For later use, we will write
$$
C:=
\begin{pmatrix}
a&b\\
b&c
\end{pmatrix}.
$$
If $a=0$ or $c=0$, positive semi-definiteness gives $b=0$, and hence the common kernel of $C(x)$ for $x\in\mathbb{R}^n$ is non-trivial for trivial reasons. 

Otherwise, the unique factorization of $ac=b^2$ gives three homogeneous polynomials $\lambda,p,q$ such that $(a,b,c)=(\lambda p^2,\lambda pq,\lambda q^2)$.
Homogeneity now shows that either the polynomial $\lambda$ is quadratic and the polynomials $p,q$ are constant, or the polynomial $\lambda$ is a positive constant and the polynomials $p,q$ are linear. The first case above gives a common kernel. Linear dependence of $p$ and $q$ also gives a common kernel, while the remaining case gives the asserted expression.
\end{proof}

\begin{lemma}\label{quintic_rank_two_pivot}
Let $\phi\in\mathbb{R}[x_1,\dots,x_4]$ be a unimodular Hessian potential of index $1$ of the form
$$
\phi=\phi_5+\phi_4+\phi_2+\phi_1+\phi_0.
$$
If {$r_{\mathrm{ess}}(\phi_5)=2$}, then the polynomial function $\phi$ admits a constant pivot.
\end{lemma}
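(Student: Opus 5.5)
The plan is to work in coordinates adapted to the common kernel $K_5:=\mathcal{K}(H_{\phi_5})$, look for the pivot inside $K_5$, and use the dilation identity to control $H_{\phi_4}$ on $K_5$.

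\textbf{Setup.} Since $r_{\mathrm{ess}}(\phi_5)=2$, after a linear change of coordinates we write $x=(z,y)$ with $z=(x_1,x_2)$ spanning $K_5$ and $y=(x_3,x_4)$, so that $\phi_5=\phi_5(y)$. Let $E$ be the $y$-directions. As in the proof of Lemma~\ref{high_degree_pivot}, the homogeneous Hesse theorem shows that $H_{\phi_5}|_{E}$ is generically non-singular. As in the proof of Lemma~\ref{lorentz_parity}, oddness of the degree shows that $H_{\phi_5}(x)|_E$ has at most one positive and at most one negative eigenvalue. So wherever it is non-singular it has signature $(1,1)$.

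\textbf{Key steps.}
\begin{enumerate}
\item[1.] \emph{Singularity of $H_{\phi_4}$ on $K_5$.} For fixed $x$, replacing $x$ by $\lambda x$ gives
$$\det\bigl(H_{\phi_2}+\lambda^2H_{\phi_4}(x)+\lambda^3H_{\phi_5}(x)\bigr)=\det(H_{\phi_2}).$$
Since $H_{\phi_5}$ is supported on the $E\times E$ block, the $\lambda^{10}$ coefficient is a nonzero constant multiple of
$$\det\bigl(H_{\phi_5}(x)|_E\bigr)\,\det\bigl(H_{\phi_4}(x)|_{K_5}\bigr).$$
No other pair $(i,j)$ with $2i+3j=10$ contributes, because $j\ge3$ is excluded by $\operatorname{rank}(H_{\phi_5})\le2$. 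Hence $C(x):=H_{\phi_4}(x)|_{K_5}$ satisfies $\det(C)\equiv0$.

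\item[2.] \emph{Positivity of $C$.} At a point where $H_{\phi_5}(x)|_E$ is non-singular, take the Schur complement of the $E$-block in $H_\phi(\lambda x)$. The $E$-block is $\lambda^3H_{\phi_5}(x)|_E+O(\lambda^2)$, of signature $(1,1)$, so it already carries the single negative direction. Since $H_\phi$ has index $1$, the Schur complement on $K_5$ must be positive semidefinite. That complement equals $\lambda^2C(x)+O(\lambda)$, because the correction term is $O(\lambda^4\cdot\lambda^{-3})$. Dividing by $\lambda^2$ and letting $\lambda\to\infty$ gives $C(x)\ge0$ generically, hence everywhere by continuity.

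\item[3.] \emph{Dichotomy.} Apply Lemma~\ref{quadratic_matrix_dichotomy} to the $2\times2$ matrix $C$ of quadratic forms.
\begin{itemize}
\item[(a)] If $C$ has a common kernel vector $\xi\in K_5$, then $H_{\phi_5}\xi=0$ and $\langle H_{\phi_4}\xi,\xi\rangle=0$. Therefore $D_\xi^2\phi=\langle H_{\phi_2}\xi,\xi\rangle$ is constant, and $\xi$ is the required constant pivot.
\item[(b)] Otherwise $\partial_{z_i}\partial_{z_j}\phi_4=\rho\,\ell_i\ell_j$ with $\ell_1,\ell_2$ independent linear forms. This case must be excluded.
\end{itemize}
\end{enumerate}

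\textbf{Main obstacle: excluding case (b).} First try the integrability conditions. The relations $\partial_{z_2}(\ell_1^2)=\partial_{z_1}(\ell_1\ell_2)$ and $\partial_{z_1}(\ell_2^2)=\partial_{z_2}(\ell_1\ell_2)$ give
$$2\ell_1\,\partial_{z_2}\ell_1=\ell_1\,\partial_{z_1}\ell_2+\ell_2\,\partial_{z_1}\ell_1,$$
together with the symmetric relation. Compare coefficients of the independent forms $\ell_1,\ell_2$. If $\ell_1,\ell_2$ are independent as forms on the $z$-plane, these equations force $\partial_{z_k}\ell_i=0$. Then $\ell_i$ depend only on $y$, and $\phi_4$ is quadratic in $z$ with coefficients $\rho\,\ell_i(y)\ell_j(y)$.

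In that residual situation, turn to the remaining coefficients of the dilation identity. The first choice is the $\lambda^8$ coefficient, which collects $(i,j)=(4,0)$ and $(1,2)$. The $(4,0)$ term is $\det(H_{\phi_4})$. The $(1,2)$ term is $\det(H_{\phi_5}|_E)$ times the $K_5$-block of $H_{\phi_4}$ evaluated against $H_{\phi_2}$. With the explicit shape $\rho\,\ell\otimes\ell$ on $K_5$ and $\ell$ depending only on $y$, this coefficient should give a nonvanishing expression and hence a contradiction. If it does not, use the $\lambda^9$ coefficient, which comes from $(3,1)$ alone.

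Alternatively, after the change of variables making $\ell=(y_1,y_2)$, look for a non-singular Newton face of $\phi$ and invoke Theorem~\ref{nonsingular_face_pivot} to produce a lightlike pivot directly. I expect this exclusion step to be the hardest part, with the Newton-face route as the fallback.
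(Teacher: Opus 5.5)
Your setup, the $\lambda^{10}$ coefficient giving $\det(C)=0$, the Schur-complement argument for $C\geq0$, the appeal to Lemma~\ref{quadratic_matrix_dichotomy}, and the integrability argument showing that $\ell$ does not depend on $z$ all coincide with the paper's proof. (For the integrability step, the hypothesis you need is simply that $\ell_1,\ell_2$ are linearly independent linear forms on $\mathbb{R}^4$, which the dichotomy lemma supplies. ``Independent on the $z$-plane'' is not the right condition, since the conclusion makes their restrictions to the $z$-plane vanish.)

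The genuine gap is case (b), whose exclusion is the real content of the lemma. You only assert that the $\lambda^8$ coefficient \emph{should} be nonvanishing. You then fall back on the $\lambda^9$ coefficient ``if it does not'', and finally on a Newton face with no weights specified. Nothing is computed, so the argument stops exactly where a contradiction has to be produced.

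Your first choice does close the gap, and the missing computation is short. After a linear change of the $z$-coordinates, take $\ell=y$. Then $\phi_4=\frac{\rho}{2}s^2+\langle a(y),z\rangle+b(y)$ with $s:=\langle y,z\rangle$.

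In the $\lambda^8$ coefficient, the $(1,2)$ term equals $\det(H_{\phi_5}|_E)\,\rho\langle\operatorname{adj}(H_{\phi_2}|_{K_5})y,y\rangle$, which is independent of $z$.

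In $\det(H_{\phi_4})$, the entries of the $yy$, $yz$, $zz$ blocks have $z$-degree at most $2,1,0$ respectively. Hence the component of $z$-degree four is $\det\operatorname{Hess}(\frac{\rho}{2}s^2)$. Write $\operatorname{Hess}(\frac{\rho}{2}s^2)=\rho(\nabla s\otimes\nabla s+sJ)$ with $J:=\operatorname{Hess}(s)$. Since $\det(J)=1$ and $\langle J^{-1}\nabla s,\nabla s\rangle=2s$, the matrix determinant lemma gives $3\rho^4s^4\neq0$. This contradicts the vanishing of the $\lambda^8$ coefficient.

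The paper argues instead with weights $(2,2,3,3)$ on $(y,z)$. The initial form is then $\phi_5(y)+\frac{\rho}{2}s^2$, its Hessian determinant is $\rho^3s^2(3\rho s^2-20\phi_5(y))\neq0$, and Lemma~\ref{weighted_hessian_face} contradicts the constancy of $\det(\operatorname{Hess}(\phi))$.

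The two arguments are essentially the same computation: $3\rho^4s^4$ is the leading term of the paper's determinant. Your route needs only the homogeneous dilation identity and a $z$-degree count. Also, $(2,2,3,3)$ is exactly the weight vector your Newton-face fallback would have had to find.
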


\begin{proof}
Let {$K_5:=\mathcal{K}(H_{\phi_5})$}, and take an ordered linear coordinate system {$x=(u,z)$} adapted to a direct sum decomposition {$V=E_5\oplus K_5$}, where the vectors {$u,z$} belong to $\mathbb{R}^2$. We also set {$C:=H_{\phi_4}|_{K_5}$}.

The proof of Lemma~\ref{lorentz_parity} shows that {$H_{\phi_5}$} has at most one eigenvalue of either sign. Since the rank of {$H_{\phi_5}$} at a generic point is two, the restriction {$H_{\phi_5}|_{E_5}$} has signature $(1,1)$ at a generic point. Along the ray $\{\lambda x:\lambda\geq0\}$, the Schur complement on {$K_5$} is asymptotically $\lambda^2C+O(\lambda)$. Since the restriction {$H_{\phi_5}|_{E_5}$} already has index one, we immediately obtain that $C\geq0$.

The coefficient of $\lambda^{10}$ in
$$
\det({H_{\phi_2}}+\lambda^2{H_{\phi_4}(x)}+\lambda^3{H_{\phi_5}(x)})
$$
is a nonzero constant multiple of {$\det(H_{\phi_5}(x)|_{E_5})\det(C(x))$}. Therefore, we obtain that $\det(C)=0$.

By Lemma~\ref{quadratic_matrix_dichotomy}, either {$\mathcal{K}(C)$} is non-trivial or {$C=\rho\,\ell\otimes\ell$}, where the constant $\rho$ is positive and the two components of the {$\ell$} are independent linear forms. In the first case, any choice of a nonzero vector in {$\mathcal{K}(C)$} is a constant pivot. 

Suppose that the second case above occurs. The Hessian integrability identities in the variables {$z_1$} and {$z_2$}, together with the independence of the two components {$\ell_1$} and {$\ell_2$} of {$\ell$}, readily show that {$\ell$} is independent of {$z$}. After a linear change of variables, we may therefore assume that {$\ell\equiv u$}. Two integrations with respect to {$z=(z_1,z_2)$} then give that the expression $${\phi_4-\frac{\rho}{2}\langle u,z\rangle^2}$$ is affine in {$z$}.

Now, we give the variables $u_1,u_2$ weight two and the variables {$z_1,z_2$} weight three. {Thus, for $\omega:=(2,2,3,3)$, the highest weight component of $\phi$ is}
$$
{\operatorname{in}_{\omega}(\phi)=\phi_5(u)+\frac{\rho}{2}\langle u,z\rangle^2,}
$$
because, by homogeneity, the remaining part of $\phi_4$ which is linear in {$z$} has weight nine, while the component independent of {$z$} has weight eight. Every term in $\phi$ of degree at most 2 has weight at most 6.
A direct block matrix computation gives
$$
{\det(\operatorname{Hess}(\operatorname{in}_{\omega}(\phi)))}
={\rho^3\langle u,z\rangle^2(3\rho\langle u,z\rangle^2-\langle H_{\phi_5}(u)u,u\rangle).}
$$
Euler's identity on homogeneous functions yields {$\langle H_{\phi_5}(u)u,u\rangle=20\phi_5(u)$}. Thus, the above determinant {$\det(\operatorname{Hess}(\operatorname{in}_{\omega}(\phi)))$} is nonzero. But Lemma~\ref{weighted_hessian_face} now contradicts the constancy of $\det(\operatorname{Hess}(\phi))$. Therefore, the second case actually cannot occur.
\end{proof}

We now treat essential rank one. Here the transverse quartic Hessian is positive semi-definite and singular, so Lemma~\ref{convex_cylinder} supplies the required direction.

\begin{lemma}\label{quintic_rank_one_pivot}
Let $\phi\in\mathbb{R}[x_1,\dots,x_4]$ be a unimodular Hessian potential of index $1$ of the form
$$
\phi=\phi_5+\phi_4+\phi_2+\phi_1+\phi_0.
$$
If {$r_{\mathrm{ess}}(\phi_5)=1$}, then the polynomial function $\phi$ admits a constant pivot.
\end{lemma}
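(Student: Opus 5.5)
Since $r_{\mathrm{ess}}(\phi_5)=1$, after a linear change of coordinates write $x=(w,z)$ with $w\in\mathbb{R}$, $z\in\mathbb{R}^3$, and $\phi_5=\alpha w^5$ with $\alpha\neq0$. Then $K_5=\mathcal{K}(H_{\phi_5})$ is the three-dimensional space spanned by $\partial_{z_1},\partial_{z_2},\partial_{z_3}$, and $H_{\phi_5}(x)$ has a single nonzero entry $20\alpha w^3$ in the $ww$-slot. Put $C:=H_{\phi_4}|_{K_5}$, the $3\times3$ matrix of quadratic forms $\partial_{z_i}\partial_{z_j}\phi_4$. The aim is a nonzero constant $\xi\in K_5$ with $C(x)\xi=0$ for all $x$. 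Such a $\xi$ suffices, since then $D_\xi^2\phi=\langle H_{\phi_2}\xi,\xi\rangle$ is constant.

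\emph{Step 1: $C\geq0$.} Look along a ray $\lambda x$ with $w\neq0$ and choose the sign of $x$ so that $20\alpha w^3<0$; this is possible because $w^3$ is odd. The single negative eigenvalue of the Lorentzian matrix $H_\phi(\lambda x)$ is then used up by the $ww$-entry, which is of order $\lambda^3$. The Schur complement on $K_5$ is $\lambda^2C(x)+O(\lambda)$, and it must therefore be positive semi-definite in the limit. This gives $C(x)\geq0$ on the half-space where $\alpha w^3<0$. Since $C$ is even in $x$, the same holds at $-x$, so $C\geq0$ on a dense set and hence everywhere. This is the argument of Lemma~\ref{quintic_rank_two_pivot}, but here it yields semidefiniteness at every point.

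\emph{Step 2: $\det C\equiv0$.} Expand $\det(H_{\phi_2}+\lambda^2H_{\phi_4}(x)+\lambda^3H_{\phi_5}(x))$. The top power is $\lambda^{3+6}=\lambda^9$, and its coefficient is $20\alpha w^3\det C(x)$, up to the cross terms. The $(w,z)$ entries of $H_{\phi_4}$ contribute only order $\lambda^2$ off-diagonal terms, and these sit in lower powers. Because the determinant is constant, $w^3\det C\equiv0$, so $\det C\equiv0$.

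\emph{Step 3: a common kernel.} For fixed $w$, the function $f(z):=\phi_4(w,z)$ is a polynomial in $z$ with $\operatorname{Hess}_z f=C\geq0$ and $\det=0$. Lemma~\ref{convex_cylinder} gives a nonzero $v=v(w)\in\mathbb{R}^3$ with $C(w,z)v=0$ for all $z$.

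To remove the dependence on $w$, use homogeneity of $\phi_4$. Expanded in $w$, the kernel condition must hold for each homogeneous piece of $f$. Take $w=1$, so that $v=v(1)$ is the vector Lemma~\ref{convex_cylinder} produces for $\phi_4(1,z)$. Then $C(1,z)v=0$ for all $z$. Scaling $(1,z)\mapsto(w,wz)$ gives $C(w,wz)=w^2C(1,z)$, so $C(x)v=0$ for every $x$ with $w\neq0$, and by density for all $x$.

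Then $\xi:=(0,v)\in K_5$ satisfies $H_{\phi_5}\xi=0$ and $\langle H_{\phi_4}(x)\xi,\xi\rangle=0$. Hence $D_\xi^2\phi$ is constant, and $\xi$ is a constant pivot.

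\emph{Main obstacle.} The delicate step is Step 1: keeping careful track of the Schur complement asymptotics, given that the off-diagonal block $H_{\phi_4}|_{(w,z)}$ is also of order $\lambda^2$. Its contribution after dividing by the $\lambda^3$ pivot is $O(\lambda)$, so it is harmless. The leading coefficient in Step 2 must be checked in the same way.
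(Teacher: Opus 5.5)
Your proof is correct and follows the paper's argument essentially step for step. You obtain $C\geq0$ from the transverse Schur complement after arranging the $\lambda^3$ pivot entry to be negative; you flip $x$ and use that $C$ is even, where the paper flips the sign of $\lambda$, which is the same thing. You then get $\det C\equiv0$ from the $\lambda^9$ coefficient $20\alpha w^3\det C$, apply Lemma~\ref{convex_cylinder} to $\phi_4(1,-)$, and spread the kernel vector to all $x$ by homogeneity and continuity.

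The only point worth tightening is the bookkeeping in Step~2. The clean reason no other term reaches $\lambda^9$ is that $H_{\phi_5}$ has a single nonzero entry. Any term of that degree must therefore use the $ww$-entry of $H_{\phi_5}$ together with a top-degree permutation term of the $z$-block of $H_{\phi_4}$. The paper's version of this is that the only other contribution involves three factors from the rank-one matrix $H_{\phi_5}$ and so vanishes.
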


\begin{proof}
By the hypothesis, we may choose a leading variable $t$ and complementary variables $z\in\mathbb{R}^3$, so that {we have} $\phi_5=\alpha t^5$ for some $\alpha\in\mathbb{R}^\times$. For every fixed $t\in\mathbb{R}$, denote by $C(t,z)$ the Hessian matrix of the polynomial function $\phi_4(t,-)$ in the variables $z$.

For each point with $t\neq0$, we choose the sign of $\lambda$ such that $20\alpha\lambda^3t^3<0$, and let $|\lambda|\rightarrow\infty$. After division of the transverse Schur complement by $\lambda^2$ and passage to the limit $|\lambda|\rightarrow\infty$, we obtain that $C(t,z)\geq0$. Continuity then gives the same conclusion when $t=0$.

The coefficient of $\lambda^9$ in
$$
\det({H_{\phi_2}}+\lambda^2{H_{\phi_4}(t,z)}+\lambda^3{H_{\phi_5}(t,z)})
$$
has one contribution containing one entry from {$H_{\phi_5}$} and three entries from {$H_{\phi_4}$}, whose value is $20\alpha t^3\det(C(t,z))$, whilst its only other contribution contains three entries from the rank-one matrix {$H_{\phi_5}$} and therefore vanishes. Hence, we obtain that $\det(C)=0$.

Lemma~\ref{convex_cylinder}, applied to the polynomial function $\phi_4(1,-)$, gives a nonzero constant vector {$\xi\in\mathbb{R}^3$} such that {$C(1,z)\xi=0$}. Homogeneity yields $C(t,z)=t^2C(1,z/t)$ for $t\neq0$, and continuity gives {$C(0,z)\xi=0$}. Regarding {$\xi$} as a transverse vector in $\mathbb{R}^4$, we also have that {$H_{\phi_5}(t,z)\xi=0$}. It is readily seen that {$\langle H_{\phi_4}(t,z)\xi,\xi\rangle=0$}. 

Thus, the vector {$\xi$} is a constant pivot for $\phi$.
\end{proof}

\subsection{Quartic Potentials}

It remains to consider quartic potentials. We first record the elementary structure of a singular pencil of symmetric 2 by 2 matrices.

\begin{lemma}\label{singular_binary_pencil}
Let $C=C(x)$ be a 2 by 2 symmetric matrix whose entries are $\mathbb{R}$-linear forms in $n$ variables. If $\det(C(x))=0$ for every $x$, then the common kernel of $C(x)$ for $x\in\mathbb{R}^n$ is non-trivial.
\end{lemma}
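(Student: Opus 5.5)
Write
$$
C=\begin{pmatrix}a&b\\ b&c\end{pmatrix},
$$
where $a,b,c$ are real linear forms in $x$. The plan is to mirror the argument of Lemma~\ref{quadratic_matrix_dichotomy}, replacing positivity by a factorization argument in degree one.

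The hypothesis says that $ac=b^2$ identically as polynomials, a product of linear forms.

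\textbf{Degenerate case.} Suppose first that $a=0$ or $c=0$. Then $b^2=0$, so $b=0$.
If $a=0$, the constant vector $e_1=(1,0)^\top$ satisfies $C(x)e_1=0$ for all $x$.
If $c=0$, the vector $e_2=(0,1)^\top$ does the same.
In either case the common kernel is non-trivial.

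\textbf{Generic case.} Suppose now that $a,c\neq0$. Then $b\neq0$ as well, since $ac$ is a nonzero polynomial in the integral domain $\mathbb{R}[x]$.
Nonzero linear forms are irreducible, so unique factorization applied to $ac=b^2$ forces $a=\alpha b$ and $c=\gamma b$ for nonzero real constants $\alpha,\gamma$.
The factor $b^2$ consists of the irreducible $b$ taken twice, up to units, so both $a$ and $c$ are scalar multiples of $b$.
Substituting gives $\alpha\gamma b^2=b^2$, hence $\alpha\gamma=1$, and therefore
$$
C(x)=b(x)\begin{pmatrix}\alpha&1\\ 1&\gamma\end{pmatrix}.
$$
The constant matrix $\begin{pmatrix}\alpha&1\\1&\gamma\end{pmatrix}$ has determinant $\alpha\gamma-1=0$. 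So it has a nonzero kernel vector, for instance $(1,-\alpha)^\top$, and this vector lies in $\ker C(x)$ for every $x$.

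\textbf{Main point.} The only step needing care is the factorization. It can be done without positivity here: since $a$ divides $b^2$ and $a$ is irreducible, $a$ divides $b$, and equal degrees then make them proportional.

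This also gives real constants throughout, because the factorization takes place over $\mathbb{R}[x]$ and linear forms are irreducible over any field.
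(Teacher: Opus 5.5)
Your proof is correct, and it takes a different route from the paper's. The paper does not factor anything. It uses only that $\{C(x):x\in\mathbb{R}^n\}$ is a linear subspace of singular symmetric matrices. It picks a nonzero (hence rank-one) member and normalises it by a congruence to $\operatorname{diag}(a,0)$. It then reads off from $\det(\operatorname{diag}(a,0)+sD)=asD_{22}+s^2(D_{11}D_{22}-D_{12}^2)\equiv0$ that every other member $D$ satisfies $D_{22}=D_{12}=0$, so $e_2$ lies in the common kernel. You instead treat $ac=b^2$ as an identity in the UFD $\mathbb{R}[x_1,\dots,x_n]$, which is legitimate because a real polynomial vanishing on all of $\mathbb{R}^n$ is zero. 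You then use the irreducibility of nonzero linear forms, in exact parallel with Lemma~\ref{quadratic_matrix_dichotomy}.

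Your route gives an explicit normal form $C(x)=b(x)M_0$ with $M_0$ a constant singular matrix (or $\operatorname{diag}(0,c)$, $\operatorname{diag}(a,0)$ in the degenerate cases). It also gives an explicit kernel vector $(1,-\alpha)^\top$ without any change of basis. The paper's route is pure linear algebra on the pencil and needs no factorization theory. It is the $2\times2$ instance of the ``normalise a maximal-rank member'' technique reused in Lemma~\ref{space_of_singular_symmetric_matrices}.

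Both arguments in fact prove the stronger statement that the span of the $C(x)$ is at most one-dimensional. Both also rely essentially on degree one. For quadratic entries the image is no longer a linear space, so the pencil step is unavailable. Correspondingly, your factorization acquires the genuine extra case $\rho\,\ell\otimes\ell$, which has trivial common kernel.
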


\begin{proof}
If $C$ vanishes identically, the assertion is immediate. Otherwise, after a linear change of basis in $\mathbb{R}^2$, one nonzero matrix in the linear space $\{C(x):x\in\mathbb{R}^n\}$ is $\operatorname{diag}(a,0)$ for some $a\neq0$. For every other matrix $$
D=\begin{pmatrix}
D_{11} & D_{12}\\
D_{21} & D_{22}
\end{pmatrix}
$$ in $\{C(x):x\in\mathbb{R}^n\}$, we have that $\det(\operatorname{diag}(a,0)+sD)=0$ for every $s\in\mathbb{R}$, from which we obtain the equalities $D_{22}=D_{12}=0$. Thus, the common kernel of $C(x)$ for $x\in\mathbb{R}^n$ contains the second standard basis vector $e_2\in\mathbb{R}^2$.
\end{proof}

We now apply this pencil lemma to the restriction of the cubic Hessian to the kernel of the quartic Hessian.

\begin{lemma}\label{quartic_pivot}
Let $\phi\in\mathbb{R}[x_1,\dots,x_4]$ be a unimodular Hessian potential of index $1$ of the form
$$
\phi=\phi_4+\phi_3+\phi_2+\phi_1+\phi_0.
$$
Then, the polynomial function $\phi$ admits a constant pivot.
\end{lemma}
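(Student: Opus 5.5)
The plan is to mimic the proof of Lemma~\ref{high_degree_pivot}, using the common kernel of the top layer. Put $r:=r_{\mathrm{ess}}(\phi_4)\in\{1,2,3\}$ (Lemma~\ref{essential_space}), $K_4:=\mathcal{K}(H_{\phi_4})$, and choose a complement $E_4$. As in the proof of Lemma~\ref{high_degree_pivot}, the homogeneous Hesse theorem shows that $H_{\phi_4}|_{E_4}$ is generically non-singular and that $H_{\phi_4}$ has generic rank exactly $r$. Dilation $x\mapsto\lambda x$ gives the identity
$$\det(H_{\phi_2}+\lambda H_{\phi_3}(x)+\lambda^2H_{\phi_4}(x))=\det(H_{\phi_2}).$$
I will read off the coefficient of $\lambda^{r+4}$. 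A term with $i$ entries from $H_{\phi_3}$, $j$ from $H_{\phi_4}$ and $k$ from $H_{\phi_2}$ must satisfy $i+2j=r+4$ and $i+j+k=4$. Since minors of $H_{\phi_4}$ of size larger than $r$ vanish, we have $j\leq r$. If $j\leq r-1$, then $i\geq 6-r$ and so $i+j\geq5$, which is impossible. Hence $j=r$, $i=4-r$, $k=0$. In a basis adapted to $E_4\oplus K_4$, $H_{\phi_4}$ is supported on the $E_4$ block, so this coefficient equals $\det(H_{\phi_4}(x)|_{E_4})\det(H_{\phi_3}(x)|_{K_4})$. Generic non-singularity of the first factor then forces $\det(H_{\phi_3}|_{K_4})\equiv0$.

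The cases $r=3$ and $r=2$ follow immediately, because any $\xi\in K_4$ with $\langle H_{\phi_3}(x)\xi,\xi\rangle\equiv0$ is a pivot: then $D_\xi^2\phi=\langle H_{\phi_2}\xi,\xi\rangle$ is constant.
\begin{itemize}
\item If $r=3$, then $K_4=\mathbb{R}\xi$ and the vanishing $1\times1$ determinant is exactly this condition.
\item If $r=2$, then $H_{\phi_3}|_{K_4}$ is a $2\times2$ symmetric matrix of linear forms with vanishing determinant. Lemma~\ref{singular_binary_pencil} gives a common kernel vector $\xi\in K_4$, which is a pivot.
\end{itemize}

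The main obstacle is the case $r=1$. Write $x=(t,z)$ with $\phi_4=\alpha t^4$, and
$$\phi_3=a t^3+t^2\ell(z)+t\,q(z)+c(z),$$
so that $C:=H_{\phi_3}|_{K_4}=t\operatorname{Hess}(q)+\operatorname{Hess}(c)$, a $3\times3$ pencil in linear forms with $\det C\equiv0$. I would first take the Schur complement on $K_4$ along the ray $\lambda x$. It equals $\lambda C(x)+O(1)$, since the correction from the $t$-block is of order $\lambda^2/\lambda^2$.
\begin{itemize}
\item If $\alpha<0$, the $t$-block is negative, so the complement must be positive definite for both signs of $\lambda$. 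This forces $C\equiv0$, and any vector of $K_4$ is a pivot.
\item If $\alpha>0$, the complement has index one for both signs of $\lambda$. As in Lemma~\ref{lorentz_flat_pencil}, $C(x)$ then has at most one eigenvalue of each sign, so $\operatorname{rank}C\leq2$ everywhere.
\end{itemize}
At $t=0$ this says $\operatorname{Hess}(c)$ has rank at most $2$. The ternary Hesse theorem (Lemma~\ref{small_hessian_kernel} in three variables) then gives a constant kernel $W\neq0$ of $\operatorname{Hess}(c)$.

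It then suffices to find $\xi\in W$ with $q(\xi)=0$, since then $\langle C\xi,\xi\rangle=2t\,q(\xi)\equiv0$. To get this I would exploit the remaining structure.
\begin{itemize}
\item The signature condition on $t\operatorname{Hess}(q)+\operatorname{Hess}(c)$ holds for all $t$. Letting $t\to\pm\infty$ forces $\operatorname{Hess}(q)$ to have rank at most $2$ with signature at most $(1,1)$.
\item I would combine this with the next determinant coefficients, of $\lambda^{4}$ and $\lambda^{3}$, which involve $\alpha$, $\operatorname{Hess}(q)$, $\operatorname{Hess}(c)$ and $H_{\phi_2}$.
\end{itemize}
The goal is to show that $q|_W$ is isotropic somewhere, or that $\operatorname{Hess}(c)$ has a kernel of dimension at least $2$. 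In the latter case a $(1,1)$- or semidefinite-type form $q$ on a plane has a real null vector unless it is definite, and the definite case should be excluded by the signature bound.

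Should this fail, the fallback is the highest-weight method of Theorem~\ref{nonsingular_face_pivot}: choose weights, for example $t$ of weight $2$ and $z$ of weight $3$, adapted to a putative counterexample, and show that the initial form has nonzero Hessian determinant. This would contradict Lemma~\ref{weighted_hessian_face}, exactly as in the rank-two quintic argument of Lemma~\ref{quintic_rank_two_pivot}.
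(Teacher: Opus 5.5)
Your cases $r=3$ and $r=2$ are exactly the paper's argument: the coefficient of $\lambda^{r+4}$ forces $\det(H_{\phi_3}|_{K_4})\equiv0$, and Lemma~\ref{singular_binary_pencil} finishes $r=2$. In the case $r=1$, your reduction to finding $\xi\in W$ with $q(\xi)=0$ is sound. The gap is that you never produce such a $\xi$. The decisive step is only stated as a goal, to be reached by combining unspecified coefficients of $\lambda^4,\lambda^3$, with an unspecified highest-weight fallback.

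Your signature sketch does handle $\dim W\geq2$. For $\alpha<0$ you get $C\equiv0$. For $\alpha>0$, $\operatorname{Hess}(q)$ has at most one eigenvalue of each sign, so it is not definite on a plane inside $W$. But if $\dim W=1$, i.e.\ $c$ is an essential binary cubic and $W=\mathbb{R}w$, you need precisely $q(w)=0$. The bound you extract on $\operatorname{Hess}(q)$ is compatible with $q(w)\neq0$, and nothing else in the proposal addresses this value. So the case $r=1$ is not proved as written.

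The missing idea is that the identity you already derived, $\det\bigl(t\operatorname{Hess}(q)+\operatorname{Hess}(c)(z)\bigr)\equiv0$ as a polynomial in $(t,z)$, suffices once it is expanded in powers of $t$. Neither $H_{\phi_2}$ nor weights are needed. Work in coordinates with $c=c(z_1,z_2)$ and $w=\partial_{z_3}$. The coefficient of $t$ is $\operatorname{tr}\bigl(\operatorname{adj}(\operatorname{Hess}(c))\operatorname{Hess}(q)\bigr)=\det\bigl(\operatorname{Hess}_{z_1,z_2}(c)\bigr)\langle\operatorname{Hess}(q)w,w\rangle$. Its first factor is not identically zero because $\dim W=1$, hence $q(w)=0$. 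Similarly, for $c=\kappa z_1^3$ the coefficient of $t^2$ gives $\det(\operatorname{Hess}(q)|_W)=0$, and for $c=0$ the coefficient of $t^3$ gives $\det(\operatorname{Hess}(q))=0$. So the signature analysis is not even needed.

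The paper performs this same expansion but splits by $\operatorname{rank}(B)$, where $B:=\operatorname{Hess}(q)$, rather than by $\dim W$. The $t^3$-coefficient gives $\det(B)=0$. If $\operatorname{rank}(B)=2$, the $t^2$-coefficient forces $\langle\operatorname{Hess}(c)\xi,\xi\rangle=0$ for $\xi$ spanning $\ker(B)$. If $\operatorname{rank}(B)=1$, the $t$-coefficient makes $\operatorname{Hess}(c)|_{\ker(B)}$ a singular binary pencil, and Lemma~\ref{singular_binary_pencil} applies. If $B=0$, the ternary Hesse theorem applies. The paper's pivot need only satisfy $D_\xi^2c=0$, not lie in $W$.
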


\begin{proof}
{Put $r:=r_{\mathrm{ess}}(\phi_4)$.} Also, let {$K_4:=\mathcal{K}(H_{\phi_4})$}, and choose a complementary direction space {$E_4$}, so that {we have $V=K_4\oplus E_4$}. As before, the restriction {$H_{\phi_4}|_{E_4}$} is non-singular at a generic point. The contribution containing {$r$} entries from {$H_{\phi_4}$} and {$4-r$} entries from {$H_{\phi_3}$} {to}
$$
{[\lambda^{4+r}]\det(H_{\phi_2}+\lambda H_{\phi_3}(x)+\lambda^2H_{\phi_4}(x))}
$$
is a nonzero constant multiple of {$\det(H_{\phi_4}(x)|_{E_4})\det(H_{\phi_3}(x)|_{K_4})$}.
Assume that {$r\leq2$}. Then, the only other possible contribution to {the coefficient above}
contains {$r+1$} entries from {$H_{\phi_4}$}, and hence it vanishes because {$\operatorname{rank}(H_{\phi_4})\leq r$}.
Consequently, the restriction of {$H_{\phi_3}$} to {$K_4$} is singular.

Suppose now that {$r=3$}. Then, we have that {$K_4=\mathbb{R}\xi$} for some nonzero vector {$\xi$}, and comparing with {the coefficient extracted above} gives {$\langle H_{\phi_3}(x)\xi,\xi\rangle=0$}. Since {$H_{\phi_4}(x)\xi=0$} as well, the vector {$\xi$} is a constant pivot.

Suppose next that {$r=2$}. The restriction of {$H_{\phi_3}$} to the two-dimensional space {$K_4$} is a linear pencil of singular symmetric matrices. Lemma~\ref{singular_binary_pencil} immediately gives a constant pivot.

It remains to assume that {$r=1$}. Choose a leading variable $t$ and complementary variables $z\in\mathbb{R}^3$. Write
$$
{\phi_3=\beta t^3+P_1(z)t^2+P_2(z)t+P_3(z),}
$$
where the polynomials {$P_1,P_2,P_3$} are homogeneous of degrees 1,2,3, respectively. We now put {$B:=\operatorname{Hess}(P_2)$} and {$C:=\operatorname{Hess}(P_3)$}. The singularity of the restriction of {$H_{\phi_3}$} to {$K_4$} shows $\det(tB+C(z))=0$. Investigating the coefficient of $t^3$ then gives $\det(B)=0$.

It now suffices to consider the following three subcases:

Suppose first that $\operatorname{rank}(B)=2$, and let {$\xi$} span {$\ker(B)$}. The coefficient of $t^2$ is a nonzero constant multiple of {$\langle C(z)\xi,\xi\rangle$}, and hence this expression vanishes.

Suppose next that $B$ has rank one. The coefficient of $t$ is a nonzero constant multiple of the determinant of the restriction of $C(z)$ to {$\ker(B)$}, and hence this restricted operator is a linear pencil of singular symmetric matrices. Lemma~\ref{singular_binary_pencil} then gives a nonzero vector {$\xi\in\ker(B)$} such that {$\langle C(z)\xi,\xi\rangle=0$}.

Finally, if $B=0$, the ternary homogeneous Hesse theorem immediately gives a nonzero vector in the common kernel of $C(z)$ for $z\in\mathbb{R}^3$. 

Therefore, we have proved that, in every case, there exists a nonzero vector {$\xi\in K_4$} such that {$\langle(tB+C(z))\xi,\xi\rangle=0$}.

This vector {$\xi$} is a constant pivot for $\phi$.
\end{proof}

The preceding lemmas cover all possible degrees and essential ranks, and therefore assemble into the following statement.

\begin{theorem}[Two-Layer Pivot]\label{two_layer_pivot}
Let $\phi\in\mathbb{R}[x_1,\dots,x_4]$ be a unimodular Hessian potential of index $1$ and degree $d\geq4$. Suppose that $\phi$ is of the form
$$
\phi=\phi_d+\phi_{d-1}+\phi_2+\phi_1+\phi_0.
$$
Then, the polynomial function $\phi$ admits a constant pivot.
\end{theorem}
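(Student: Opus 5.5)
The theorem is an assembly of the lemmas proved in this section, so I will argue by case analysis on the degree $d$ and, where needed, on the essential rank $r_{\mathrm{ess}}(\phi_d)$.

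Throughout, Lemma~\ref{essential_space} applies since $d\geq4\geq3$, so $r_{\mathrm{ess}}(\phi_d)\in\{1,2,3\}$ is well defined.

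\textbf{Degree $d\geq6$.} Lemma~\ref{high_degree_pivot} applies verbatim to the two-layer form $\phi=\phi_d+\phi_{d-1}+\phi_2+\phi_1+\phi_0$ and yields a constant pivot. That lemma already splits internally into two cases:
\begin{enumerate}
    \item[1.] the collision case $d=6$, $r_{\mathrm{ess}}(\phi_6)=3$, handled through the $\lambda^{15}$ coefficient;
    \item[2.] all remaining cases, handled through Lemma~\ref{dilation_separation}.
\end{enumerate}
So nothing further is needed here.

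\textbf{Degree $d=5$.} Since $d$ is odd, Lemma~\ref{lorentz_parity} gives $r_{\mathrm{ess}}(\phi_5)\leq2$. The two remaining values are covered as follows:
\begin{enumerate}
    \item[1.] if $r_{\mathrm{ess}}(\phi_5)=2$, Lemma~\ref{quintic_rank_two_pivot} gives a pivot;
    \item[2.] if $r_{\mathrm{ess}}(\phi_5)=1$, Lemma~\ref{quintic_rank_one_pivot} gives a pivot.
\end{enumerate}

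\textbf{Degree $d=4$.} Here $d-1=3$, so the hypothesis reads $\phi=\phi_4+\phi_3+\phi_2+\phi_1+\phi_0$. This is exactly the situation of Lemma~\ref{quartic_pivot}, which treats all essential ranks $1,2,3$.

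The only point needing care is that the case split is exhaustive. This is guaranteed by Lemma~\ref{essential_space}, which rules out essential rank $0$ (because $\phi_d\neq0$) and essential rank $4$ (because $\phi_d$ has singular Hessian). I do not expect any step to be an obstacle, since all the substantive work is contained in the cited lemmas.
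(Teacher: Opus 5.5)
Your proposal is correct and follows the paper's own proof exactly. The paper uses the same degree split: $d\geq6$ is handled by Lemma~\ref{high_degree_pivot}; $d=5$ by Lemma~\ref{lorentz_parity} followed by Lemmas~\ref{quintic_rank_two_pivot} and~\ref{quintic_rank_one_pivot}; and $d=4$ by Lemma~\ref{quartic_pivot}.
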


\begin{proof}
If $d\geq6$, the assertion follows from Lemma~\ref{high_degree_pivot}. Suppose that $d=5$. Lemma~\ref{lorentz_parity} gives {$r_{\mathrm{ess}}(\phi_5)\leq2$}, and hence Lemma~\ref{quintic_rank_two_pivot} or Lemma~\ref{quintic_rank_one_pivot} applies. Finally, if $d=4$, the assertion follows from Lemma~\ref{quartic_pivot}. 

The desired theorem is therefore settled.
\end{proof}

Combining this theorem with Proposition~\ref{constant_pivot_inversion} proves the Hessian conjecture for the stated two-layer potentials. We now turn to a degree-free criterion for the existence of a pivot.

\section{Complexifications and the CR Criterion}

The purpose of this section is to detect constant pivots by linear algebra over $\mathbb{C}$ and then return to the real Lorentzian setting. The complex criterion is algebraic; the main point is that every complex pivot forces a real one.

\subsection{The Complex Pivot Criterion}

Throughout this section, we complexify $\mathbb{R}^n$ to $\mathbb{C}^n=\mathbb{R}^n\otimes\mathbb{C}$, and denote by $\langle-,-\rangle_\mathbb{C}$ the standard complex bilinear pairing on $\mathbb{C}^n$.

\begin{definition}[Complex Constant Pivot]\label{Complex_constant_pivot}
For any $\phi\in\mathbb{R}[x_1,\dots,x_n]$, a complex constant pivot for $\phi$ is a nonzero constant vector {$\zeta\in\mathbb{C}^n$} for which {$\langle \operatorname{Hess}(\phi)\zeta,\zeta\rangle_\mathbb{C}$} is a constant.
\end{definition}

From now on, we write {$\zeta=(\zeta_1,\zeta_2,\zeta_3,\zeta_4)\in V_{\mathbb{C}}$}.

Complexification turns the pivot condition into the existence of a common zero of quadratic forms. The nonconstant part of {$D_\zeta^2\phi(x)$}, regarded as a quadratic form in {$\zeta$}, contains precisely the required information.

\begin{definition}[Hesse system]\label{Hesse_system}
For a polynomial function $\phi\in\mathbb{R}[x_1,\dots,x_4]$ and a point $c\in{V_{\mathbb{C}}}$, we regard {$D_\zeta^2\phi(c)$} as a quadratic form in {$\zeta$}. The linear system
$$
\mathcal{H}_\phi:=\operatorname{span}_{\mathbb{C}}\{{D_\zeta^2\phi(a)-D_\zeta^2\phi(b)}:a,b\in{V_{\mathbb{C}}}\}\subseteq\operatorname{Sym}^2({V_{\mathbb{C}}^*})
$$
is said to be the Hesse system of $\phi$, and it is equipped with the natural multiplication
$$
\mu_\phi\colon\operatorname{Sym}^3({V_{\mathbb{C}}^*})\otimes\mathcal{H}_\phi\rightarrow\operatorname{Sym}^5({V_{\mathbb{C}}^*})
$$
given by $\mu_\phi(C\otimes q):=Cq$.
\end{definition}

For any $\phi\in\mathbb{R}[x_1,\dots,x_4]$, there exist unique $q_0,\dots,q_N\in\operatorname{Sym}^2({V_{\mathbb{C}}^*})$ such that 
$$
{D_\zeta^2\phi(x)}=q_0(\zeta)+\sum_{i=1}^Nm_i(x)q_i(\zeta),
$$
where $m_1,\dots,m_N$ are the distinct nonconstant monomials which occur, and it is readily seen that the Hesse system is nothing but $\operatorname{span}_\mathbb{C}\{q_1,\dots,q_N\}$.

Thus, a nonzero vector {$\zeta$} is a complex constant pivot precisely when all the quadrics in $\mathcal{H}_\phi$ vanish at {$\zeta$}. The next proposition expresses this condition as the failure of a single multiplication mapping to be surjective.

\begin{proposition}\label{Complex_pivot_criterion}
A polynomial function $\phi\in\mathbb{R}[x_1,\dots,x_4]$ admits a complex constant pivot if and only if $\operatorname{rank}(\mu_\phi)\leq55$.
\end{proposition}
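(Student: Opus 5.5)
The plan is to read the number $55$ as $\dim_{\mathbb{C}}\operatorname{Sym}^5(V_{\mathbb{C}}^*)-1$. Since $\binom{5+3}{3}=56$, the condition $\operatorname{rank}(\mu_\phi)\leq55$ says exactly that $\mu_\phi$ is not surjective. The image of $\mu_\phi$ is the degree-five part $I_5$ of the homogeneous ideal $I\subseteq S:=\operatorname{Sym}^\bullet(V_{\mathbb{C}}^*)$ generated by the quadrics in $\mathcal{H}_\phi$. By the remark after Definition~\ref{Hesse_system}, a nonzero $\zeta$ is a complex constant pivot precisely when every quadric of $\mathcal{H}_\phi$ vanishes at $\zeta$. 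Here $D_\zeta^2\phi$ is constant as a polynomial in $x$, so no issue arises between real and complex points. Hence the proposition is equivalent to the following statement: the base locus $Z:=V(\mathcal{H}_\phi)\subseteq\mathbb{P}^3$ is nonempty if and only if $I_5\neq S_5$.

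The forward direction is immediate. Suppose $\zeta\neq0$ is a common zero. Then every product $Cq$ with $C\in S_3$ and $q\in\mathcal{H}_\phi$ vanishes at $\zeta$. Evaluation at $\zeta$ is a nonzero linear functional on $S_5$, since for example $\ell^5$ with $\ell(\zeta)\neq0$ does not vanish there. So $I_5$ lies in a hyperplane, and $\operatorname{rank}(\mu_\phi)\leq55$. This direction includes the degenerate case $\mathcal{H}_\phi=0$, where $\mu_\phi=0$ and every nonzero $\zeta$ is a pivot.

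For the converse, I will assume $Z=\emptyset$ and show $I_5=S_5$. The main step is to extract from $\mathcal{H}_\phi$ four quadrics $g_1,\dots,g_4$ forming a regular sequence in $S=\mathbb{C}[\zeta_1,\dots,\zeta_4]$. I will choose them inductively as general members of $\mathcal{H}_\phi$. Suppose $V(g_1,\dots,g_k)$ has pure dimension $2-k$ in $\mathbb{P}^3$ (with dimension $-1$ meaning empty). No irreducible component of it lies in $Z=\emptyset$, so for each of the finitely many components the members of $\mathcal{H}_\phi$ vanishing identically on it form a proper linear subspace. A general $g_{k+1}$ avoids all these subspaces, and therefore cuts every component in codimension exactly one. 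After four steps $V(g_1,\dots,g_4)=\emptyset$. Four homogeneous elements of the four-variable polynomial ring with only the trivial common zero form a regular sequence, since $S$ is Cohen--Macaulay and $\dim S/(g_1,\dots,g_4)=0$.

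Then $J:=(g_1,\dots,g_4)\subseteq I$ is a complete intersection of four quadrics, so the Hilbert series of $S/J$ is $(1+t)^4=1+4t+6t^2+4t^3+t^4$. In particular $(S/J)_5=0$, which gives $J_5=S_5$. Since every element of $J_5$ has the form $\sum_i C_ig_i$ with $C_i\in S_3$, it lies in the image of $\mu_\phi$. Hence $\mu_\phi$ is surjective and $\operatorname{rank}(\mu_\phi)=56$, completing the equivalence.

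I expect the main obstacle to be the general-member argument producing the regular sequence, that is, checking carefully that the dimension drops at each step. If that becomes delicate, an alternative is Macaulay's theorem that the socle degree of an Artinian complete intersection of type $(2,2,2,2)$ is $4$. That theorem still requires the regular sequence, so the inductive argument must be carried out in either case.
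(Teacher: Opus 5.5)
Your proposal is correct and follows the paper's proof. The forward direction uses evaluation at a common zero. The converse takes four general members of $\mathcal{H}_\phi$ as a regular sequence, uses the Hilbert series $(1+t)^4$, and concludes $S_5=\sum_i S_3q_i\subseteq\operatorname{im}(\mu_\phi)$. Your inductive general-member argument only spells out what the paper asserts in one line. There is one indexing slip: the inductive hypothesis should say that $V(g_1,\dots,g_k)$ has pure dimension $3-k$, so that $k=0$ gives $\mathbb{P}^3$ and $k=4$ gives the empty set, not $2-k$.
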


\begin{proof}
Put {$S_k:=\operatorname{Sym}^k(V_{\mathbb{C}}^*)$} and write
$$
{S:=\operatorname{Sym}(V_{\mathbb{C}}^*)=\mathbb{C}[\zeta_1,\zeta_2,\zeta_3,\zeta_4]}=\bigoplus_{k=0}^\infty S_k.
$$
A vector {$\zeta\in V_{\mathbb{C}}-\{0\}$} is a complex constant pivot if and only if {$[\zeta]\in\mathbb{P}(V_{\mathbb{C}})$} is a common zero of $\mathcal{H}_\phi$. If such a common zero {$[\zeta]$} exists, every element of {$\operatorname{im}(\mu_\phi)$} vanishes at {$\zeta$}, and hence the mapping $\mu_\phi$ is not surjective.

Conversely, we assume that $\mathcal{H}_\phi$ has no common zero in {$\mathbb{P}(V_{\mathbb{C}})$}. Then, four general elements $q_1,\dots,q_4\in\mathcal{H}_\phi$ form a regular sequence in $S$. The quotient ring $S/(q_1,\dots,q_4)$ has Hilbert series $$\frac{(1-t^2)^4}{(1-t)^4}=(1+t)^4,$$ and hence its homogeneous component of degree five vanishes. Therefore, we immediately obtain that
$$
S_5=S_3q_1+S_3q_2+S_3q_3+S_3q_4,
$$
whose right-hand side is contained in {$\operatorname{im}(\mu_\phi)$}, by the very definition. Thus, the mapping $\mu_\phi$ is surjective. Since $S_5$ has complex dimension $56$, the proof is completed.
\end{proof}

\subsection{The Complex-to-Real Principle}

For a real potential, the pivot obtained in this way may initially have nonreal coordinates. We now show that the Lorentzian condition nevertheless produces a real pivot.

We begin this section with the identities obtained by taking the real and imaginary parts of a complex pivot.

\begin{lemma}\label{complex_screen}
Let $\phi\in\mathbb{R}[x_1,\dots,x_4]$, and let {$\zeta=u+\sqrt{-1}v\in V_{\mathbb{C}}$} be a complex constant pivot for $\phi$. Then, either $\phi$ admits a real constant pivot, or the real vectors $u,v$ are linearly independent and it holds that
$$
D_uD_v{\phi_{\geq3}}=D_u^2{\phi_{\geq3}}-D_v^2{\phi_{\geq3}}=0.
$$
Moreover, if $f:=D_u^2{\phi_{\geq3}}$, then {we have that} $D_uf=D_vf=0$.
\end{lemma}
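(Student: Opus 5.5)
Expand the complex pivot condition into real and imaginary parts. By Definition~\ref{Complex_constant_pivot}, the quantity $D_\zeta^2\phi=\langle\operatorname{Hess}(\phi)\zeta,\zeta\rangle_{\mathbb{C}}$ is constant. Since $\phi$ has real coefficients, bilinearity gives
$$
D_\zeta^2\phi=\bigl(D_u^2\phi-D_v^2\phi\bigr)+2\sqrt{-1}\,D_uD_v\phi .
$$
Both real-valued polynomials $D_u^2\phi-D_v^2\phi$ and $D_uD_v\phi$ on $\mathbb{R}^4$ are therefore constant.

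The terms coming from $\phi_2+\phi_1+\phi_0$ contribute only constants to second derivatives, since $\phi_2$ has constant Hessian and $\phi_1,\phi_0$ have none. Conversely, second derivatives of $\phi_{\geq3}$ are sums of homogeneous pieces of degree at least one. So a second-order directional expression of $\phi$ is constant exactly when the corresponding expression of $\phi_{\geq3}$ vanishes identically. This yields $D_uD_v\phi_{\geq3}=0$ and $D_u^2\phi_{\geq3}-D_v^2\phi_{\geq3}=0$.

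Next treat the degenerate case. Suppose $u,v$ are linearly dependent. Since $\zeta\neq0$, we may write $\zeta=w\lambda$ for a nonzero real vector $w$ and some $\lambda\in\mathbb{C}^\times$, obtained by taking whichever of $u,v$ is nonzero. Then $D_\zeta^2\phi=\lambda^2D_w^2\phi$ is constant, so $D_w^2\phi$ is constant and $w$ is a real constant pivot. The same conclusion holds if $u=0$ or $v=0$. So either a real pivot exists, or $u,v$ are independent and both identities hold.

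For the final claim, put $f:=D_u^2\phi_{\geq3}$, which also equals $D_v^2\phi_{\geq3}$. Using commutativity of constant-coefficient derivatives and the two identities, compute
$$
D_uf=D_u^2\phi_{\geq3}\text{ differentiated once more along }u
=D_v\bigl(D_uD_v\phi_{\geq3}\bigr)=0,
$$
and likewise $D_vf=D_u(D_uD_v\phi_{\geq3})=0$. Explicitly, $D_u f = D_u D_v^2\phi_{\geq3} = D_v(D_uD_v\phi_{\geq3})=0$, and $D_vf=D_vD_u^2\phi_{\geq3}=D_u(D_uD_v\phi_{\geq3})=0$.

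No step is a genuine obstacle. The only point needing care is the justification that "constant" for $\phi$ translates into "identically zero" for $\phi_{\geq3}$. This follows from the homogeneous decomposition: $D_\xi D_\eta\phi_j$ is homogeneous of degree $j-2\geq1$ for $j\geq3$, and such terms of distinct degrees cannot cancel into a nonzero constant.
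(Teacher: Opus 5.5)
Your proposal is correct and follows essentially the same route as the paper: you split $D_\zeta^2\phi$ into real and imaginary parts, pass to $\phi_{\geq3}$ via the homogeneous decomposition, dispose of dependent $u,v$ by producing a real pivot, and differentiate $D_uD_v\phi_{\geq3}=0$ along $v$ and $u$ to get the final identity. The only blemish is your first loosely worded display for $D_uf$, which literally describes $D_u^3\phi_{\geq3}$; the explicit computation that follows, $D_uf=D_uD_v^2\phi_{\geq3}=D_v(D_uD_v\phi_{\geq3})=0$, is the correct one.
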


\begin{proof}
Since {$D_\zeta^2\phi$} is constant, comparison of the homogeneous pieces gives {$D_\zeta^2\phi_{\geq3}=0$}.

Suppose first that $u$ and $v$ are linearly dependent. Since {$\zeta\neq0$}, there exist a nonzero vector $\xi\in\mathbb{R}^4$ and a scalar $c\in\mathbb{C}^{\times}$ such that {$\zeta=c\xi$}. We then obtain that $D_\xi^2{\phi_{\geq3}}=0$.
Thus, the vector $\xi$ is a real constant pivot for $\phi$.

It remains to assume that $u$ and $v$ are linearly independent. Taking the real and imaginary parts of {$D_\zeta^2\phi_{\geq3}=0$} gives the first two identities. Differentiating the first identity in the directions $v$ and $u$, respectively, gives the final identity.
\end{proof}

If the two real parts do not already yield a pivot, they determine a two-dimensional screen. The following cylinder lemma will be applied to its scalar Hessian profile.

\begin{lemma}\label{flat_graph_cylinder}
Let $h\colon\mathbb{R}^2\rightarrow\mathbb{R}$ be a smooth function which is bounded from below. If
$$
\det(\operatorname{Hess}(h))=0,
$$
then there exists a nonzero constant vector $v\in\mathbb{R}^2$ such that $D_vh=0$. In particular, after a real linear change of coordinates, the function $h$ depends on at most one variable.
\end{lemma}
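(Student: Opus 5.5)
The plan is to reduce the statement to the classical global rigidity of flat graphs, namely the theorem of Hartman and Nirenberg. That theorem says the following. Let $h\colon\mathbb{R}^2\rightarrow\mathbb{R}$ be of class $C^2$ with $\det(\operatorname{Hess}(h))\equiv0$ on the whole plane. Then the graph of $h$ is a cylinder: there exist a nonzero vector $v\in\mathbb{R}^2$, a constant $c\in\mathbb{R}$ and a $C^2$ function $g$ of one variable such that
$$
h(x)=g(\langle w,x\rangle)+c\,\langle v,x\rangle/|v|^2,
$$
where $w\perp v$. Equivalently, $h(x+sv)=h(x)+cs$ for all $x\in\mathbb{R}^2$ and $s\in\mathbb{R}$. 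The conclusion of the present lemma is then immediate from the hypothesis that $h$ is bounded from below. For fixed $x$, the function $s\mapsto h(x)+cs$ is bounded below on all of $\mathbb{R}$ only if $c=0$. Hence $D_vh=0$, and completing $v$ to a basis shows that $h$ depends only on the coordinate $\langle w,x\rangle$.

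If one prefers not to quote Hartman--Nirenberg as a black box, I would sketch its proof in three steps.

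\textbf{Step 1: the open set $U$.} Let $U:=\{x:\operatorname{Hess}(h)_x\neq0\}$. On $U$ the Hessian has rank exactly one, so its kernel is a smooth line field. Since $\operatorname{Hess}(h)$ annihilates the kernel direction, $\nabla h$ is constant along each integral curve of this field. Differentiating the rank-one condition shows that these integral curves are straight segments; this is the usual argument that the level sets of $\nabla h$ are affine.

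\textbf{Step 2: the complement of $U$.} On the interior of $\mathbb{R}^2\setminus U$, the function $h$ is locally affine.

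\textbf{Step 3: globalisation (the main obstacle).} One must show two things. First, every ruling segment in $U$ extends to a full line in $\mathbb{R}^2$ along which $\nabla h$ is constant. Second, any two such lines are parallel, since two nonparallel ones would intersect and force $\nabla h$ to take two different values at the intersection point, unless they carry the same gradient. One then also has to handle the flat regions, which are bounded by ruling lines, and show that the common direction propagates across them. This is exactly the content of Hartman--Nirenberg, and I would cite it rather than reprove it.

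Finally, there is a degenerate case: if $U$ is empty, then $h$ is affine. Being bounded below, it is then constant, and any nonzero $v$ works. The substantive use of the lower bound is thus only to kill the linear drift $c$ along the ruling direction, which is the one-line step described in the first paragraph.
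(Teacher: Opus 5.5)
Your proposal is correct and follows essentially the paper's route: Hartman--Nirenberg gives a translation invariance $h(x+tv)=h(x)+ta$ with $v\neq0$, and the lower bound on $h$ forces $a=0$. The only difference is presentation: the paper applies the geometric form of Hartman--Nirenberg (the graph is complete and flat, hence a cylinder, with $v\neq0$ because a graph contains no vertical line), whereas you quote its analytic form for entire solutions of $\det(\operatorname{Hess}(h))=0$ directly.
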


\begin{proof}
The graph of $h$ is complete, because its induced metric dominates the Euclidean metric on $\mathbb{R}^2$. Its Gaussian curvature is
$$
\frac{\det(\operatorname{Hess}(h))}{(1+|\nabla h|^2)^2}=0.
$$
By the Hartman--Nirenberg cylinder theorem \cite{HartmanNirenberg1959}, the graph is a cylinder. Hence, there exists a nonzero constant vector $(v,a)\in\mathbb{R}^2\times\mathbb{R}$ such that
$$
h(x+tv)=h(x)+ta
$$
for every $x\in\mathbb{R}^2$ and $t\in\mathbb{R}$. Since a graph contains no vertical line, we have that $v\neq0$. Since $h$ is bounded from below, we also obtain that $a=0$. The assertion follows.
\end{proof}

We now apply the preceding cylinder theorem to the screen determined by a complex pivot.

\begin{lemma}\label{screen_cylinder}
Let $\phi\in\mathbb{R}[x_1,\dots,x_4]$ be a unimodular Hessian potential of index $1$. Suppose that $\phi$ admits a complex constant pivot but no real constant pivot. Then, after a real linear change of coordinates, there exist variables $z:=(x,y)$ and {$s:=(s_1,s_2)$} such that
$$
{\phi(z,s)=\frac{1}{2}\langle B(x)s,s\rangle+\langle b(z),s\rangle+c(z)},
$$
where {the mappings} $b\colon\mathbb{R}^2\rightarrow\mathbb{R}^2$ and $c\colon\mathbb{R}^2\rightarrow\mathbb{R}$ are polynomial, while
$$
{B(x):=A+f(x)I_2}
$$
for a constant symmetric matrix $A$ and a nonconstant polynomial $f$ in one variable.
\end{lemma}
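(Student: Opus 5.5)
We start from Lemma~\ref{complex_screen}. Since there is no real constant pivot, a complex constant pivot $\zeta=u+\sqrt{-1}v$ has linearly independent real parts $u,v$. Moreover, $\psi:=\phi_{\geq3}$ satisfies $D_uD_v\psi=0$ and $D_u^2\psi=D_v^2\psi=:f$, with $D_uf=D_vf=0$.

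\textbf{Choice of coordinates.} We pick real linear coordinates $(z,s)$ with $z=(x,y)$ and $s=(s_1,s_2)$ such that $\partial_{s_1}=u$ and $\partial_{s_2}=v$. In these coordinates $f$ is a polynomial in $z$ alone. Write $\psi$ as a polynomial in $s$ with coefficients in $\mathbb{R}[z]$. The three identities say that the $s$-Hessian of $\psi$ equals $f(z)I_2$, which depends only on $z$. Integrating twice in $s$ gives
$$\psi=\tfrac12 f(z)|s|^2+\langle b_0(z),s\rangle+c_0(z).$$
Adding back $\phi_2+\phi_1+\phi_0$, which contributes a constant symmetric matrix $A$ in the $ss$-block and polynomial terms elsewhere, we obtain
$$\phi=\tfrac12\langle (A+f(z)I_2)s,s\rangle+\langle b(z),s\rangle+c(z).$$
The function $f$ is nonconstant: if it were constant, then $\partial_{s_1}$ would be a real constant pivot. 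Indeed $D_u^2\phi$ would then be constant, contradicting the hypothesis.

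\textbf{Sign of the screen.} It remains to show that $f$ depends on only one linear coordinate of $z$. The Lorentzian condition enters here. First, $A+f(z)I_2$ can never be negative definite anywhere; this is the step to settle via the index. Consider the $s$-block $B=A+fI_2$ of $\operatorname{Hess}(\phi)$. If $B$ had index $2$ at some point, then $\operatorname{Hess}(\phi)$ would have at least two negative eigenvalues there, which is impossible. So $B$ has at most one negative eigenvalue everywhere. Hence $\lambda_{\max}(A)+f(z)\geq0$ for all $z$, that is, $f$ is bounded from below.

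\textbf{Cylinder step.} Next we show $\det(\operatorname{Hess}_z f)=0$. Consider the determinant $\det\operatorname{Hess}(\phi)=\pm1$ as a polynomial in $s$. Its top-degree part in $s$ comes from the $zz$-block, whose entries contain $\tfrac12\langle \operatorname{Hess}_z f\, s,s\rangle$-type quadratic terms. More precisely, $\partial_{z_i}\partial_{z_j}\phi=\tfrac12 |s|^2\,\partial_i\partial_j f+O(|s|)$, while the $zs$ and $ss$ entries are $O(|s|)$ and $O(1)$ respectively. The coefficient of $|s|^4$ in $\det\operatorname{Hess}(\phi)$ is therefore $\tfrac14\det(\operatorname{Hess}_z f)\det(A+fI_2)$. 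For $|s|$ large, the dominant $s$-degree is $4$. Comparing degrees in $s$ forces this product to vanish identically. The factor $\det(A+fI_2)$ is a nonzero polynomial, because $f$ is nonconstant. Hence $\det\operatorname{Hess}_z f\equiv0$.

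Lemma~\ref{flat_graph_cylinder} then applies to $f$ on $\mathbb{R}^2$, since $f$ is bounded below. It gives a direction $w\in\mathbb{R}^2$ with $D_wf=0$. After a linear change in $z$, $f=f(x)$ is a nonconstant polynomial in one variable. This is the asserted normal form with $B(x)=A+f(x)I_2$.

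\textbf{Expected obstacle.} The main difficulty is the degree bookkeeping in the determinant expansion. We must confirm that no other product of entries reaches $s$-degree $4$ with a nonvanishing coefficient. The $zs$ entries $\partial_{z_i}f\,s+\partial_i b$ have $s$-degree $1$. A term using $k$ such pairs has $s$-degree at most $2(2-k)+2k\cdot$, which is below $4$ unless $k=0$. If this naive count fails, I would instead take the Schur complement of the invertible block $B$, which is generically invertible, and extract the leading $|s|^4$ coefficient from that.
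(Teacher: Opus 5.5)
Your reduction to $\phi=\tfrac12\langle (A+f(z)I_2)s,s\rangle+\langle b(z),s\rangle+c(z)$, with $f$ nonconstant and bounded below, is sound. The cylinder step, however, has a genuine gap: the $|s|^4$ coefficient of $\det(\operatorname{Hess}(\phi))$ is not $\tfrac14\det(\operatorname{Hess}_z f)\det(A+fI_2)$. The mixed entries $\partial_{z_i}\partial_{s_j}\phi=\partial_{z_i}f\,s_j+\partial_{z_i}b_j$ have $s$-degree one. A permutation term using $k$ mixed pairs therefore has $s$-degree $2(2-k)+2k=4$ for every $k$, so your own count shows that every term reaches the top degree. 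If you carry out the Schur complement you mention, write $H_f:=\operatorname{Hess}(f)$ and $B:=A+fI_2$. The degree-two part of the transverse Schur complement is $S_2=\tfrac{|s|^2}{2}H_f-\langle B^{-1}s,s\rangle\,\nabla f\otimes\nabla f$, and the vanishing of the degree-four part gives only
\[
\det(B)\det(H_f)\,|s|^2=2\rho(f)\,\langle\operatorname{adj}(B)s,s\rangle,\qquad \rho(f):=\langle\operatorname{adj}(H_f)\nabla f,\nabla f\rangle .
\]

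This identity yields $\det(H_f)=0$ only when $A$ is not a scalar matrix. In that case, diagonalise $A=\operatorname{diag}(\lambda_1,\lambda_2)$ with $\lambda_1\neq\lambda_2$. Comparing the two diagonal coefficients first forces $\rho(f)=0$, and then $\det(B)\not\equiv0$ gives $\det(H_f)=0$.

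If instead $A=\lambda I_2$, put $g:=\lambda+f$. The identity becomes $g\det(\operatorname{Hess}(g))=2\langle\operatorname{adj}(\operatorname{Hess}(g))\nabla g,\nabla g\rangle$, which is equivalent to $\det(\operatorname{Hess}(1/g))=0$. This does not by itself force $\det(\operatorname{Hess}(g))=0$: locally, $g=(x^2+y^2)^{-1/2}$ satisfies it with nonzero Hessian determinant.

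The missing idea in this case is to apply Lemma~\ref{flat_graph_cylinder} to $1/g$ rather than to $f$. That requires $1/g$ to be smooth and bounded below on all of $\mathbb{R}^2$, i.e.\ the strict inequality $g>0$ everywhere. This holds because a form of signature $(3,1)$ admits no two-dimensional negative semi-definite subspace. Your index argument only yields $\lambda_{\max}(A)+f\geq0$, which suffices for the non-scalar case but not for this one. Once you have a direction $\xi$ with $D_\xi(1/g)=0$, you get $D_\xi f=0$ and hence the asserted normal form.
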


\begin{proof}
Let {$\zeta\in V_{\mathbb{C}}$} be a complex constant pivot for $\phi$. Then, by Lemma~\ref{complex_screen}, the vectors {$\Re(\zeta),\Im(\zeta)\in\mathbb{R}^4$} are linearly independent. 

Choose the coordinates {$s_1,s_2$} in directions {$\Re(\zeta),\Im(\zeta)$}, respectively, and complete them to a coordinate system {$x,y,s_1,s_2$}. The identities in Lemma~\ref{complex_screen}, together with Hessian integrability, then yield
$$
{\phi(z,s)=\frac{1}{2}\langle B(z)s,s\rangle+\langle b(z),s\rangle+c(z)},
$$
where {the matrix $B(z)$ is given by}
$$
{B(z):=A+f(z)I_2}
$$
for a constant symmetric matrix $A$. The polynomial $f$ is nonconstant, since otherwise {we would have that} either {$\Re(\zeta)$} or {$\Im(\zeta)$} would be a real constant pivot.

After an orthogonal change of the variables {$s=(s_1,s_2)$}, we may write
$$
{A=\operatorname{diag}(\lambda_1,\lambda_2).}
$$
The matrix {$B(z)$} is the restriction of {$\operatorname{Hess}(\phi)_{(z,s)}$} to the {screen plane}. Since a Lorentzian form admits no negative semi-definite two-dimensional subspace, we obtain that
$$
\max\{\lambda_1,\lambda_2\}>-f(z).
$$

Put $p:=\nabla f$ and {$H_f:=\operatorname{Hess}(f)$}. Also, put
$$
{\rho(f):=\langle\operatorname{adj}(H_f)p,p\rangle.}
$$
Over the rational function field $\mathbb{R}(x,y)$, the component of degree two in the variable {$s$} of the transverse Schur complement is nothing but
$$
{S_2(z,s):=\frac{|s|^2}{2}H_f(z)-\langle B(z)^{-1}s,s\rangle p\otimes p}.
$$
Since the Hessian determinant of $\phi$ is constant, the component of degree four in the variable {$s$} vanishes. Multiplying the identity {$\det(S_2)=0$} by {$\det(B(z))$} and comparing quadratic forms in {$s$}, we obtain that
$$
{\det(B(z))\det(H_f)I_2=2\rho(f)\operatorname{adj}(B(z))},
$$
which is a polynomial identity after clearing the denominator.

Suppose first that $\lambda_1\neq\lambda_2$. Comparison of the two diagonal entries gives {$\rho(f)=0$}. Since $f$ is nonconstant, the polynomial {$\det(B(z))$} does not vanish identically. Therefore, we obtain that
$$
\det(\operatorname{Hess}(f))=0.
$$
The polynomial $f$ is bounded from below, so Lemma~\ref{flat_graph_cylinder} shows that it depends on one real linear form.

Suppose now that $\lambda_1=\lambda_2$, and put $g:=\lambda_1+f$. The Lorentzian index shows that $g>0$ everywhere. The preceding matrix identity becomes
$$
g\det(\operatorname{Hess}(g))=2\langle\operatorname{adj}(\operatorname{Hess}(g))\nabla g,\nabla g\rangle.
$$
Consequently, we have that
$$
{\det(\operatorname{Hess}(1/g))=0.}
$$

The function $1/g$ is positive and smooth on the whole plane. Lemma~\ref{flat_graph_cylinder} gives a nonzero constant vector $\xi\in\mathbb{R}^2$ such that $D_\xi(1/g)=0$. Hence, we obtain that $D_\xi f=0$.

In both cases, a real linear change of the variables $z$ now gives $f=f(x)$, as asserted.
\end{proof}

The screen profile has therefore been reduced to one variable. The constant determinant identity next forces the potential to be quadratic in the remaining three variables.

\begin{lemma}\label{screen_quadratic_reduction}
Under the hypotheses and notations of Lemma~\ref{screen_cylinder}, after a further polynomial rearrangement, there exist variables {$u=(u_1,u_2,u_3)$} such that
$$
{\phi(x,u)=\frac{1}{2}\langle M(x)u,u\rangle+\langle \ell(x),u\rangle+r(x)},
$$
where {the matrix} $M$ is a 3 by 3 symmetric matrix-valued polynomial function and the entries of {$\ell$} and the function $r$ are polynomial in $x$.
\end{lemma}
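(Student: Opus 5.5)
We start from the normal form of Lemma~\ref{screen_cylinder}, $\phi(z,s)=\frac12\langle B(x)s,s\rangle+\langle b(z),s\rangle+c(z)$ with $z=(x,y)$ and $B(x)=A+f(x)I_2$. The target is to show that $\phi$ is at most quadratic in $(y,s_1,s_2)$ with coefficients polynomial in $x$. Since $\phi$ is already quadratic in $s$, with $s$-quadratic coefficient depending only on $x$, it remains to prove two things: $b(x,y)$ is affine in $y$, and $c(x,y)$ is at most quadratic in $y$. After that we set $u:=(y,s_1,s_2)$ and read off $M$, $\ell$ and $r$. The "polynomial rearrangement" in the statement is then only a relabelling of variables, possibly combined with a shift $s\mapsto s-B(x)^{-1}(\cdots)$ where that is polynomial.

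\textbf{Step 1: expand the determinant.} In the ordered coordinates $(x,y,s)$, write out $\operatorname{Hess}(\phi)$. The lower-right $2\times2$ block is $B(x)$, which is invertible wherever $\det B(x)\neq0$. The $s$-cross block is $\partial_z(B s+b)$, which is linear in $s$. The $(z,z)$ block is quadratic in $s$, and its $s$-quadratic part involves only $f''(x)$ in the $xx$-entry. Take the Schur complement with respect to the $s$-block, working over $\mathbb{R}(x)$ as in the proof of Lemma~\ref{screen_cylinder}. This gives a $2\times2$ matrix $S(z,s)$ with $\det(\operatorname{Hess}\phi)=\det B(x)\det S(z,s)$.

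\textbf{Step 2: read off $y$-independence.} Since the left side is constant, $\det S$ equals $\kappa/\det B(x)$, which depends on $x$ only. Expand $\det S$ in powers of $s$ and compare the $s$-free, $s$-linear and $s$-quadratic coefficients.
\begin{itemize}
\item The $yy$-entry of $S$ is $c_{yy}-\langle B^{-1}b_y,b_y\rangle+\langle b_{yy},s\rangle$. Its $s$-linear part $b_{yy}$ multiplies $S_{xx}$, whose leading $s$-quadratic part is $\frac12 f''|s|^2-(\cdots)$.
\item Matching top degrees in $s$ then forces $b_{yy}=0$. This uses that $f$ is nonconstant: if $f''\equiv0$ we instead use the cross terms with $f'\neq0$.
\item So $b=b_0(x)+y\,b_1(x)$. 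The $yy$-entry becomes $c_{yy}-\langle B^{-1}b_1,b_1\rangle$. The remaining identity then says that a polynomial in $(x,y)$, namely $c_{yy}$, equals a rational function of $x$ alone. Hence $c_{yy}$ depends only on $x$, and $c$ is quadratic in $y$.
\end{itemize}

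\textbf{Main obstacle.} The delicate part is Step 2 in degenerate subcases. First, $\det B(x)$ can vanish at isolated $x$, so the identities must be cleared of denominators before comparing coefficients. Second, the leading $s$-coefficient may cancel, for example when $f''\equiv0$ or when $S_{xx}$ has special structure. In that situation I would use the Lorentzian constraint as in Lemma~\ref{lorentz_flat_pencil}. The affine pencil in $s$ of Hessians has constant determinant, so its $s$-quadratic part has rank at most $2$. This rank bound should force the $s$-dependent parts of the $y$-column to vanish. As a fallback, a positive-definiteness argument in $y$ gives the same conclusion via Jörgens' theorem: the Schur complement restricted to the $y$-direction, for fixed $(x,s)$, is a one-variable convex function with constant second derivative.
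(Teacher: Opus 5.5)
Your outline follows the paper's route. You take the Schur complement of $\operatorname{Hess}(\phi)$ with respect to the screen block over $\mathbb{R}(x,y)$, then compare $s$-degrees in $\det(B(x))\det(S)=\det(\operatorname{Hess}(\phi))$. Concretely, the $s$-quadratic part of $S$ is $\operatorname{diag}(q,0)$ with
\[
q(s)=\tfrac12 f''(x)|s|^2-f'(x)^2\langle B(x)^{-1}s,s\rangle ,
\]
so the quartic coefficient of $\det S$ vanishes automatically. The cubic coefficient is $q(s)\langle\partial_y^2 b,s\rangle$; this is your ``top degree'' step, and it gives $b=m(x)y+n(x)$. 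The quadratic coefficient is then $q(s)\bigl(\partial_y^2c-\langle B^{-1}m,m\rangle\bigr)-\alpha(s,x)^2$, where $\alpha(s,x)=\langle m',s\rangle-f'\langle B^{-1}m,s\rangle$ does not depend on $y$. Its vanishing is exactly your claim that $\partial_y^2c$ depends on $x$ alone.

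The genuine gap is that both deductions divide by $q$, and you only check $q\not\equiv0$ when $f''\equiv0$. When $f''\not\equiv0$, the two terms of $q$ could a priori cancel. You notice this, but the fallbacks you propose do not work. First, $\operatorname{Hess}(\phi)$ is quadratic, not affine, in $s$: its $xx$-entry contains $\tfrac12 f''|s|^2$. So Lemma~\ref{lorentz_flat_pencil} does not apply, and the leading part along rays in $s$ has rank one, which gives no information. Second, the J\"orgens fallback does not identify any function of $y$ whose second derivative is constant.

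What is actually needed is a direct computation. Write $A=\operatorname{diag}(\lambda_1,\lambda_2)$. Then $q\equiv0$ means $\tfrac12 f''=f'^2/(\lambda_i+f)$ for $i=1,2$.
\begin{itemize}
\item If $\lambda_1\neq\lambda_2$, subtracting the two equations gives $f'\equiv0$, which contradicts the nonconstancy of $f$.
\item If $\lambda_1=\lambda_2$, the condition is the ODE $(\lambda_1+f)f''=2f'^2$. Comparing leading coefficients for $\deg f=k\geq1$ gives $k(k-1)=2k^2$, which is impossible.
\end{itemize}
Once $q\neq0$ is established, your Steps 1 and 2 go through. The conclusion is then the plain relabelling $u=(y,s_1,s_2)$, with no shift of $s$ required.
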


\begin{proof}
We write $b=(b_1,b_2)$, and put 
$$
{J_b:=\begin{pmatrix}
\partial_xb_1 & \partial_yb_1 \\
\partial_xb_2 & \partial_yb_2
\end{pmatrix}}.
$$
Over the function field $\mathbb{R}(x,y)$, decompose the transverse Schur complement {$S$} of $\operatorname{Hess}(\phi)$ as
$$
{S=S_2+S_1+S_0,}
$$
where {the matrix $S_2$ is given by}
$$
{S_2=\frac{|s|^2}{2}\operatorname{Hess}(f)-\langle B(x)^{-1}s,s\rangle\nabla f\otimes\nabla f},
$$
and {the matrix $S_1$ is given by}
$$
{S_1=s_1\operatorname{Hess}(b_1)+s_2\operatorname{Hess}(b_2)-\nabla f\otimes(s^\top B(x)^{-1}J_b)-(J_b^\top B(x)^{-1}s)\otimes\nabla f}.
$$
Finally, it also holds that
$$
{S_0=\operatorname{Hess}(c)-J_b^\top B(x)^{-1}J_b}.
$$
Since $f=f(x)$, put
$$
{q(s):=f''\frac{|s|^2}{2}-|f'|^2\langle B(x)^{-1}s,s\rangle}.
$$
We then have that {$S_2=\operatorname{diag}(q,0)$}. Moreover, we claim that the quadratic form {$q$} is nonzero. Indeed, if $A$ is not a scalar matrix, the identity {$q=0$} would force both $f''=0$ and $f'=0$. If $A$ is a scalar matrix, the same identity above would give the ODE
$$
(\lambda_1+f)f''=2|f'|^2,
$$
which is impossible for a nonconstant polynomial by comparison of the highest degrees.

Comparison of the terms of degree three in the variable {$s$} in the constant identity
$$
{\det(B(x))\det(S)}=\det(\operatorname{Hess}(\phi))
$$
gives
$$
{q(s)(s_1\partial_y^2b_1+s_2\partial_y^2b_2)=0.}
$$
Consequently, there exist vector-valued polynomial functions $m:=m(x)$ and $n:=n(x)$ such that
$$
b(x,y)=m(x)y+n(x).
$$
Now, we write
$$
{\alpha(s,x):=\langle m'(x),s\rangle-f'(x)\langle B(x)^{-1}m(x),s\rangle}.
$$
Comparison of the terms of degree two in {$s$} gives
$$
{q(s)(\partial_y^2c-\langle B(x)^{-1}m,m\rangle)-\alpha(s,x)^2=0}.
$$
Differentiating this identity with respect to $y$, we obtain that {$\partial_y^3c=0$}.
Thus, the polynomial $\phi$ is quadratic in the three variables {$s_1,s_2,y$}, and the desired expression follows.
\end{proof}

It remains to analyse a polynomial family of quadratic forms in three variables depending on one parameter.

\begin{lemma}\label{one_parameter_quadratic_pivot}
Use the notations in Lemma~\ref{screen_quadratic_reduction}.
Assume that $\det(\operatorname{Hess}(\phi))$ is a nonzero constant. Then, the potential $\phi$ admits a real constant pivot.
\end{lemma}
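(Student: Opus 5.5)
The plan is to make the constant-determinant condition explicit in the block form given by Lemma~\ref{screen_quadratic_reduction}, and to show that it forces $M(x)^{-1}$ to be affine in $x$. A real pivot can then be read off from the kernel of the linear coefficient.

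\textbf{Step 1: the determinant identity.} Work in the coordinates $(x,u)$ and write $w(x,u):=M'(x)u+\ell'(x)$. The Hessian of $\phi$ has the block form
$$
\operatorname{Hess}(\phi)=\begin{pmatrix}\tfrac12\langle M''u,u\rangle+\langle\ell'',u\rangle+r''&w^\top\\ w&M\end{pmatrix}.
$$
Expanding along the first row and column gives
$$
\det(\operatorname{Hess}(\phi))=\det(M)\bigl(\tfrac12\langle M''u,u\rangle+\langle\ell'',u\rangle+r''\bigr)-\langle\operatorname{adj}(M)w,w\rangle=\kappa\in\mathbb{R}^\times .
$$
I will compare the parts that are homogeneous of degrees $2$, $1$ and $0$ in $u$. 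The quadratic part gives the matrix identity
$$
\tfrac12\det(M)M''=M'\operatorname{adj}(M)M'.
$$

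\textbf{Step 2: $M^{-1}$ is affine when $M$ is generically invertible.} Assume first that $\det(M)\not\equiv0$. Over $\mathbb{R}(x)$ the identity of Step~1 reads $M''=2M'M^{-1}M'$. Differentiating $M^{-1}$ twice gives
$$
(M^{-1})''=M^{-1}\bigl(2M'M^{-1}M'-M''\bigr)M^{-1}=0,
$$
so $M^{-1}=P+xQ$ for constant real symmetric matrices $P$ and $Q$.

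Then $\det(M)\det(P+xQ)=1$. Since $\det(M)$ is a polynomial and $\det(P+xQ)$ is a polynomial, both are nonzero constants. In particular $P$ is invertible, and $\det(I+xN)$ is constant for $N:=P^{-1}Q$. Hence $N$ is nilpotent, so $N^3=0$ and $Q$ is singular.

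\textbf{Step 3: producing the pivot.} Choose $0\neq w_0\in\ker(Q)$ and put $\xi:=Pw_0$, viewed as a vector in the $u$-directions. Using the nilpotency of $N$,
$$
M=(P+xQ)^{-1}=(I-xN+x^2N^2)P^{-1}.
$$
Substituting gives
$$
\langle M\xi,\xi\rangle=\langle Pw_0,w_0\rangle-x\langle Qw_0,w_0\rangle+x^2\langle QP^{-1}Qw_0,w_0\rangle=\langle Pw_0,w_0\rangle,
$$
which is constant. Since $\xi$ has no $x$-component, $D_\xi^2\phi=\langle M(x)\xi,\xi\rangle$. Therefore $\xi$ is a real constant pivot, and it is nonzero because $P$ is invertible.

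\textbf{Step 4: the degenerate case $\det(M)\equiv0$ (main obstacle).} Here the determinant reduces to $-\langle\operatorname{adj}(M)w,w\rangle=\kappa\neq0$. So $\operatorname{adj}(M)\neq0$, which means $M$ has generic rank exactly two and $\operatorname{adj}(M)=\epsilon\,k\otimes k$ for a kernel vector field $k(x)$ with $Mk=0$.

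Constancy in $u$ then gives $\langle M'u,k\rangle^2$-type identities forcing $\langle M'(x)u,k(x)\rangle=0$ for all $u$. Differentiating $Mk=0$ gives $M'k+Mk'=0$, hence $Mk'=0$. This makes $k'$ parallel to $k$, so after rescaling the kernel direction $k$ is constant.

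Taking $\xi:=k$ gives $\langle M(x)\xi,\xi\rangle=0$, again a pivot. The delicate point is justifying that $k$ can be normalised to a constant vector over $\mathbb{R}[x]$. I would settle this by taking $k$ as a primitive polynomial kernel vector and using $k'\parallel k$ together with a degree comparison.
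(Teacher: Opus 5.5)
Your proposal is correct and follows essentially the same route as the paper. In the invertible case, the quadratic-in-$u$ part of the bordered determinant gives $M''=2M'M^{-1}M'$, hence $M^{-1}=P+xQ$ with $Q$ singular, and the pivot is $Pw_0$; the paper simply notes that $M(x)Pw_0=w_0$, so your Neumann expansion is not needed. In the degenerate case, $M'k=0$ and $Mk'=0$ yield a constant kernel vector, exactly as in the paper, and the normalisation you flag as delicate is routine: $k'=\rho k$ forces every ratio $k_i/k_j$ to have zero derivative, so each ratio is constant.
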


\begin{proof}
Suppose first that $\det(M)=0$ as a polynomial. The rank of $M$ at a generic point is 2, since otherwise the full Hessian matrix of $\phi$ would be singular. Comparison of the quadratic terms in $u=(u_1,u_2,u_3)$ gives
$$
M'\operatorname{adj}(M)M'=0.
$$
Over the function field $\mathbb{R}(x)$, choose a nonzero vector $a(x)$ spanning the kernel of $M$. For some nonzero rational function $\lambda$, we have that
$$
\operatorname{adj}(M)=\lambda \,a\otimes a.
$$
The preceding identity then gives $M'a=0$. Differentiating $Ma=0$, we also obtain that $Ma'=0$. Since the kernel of $M$ is one-dimensional, there exists a rational function $\rho$ such that $a'=\rho a$. Thus, the projective class of $a$ is constant. We may therefore choose a nonzero constant vector $e\in\mathbb{R}^3$ such that
$$
{M(x)e=0.}
$$
The vector $e$ is a real constant pivot of $\phi$.

It remains to assume that $\det(M)$ is not the zero polynomial. Comparison of the quadratic terms in $u$ in the Schur-complement identity for $\operatorname{Hess}(\phi)$ gives
$$
M''=2M'M^{-1}M'.
$$
Differentiating twice, we obtain that $\partial_x^2M^{-1}=0$. Hence, there exist constant symmetric matrices $A,B$ such that
$$
M(x)^{-1}=A+xB.
$$
Since $M$ is a polynomial inverse of $M^{-1}$, the polynomial $\det(A+xB)$ is a nonzero constant. In particular, the matrix $B$ is singular. Choose a nonzero vector $a\in{\ker(B)}$ and put $e:=Aa$. Since $A=M(0)^{-1}$ is invertible, we have that $e\neq0$. It is also readily seen that $M(x)e=a$.

Consequently, we obtain that
$$
D_e^2\phi=\langle a,Aa\rangle,
$$
which is constant. Thus, in this case, the vector $e$ is again a real constant pivot for $\phi$.
\end{proof}

The preceding reductions now give the desired complex-to-real principle.

\begin{proposition}\label{Complex-to-real}
Let $\phi\in\mathbb{R}[x_1,\dots,x_4]$ be a unimodular Hessian potential of index $1$. Assume that $\phi$ admits a complex constant pivot. Then, in fact, the potential $\phi$ admits a real constant pivot.
\end{proposition}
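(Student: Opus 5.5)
The plan is to argue by contradiction and chain together the four preceding lemmas. Suppose that $\phi$ admits a complex constant pivot $\zeta=u+\sqrt{-1}v$ but no real constant pivot.

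First apply Lemma~\ref{complex_screen}. Since no real pivot exists, $u$ and $v$ are linearly independent and satisfy the screen identities $D_uD_v\phi_{\geq3}=D_u^2\phi_{\geq3}-D_v^2\phi_{\geq3}=0$. These are exactly the hypotheses used in Lemma~\ref{screen_cylinder}, so we may pass to the normal form
$$
\phi(z,s)=\tfrac12\langle B(x)s,s\rangle+\langle b(z),s\rangle+c(z),\qquad B(x)=A+f(x)I_2,
$$
with $f$ a nonconstant polynomial in one variable.

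Second, apply Lemma~\ref{screen_quadratic_reduction}. Comparing the degree-three and degree-two parts in $s$ of the constant-determinant Schur identity shows that $\phi$ is quadratic in the three variables $u=(s_1,s_2,y)$:
$$
\phi=\tfrac12\langle M(x)u,u\rangle+\langle\ell(x),u\rangle+r(x).
$$
Third, Lemma~\ref{one_parameter_quadratic_pivot} applies to this one-parameter family, because $\det(\operatorname{Hess}(\phi))=\pm1$ is a nonzero constant. It produces a real constant pivot, either as a constant kernel vector of $M$ when $\det(M)\equiv0$, or as $e=Aa$ with $a\in\ker(B)$ when $M^{-1}=A+xB$. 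This contradicts the assumption that no real pivot exists, which proves the proposition.

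The only point needing care is bookkeeping. The coordinate changes made in Lemmas~\ref{screen_cylinder} and \ref{screen_quadratic_reduction} are real linear changes, possibly combined with adding affine terms. A constant pivot in the new coordinates therefore pulls back to a nonzero constant real vector in the original coordinates. Also, $D_e^2\phi$ is unchanged under the addition of affine polynomials. With these two observations, the pivot found in the final lemma is a genuine real constant pivot for the original $\phi$.

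I expect the main obstacle to be confirming that the hypotheses carry through the chain. In particular, the unimodular index-one assumption is what gives the Lorentzian positivity $\max\{\lambda_1,\lambda_2\}>-f$ inside Lemma~\ref{screen_cylinder}, and this must be in force when that lemma is invoked. Beyond that check, the proof is a direct concatenation of the preceding lemmas.
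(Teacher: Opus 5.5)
Your proposal is correct and is essentially the paper's own proof: assuming no real pivot, the paper chains Lemma~\ref{screen_cylinder} (which already invokes Lemma~\ref{complex_screen} internally, so your separate first step is redundant but harmless), then Lemma~\ref{screen_quadratic_reduction}, then Lemma~\ref{one_parameter_quadratic_pivot}, which produces a real pivot and gives the contradiction. Your remarks that the coordinate changes are real linear and that affine summands do not affect $D_e^2\phi$ are accurate, and they make explicit what the paper leaves implicit.
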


\begin{proof}
Suppose, for a contradiction, that $\phi$ admits no real constant pivot. Lemma~\ref{screen_cylinder} gives the screen normal form with profile $f=f(x)$. Lemma~\ref{screen_quadratic_reduction} then shows that $\phi$ is quadratic in three variables with coefficients depending polynomially on one variable. Lemma~\ref{one_parameter_quadratic_pivot} now gives a real constant pivot, which is a contradiction.
\end{proof}

Consequently, the complex rank criterion is already a real Lorentzian pivot criterion.

\begin{corollary}[{Real Pivot Criterion}]\label{Complex_to_Real_pivot_criterion}
A unimodular Hessian potential $\phi\in\mathbb{R}[x_1,\dots,x_4]$ of index $1$ admits a constant pivot if and only if $\operatorname{rank}(\mu_\phi)\leq55$.
\end{corollary}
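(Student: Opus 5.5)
The corollary follows by chaining Proposition~\ref{Complex_pivot_criterion} with Proposition~\ref{Complex-to-real}. We prove the two implications separately.

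Suppose first that $\phi$ admits a real constant pivot $\xi\in\mathbb{R}^4$. View $\xi$ as an element of $V_{\mathbb{C}}$ via $\xi\otimes1$. Then $\langle\operatorname{Hess}(\phi)\xi,\xi\rangle_{\mathbb{C}}$ coincides with the real pairing $\langle\operatorname{Hess}(\phi)\xi,\xi\rangle$, which is constant. So $\xi$ is a complex constant pivot in the sense of Definition~\ref{Complex_constant_pivot}, and Proposition~\ref{Complex_pivot_criterion} gives $\operatorname{rank}(\mu_\phi)\leq55$. This direction uses neither the index nor the determinant hypothesis.

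Conversely, suppose $\operatorname{rank}(\mu_\phi)\leq55$. Proposition~\ref{Complex_pivot_criterion} then yields a complex constant pivot $\zeta\in V_{\mathbb{C}}$. Since $\phi$ is a unimodular Hessian potential of index $1$, Proposition~\ref{Complex-to-real} applies and produces a real constant pivot.

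All substantive work lies in the earlier results: the Hilbert-series surjectivity argument in the complex criterion, and the screen and cylinder reductions behind Proposition~\ref{Complex-to-real}. The only point to check here is that the notions match. The real pairing is the restriction of the complex bilinear pairing $\langle-,-\rangle_{\mathbb{C}}$, so a real pivot is literally a complex pivot with real coordinates. The Lorentzian hypotheses are exactly those required by Proposition~\ref{Complex-to-real}.
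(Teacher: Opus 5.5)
Your proposal is correct and is exactly the paper's argument. A real pivot is a complex pivot, so Proposition~\ref{Complex_pivot_criterion} gives the rank bound; conversely, the rank bound yields a complex pivot, which Proposition~\ref{Complex-to-real} upgrades to a real one under the Lorentzian unimodular hypotheses.
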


\begin{proof}
If $\phi$ admits a real constant pivot, then it admits a complex constant pivot, and Proposition~\ref{Complex_pivot_criterion} gives the rank inequality. Conversely, the rank inequality gives a complex constant pivot by Proposition~\ref{Complex_pivot_criterion}, and Proposition~\ref{Complex-to-real} then gives a real constant pivot.
\end{proof}

We next pass from the rank of $\mu_\phi$ to geometric conditions on the Hesse system itself.

\section{On Hesse Systems}

We use the dimension of the Hesse system to obtain a directly computable sufficient condition.

\subsection{\texorpdfstring{Low-Dimensional Hesse Systems}{Low-Dimensional Hesse Systems}}

If the Hesse system has dimension at most three, a common zero follows immediately from projective dimension. To include the borderline dimension four, we need the following two lemmas.

The first lemma states that four quadrics without a common zero force the Hessian-difference mapping to have Zariski-dense image.

\begin{lemma}\label{square_hesse_dominance}
Let $\phi\in\mathbb{R}[x_1,\dots,x_4]$ be a polynomial function such that
$$
{\dim_{\mathbb{C}}(\mathcal{H}_\phi)=4.}
$$
If the quadrics in $\mathcal{H}_\phi$ have no common zero in {$\mathbb{P}(V_{\mathbb{C}})$}, then the polynomial map {$\Delta_\phi\colon V_{\mathbb{C}}\rightarrow\mathcal{H}_\phi$} given by
$$
{\Delta_\phi(x):=H_\phi(x)-H_\phi(0)}
$$
is dominant.
\end{lemma}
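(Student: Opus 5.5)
The approach is to argue by contradiction through the differential of $\Delta_\phi$, using the symmetry of third derivatives. Fix a basis $q_1,\dots,q_4$ of $\mathcal{H}_\phi$ and write
$$
D_\zeta^2\phi(x)=q_0(\zeta)+\sum_{i=1}^4 m_i(x)q_i(\zeta),
$$
where the $m_i$ are polynomials with $m_i(0)=0$. In these coordinates $\Delta_\phi$ is the map $m=(m_1,\dots,m_4)\colon V_{\mathbb{C}}\rightarrow\mathbb{C}^4$. Since $V_{\mathbb{C}}$ is irreducible, $\Delta_\phi$ is dominant if and only if the Jacobian determinant of $m$ is not identically zero. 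So I will suppose that $\operatorname{rank}(dm_x)\leq3$ for every $x$ and aim for a common zero of $\mathcal{H}_\phi$.

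The key identity is
$$
D_w\bigl(D_\zeta^2\phi\bigr)(x)=\sum_i (D_wm_i)(x)\,q_i(\zeta)=T_x(w,\zeta,\zeta),
$$
where $T_x$ is the symmetric cubic tensor of third derivatives of $\phi$ at $x$. Thus the image of $dm_x$, read back in $\mathcal{H}_\phi$, is the space $W_x$ of quadrics $\zeta\mapsto T_x(w,\zeta,\zeta)$ with $w\in V_{\mathbb{C}}$. Under the contradiction hypothesis, $\dim W_x\leq3$ for every $x$. By projective dimension, $W_x$ then has a common zero $[\zeta_x]\in\mathbb{P}(V_{\mathbb{C}})$, and indeed a positive-dimensional common zero locus $Z_x$. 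By symmetry of $T_x$, a common zero $\zeta$ of $W_x$ satisfies $T_x(\zeta,\zeta,\cdot)=0$. Equivalently, the function $y\mapsto D_\zeta^2\phi(y)=q_0(\zeta)+\sum_i m_i(y)q_i(\zeta)$ has a critical point at $y=x$.

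To conclude, I will work over a generic point. Fix a linear form $\lambda=\lambda(x)$ on $\mathcal{H}_\phi\cong\mathbb{C}^4$ annihilating $W_x$; it can be chosen rationally in $x$ as a row of the adjugate of $dm_x$. Then $\sum_i\lambda_i(x)\,\nabla m_i(x)=0$. I will combine this with the finiteness of the map $q=(q_1,\dots,q_4)\colon V_{\mathbb{C}}\rightarrow\mathbb{C}^4$. Because $\mathcal{H}_\phi$ has no common zero, $q$ is finite and surjective of degree $16$, so every point of $\mathbb{C}^4$, and in particular every point of $\overline{m(V_{\mathbb{C}})}$, has a preimage under $q$. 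The plan is to show that $\overline{m(V_{\mathbb{C}})}$ is a proper closed cone-like subvariety, defined by a nonzero homogeneous polynomial $P$, such that the linear span of $m(V_{\mathbb{C}})$ is still all of $\mathbb{C}^4$; that span condition is automatic from the definition of $\mathcal{H}_\phi$. I would then test $P$ against the finite cover $q$. Pulling back gives $P\circ q\not\equiv0$, because $q$ is surjective. On the other hand, the symmetric identity above, applied at points of the curve $Z_x$, should force $P\circ q$ to vanish on the cones over all $Z_x$. If these cones sweep out a Zariski dense subset of $V_{\mathbb{C}}$, that is a contradiction.

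The main obstacle is this last density step: making sure that the common zero loci $Z_x$ of the rank-deficient spaces $W_x$ move with $x$ enough to fill $\mathbb{P}(V_{\mathbb{C}})$. Otherwise they might all lie in a fixed proper subvariety, which is exactly the situation a common zero of $\mathcal{H}_\phi$ would create. To attack it, I would first pass to the highest-degree homogeneous layer of $m$, using the homogeneous decomposition of $\phi$ and the dilation $x\mapsto\lambda x$ as in Lemma~\ref{dilation_separation}. There $T_x$ is homogeneous in $x$, and I would try to show by a dimension count on the incidence variety $\{(x,[\zeta])\colon\zeta\in Z_x\}$ that its projection to $\mathbb{P}(V_{\mathbb{C}})$ is surjective. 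Surjectivity would give $P\circ q\equiv0$, contradicting finiteness of $q$, and hence would prove that $\Delta_\phi$ is dominant.
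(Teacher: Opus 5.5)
There is a genuine gap, and it comes before the density step you single out. Your setup gives only one link between $Z_x$ and $m$: a vector $\zeta$ spans a point of $Z_x$ exactly when $\sum_i (D_wm_i)(x)\,q_i(\zeta)=0$ for all $w$, i.e.\ when $q(\zeta)\perp\operatorname{im}(dm_x)$. So the cone over $Z_x$ is $q^{-1}$ of the conormal space of $\overline{m(V_{\mathbb{C}})}$ at a generic point $m(x)$. The map $q$ sends it into the \emph{normal directions} of the image, not into the image. Nothing in your argument makes $P(q(\zeta))$ vanish for a polynomial $P$ vanishing on $\overline{m(V_{\mathbb{C}})}$, and there is no reason it should. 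So the intended clash with $P\circ q\not\equiv0$ never arises, however the $Z_x$ move.

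Two auxiliary claims are also wrong. First, since $q$ is finite, $Z_x$ is finite whenever $\dim W_x=3$ (its cone is $q^{-1}$ of a line), so it is not a curve. Second, $m$ is not homogeneous, so $\overline{m(V_{\mathbb{C}})}$ need not be a cone cut out by a homogeneous $P$. Restricting to the top homogeneous layer does not rescue this: the Hesse system of $\phi_d$ alone is only a subspace of $\mathcal{H}_\phi$ and can have base points, so the hypothesis you need is lost.

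The missing idea is to use the \emph{kernel} of $dm_x$, which is nonzero under your contradiction hypothesis, instead of the common zeros of its image.

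If $a\in\ker(dm_x)$, then $T_x(a,\cdot,\cdot)=0$, so by symmetry $T_x(b,a,c)=0$ for all $b,c$. Thus $a$ lies in the kernel of every quadric in $W_x$. In terms of your finite map $q$, this says $\operatorname{im}(d_aq)\perp\operatorname{im}(dm_x)$. Hence $\operatorname{rank}(d_aq)\le r:=4-s$ for every $a$ in the $r$-dimensional space $\ker(dm_x)$, where $s\le 3$ is the generic rank.

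Finiteness of $q$ implies that every irreducible component of $\{\operatorname{rank}(dq)\le r\}$ has dimension at most $r$. So $\ker(dm_x)$ is one of finitely many such components. Since it varies algebraically over an irreducible open set of $x$, it is independent of $x$.

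A constant nonzero $a$ with $D_a\operatorname{Hess}(\phi)\equiv0$ then lies in the kernel of every $W_x$. The $W_x$ span $\mathcal{H}_\phi$, because $m(x)=\int_0^1 (dm_{tx})x\,dt$. Hence $[a]$ is a common zero of $\mathcal{H}_\phi$, the desired contradiction. This is the paper's argument, and it needs no incidence or density count.
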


The mapping {$\Delta_\phi$} simply records, at the point $x$, the nonconstant part of the Hessian as an element of the four-dimensional space $\mathcal{H}_\phi$. Here, dominant means that its image is Zariski dense in $\mathcal{H}_\phi$.

\begin{proof}
Choose a basis $q_1,q_2,q_3,q_4$ of $\mathcal{H}_\phi$. There exist polynomial functions $f_1,f_2,f_3,f_4$ on {$V_{\mathbb{C}}$} such that
$$
{\Delta_\phi(x)=\sum_{i=1}^4f_i(x)q_i}.
$$
Under this basis, the mapping {$\Delta_\phi$} is identified with $(f_1,f_2,f_3,f_4)$.

We first compare {$\Delta_\phi$} with the quadratic mapping determined by the four basis quadrics.

Regarding each $q_i$ as a symmetric bilinear form by polarization, define {$Q\colon V_{\mathbb{C}}\rightarrow\mathbb{C}^4$} via
$$
{Q(a):=(q_1(a,a),\dots,q_4(a,a)).}
$$
By the hypothesis, the identity {$Q(a)=0$} holds only when $a=0$. Fix a Hermitian norm on {$V_{\mathbb{C}}$}. Compactness of the unit sphere and homogeneity of {$Q$} give a constant $c>0$ such that {$|Q(a)|\geq c|a|^2$} for every {$a\in V_{\mathbb{C}}$}. It follows that every fibre of {$Q$} is compact. A positive-dimensional complex affine algebraic set cannot be compact. Therefore, the mapping {$Q$} has finite fibres.

The finiteness of {$Q$} controls the loci on which its differential drops rank. We now show that failure of dominance would produce a common zero of the Hesse system.

Suppose, for a contradiction, that {$\Delta_\phi$} is not dominant, so that its image is not Zariski dense in $\mathcal{H}_\phi$. Let $s\leq3$ be the generic rank of {$d\Delta_\phi$}. On the nonempty open subset on which {$d\Delta_\phi$} has rank $s$, put {$K:=\ker(d_x\Delta_\phi)$} and put $r:=4-s$.

Fix a vector {$a\in K$}. For any $b,c\in {V_{\mathbb{C}}}$, the symmetry of the third derivative shows that {$(d_x\Delta_\phi)b(a,c)$} equals {$(d_x\Delta_\phi)a(b,c)$}, while the latter expression vanishes because {$a\in K$}. On the other hand, we also have
$$
{(d_aQ)c=2(q_1(a,c),\dots,q_4(a,c))}.
$$
Using the standard complex bilinear pairing on $\mathbb{C}^4$, the pairing of {$(d_x\Delta_\phi)b$} with {$(d_aQ)c$} is twice {$(d_x\Delta_\phi)b(a,c)$}, and hence it vanishes. Thus, the image of {$d_aQ$} is orthogonal to the image of {$d_x\Delta_\phi$}. We therefore obtain {$\operatorname{rank}(d_aQ)\leq r$}.

Thus, every kernel {$K$} is contained in the rank-$r$ locus of {$dQ$}.

Let $Z_r$ be the common zero set of the minors of order $r+1$ of the matrix {$dQ$}. Thus, the locus $Z_r$ is precisely the algebraic subset of {$V_{\mathbb{C}}$} on which {$dQ$} has rank at most $r$. We have proved that {$K\subseteq Z_r$}. We claim that every irreducible component of $Z_r$ has dimension at most $r$. Indeed, since {$Q$} has finite fibres, its restriction to such a component has image of the same dimension. At a general smooth point, this dimension equals the rank of the differential of the restriction, which is at most $r$.

Since {$K$} is an $r$-dimensional linear subspace, it is therefore an irreducible component of $Z_r$. The algebraic subset $Z_r$ has only finitely many irreducible components. On every coordinate neighbourhood on which one fixed nonzero minor of order $s$ of {$d\Delta_\phi$} remains nonzero, the kernel {$K$} is obtained by solving the linear equations {$(d_x\Delta_\phi)a=0$}, and hence it depends algebraically on $x$. Since the constant-rank open subset is irreducible, while only finitely many components of $Z_r$ are available, we conclude that {$K$} is a fixed subspace {$K$}.

The kernels of {$d\Delta_\phi$} are therefore constant. A fixed direction in these kernels will annihilate every Hessian difference.

Now, choose a nonzero vector {$a\in K$}. We have that {$(d_x\Delta_\phi)a=0$} on a nonempty open subset, and hence this polynomial identity holds for every {$x\in V_{\mathbb{C}}$}. By the symmetry of the third derivative, every quadratic form belonging to the image of any {$d_x\Delta_\phi$} annihilates $a$. Finally, the images of all the differentials {$d_x\Delta_\phi$} span $\mathcal{H}_\phi$, because {$\Delta_\phi(0)=0$} and
$$
{\Delta_\phi(x)=\int_0^1(d_{tx}\Delta_\phi)x\,dt.}
$$
Consequently, every quadratic form in $\mathcal{H}_\phi$ vanishes at $a$, which contradicts the hypothesis.
\end{proof}

The second lemma is purely Lorentzian: a linear space of nilpotent self-adjoint endomorphisms has dimension at most three.

\begin{lemma}\label{lorentzian_nilpotent_bound}
Let $G$ be a non-singular 4 by 4 real symmetric matrix of index $1$. Let {$N$} be a linear subspace of the space of self-adjoint endomorphisms of $(\mathbb{R}^4,G)$, every member of which is nilpotent. Then, we have that
$$
{\dim_{\mathbb{R}}(N)\leq3.}
$$
\end{lemma}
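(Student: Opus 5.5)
The plan is to turn nilpotency into an isotropy condition for a nondegenerate quadratic form, and then bound the dimension of isotropic subspaces by the signature of that form. Let $\mathcal{S}$ be the $10$-dimensional real space of endomorphisms $T$ of $\mathbb{R}^4$ that are self-adjoint with respect to $G$, meaning $GT$ is symmetric. On $\mathcal{S}$ consider the symmetric bilinear form
$$
\Phi(S,T):=\operatorname{tr}(ST).
$$
It is symmetric because $\operatorname{tr}(ST)=\operatorname{tr}(TS)$.

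The first step is the observation that nilpotent members of $N$ are $\Phi$-isotropic. If $T\in N$, then $T$ is nilpotent, so all its eigenvalues vanish and $\operatorname{tr}(T^2)=0$. Since $N$ is a linear subspace, the polarization identity
$$
2\operatorname{tr}(ST)=\operatorname{tr}((S+T)^2)-\operatorname{tr}(S^2)-\operatorname{tr}(T^2)
$$
gives $\Phi(S,T)=0$ for all $S,T\in N$. Thus $N$ is a totally isotropic subspace for $\Phi$.

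The second step is to compute the signature of $\Phi$ on $\mathcal{S}$. By Sylvester's law we may change coordinates by a real congruence, so that $G=J:=\operatorname{diag}(1,1,1,-1)=\operatorname{diag}(\varepsilon_1,\dots,\varepsilon_4)$. Every $T\in\mathcal{S}$ is then $T=JB$ with $B$ real symmetric. A direct expansion gives
$$
\Phi(T,T)=\operatorname{tr}(JBJB)=\sum_{i,j}\varepsilon_i\varepsilon_jB_{ij}^2=\sum_{i}B_{ii}^2+2\sum_{i<j}\varepsilon_i\varepsilon_jB_{ij}^2.
$$
So the basis $\{J(E_{ij}+E_{ji})\}_{i\leq j}$ is $\Phi$-orthogonal. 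A basis element is $\Phi$-positive exactly when $\varepsilon_i\varepsilon_j=1$, and there are $6+1=7$ such pairs: the six pairs $i\leq j\leq3$ and the pair $(4,4)$. It is $\Phi$-negative for the three mixed pairs $(i,4)$ with $i\leq3$. Hence $\Phi$ is nondegenerate of signature $(7,3)$.

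The final step is the standard fact that a totally isotropic subspace $W$ of a nondegenerate real quadratic space of signature $(p,q)$ has $\dim W\leq\min(p,q)$. The reason is that $W$ meets the $p$-dimensional positive definite subspace trivially, which forces $\dim W\leq 10-p=q$, and symmetrically $\dim W\leq 10-q=p$. Applying this with $(p,q)=(7,3)$ gives $\dim_{\mathbb{R}}(N)\leq3$.

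I do not expect a serious obstacle. The only points to check carefully are the bookkeeping in the signature count and the fact that passing to $G=J$ by a congruence $P$ preserves both self-adjointness and the trace form, since $T\mapsto P^{-1}TP$ is a conjugation. The bound is sharp: a null frame gives a $3$-dimensional example.
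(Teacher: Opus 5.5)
Your proposal is correct and matches the paper's proof step for step. Both polarize $\operatorname{tr}(T^2)=0$ to show that $N$ is totally isotropic for the trace form, write $T=G^{-1}S$ with $G=\operatorname{diag}(1,1,1,-1)$ to compute its signature $(7,3)$, and conclude with the bound $\dim N\leq\min(7,3)=3$ on totally isotropic subspaces.
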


\begin{proof}
For any {$A,B\in N$}, the endomorphisms $A$, $B$, and $A+B$ are nilpotent. We therefore obtain that
$$
{0=\operatorname{tr}((A+B)^2)=2\operatorname{tr}(AB).}
$$
Thus, the standard matrix trace pairing {$\langle A,B\rangle:=\operatorname{tr}(AB)$} vanishes on {$N$}.

Choose a basis of $\mathbb{R}^4$ in which $G=\operatorname{diag}(1,1,1,-1)$. Every self-adjoint endomorphism $T$ has the form $T=G^{-1}S$ for a unique symmetric matrix 
$$S=\begin{pmatrix}
    s_{11} & \dots  & s_{14} \\
    \vdots& \ddots & \vdots \\
    s_{41} & \dots  & s_{44}
\end{pmatrix}$$ and a direct computation then gives
$$
\operatorname{tr}(T^2)=s_{11}^2+s_{22}^2+s_{33}^2+s_{44}^2+2s_{12}^2+2s_{13}^2+2s_{23}^2-2s_{14}^2-2s_{24}^2-2s_{34}^2.
$$
Hence, the trace quadratic form $\operatorname{tr}(T^2)$ has signature $(7,3)$. Since the trace pairing vanishes on {$N$}, the space {$N$} is totally isotropic for this quadratic form. Using basic linear algebra, we conclude that {$\dim_{\mathbb{R}}(N)\leq3$}.
\end{proof}

We can now prove the dimension-four application. A 4-dimensional Hesse system would contradict the preceding Lorentzian bound, while the cases of lower-dimensional Hesse systems follow from dimension theory.

\begin{corollary}\label{low_dimensional_Hesse_system}
Let $\phi\in\mathbb{R}[x_1,\dots,x_4]$ be a unimodular Hessian potential of index $1$. Suppose that the Hesse linear system $\mathcal{H}_\phi$ of $\phi$ has complex dimension at most $4$. Then, the gradient mapping $\nabla\phi\colon\mathbb{R}^4\rightarrow\mathbb{R}^4$ is a polynomial automorphism.
\end{corollary}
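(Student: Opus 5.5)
The plan is to reduce everything to the existence of a real constant pivot and then invoke Proposition~\ref{constant_pivot_inversion}. By Proposition~\ref{Complex-to-real}, it is enough to produce a \emph{complex} constant pivot. By the discussion following Definition~\ref{Hesse_system}, a complex pivot is exactly a common zero of the quadrics in $\mathcal{H}_\phi$ in $\mathbb{P}(V_{\mathbb{C}})\cong\mathbb{P}^3$. I will split according to $m:=\dim_{\mathbb{C}}(\mathcal{H}_\phi)\leq 4$.

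If $m\leq3$, I use projective dimension theory. Choose a basis of at most three quadrics; their common zero locus in $\mathbb{P}^3$ has dimension at least $3-m\geq0$, so it is nonempty. (If $m=0$, then $\phi$ is quadratic and any nonzero vector is a pivot.) Thus a complex pivot exists, and we conclude as above.

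If $m=4$, I argue by contradiction, assuming that the four basis quadrics have no common zero. Lemma~\ref{square_hesse_dominance} then shows that $\Delta_\phi(x)=H_\phi(x)-H_\phi(0)$ is dominant onto $\mathcal{H}_\phi$. Put $G:=H_\phi(0)=\operatorname{Hess}(\phi_2)$, which is non-singular of index $1$. The identity $\det(G+\Delta_\phi(x))=\det(G)$ holds for all $x\in V_{\mathbb{C}}$, since it is a polynomial identity valid on the reals. By dominance, the polynomial function $D\mapsto\det(G+D)-\det(G)$ vanishes on a Zariski-dense subset of $\mathcal{H}_\phi$, and hence on all of $\mathcal{H}_\phi$. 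Replacing $D$ by $tD$, we get $\det(I+tG^{-1}D)=1$ for all $t$. Therefore $G^{-1}D$ has all eigenvalues zero, i.e.\ it is nilpotent, for every $D\in\mathcal{H}_\phi$.

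Next I pass to a real space. Since $\phi$ has real coefficients, the quadrics $q_1,\dots,q_N$ in the monomial expansion of $D_\zeta^2\phi$ are real. Hence $\mathcal{H}_\phi$ is the complexification of the real span $\mathcal{H}_\phi^{\mathbb{R}}$ of these $q_i$, and this real span has real dimension $4$. The space
$$
N:=\{G^{-1}D : D\in\mathcal{H}_\phi^{\mathbb{R}}\}
$$
is then a $4$-dimensional real space of $G$-self-adjoint nilpotent endomorphisms. This contradicts Lemma~\ref{lorentzian_nilpotent_bound}. Hence a common zero exists, giving a complex pivot, then a real pivot by Proposition~\ref{Complex-to-real}, and the conclusion follows from Proposition~\ref{constant_pivot_inversion}.

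The main obstacle is the $m=4$ case, namely transferring the determinant identity from the image of $\Delta_\phi$ to the whole Hesse system. Dominance from Lemma~\ref{square_hesse_dominance} resolves this, and the only remaining care is the real-structure bookkeeping in the last step.
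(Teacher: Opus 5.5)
Your proposal is correct and follows the paper's proof essentially step for step: projective dimension theory when $\dim_{\mathbb{C}}(\mathcal{H}_\phi)\leq3$, and in dimension four the dominance Lemma~\ref{square_hesse_dominance}, nilpotency of $G^{-1}A$ on the real span of the Hesse system, and the Lorentzian bound of Lemma~\ref{lorentzian_nilpotent_bound}, followed by Propositions~\ref{Complex-to-real} and~\ref{constant_pivot_inversion}. The only cosmetic difference is the setup of the contradiction: you assume directly that the quadrics have no common zero, whereas the paper assumes there is no real pivot and uses Proposition~\ref{Complex-to-real} contrapositively to arrive at the same hypothesis.
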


\begin{proof}
For simplicity, we write $r:=\dim_{\mathbb{C}}(\mathcal{H}_\phi)$. Suppose first that $r\leq3$. By the projective dimension theorem, the quadrics in $\mathcal{H}_\phi$ have a common zero in {$\mathbb{P}(V_{\mathbb{C}})$}. Thus, the potential $\phi$ admits a complex constant pivot. Proposition~\ref{Complex-to-real} then gives a real constant pivot.

It remains to assume that $r=4$. Suppose, for a contradiction, that $\phi$ admits no real constant pivot. Proposition~\ref{Complex-to-real} shows that the quadrics in $\mathcal{H}_\phi$ have no common zero in {$\mathbb{P}(V_{\mathbb{C}})$}. Lemma~\ref{square_hesse_dominance} therefore shows that the mapping {$\Delta_\phi$} in that lemma is dominant.

Put {$G:=\operatorname{Hess}(\phi_2)$}, and let {$\mathcal{H}_\phi(\mathbb{R})$} be the real linear space spanned by the Hessian differences {$H_\phi(x)-G$} for $x\in\mathbb{R}^4$. Since polynomials with real coefficients are determined by their values on $\mathbb{R}^4$, under the standard identification of symmetric matrices with quadratic forms, we have that {$\mathcal{H}_\phi(\mathbb{R})\otimes_{\mathbb{R}}\mathbb{C}=\mathcal{H}_\phi$}. In particular, we have that {$\dim_{\mathbb{R}}(\mathcal{H}_\phi(\mathbb{R}))=4$}. Since the Hessian determinant of $\phi$ is constant and {$\Delta_\phi$} is dominant, the polynomial function sending $A$ to {$\det(G+A)-\det(G)$} vanishes on the entirety of $\mathcal{H}_\phi$.

In particular, for every {$A\in\mathcal{H}_\phi(\mathbb{R})$} and every $t\in\mathbb{R}$, we have
$$
{\det(I_4+tG^{-1}A)=1.}
$$
It follows that the characteristic polynomial of $G^{-1}A$ is $\lambda^4$, and hence the endomorphism $G^{-1}A$ is nilpotent. Since $A$ is symmetric, the endomorphism $G^{-1}A$ is also self-adjoint with respect to the bilinear form $G$. Lemma~\ref{lorentzian_nilpotent_bound} now forces {$\dim_{\mathbb{R}}(\mathcal{H}_\phi(\mathbb{R}))\leq3$}, which contradicts {$\dim_{\mathbb{R}}(\mathcal{H}_\phi(\mathbb{R}))=4$}.

Therefore, the potential $\phi$ admits a real constant pivot in every case. Proposition~\ref{constant_pivot_inversion} completes the proof.
\end{proof}

The preceding corollary completes the circle of ideas developed above. Constant pivots first reduce the inversion problem to three variables, while the Hesse system detects their existence by a finite-dimensional complex calculation. In particular, the Hessian conjecture holds whenever the Hessian differences of a Lorentzian unimodular potential span a complex vector space of dimension at most four. This condition is independent of the degree and can be checked directly from the coefficients of the potential.

\begin{proposition}\label{dual_square_theorem}
If {$\mathcal{H}_\phi$} contains no nonzero quadratic form of rank one over $\mathbb{C}$, then it holds that $\deg(\phi)\leq4$.
Consequently, if $\phi$ is Lorentzian unimodular and $\mathcal{H}_\phi$ contains no nonzero quadratic form of rank one, then {we have that} $\phi$ admits a constant pivot.
\end{proposition}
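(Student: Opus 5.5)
Suppose $d:=\deg(\phi)\geq5$. We will show that $\mathcal{H}_\phi$ must contain a nonzero square of a linear form, which contradicts the hypothesis.

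The plan is to look at the highest-order derivatives of $\phi$. For a multi-index $\beta$ with $|\beta|=d-2$, the differentiated Hessian $\partial^\beta H_\phi$ is the constant matrix $\partial^\beta H_{\phi_d}$. Each such constant matrix lies in $\mathcal{H}_\phi$. To see this, note that $\mathcal{H}_\phi$ is spanned by the coefficient quadrics $q_i$ of the nonconstant monomials $m_i$ in $D_\zeta^2\phi(x)$. The form $\zeta\mapsto D_\zeta^2\partial^\beta\phi$ is a linear combination of the $q_i$ with $\deg m_i=d-2\geq1$.

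In the same way, any iterated directional derivative $D_{w_1}\cdots D_{w_{d-2}}$ applied to $D_\zeta^2\phi$ lands in $\mathcal{H}_\phi$, for arbitrary $w_j\in V_{\mathbb{C}}$. Taking all $w_j$ equal to $\zeta$, the resulting quadratic form in the auxiliary variable $\eta$ is
$$
\eta\mapsto D_\eta^2D_\zeta^{d-2}\phi_d .
$$
This polarization of $\phi_d$ equals $\frac{(d-2)!}{(d-2)!}\,\cdot$ (a constant multiple of) $\phi_d^{\mathrm{pol}}(\zeta,\dots,\zeta,\eta,\eta)$. By symmetry it equals $c_d\,D_\zeta^{d-2}D_\eta^2\phi_d$ with $c_d\neq0$.

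We now use $d\geq5$ to swap the roles of $\zeta$ and $\eta$. Consider instead the quadratic form in $\eta$ obtained by differentiating $d-3$ times in a fixed direction $w$ and once more in $\eta$. This gives the forms $\eta\mapsto D_w^{d-3}D_\eta^{3}\phi_d$, which are cubic in $\eta$, so that is not directly usable. Instead we use the full symmetric tensor $T:=\phi_d^{\mathrm{pol}}\in\operatorname{Sym}^d(V_{\mathbb{C}})$. Then $\mathcal{H}_\phi\supseteq\{T(w_1,\dots,w_{d-2},-,-)\}$, the space of all second-order contractions of $T$.

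Choose a point $w$ with $\phi_d(w)\neq0$, and contract $d-2$ times with $w$. The result is the quadratic form $\eta\mapsto T(w,\dots,w,\eta,\eta)$. Contracting $d-3$ times with $w$ and once with an auxiliary vector $v$ gives the forms $T(w,\dots,w,v,\eta,\eta)$.

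The key algebraic step is to exhibit a nonzero rank-one member. Since $d\geq5$, consider the contractions $T(w,\dots,w,\eta,\eta)$ as $w$ varies: the quadric $\eta\mapsto \partial_w^{d-2}\phi_d(\eta)$. Apolarity gives
$$
\operatorname{span}\{T(w^{d-2},-,-)\}=\operatorname{span}\{\text{second partials of }\phi_d\text{ evaluated}\}.
$$
By Lemma~\ref{top_hessian_singularity} and the Hesse theorem, $\phi_d$ depends on at most $3$ essential variables. More precisely, we intend to use the following classical fact: if every form in the space of $(d-2)$-th partials of $\phi_d$ has rank at least $2$, then $d\leq 4$. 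For binary and ternary forms this should follow by taking a Waring-type/catalecticant argument. The catalecticant of $\phi_d$ from degree $d-2$ to degree $2$ has image of dimension at most $6$ (in $\le3$ variables), while the variety of rank $\le1$ quadrics (a Veronese surface of dimension $2$, codimension $3$) meets any linear space of projective dimension $\geq3$. It is therefore enough to show that this catalecticant image has dimension $\geq4$ when $d\geq5$. If it does not, we treat the small-rank cases by hand via Lemma~\ref{small_hessian_kernel}.

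This dimension count is the main obstacle, and I am least sure it closes in all cases. If it fails, the fallback is to use the top $d-2$ derivatives directly: pick a point on the Veronese where $\partial^{d-2}$ is a pure power $\ell^{d-2}$ and use the apolar pairing $\langle \ell^{d-2},\phi_d\rangle$. Alternatively, argue via the lowest layer $\phi_3$ when it is nonzero.

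The second sentence follows from the first. Once $\deg\phi\leq4$, Lemma~\ref{quartic_pivot} (degree four, with $\phi_d+\phi_{d-1}$ covering all layers) or the trivial quadratic and cubic cases (for a cubic, Lemma~\ref{top_hessian_singularity} with Lemma~\ref{essential_space} gives a kernel vector of $H_{\phi_3}$) supplies a constant pivot.
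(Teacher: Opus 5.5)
Your reduction to the top layer is sound: for $|\beta|=d-2$ the constant quadrics $\zeta\mapsto D_\zeta^2\partial^\beta\phi_d$ lie in $\mathcal{H}_\phi$. So it would suffice to show that the span $W$ of the $(d-2)$-th partials of $\phi_d$ contains some $\ell^2$ when $d\geq5$. But that is the whole content of the proposition, and the proposal does not prove it.

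\begin{itemize}
\item The dimension-count route fails as stated. The assertion that $\dim W\geq4$ whenever $d\geq5$ is false for ternary forms: $\phi_d=x_1^d+x_2^d+x_3^d$ gives $W=\operatorname{span}\{x_1^2,x_2^2,x_3^2\}$, of dimension $3$.
\item Since a general plane in $\mathbb{P}^5$ misses the Veronese surface, no count can settle the case $\dim W\leq3$.
\item Lemma~\ref{small_hessian_kernel} does not help there. It concerns the generic rank of $\operatorname{Hess}(\phi_d)$, which is $3$ in the Fermat example, not $\dim W$. So ``treat the small-rank cases by hand'' has no mechanism behind it.
\item The fallbacks (apolar pairing with $\ell^{d-2}$, passing to $\phi_3$) are not arguments.
\item Cutting down to three essential variables via Lemma~\ref{top_hessian_singularity} uses unimodularity, which the first sentence of the statement does not assume.
\end{itemize}

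The missing idea is to dualise. Put $E:=\mathcal{H}_\phi^{\perp}\subseteq\operatorname{Sym}^2(V_{\mathbb{C}})$ and view its members as constant-coefficient second-order operators; each $\beta\in E$ sends $\phi$ to a constant. Since $\langle\ell^2,\beta\rangle$ is a nonzero multiple of $\beta(\ell)$, a common zero $[\ell]\in\mathbb{P}(V_{\mathbb{C}}^*)$ of $E$ would give $\ell^2\in E^{\perp}=\mathcal{H}_\phi$. So the hypothesis says exactly that $E$ is base-point-free. Then:
\begin{itemize}
\item four general members of $E$ form a regular sequence in $\operatorname{Sym}(V_{\mathbb{C}})$;
\item the quotient by them has Hilbert series $(1+t)^4$, which vanishes in degree $5$;
\item hence every operator of order at least $5$ lies in the ideal they generate;
\item each such operator is $\sum\gamma_i\beta_i$ with $\operatorname{ord}(\gamma_i)\geq3$, so it kills $\phi$, and therefore $\deg(\phi)\leq4$.
\end{itemize}
The same argument applied to $\operatorname{Ann}(\phi_d)_2=W^{\perp}$ would complete your top-layer version directly, with no essential-variable reduction. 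Your treatment of the second sentence is fine, including the separate quadratic and cubic cases.
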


\begin{proof}
We use the natural pairing between {$\operatorname{Sym}^2(V_{\mathbb{C}}^*)$} and {$\operatorname{Sym}^2(V_{\mathbb{C}})$}, and denote $E:=\mathcal{H}_\phi^{\perp}$. 
Regard $E$ as a linear system of quadrics on {$\mathbb{P}(V_{\mathbb{C}}^*)$}. If $[\ell]$ were a common zero of $E$, then {we would have that}
$$
\langle\ell^2,\beta\rangle=0
$$
for every $\beta\in E$, and hence {we would obtain that} $\ell^2\in E^\perp=\mathcal{H}_\phi$, contrary to the hypothesis. Thus, the linear system $E$ is base-point-free.

Let {$S:=\operatorname{Sym}(V_{\mathbb{C}})$} be the polynomial ring of constant-coefficient differential operators. The ideal generated by $E$ has height four. Four general elements $\beta_1,\ldots,\beta_4\in E$ therefore form a homogeneous system of parameters. Since {$S$} is Cohen--Macaulay, the elements $\beta_1,\ldots,\beta_4$ form a regular sequence, and hence {we obtain that}
$$
{\operatorname{Hilb}_{S/(\beta_1,\ldots,\beta_4)}(t)}
=\frac{(1-t^2)^4}{(1-t)^4}
=(1+t)^4.
$$
Therefore, every homogeneous differential operator of order at least five belongs to the ideal $(\beta_1,\ldots,\beta_4)$.

Each $\beta_i$ annihilates $\phi$, and constant-coefficient differential operators commute. Hence every fifth derivative of $\phi$ vanishes, so {we have that} $\deg(\phi)\leq4$. The Lorentzian quartic theorem then gives a constant pivot.
\end{proof}

We now consider the different situation in which every member of the Hesse system is singular.

\subsection{High-Dimensional Hesse Systems}

\begin{lemma}\label{space_of_singular_symmetric_matrices}
Let {$L$} be a complex linear
subspace of {$\operatorname{Sym}^2(V_{\mathbb{C}}^*)$} such that every matrix in {$L$} is singular. Then, it holds that {$\dim_\mathbb{C}(L)\leq6$}. Moreover, if {$\dim_\mathbb{C}(L)=6$}, then a linear change of basis identifies {$L$} with $$\{\operatorname{diag}(A,0):A\in\operatorname{Sym}^2(\mathbb{C}^3)\}\subseteq{\operatorname{Sym}^2(V_{\mathbb{C}}^*)}.$$
\end{lemma}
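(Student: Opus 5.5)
The plan is a Flanders-type rank argument. Fix an element $A\in L$ of maximal rank $r\le 3$ and read off linear constraints on every other $B\in L$ from the identity $\det(A+tB)=0$, valid for all $t\in\mathbb{C}$. Since the rank-$\le r$ locus is closed and $A$ has maximal rank, every element of $L$ near $A$ also has rank exactly $r$. This lets me polarize the constraints over all of $L$, replacing $A$ by $A+sC$ for $C\in L$ and small $s$.

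\textbf{Case $r=3$.} Choose a basis with $A=\operatorname{diag}(A_0,0)$ and $A_0\in\operatorname{Sym}^2(\mathbb{C}^3)$ invertible. Write each $B\in L$ in block form with entries $B_0$, a column $b=\beta(B)\in\mathbb{C}^3$, and a corner entry $B_{44}$.
\begin{enumerate}
\item[1.] The coefficient of $t$ in $\det(A+tB)$ is $\det(A_0)B_{44}$, so $B_{44}=0$.
\item[2.] With $B_{44}=0$, the determinant becomes $-t^2\,b^\top\operatorname{adj}(A_0+tB_0)\,b$. Hence $b^\top\operatorname{adj}(A_0+tB_0)\,b=0$ for all $t$.
\item[3.] Setting $t=0$ and polarizing over $L$ shows that $\operatorname{im}(\beta)$ is totally isotropic for the nondegenerate form $\operatorname{adj}(A_0)$ on $\mathbb{C}^3$. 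Therefore $\dim\operatorname{im}(\beta)\le 1$.
\item[4.] Since $\ker(\beta)\subseteq\{\operatorname{diag}(B_0,0)\}\cong\operatorname{Sym}^2(\mathbb{C}^3)$, this already gives $\dim L\le 7$.
\end{enumerate}

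\textbf{The hard step} is removing the extra dimension when $\operatorname{im}(\beta)=\mathbb{C}w$ with $w\ne0$ isotropic. I would apply the $t$-linear part of step 2 to $\operatorname{adj}(A_0+tC_0)$, polarized as in step 3. Concretely, take a fixed $B^\ast\in L$ with $\beta(B^\ast)=w$, and add small multiples of it to $A$ and to arbitrary elements of $\ker\beta$. This should give $w^\top\,(d\operatorname{adj})_{A_0}[C_0]\,w=0$ for all $\operatorname{diag}(C_0,0)\in\ker\beta$. Since $A_0$ is invertible, $(d\operatorname{adj})_{A_0}$ is a linear isomorphism of $\operatorname{Sym}^2(\mathbb{C}^3)$, and $w\otimes w\ne0$. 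So this is a nontrivial linear condition on $C_0$, forcing $\dim\ker\beta\le5$ and hence $\dim L\le 6$.

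For the equality statement I then need to rule out that this isotropic configuration actually achieves dimension $6$. I would try to show that in this configuration a different vector, namely a suitable combination of $e_4$ with $A_0^{-1}$-related data built from $w$, lies in the common kernel of $L$. If that fails, I would fall back on an explicit classification: after a normalization $w=(1,\sqrt{-1},0)$, compute directly which $6$-dimensional spaces survive the higher-order coefficients in $t$.

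\textbf{Cases $r\le2$.} Here $A=\operatorname{diag}(A_0,0_{4-r})$, and the same coefficient extraction applies with larger kernel blocks. The lowest nonvanishing coefficient shows that the lower-right $(4-r)\times(4-r)$ block of every $B\in L$ vanishes to the appropriate order. For $r=2$, combining $\det(B_{\mathrm{lower}})=0$ with the rank-$\le2$ constraint on off-diagonal blocks should again confine $L$ to dimension at most $6$. In the extremal case I would verify that $L$ consists of matrices with $r\le2$ supported on a $3\times3$ block, which is impossible since that space contains rank-$3$ elements. So in fact the case $r\le2$ gives $\dim L\le5$.

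\textbf{Conclusion for equality.} When $\dim L=6$, the analysis produces a nonzero vector $a$ killed by every member of $L$. The space of symmetric forms with $a$ in the kernel is itself $6$-dimensional, so $L$ equals it. Moving $a$ to $e_4$ by a change of basis gives the stated normal form.
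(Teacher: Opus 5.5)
Your argument for $r=3$ up to the bound $\dim L\le 6$ is sound and is essentially the paper's route: a maximal-rank element, $B_{44}=0$, isotropy of $\operatorname{im}\beta$, and polarization against a fixed $B^\ast$. The genuine gap is the equality statement. In the configuration $\operatorname{im}\beta=\mathbb{C}w\neq0$ you only reach $\dim\ker\beta\le5$, i.e.\ $\dim L\le6$, and you leave open whether $6$ is attained.

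Your first fallback cannot succeed. Any common kernel vector of $L$ must lie in $\ker A=\mathbb{C}e_4$, but $B^\ast e_4=(w,0)\neq0$. So this configuration has no common kernel vector, and the only way forward is to prove $\dim L\le5$ there.

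The missing ingredient is the top-order coefficient, which you did not use. Apply $b^\top\operatorname{adj}(A_0+tB_0)b=0$ to $B^\ast+s\operatorname{diag}(C_0,0)\in L$ and take the coefficient of $t^2s^2$. This gives $w^\top\operatorname{adj}(C_0)w=0$ for every $\operatorname{diag}(C_0,0)\in\ker\beta$. Normalize $A_0=I_3$ and, by Witt's theorem, $w=(1,\sqrt{-1},0)$. The condition then reads $c_{33}(c_{22}-c_{11}-2\sqrt{-1}c_{12})+(c_{13}+\sqrt{-1}c_{23})^2=0$. Your linear condition $w^\top(d\operatorname{adj})_{I_3}[C_0]w=-w^\top C_0w=0$ is exactly $c_{22}-c_{11}-2\sqrt{-1}c_{12}=0$. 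Together they force the second independent condition $c_{13}+\sqrt{-1}c_{23}=0$ on $\ker\beta$. Hence $\dim\ker\beta\le4$ and $\dim L\le5$.

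The paper uses the quadratic condition alone. It is a rank-three form on $\operatorname{Sym}^2(\mathbb{C}^3)$ whose nondegenerate part has Witt index one, so its totally isotropic subspaces have dimension at most $4$. Only once this bound is in place is your concluding paragraph, which produces a common kernel vector when $\dim L=6$, justified.

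Your case $r\le2$ is also only a sketch, and the condition you name, $\det(B_{\mathrm{lower}})=0$, is too weak to bound the dimension. What you need is that $\operatorname{rank}(A+tB)\le2$ for all $t$. Take $A=\operatorname{diag}(A_0,0)$ with $A_0$ invertible of size $2$, and write $B$ with upper-left block $C$, lower-left block $U$ and lower-right block $B_{\mathrm{lower}}$. The Schur complement $tB_{\mathrm{lower}}-t^2U(A_0+tC)^{-1}U^\top$ must then vanish for small $t$, so $B_{\mathrm{lower}}=0$. A member with $C=0$ then has rank $2\operatorname{rank}(U)\le2$. These $U$ therefore form a linear space of singular $2\times2$ matrices, of dimension at most $2$, which gives $\dim L\le3+2=5$. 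Your appeal to an extremal $L$ being supported on a $3\times3$ block assumes the classification it is meant to establish.
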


\begin{proof}
Let $r$ be the largest rank of a matrix in {$L$}. The assertion is immediate if $r\leq1$. Suppose first that $r=2$. After a linear change of coordinates, we may assume that {$L$} contains the matrix {$A=\operatorname{diag}(I_2,0)$}. For any {$B\in L$}, write
$$
{B=\begin{pmatrix}C&U^\top\\ U&B_0\end{pmatrix}}.
$$
Since $\operatorname{rank}(A+tB)\leq2$ for every $t\in\mathbb{C}$, the Schur complement over $\mathbb{C}(t)$ gives $B_0=0$. If $B$ belongs to the kernel of the mapping that sends $B$ to $C$, then its rank is twice the rank of $U$. Therefore, in the above case, these matrices $U$ form a linear space of singular 2 by 2 complex matrices, and hence have dimension at most 2. Now, since $C$ is symmetric, we obtain that {$\dim_\mathbb{C}(L)\leq5$}.

It remains to assume that $r=3$. We may now take {$A=\operatorname{diag}(I_3,0)$} and write
$$
{B=\begin{pmatrix}C&u^\top\\ u&b_0\end{pmatrix}}.
$$
For every {$B\in L$}, we have that
$$
{0=\det(A+tB)=b_0\det(I_3+tC)t-\langle\operatorname{adj}(I_3+tC)u,u\rangle t^2}.
$$
It follows that $b_0=0$, and then the constant term of the remaining identity gives $\langle u,u\rangle=0$. Applying this conclusion to the sum of two members of {$L$}, we see that the image $E$ of the projection $\pi(B):=u$ is totally isotropic in $\mathbb{C}^3$. Thus, we have that $\dim_\mathbb{C}(E)\leq1$.

If $E=0$, every member of {$L$} annihilates the last coordinate vector, and hence we obtain that {$\dim_\mathbb{C}(L)\leq6$}. Equality gives precisely the stated linear space. 

Suppose finally that $E=\mathbb{C}u_0$, and let {$L_0:=\ker(\pi|_{L})$}. Fix a member of {$L$} whose last column is $u_0$. For every matrix {$\operatorname{diag}(C,0)\in L_0$}, the preceding determinant identity, applied after adding an arbitrary multiple of this matrix, gives
$$
\langle Cu_0,u_0\rangle=\langle\operatorname{adj}(C)u_0,u_0\rangle=0.
$$
After taking {$u_0=(1,\sqrt{-1},0)$}, the second expression above is
$$
{c_{33}(c_{22}-c_{11}-2\sqrt{-1}c_{12})+(c_{13}+\sqrt{-1}c_{23})^2}.
$$
This is a quadratic form of rank three on the 6-dimensional space of symmetric matrices $C$, whose restriction to {$L_0$} vanishes identically, and hence, its polarization vanishes on {$L_0$}. Its non-degenerate part has Witt index one, so we obtain that {$\dim_\mathbb{C}(L_0)\leq4$}. Therefore, we have that {$\dim_\mathbb{C}(L)\leq5$}. The proof is completed.
\end{proof}

\begin{corollary}\label{singular_Hesse_system}
Let $\phi\in\mathbb{R}[x_1,\dots,x_4]$ be a unimodular Hessian potential of index $1$. If every member of the Hesse system $\mathcal{H}_\phi$ is singular, then {we have that} $\dim_\mathbb{C}(\mathcal{H}_\phi)\leq6$. If $\dim_\mathbb{C}(\mathcal{H}_\phi)=6$, then {we have that} $\phi$ admits a constant pivot.
\end{corollary}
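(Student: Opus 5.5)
The first assertion is a direct application of Lemma~\ref{space_of_singular_symmetric_matrices} with $L:=\mathcal{H}_\phi$. Every member of $\mathcal{H}_\phi$ is a singular quadratic form on $V_{\mathbb{C}}$, so $\dim_\mathbb{C}(\mathcal{H}_\phi)\leq6$. No Lorentzian input is needed here.

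For the equality case, I plan to use the structural part of Lemma~\ref{space_of_singular_symmetric_matrices}. If $\dim_\mathbb{C}(\mathcal{H}_\phi)=6$, then after a complex linear change of basis, $\mathcal{H}_\phi$ becomes $\{\operatorname{diag}(A,0):A\in\operatorname{Sym}^2(\mathbb{C}^3)\}$. Let $\zeta\in V_{\mathbb{C}}$ be the vector corresponding to the last basis vector. Every member of $\mathcal{H}_\phi$ annihilates $\zeta$ as a symmetric bilinear form, so in particular $q(\zeta)=0$ for all $q\in\mathcal{H}_\phi$.

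By the remark after Definition~\ref{Hesse_system}, we have $D_\zeta^2\phi(x)=q_0(\zeta)+\sum_i m_i(x)q_i(\zeta)$ with each $q_i\in\mathcal{H}_\phi$. Hence $D_\zeta^2\phi$ is constant, and $\zeta$ is a complex constant pivot. Proposition~\ref{Complex-to-real} then upgrades this to a real constant pivot, since $\phi$ is a unimodular Hessian potential of index $1$.

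Alternatively, a real pivot can be obtained directly, which avoids invoking the complex-to-real principle. The common kernel $K:=\bigcap_{q\in\mathcal{H}_\phi}\ker(q)$ is defined by a linear system. Because $\phi$ has real coefficients, $\mathcal{H}_\phi=\mathcal{H}_\phi(\mathbb{R})\otimes\mathbb{C}$, as noted in the proof of Corollary~\ref{low_dimensional_Hesse_system}. So $K$ is the complexification of the real common kernel of the real Hessian differences. In the equality case $K$ is nonzero, and it therefore contains a nonzero real vector $\xi$ with $(H_\phi(x)-H_\phi(0))\xi=0$ for all $x$. Such a $\xi$ is directly a real constant pivot.

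The only point requiring care is confirming that the normal form yields a \emph{common kernel vector}, not merely a common isotropic vector. That is exactly what the block form $\operatorname{diag}(A,0)$ provides, so I expect no real obstacle.
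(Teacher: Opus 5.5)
Your main argument is the paper's proof. Lemma~\ref{space_of_singular_symmetric_matrices} gives the bound and the block normal form, the last basis vector is a common kernel vector $\zeta$ and hence a complex constant pivot, and Proposition~\ref{Complex-to-real} turns it into a real one.

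Your alternative ending is also correct, and it is sharper than what the paper does. The Hesse system $\mathcal{H}_\phi$ is spanned over $\mathbb{C}$ by the real symmetric matrices $q_1,\dots,q_N$. Its common kernel is therefore the null space of a real linear system, which is stable under complex conjugation. So if $\zeta=u+\sqrt{-1}v$ lies in it, so do $u$ and $v$. Whichever of them is nonzero, call it $\xi$, satisfies $R_\phi(x)\xi=0$ for all $x$, so it is even a strong pivot in the sense of Definition~\ref{pivot_new}. This bypasses Proposition~\ref{Complex-to-real} entirely, together with its Hartman--Nirenberg and J\"orgens--Calabi--Pogorelov inputs. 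It also drops the Lorentzian hypothesis: both assertions of the corollary hold for an arbitrary real polynomial.

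The paper's route has the merit of matching Proposition~\ref{singular_Hesse_system_new}. There, Bertini only produces a common \emph{zero} of the quadrics, not a common kernel vector. For such a vector the real and imaginary parts need not be isotropic separately, so the complex-to-real principle is genuinely needed. You identified exactly this distinction in your last paragraph.
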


\begin{proof}
The first assertion follows immediately from Lemma~\ref{space_of_singular_symmetric_matrices}. 

Suppose now that $\dim_\mathbb{C}(\mathcal{H}_\phi)=6$. The equality statement gives a nonzero vector {$\zeta\in V_{\mathbb{C}}$} which belongs to the kernel of every member of $\mathcal{H}_\phi$. Hence, the vector {$\zeta$} is a complex constant pivot. Proposition~\ref{Complex-to-real} then gives a real constant pivot.
\end{proof}

In fact, using the algebraic geometry of linear systems of divisors, we can prove that even more is true:

\begin{proposition}\label{singular_Hesse_system_new}
Let $\phi\in\mathbb{R}[x_1,\dots,x_4]$ be a unimodular Hessian potential of index $1$. If every member of the Hesse system $\mathcal{H}_\phi$ is singular, then {we have that} $\phi$ admits a constant pivot.
\end{proposition}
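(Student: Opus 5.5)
The plan is to show that the quadrics in $\mathcal{H}_\phi$ have a common zero in $\mathbb{P}(V_{\mathbb{C}})$. Such a zero is a complex constant pivot, and Proposition~\ref{Complex-to-real} then converts it into a real one. If $\mathcal{H}_\phi=0$, then $\phi$ is quadratic and every nonzero vector is a pivot. So we assume $\dim_{\mathbb{C}}(\mathcal{H}_\phi)\geq1$.

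The hypothesis says that every symmetric matrix $q\in\mathcal{H}_\phi$ has a nonzero kernel vector $\zeta_q$. Equivalently, the projective quadric $\{q=0\}\subseteq\mathbb{P}(V_{\mathbb{C}})$ is singular at $[\zeta_q]$. Note that $q(\zeta_q)=\langle q\zeta_q,\zeta_q\rangle=0$, so $[\zeta_q]$ lies on the quadric. The key input is Bertini's theorem in characteristic zero: for the linear system $|\mathcal{H}_\phi|$ on $\mathbb{P}^3$, a general member is smooth away from the base locus $\operatorname{Bs}(\mathcal{H}_\phi)$. Since every member, in particular a general one, is singular somewhere, its singular point must lie in $\operatorname{Bs}(\mathcal{H}_\phi)$. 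Hence the base locus is nonempty, and any $[\zeta]$ in it is a common zero of $\mathcal{H}_\phi$.

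To keep the argument self-contained and avoid subtleties about Bertini for non-reduced members, I would phrase it through the incidence variety
$$
I:=\{([q],[\zeta])\in\mathbb{P}(\mathcal{H}_\phi)\times\mathbb{P}(V_{\mathbb{C}}) \mid q\zeta=0\}.
$$
By hypothesis, $I$ surjects onto $\mathbb{P}(\mathcal{H}_\phi)$. Choose an irreducible component $I_0$ dominating $\mathbb{P}(\mathcal{H}_\phi)$, and let $Z$ be its image in $\mathbb{P}(V_{\mathbb{C}})$. I then want to show that every point of $Z$ is a base point.

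Fix a general $[\zeta]\in Z$. The fibre of $I_0$ over $[\zeta]$ is a linear space $F_\zeta\subseteq\mathbb{P}(\mathcal{H}_\phi)$, namely those $q$ with $q\zeta=0$. Now differentiate along a curve $([q_t],[\zeta_t])$ in $I_0$, using generic smoothness of $I_0\to\mathbb{P}(\mathcal{H}_\phi)$ in characteristic zero. From $q_t\zeta_t=0$ we get $\dot q\zeta+q\dot\zeta=0$. Pairing with $\zeta$ and using the symmetry of $q$ gives $\langle\dot q\zeta,\zeta\rangle=-\langle q\dot\zeta,\zeta\rangle=-\langle\dot\zeta,q\zeta\rangle=0$. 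Since the tangent directions $\dot q$ span all of $\mathcal{H}_\phi$ modulo $q$ by dominance, and $q(\zeta)=0$ as well, every member of $\mathcal{H}_\phi$ vanishes at $\zeta$. This is exactly the classical proof of Bertini's theorem specialised to quadrics.

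The main obstacle is justifying that the tangent vectors $\dot q$ span $\mathcal{H}_\phi$. This requires the projection $I_0\to\mathbb{P}(\mathcal{H}_\phi)$ to be submersive at a general point, which holds by generic smoothness because we are over $\mathbb{C}$. Once a common zero $\zeta$ is found, $D_\zeta^2\phi$ is constant, so $\zeta$ is a complex constant pivot, and Proposition~\ref{Complex-to-real} finishes the proof. This also recovers Corollary~\ref{singular_Hesse_system} without the dimension bound.
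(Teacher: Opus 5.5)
Your proposal is correct and takes the same route as the paper: Bertini's theorem (a general member of $\mathbb{P}(\mathcal{H}_\phi)$ is smooth away from the base locus) produces a common zero $[\zeta]$, which is a complex constant pivot, and Proposition~\ref{Complex-to-real} then turns it into a real one. Your incidence-variety and generic-smoothness computation just unpacks that Bertini statement for quadrics; one aside is inaccurate but never used, namely the claim that the fibre of $I_0$ over $[\zeta]$ is the whole linear space $F_\zeta$ (this holds only for the fibre of $I$).
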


\begin{proof}
If $\mathcal{H}_\phi=0$, the assertion is immediate. Otherwise, regard $\mathbb{P}(\mathcal{H}_\phi)$ as a projective linear system of quadric divisors on {$\mathbb{P}(V_{\mathbb{C}})$}. If the base locus of $\mathbb{P}(\mathcal{H}_\phi)$ were empty, Bertini's theorem would imply that a general member is smooth, contradicting the assumption that every member of $\mathcal{H}_\phi$ is singular. Therefore, the quadrics in $\mathcal{H}_\phi$ have a common zero {$[\zeta]$} in {$\mathbb{P}(V_{\mathbb{C}})$}. Any nonzero representative of this point {$[\zeta]$} is a complex constant pivot, and Proposition~\ref{Complex-to-real} gives a real constant pivot. The statement is proved.
\end{proof}

\begin{theorem}\label{dim_5/6_system_reduction}
Let $\phi\in\mathbb{R}[x_1,\dots,x_4]$ be a unimodular Hessian potential of index $1$. Suppose that the linear system $\mathbb{P}(\mathcal{H}_\phi)$ is {base-point-free}, and {suppose also that} $\dim_{\mathbb{C}}(\mathcal{H}_\phi)\leq6$. Then, there exist linearly independent vectors {$u,v\in V_{\mathbb{C}}$} such that $D_uD_v{\phi_{\geq3}}=0$. Consequently, there exist complex polynomial functions $P,Q$ in 3 variables, such that
$$
{\phi_{\geq3}}(z_1,\dots,z_4)=P(z_1,z_3,z_4)+Q(z_2,z_3,z_4)
$$
in some complex coordinate system $(z_1,\dots,z_4)$.
\end{theorem}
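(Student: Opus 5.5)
The plan is to translate the condition $D_uD_v\phi_{\geq3}=0$ into a linear condition on the symmetric tensor $u\cdot v\in\operatorname{Sym}^2(V_{\mathbb{C}})$. We then find such a tensor of rank exactly two by a dimension count in $\mathbb{P}(\operatorname{Sym}^2(V_{\mathbb{C}}))\cong\mathbb{P}^9$.

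\textbf{Step 1 (linear reformulation).} Write the Hesse system as in Definition~\ref{Hesse_system}, so that the nonconstant part of $\operatorname{Hess}(\phi)$ is $\sum_i m_i(x)q_i$ with $q_i$ spanning $\mathcal{H}_\phi$. For $u,v\in V_{\mathbb{C}}$, the function $D_uD_v\phi_{\geq3}$ is precisely the nonconstant part of $D_uD_v\phi$. Indeed, $\phi_2$ contributes a constant, $\phi_1,\phi_0$ contribute nothing, and every homogeneous piece of $D_uD_v\phi_{\geq3}$ has positive degree. Hence
$$
D_uD_v\phi_{\geq3}=\sum_i m_i(x)\,q_i(u,v),
$$
where $q_i(u,v)$ is the polarisation. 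Since the $m_i$ are distinct nonconstant monomials, this vanishes iff $q(u,v)=0$ for all $q\in\mathcal{H}_\phi$. We use the natural pairing between $\operatorname{Sym}^2(V_{\mathbb{C}}^*)$ and $\operatorname{Sym}^2(V_{\mathbb{C}})$, as in the proof of Proposition~\ref{dual_square_theorem}, and put $E:=\mathcal{H}_\phi^{\perp}\subseteq\operatorname{Sym}^2(V_{\mathbb{C}})$. Then the condition is exactly $u\cdot v\in E$. By the hypothesis $\dim_{\mathbb{C}}(\mathcal{H}_\phi)\leq6$, we have $\dim_{\mathbb{C}}(E)\geq4$, so $\mathbb{P}(E)$ is a linear subspace of $\mathbb{P}^9$ of dimension at least $3$.

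\textbf{Step 2 (intersection with the rank-two locus).} Let $\Sigma\subseteq\mathbb{P}^9$ be the locus of symmetric tensors of rank at most two. Equivalently, $\Sigma$ is the image of $\mathbb{P}(V_{\mathbb{C}})\times\mathbb{P}(V_{\mathbb{C}})\to\mathbb{P}^9$, $([u],[v])\mapsto[u\cdot v]$. Over $\mathbb{C}$ every rank-two symmetric tensor is $a^2+b^2=(a+\sqrt{-1}b)(a-\sqrt{-1}b)$, so $\Sigma$ is the secant variety of the Veronese threefold. The map is generically two-to-one, so $\Sigma$ is a closed irreducible subvariety of dimension $6$. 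By the projective dimension theorem, $\dim\mathbb{P}(E)+\dim\Sigma\geq3+6=9$ forces $\mathbb{P}(E)\cap\Sigma\neq\emptyset$. Choose a point $[u\cdot v]$ in this intersection.

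\textbf{Step 3 (excluding squares via base-point-freeness).} We must show that $u$ and $v$ can be taken independent, i.e.\ that the point is not a square $[w^2]$. But $w^2\in E$ means $\langle q,w^2\rangle=q(w)=0$ for every $q\in\mathcal{H}_\phi$, i.e.\ $[w]$ is a base point of $\mathbb{P}(\mathcal{H}_\phi)$. This is excluded by hypothesis, so $\mathbb{P}(E)$ misses the Veronese threefold. Hence the chosen point has rank exactly two and $u,v$ are linearly independent. By Step 1, $D_uD_v\phi_{\geq3}=0$.

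\textbf{Step 4 (splitting).} Complete $u,v$ to a basis and take complex coordinates $(z_1,\dots,z_4)$ with $D_u=\partial_{z_1}$ and $D_v=\partial_{z_2}$. Then $\partial_{z_1}\partial_{z_2}\phi_{\geq3}=0$, so $\partial_{z_1}\phi_{\geq3}$ is independent of $z_2$. Integrating in $z_1$ gives
$$
\phi_{\geq3}=P(z_1,z_3,z_4)+Q(z_2,z_3,z_4),
$$
with any common function of $(z_3,z_4)$ absorbed into $Q$.

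The main point requiring care is Step 2: justifying $\dim\Sigma=6$ and that it is closed, for instance via the rank-at-most-two determinantal description of $3\times3$ minors. The rest is formal.
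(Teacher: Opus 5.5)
Your proposal is correct and matches the paper's proof step for step. Both set $E=\mathcal{H}_\phi^{\perp}$, intersect $\mathbb{P}(E)$ (of dimension at least $3$) with the $6$-dimensional rank-at-most-two locus in $\mathbb{P}^9$, exclude squares by base-point-freeness, and conclude by splitting $\phi_{\geq3}$ once $\partial_{z_1}\partial_{z_2}\phi_{\geq3}=0$. Your justification of $\dim\Sigma=6$ via the generically two-to-one map $\mathbb{P}^3\times\mathbb{P}^3\to\mathbb{P}^9$ supplies a detail the paper only asserts.
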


\begin{proof}
We use the natural pairing between {$\operatorname{Sym}^2(V_{\mathbb{C}}^*)$} and {$\operatorname{Sym}^2(V_{\mathbb{C}})$}, and denote $E:=\mathcal{H}_\phi^{\perp}$. 

Recall that the projective variety of symmetric tensors of rank at most two has dimension $6$ in {$\mathbb{P}(\operatorname{Sym}^2(V_{\mathbb{C}}))=\mathbb{P}^9$}. Since $\dim_{\mathbb{C}}(\mathcal{H}_\phi)\leq6$, we have that $\mathbb{P}(E)$ has dimension at least $3$. The intersection theory then yields a nonzero symmetric tensor $\beta$ of rank at most two in $E$.

This tensor $\beta$ cannot have rank 1. Indeed, otherwise it is proportional to {$a\otimes a$} for some nonzero vector {$a\in V_{\mathbb{C}}$}, and every quadric in $\mathcal{H}_\phi$ vanishes at {$[a]$}. This contradicts the {base-point-freeness} of $\mathbb{P}(\mathcal{H}_\phi)$. Thus, we obtain that $\beta$ has rank {$2$}. Over $\mathbb{C}$, it is proportional to $u\otimes v+v\otimes u$ for some linearly independent vectors {$u,v\in V_{\mathbb{C}}$}. Its orthogonality to $\mathcal{H}_\phi$ shows that $D_uD_v\phi$ is constant. Since the polynomial {$\phi_{\geq3}$} has no term of degree at most two, we obtain that $D_uD_v{\phi_{\geq3}}=0$.

After a $\mathbb{C}$-linear change of coordinates, we may always assume that {$D_u=\partial_1$ and $D_v=\partial_2$, where the operators $\partial_1$ and $\partial_2$ denote differentiation with respect to $z_1$ and $z_2$, respectively}. Clairaut's theorem on mixed derivatives shows that no monomial of {$\phi_{\geq3}$} contains both $z_1$ and $z_2$. Grouping the remaining monomials gives the asserted decomposition.
\end{proof}

The preceding applications complete the degree-free part of the argument. We finally return to homogeneous degree separation and strengthen the two-layer theorem.

\subsection{Multi-Layer Potentials Revisited}

In this particular section, we will use the following notations: 

\begin{definition}
For an $n\times n$ real matrix $M$, we define its $k$-th {weighted principal-minor polynomial} by
$$
{\sigma_k(M;y_1,\dots,y_n):=\sum_{\substack{I\subseteq\{1,\dots,n\}\\|I|=k}}\det(M_I)\prod_{i\in I}y_i\in\mathbb{R}[y_1,\dots,y_n]},
$$
where {the matrix $M_I$ is the principal submatrix of $M$ indexed by $I$}. {We also put}
$$
{\sigma_k(M):=\sigma_k(M;1,\dots,1)}.
$$
\end{definition}

For a symmetric matrix $G\in\operatorname{GL}(4,\mathbb{R})$ and pairwise non-parallel vectors $v_1,\dots,v_n\in \mathbb{R}^4$, we define {$V:=[v_1\ \cdots\ v_n]$ and $C_G:=V^\top G^{-1}V$}. Consider
\begin{equation}\label{perturbation}
\phi(x):=\frac{1}{2}\langle Gx,x\rangle+\sum_{i=1}^nf_i(\langle v_i,x\rangle)\in \mathbb{R}[x_1,x_2,x_3,x_4]
\end{equation}
for some univariate {polynomials satisfying}
$$
{f_1(t),\dots,f_n(t)\in t^3\mathbb{R}[t].}
$$
\begin{lemma}\label{ridge_cauchy_binet_formula}
Then, by the Cauchy--Binet formula, we have 
$$
{\frac{\det(\operatorname{Hess}(\phi))}{\det(G)}=1+\sum_{i=1}^4\sigma_i(C_G;f''_1(\langle v_1,x\rangle),\dots,f''_n(\langle v_n,x\rangle))}.
$$
Therefore, the polynomial function $\phi(x)$ is a solution to the Monge-Ampère equation
\begin{equation}\label{Monge–Ampère}
{\det(\operatorname{Hess}(\phi))\equiv\det(G)}
\end{equation}
if and only if
$$
{\sum_{i=1}^4\sigma_i(C_G;f''_1(\langle v_1,x\rangle),\dots,f''_n(\langle v_n,x\rangle))=0.}
$$
\end{lemma}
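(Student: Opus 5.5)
The Hessian of $\phi$ in \eqref{perturbation} is a low-rank update of $G$:
$$
\operatorname{Hess}(\phi)_x=G+\sum_{i=1}^n f_i''(\langle v_i,x\rangle)\,v_iv_i^\top=G+VD(x)V^\top,
$$
where $D(x):=\operatorname{diag}(f_1''(\langle v_1,x\rangle),\dots,f_n''(\langle v_n,x\rangle))$. The plan is to factor out $G$ and apply the Weinstein--Aronszajn (Sylvester) determinant identity, which gives
$$
\frac{\det(\operatorname{Hess}(\phi))}{\det(G)}=\det(I_4+G^{-1}VDV^\top)=\det(I_n+V^\top G^{-1}VD)=\det(I_n+C_GD).
$$
This identity holds formally over the polynomial ring, so it needs no invertibility of $D$.

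Next we expand $\det(I_n+C_GD)$ in principal minors. Since $D$ is diagonal with entries $y_i:=f_i''(\langle v_i,x\rangle)$, the standard expansion $\det(I+M)=\sum_I\det(M_I)$ applied to $M=C_GD$ gives
$$
\det(I_n+C_GD)=\sum_{I}\det((C_G)_I)\prod_{i\in I}y_i=1+\sum_{k\ge1}\sigma_k(C_G;y_1,\dots,y_n).
$$
Here $(C_GD)_I=(C_G)_ID_I$. Equivalently, this expansion is the Cauchy--Binet formula applied to $\det(I_4+G^{-1}VDV^\top)$.

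To truncate the sum at $k=4$, we note that $C_G=V^\top G^{-1}V$ has rank at most $4$. Hence every principal minor of order $k\ge5$ vanishes, so $\sigma_k(C_G;\cdot)=0$ for $k\geq5$. This yields the displayed formula with the sum running only over $i=1,\dots,4$.

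The equivalence with \eqref{Monge–Ampère} then follows immediately: since $\det(G)\neq0$, the identity $\det(\operatorname{Hess}(\phi))\equiv\det(G)$ holds if and only if the sum of the $\sigma_i$ vanishes identically in $x$. No step here is a real obstacle. The only point requiring care is justifying the rank-$4$ truncation and the diagonal-scaling identity for principal minors. The hypotheses $f_i\in t^3\mathbb{R}[t]$ and that the $v_i$ are pairwise non-parallel play no role in the identity itself; they only ensure that $\phi_2=\frac12\langle Gx,x\rangle$.
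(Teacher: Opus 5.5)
Your proposal is correct and follows the paper's proof essentially step for step. Like the paper, you write $\operatorname{Hess}(\phi)=G+V\Lambda_f V^\top$ and apply the Sylvester determinant identity to get $\det(I_n+C_G\Lambda_f)$; the paper writes $\det(I_n+\Lambda_f C_G)$, which is the same determinant. You then expand by principal minors and truncate using $\operatorname{rank}(C_G)\leq4$, as the paper does. One small correction to your closing remark: only $f_i\in t^3\mathbb{R}[t]$ ensures $\phi_2=\frac12\langle Gx,x\rangle$, while the non-parallel condition on the $v_i$ just normalises the representation.
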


\begin{proof}
We put
$$
{\Lambda_f(x):=\operatorname{diag}(f_1''(\langle v_1,x\rangle),\dots,f_n''(\langle v_n,x\rangle))}.
$$
Then, we have that {$\operatorname{Hess}(\phi)=G+V\Lambda_f(x)V^\top$}. The Sylvester determinant identity gives
$$
{\frac{\det(G+V\Lambda_f(x)V^\top)}{\det(G)}=\det(I_n+\Lambda_f(x)C_G)}.
$$
Expansion by principal minors gives the displayed formula. Since {$\operatorname{rank}(C_G)\leq4$}, every principal minor of order larger than four vanishes.
\end{proof}

From now on, we will call the quadratic part of $\phi$ the \textit{base}, and call the functions of the form $f_i(\langle v_i,x\rangle)$ the \textit{ridges} of $\phi$. The polynomial $\phi$
itself will be termed a \textit{regular perturbation} of its base whenever equation~(\ref{Monge–Ampère}) is satisfied. {The polynomial $\phi_{\geq3}$ defined above is the sum of all ridges of $\phi$.}

\begin{remark}
We recall that, by elementary commutative algebra, up to an affine term, every real polynomial in 4 variables whose quadratic part has Hessian $G$ in fact can always be written as a function of the form (\ref{perturbation}). This is simply because every homogeneous polynomial of degree $j\geq3$ is a linear combination of $j$-th powers of real linear forms. Now, taking the union of these directions over all homogeneous pieces and combining proportional directions gives the desired representation.

As a consequence, the construction above generates all polynomial solutions to the Monge-Ampère equation~(\ref{Monge–Ampère}).
\end{remark}

\begin{lemma}\label{lorentz_hessian_rank_collapse}
Let $\phi\in\mathbb{R}[x_1,\dots,x_4]$ be a unimodular Hessian potential of index $1$. {Put $G:=\operatorname{Hess}(\phi_2)$ and $R_\phi:=\operatorname{Hess}(\phi_{\geq3})$.} Suppose that
\begin{equation}\label{determinant_equation}
{\det(G+\lambda R_\phi)\equiv\det(G)}
\end{equation}
for all $\lambda\in\mathbb{R}$. Then,
every member of the Hesse system $\mathcal{H}_\phi$ has rank at most $2$, and there exists a nonzero vector {$\xi\in\mathbb{R}^4$} such that $R_\phi(x)\xi=0$ for every $x\in\mathbb{R}^4$.
\end{lemma}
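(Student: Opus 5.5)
The plan has three steps: show each $R_\phi(x)$ has rank at most $2$ using the signature of $G$, show each $R_\phi(x)$ has a kernel vector by integrating along rays, and then pass from these pointwise kernels to a constant one using the homogeneous Hesse theorem.

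\emph{Rank bound.} Fix $x\in\mathbb{R}^4$. Identity~(\ref{determinant_equation}) says that the real pencil $G+\lambda R_\phi(x)$ has constant determinant $\det(G)$ for all $\lambda\in\mathbb{R}$. Since $G=H_\phi(0)$ has signature $(3,1)$, Lemma~\ref{lorentz_flat_pencil} gives $\operatorname{rank}(R_\phi(x))\leq2$ for every real $x$. Every member of $\mathcal{H}_\phi$ is a linear combination of matrices $R_\phi(a)-R_\phi(b)$, so this bound alone is not enough. I would use instead that~(\ref{determinant_equation}) is a polynomial identity in $(x,\lambda)$.

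\emph{Extending to the whole Hesse system.} Identity~(\ref{determinant_equation}) extends to complex $x$. Replacing $x$ by $\lambda x$ and comparing homogeneous components, as in Lemma~\ref{dilation_separation}, I would aim to show that $\det(G+\sum_j t_jH_{\phi_j}(x))\equiv\det(G)$ for independent parameters $t_j$. The Hesse system is the span of the coefficient matrices of the monomials in $R_\phi$, and these are exactly the values $R_\phi(x)$ linearized. I expect to conclude that $\det(G+A)=\det(G)$ for every $A\in\mathcal{H}_\phi(\mathbb{R})$, or equivalently that $\mathcal{H}_\phi$ consists of $G$-nilpotent self-adjoint endomorphisms $G^{-1}A$. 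Applying Lemma~\ref{lorentz_flat_pencil} to each real $A$ then gives rank at most $2$, and the complexification $\mathcal{H}_\phi(\mathbb{R})\otimes\mathbb{C}=\mathcal{H}_\phi$ extends this to all members, since the rank condition is Zariski closed.

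The main obstacle is passing from~(\ref{determinant_equation}) along rays to the full linear span. First I would try to show that $\det(G+A)-\det(G)$ vanishes on the Zariski closure of the image of $x\mapsto R_\phi(x)$, and that this closure is a cone. I would then use that the set of $G$-nilpotent self-adjoint matrices of rank at most $2$ with pairwise-vanishing trace pairing (as in Lemma~\ref{lorentzian_nilpotent_bound}: $\operatorname{tr}(G^{-1}R(x)G^{-1}R(y))=0$ follows from $\operatorname{tr}((G^{-1}R)^2)=0$ by polarization along rays) lies in a totally isotropic space. This gives a linear space of nilpotents, so the span of the values is nilpotent. If the general-span statement cannot be proved directly, I would settle for the statement needed for the second conclusion.

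\emph{Constant kernel.} Since $\phi_{\geq3}$ has generic Hessian rank at most $2$, Lemma~\ref{small_hessian_kernel} (item 1) suggests that $\phi_{\geq3}$ depends on at most two linear forms, which would give a constant kernel of dimension at least $2$. That lemma, however, is stated for homogeneous $P$. To handle the inhomogeneous case I would apply Lemma~\ref{small_hessian_kernel} to the top layer, obtaining a constant kernel $K$ of dimension at least $2$, and then argue inductively on the layers, mimicking the end of the proof of Lemma~\ref{high_degree_pivot}. There, the coefficient identities force $H_{\phi_{j}}|_{K}$ to be singular, and the $2$-dimensional intersection argument with Lemma~\ref{singular_binary_pencil} yields a common $\xi$. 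This common $\xi$ is real, and it satisfies $R_\phi(x)\xi=0$ for all $x$.
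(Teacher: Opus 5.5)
Your pointwise bound $\operatorname{rank}(R_\phi(x))\leq2$ via Lemma~\ref{lorentz_flat_pencil} is correct, but both remaining steps have genuine gaps.

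\textbf{The span.} Substituting $\mu x$ for $x$ only gives $\det\bigl(G+\sum_j\lambda\mu^{j-2}H_{\phi_j}(x)\bigr)=\det(G)$ on the two-parameter family $t_j=\lambda\mu^{j-2}$. It does not give the identity for independent $t_j$. With three or more nonlinear layers, different products of layers contribute to the same coefficient of $\lambda^s\mu^m$ (e.g.\ Hessian degrees $1+3=2+2$). This is exactly why Lemma~\ref{dilation_separation} is confined to two layers.

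Your polarization step has the same defect. Polarizing $\operatorname{tr}\bigl((G^{-1}R_\phi(x))^2\bigr)=0$ requires the identity at $R_\phi(x)+R_\phi(y)$, which is not a value of $R_\phi$ unless $\phi_{\geq3}$ is cubic. Even granting a trace-isotropic span, $\operatorname{tr}(T)=\operatorname{tr}(T^2)=0$ does not force a $G$-self-adjoint $T$ to be nilpotent: eigenvalues $\pm1,\pm\sqrt{-1}$ occur in signature $(3,1)$. So ``a linear space of nilpotents'' does not follow, and the first assertion remains unproved, as you partly concede.

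\textbf{The constant kernel.} Lemma~\ref{small_hessian_kernel} is genuinely homogeneous, and pointwise rank at most two is not enough. The polynomial $P=t^2y_1+t^3y_2+t^4y_3$ has $\operatorname{rank}(\operatorname{Hess}(P))\leq2$ everywhere. Its three layers have common kernels $\langle\partial_{y_2},\partial_{y_3}\rangle$, $\langle\partial_{y_1},\partial_{y_3}\rangle$ and $\langle\partial_{y_1},\partial_{y_2}\rangle$, whose intersection is zero, and $\operatorname{Hess}(P)$ has no common kernel vector at all. So the layer-by-layer scheme of Lemma~\ref{high_degree_pivot}, which is a two-layer argument relying on independent parameters, cannot work without a new input. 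You never identify which coefficient identity would supply it.

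The missing ingredients, which the paper uses, are:
\begin{enumerate}
\item[(i)] De Bondt's classification of polynomials with Hessian rank at most $2$. After a real linear change and dropping affine terms, either $\phi_{\geq3}=F(y_1,y_2)$, or $\phi_{\geq3}=g(t)+\sum_ib_i(t)y_i$. In the first case $R_\phi$ kills a fixed plane and both claims are immediate. In the second case every element of the span has vanishing $3\times3$ block in the $y$-variables, hence rank at most $2$.
\item[(ii)] The linear coefficient $\operatorname{tr}(G^{-1}R_\phi)=0$ of the determinant identity. Write $R_\phi=\theta\otimes\ell+\ell\otimes\theta+h\,\theta\otimes\theta$ with $\theta=dt$, $\ell=\sum_ib_i'(t)\,dy_i$ and $h=g''+\sum_ib_i''y_i$. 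The trace condition then reads $2\langle G^{-1}\theta,\ell\rangle+h\langle G^{-1}\theta,\theta\rangle=0$. If some $b_i''\neq0$, comparing $y_i$-coefficients gives $\langle G^{-1}\theta,\theta\rangle=\langle G^{-1}\theta,\ell\rangle=0$; otherwise one is back in the first form. Hence $\xi=G^{-1}\theta$ is the required common kernel vector.
\end{enumerate}
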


\begin{proof}
Lemma~\ref{lorentz_flat_pencil} gives {$\operatorname{rank}(R_\phi(x))\leq2$} for every $x\in\mathbb{R}^4$. By the classification theorem in \cite{deBondtSmallRank}, after a real linear change of coordinates and addition of an affine polynomial, either we have 
$$
{\phi_{\geq3}=F(y_1,y_2),}
$$
or we have
$$
{\phi_{\geq3}=g(t)+\sum_{i=1}^3 b_i(t)y_i}
$$
in a linear coordinate system $(t,y_1,y_2,y_3)$ of $\mathbb{R}^4$.
In the first case, every matrix {$R_\phi(x)$} annihilates the same two-dimensional subspace of $\mathbb{R}^4$, so both assertions follow. In the second case, the Hessian {$R_\phi(x)$} is of the form
$$
\begin{pmatrix}
g''(t)+b_1''(t)y_1+b_2''(t)y_2+b_3''(t)y_3&b'(t)\\
{b'(t)^\top}&0
\end{pmatrix}.
$$
Now, put $\theta:=dt$ and
$$
{\ell:=\sum_{i=1}^3b_i'(t)dy_i.}
$$
If all of $b_1'',b_2'',b_3''$ vanish, the absence of a quadratic homogeneous part reduces this to the first case; otherwise, investigating the coefficient of $\lambda$ in the determinant identity (\ref{determinant_equation}) gives
$$
{2\langle G^{-1}\theta,\ell\rangle+\left(g''+\sum_{i=1}^3 b_i''y_i\right)\langle G^{-1}\theta,\theta\rangle=0}.
$$
Comparison of the coefficients of $y_i$ gives $\langle G^{-1}\theta,\theta\rangle=0$, and then {we obtain that} {$\langle G^{-1}\theta,\ell\rangle=0$}. Thus, the nonzero vector {$\xi:=G^{-1}\theta$} satisfies {$R_\phi(x)\xi=0$} for every $x\in\mathbb{R}^4$.
\end{proof}

\begin{theorem}[High-Degree Multi-Layer Pivot]\label{high_degree_multilayer_pivot}
Let $\phi\in\mathbb{R}[x_1,\dots,x_4]$ be a unimodular Hessian potential of index $1$. Assume $$\phi=\phi_d+\cdots+\phi_{d-k}+\phi_2+\phi_1+\phi_0,$$ where $k\geq0$ and $d\geq4k+3$. Then, the potential $\phi$ admits a constant pivot.
\end{theorem}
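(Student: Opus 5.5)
The plan is to reduce to Lemma~\ref{lorentz_hessian_rank_collapse} by a dilation argument, as in Lemma~\ref{dilation_separation} but now with bands of exponents instead of single exponents. Put $G:=H_{\phi_2}$ and $R_\phi(x)=\sum_{j=d-k}^{d}H_{\phi_j}(x)$. Fix $x\in\mathbb{R}^4$. Since $H_{\phi_j}(\lambda x)=\lambda^{j-2}H_{\phi_j}(x)$, constancy of the Hessian determinant gives
$$
\det\Big(G+\sum_{j=d-k}^{d}\lambda^{j-2}H_{\phi_j}(x)\Big)=\det(G)
$$
for all $\lambda\in\mathbb{R}$.

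Introduce independent variables $s_{d-k},\dots,s_d$ and set
$$
P(s):=\det\Big(G+\sum_j s_jH_{\phi_j}(x)\Big).
$$
Expand $P(s)$ into monomials $s^\alpha$ with $|\alpha|=m\leq4$. Under the substitution $s_j=\lambda^{j-2}$, each such monomial becomes a power $\lambda^{e}$ with
$$
m(d-k-2)\leq e\leq m(d-2).
$$
The key step is to check that these bands are pairwise disjoint for $m=0,1,\dots,4$. This requires $m(d-2)<(m+1)(d-k-2)$ for $m\leq3$, which is equivalent to $mk+k+2<d$. The worst case is $m=3$, which needs $d>4k+2$, and this is exactly the hypothesis $d\geq4k+3$.

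Given disjointness, the identity in $\lambda$ shows the following. For each $m\geq1$, the sum of the coefficients of $P$ over monomials of total degree $m$, grouped by the exponent $e$ they produce, vanishes for each $e$. Since every such $e$ lies in the band of $m$ only, these sums over $e$ exhaust the degree-$m$ part. Hence the sum of all degree-$m$ coefficients of $P$ vanishes. This sum is precisely the coefficient of $\mu^m$ in $P(\mu,\dots,\mu)=\det(G+\mu R_\phi(x))$.

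Therefore $\det(G+\mu R_\phi(x))=\det(G)$ for all $\mu$ and all $x$, which is the hypothesis (\ref{determinant_equation}). Note also that $G=H_\phi(0)$ has index $1$.

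Lemma~\ref{lorentz_hessian_rank_collapse} then provides a nonzero $\xi\in\mathbb{R}^4$ with $R_\phi(x)\xi=0$ for every $x$. Consequently
$$
D_\xi^2\phi=\langle G\xi,\xi\rangle+\langle R_\phi(x)\xi,\xi\rangle=\langle G\xi,\xi\rangle
$$
is constant, so $\xi$ is a constant pivot.

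The only delicate point is the coefficient bookkeeping: within a single band several monomials may collide on the same power of $\lambda$. This is harmless, because we only need the total over each band, not the individual coefficients. The main obstacle is therefore just verifying the band separation inequality, together with the observation that the band for $m=1$ starts at $d-k-2\geq3k+1>0$, so it does not meet the constant term.
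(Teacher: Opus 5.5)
Your proof is correct and is essentially the paper's argument. The paper expands $\det(G+\lambda R_\phi)/\det(G)=1+\sum_{s=1}^4\lambda^s\sigma_s(L_\phi)$, observes that the homogeneous $x$-degrees of $\sigma_s(L_\phi)$ lie in the bands $[(d-k-2)s,(d-2)s]$, which are disjoint precisely when $d\geq4k+3$, deduces $\sigma_1=\dots=\sigma_4=0$ from $\det(G+R_\phi)=\det(G)$, and then applies Lemma~\ref{lorentz_hessian_rank_collapse}; your dilation $x\mapsto\lambda x$ with the auxiliary variables $s_j$ is the same homogeneous-degree comparison, just organised differently.
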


\begin{proof}
{Put $G:=\operatorname{Hess}(\phi_2)$, $R_\phi:=\operatorname{Hess}(\phi_{\geq3})$ and $L_\phi:=G^{-1}R_\phi$.} Write
$$
{\frac{\det(G+\lambda R_\phi)}{\det(G)}=1+\sum_{s=1}^4\lambda^s\sigma_s(L_\phi)}.
$$
The homogeneous degrees occurring in {$\sigma_s(L_\phi)$} belong to the interval
$$
{J_s:=\{j\in\mathbb{Z}:(d-k-2)s\leq j\leq(d-2)s\}}.
$$
For every $1\leq s\leq3$, we have that
$$
{\min(J_{s+1})-\max(J_s)}=d-(s+1)k-2>0.
$$
Thus, the four intervals {$J_1,\dots,J_4$} are pairwise disjoint. Since {$\det(G+R_\phi)=\det(G)$}, comparison of the homogeneous degrees gives {$\sigma_1(L_\phi)=\sigma_2(L_\phi)=\sigma_3(L_\phi)=\sigma_4(L_\phi)=0$}. Lemma~\ref{lorentz_hessian_rank_collapse} then yields a nonzero vector {$\xi$} such that $R_\phi(x)\xi=0$ for every $x\in\mathbb{R}^4$. Consequently, the polynomial {$D_\xi^2\phi=\langle G\xi,\xi\rangle$} is constant, and hence {the vector} {$\xi$} is by definition a constant pivot.
\end{proof}

The interval separation above is only one way of distinguishing the determinant coefficients. The same argument works whenever the corresponding sets of degrees are disjoint.

We will consider the Minkowski sums of subsets of $\mathbb{Z}$, so in particular, for any $E\subseteq\mathbb{Z}$ and non-negative $m\in\mathbb{Z}$, we write $mE:=\{a_1+\cdots+a_m:a_1,\dots,a_m\in E\}${, with $0E:=\{0\}$}.

\begin{lemma}
Let $m\in\mathbb{Z}$ be a positive integer and let $\varnothing\neq E\subseteq\mathbb{Z}$. If $1\leq i\leq j\leq m$ and $iE\cap jE\neq\varnothing$, then $mE\cap(m+i-j)E\neq\varnothing$.
Consequently, 
$$
mE\cap\bigcup_{k=1}^{m-1}kE=\varnothing
$$
if and only if $E,2E,\dots,mE$ are pairwise disjoint. 
\end{lemma}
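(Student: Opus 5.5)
The plan is a direct shifting argument based on the identity $aE + bE = (a+b)E$ for nonnegative integers $a,b$. This holds because $mE$ is the set of sums of $m$ elements of $E$, with $0E=\{0\}$.

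For the first assertion, I will suppose $1\leq i\leq j\leq m$ and pick $c\in iE\cap jE$. I add to both sides of this membership an arbitrary element $e\in(m-j)E$, which exists since $E\neq\varnothing$ and $m-j\geq0$. On the one hand, $c+e\in jE+(m-j)E=mE$. On the other hand, $c+e\in iE+(m-j)E=(m+i-j)E$. Hence $mE\cap(m+i-j)E\neq\varnothing$.

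For the equivalence, one direction is trivial: if $E,2E,\dots,mE$ are pairwise disjoint, then in particular $mE$ misses each $kE$ with $1\leq k\leq m-1$. For the converse, I will argue by contraposition. Suppose $iE\cap jE\neq\varnothing$ for some $1\leq i<j\leq m$. The first part gives $mE\cap(m+i-j)E\neq\varnothing$. Setting $k:=m+i-j$, the inequalities $i<j$ and $i\geq1$ give $1\leq k\leq m-1$. So $mE$ meets $\bigcup_{k=1}^{m-1}kE$, which is the contrapositive of what is needed.

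The only point requiring care is strict versus non-strict inequalities. In the first statement $i=j$ is allowed and then trivially gives $mE\cap mE\neq\varnothing$. In the equivalence, the strict inequality $i<j$ is exactly what places $k$ in the range $1,\dots,m-1$. No substantial obstacle is expected.
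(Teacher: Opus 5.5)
Your proposal is correct and follows essentially the same argument as the paper: choose an element of $iE\cap jE$, add an element of $(m-j)E$ to land in both $mE$ and $(m+i-j)E$, and for the equivalence apply this with $i<j$ so that $1\leq m+i-j\leq m-1$. Your explicit check that $k=i+(m-j)\geq 1$ and that the strict inequality $i<j$ gives $k\leq m-1$ spells out exactly the range condition the paper uses.
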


\begin{proof}
Choose $a\in iE\cap jE$ and $b\in(m-j)E$. Then, the element $a+b$ belongs to both $mE$ and $(m+i-j)E$. If two distinct members of $E,2E,\dots,mE$ intersect, write them as $iE$ and $jE$ with $i<j$. The first assertion gives an intersection of $mE$ with $(m+i-j)E$, where $1\leq m+i-j\leq m-1$. The converse is immediate.
\end{proof}

\begin{theorem}
Let $\phi\in\mathbb{R}[x_1,\dots,x_4]$ be a unimodular Hessian potential of index $1$. Assume the set {$I:=\{i\in\mathbb{Z}:\phi_{i+2}\neq0,\ i\geq1\}$} is {4-separated}, in the sense that the Minkowski sums {$I,2I,3I,4I$} are pairwise disjoint. Then, the potential $\phi$ admits a constant pivot.
\end{theorem}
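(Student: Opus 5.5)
The plan is to mimic the proof of Theorem~\ref{high_degree_multilayer_pivot}, replacing interval separation by the 4-separation hypothesis. Put $G:=\operatorname{Hess}(\phi_2)$, $R_\phi:=\operatorname{Hess}(\phi_{\geq3})$ and $L_\phi:=G^{-1}R_\phi$. Since $H_\phi(0)=G$, the matrix $G$ has signature $(3,1)$ and in particular is invertible. We expand
$$
\frac{\det(G+\lambda R_\phi)}{\det(G)}=1+\sum_{s=1}^4\lambda^s\sigma_s(L_\phi).
$$
Each entry of $R_\phi=\sum_{j\geq3}\operatorname{Hess}(\phi_j)$ is a sum of homogeneous pieces of degrees $j-2=i\in I$, where $i\geq1$ for every nonzero $\phi_{i+2}$. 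Hence every product of $s$ entries of $L_\phi$ has homogeneous components only in degrees belonging to the Minkowski sum $sI$. Consequently, $\sigma_s(L_\phi)$ is supported in degrees lying in $sI$.

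Next, we set $\lambda=1$. The hypothesis $\det(\operatorname{Hess}(\phi))=\det(G)$ gives $\sum_{s=1}^4\sigma_s(L_\phi)=0$. Because $I,2I,3I,4I$ are pairwise disjoint, and every element of them is positive (so that none collides with the constant term $1$), comparing homogeneous components of each degree forces $\sigma_s(L_\phi)=0$ for every $1\leq s\leq4$. Substituting back, we obtain the identity $\det(G+\lambda R_\phi)\equiv\det(G)$ for all $\lambda\in\mathbb{R}$.

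This is precisely hypothesis~(\ref{determinant_equation}) of Lemma~\ref{lorentz_hessian_rank_collapse}. That lemma supplies a nonzero real vector $\xi$ with $R_\phi(x)\xi=0$ for all $x$. Then $D_\xi^2\phi=\langle G\xi,\xi\rangle$ is constant, so $\xi$ is a constant pivot.

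The only step needing care is the claim that $\sigma_s(L_\phi)$ has degrees in $sI$ rather than in some larger set. It holds because each principal minor of order $s$ is a sum of products of exactly $s$ entries of $L_\phi$, and $G^{-1}$ is constant, so multiplying by it does not alter degrees. The preceding lemma on Minkowski sums is not needed for this direction; it merely reformulates the hypothesis. A degenerate case arises if $I=\varnothing$, but then $\phi$ is quadratic, and any nonzero vector is a pivot.
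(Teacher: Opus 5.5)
Your proposal is correct and follows essentially the same argument as the paper. You expand $\det(G+\lambda R_\phi)/\det(G)$ into the $\sigma_s(L_\phi)$, use the disjointness of $I,2I,3I,4I$ (whose elements are all positive degrees) to force each $\sigma_s(L_\phi)=0$, and then invoke Lemma~\ref{lorentz_hessian_rank_collapse} to obtain $\xi$ with $D_\xi^2\phi=\langle G\xi,\xi\rangle$; your remarks on the signature of $G$, the positivity of $sI$, and the case $I=\varnothing$ just make explicit details the paper leaves implicit.
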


\begin{proof}
{Put $G:=\operatorname{Hess}(\phi_2)$, $R_\phi:=\operatorname{Hess}(\phi_{\geq3})$ and $L_\phi:=G^{-1}R_\phi$.} Write
$$
{\frac{\det(G+\lambda R_\phi)}{\det(G)}=1+\sum_{s=1}^4\lambda^s\sigma_s(L_\phi)}.
$$
For every {$i\in I$}, the matrix {$\operatorname{Hess}(\phi_{i+2})$} is homogeneous of degree {$i$}. Hence, every homogeneous degree occurring in {$\sigma_s(L_\phi)$} belongs to {$sI$}. Since the sets {$I,2I,3I,4I$} are pairwise disjoint and {$\det(G+R_\phi)=\det(G)$}, comparison of homogeneous degrees gives {$\sigma_1(L_\phi)=\sigma_2(L_\phi)=\sigma_3(L_\phi)=\sigma_4(L_\phi)=0$}. Lemma~\ref{lorentz_hessian_rank_collapse} provides a nonzero vector {$\xi$} such that $R_\phi(x)\xi=0$ for every $x\in\mathbb{R}^4$. Consequently, the polynomial {$D_\xi^2\phi=\langle G\xi,\xi\rangle$} is constant, and hence {the vector} {$\xi$} is a constant pivot.
\end{proof}

\section{Hessian Metrics with Singular Residual}

We now prove the following result: for any unimodular Hessian potential $\phi\in\mathbb{R}[x_1,\dots,x_4]$ of index $1$, if ${R_\phi(x)}$ is singular for every $x\in\mathbb{R}^4$, then the potential $\phi$ admits a pivot.

The proof proceeds according to the generic rank of the residual Hessian. After treating the low-rank cases, we analyse the projective kernel image in generic rank three and exclude the remaining developable branch.

For any polynomial mapping ${f\colon \mathbb{A}^n\rightarrow \mathbb{A}^n}$, we denote by ${df}$ its {differential, identified with the Jacobian matrix}.

\subsection{The Low-Rank Cases}

In this section, we will use the following notation: 

Let $\phi\in\mathbb{R}[x_1,\dots,x_4]$ be a unimodular Hessian potential of index $1$ and $\deg(\phi)=d$. For our purpose, after omitting the affine part of $\phi$ and performing a linear change of coordinates, we may always assume $\phi_0=\phi_1=0$ and $2\phi_2=\langle Gx,x\rangle$ for $G:=\operatorname{diag}(-1,1,1,1)$. With the notation fixed above, we have {$R_\phi(x)=\operatorname{Hess}(\phi_{\geq3})_x$} {and} {$H_\phi(x)=G+R_\phi(x)$}. The rank of ${R_\phi(x)}$ at a generic point $x\in\mathbb{R}^4$ is said to be the generic rank of ${R_\phi}$.

We now refine the definition of a constant pivot.

\begin{definition}\label{pivot_new}
A nonzero constant vector ${\zeta\in V_{\mathbb{C}}}$ is a complex pivot of $\phi$ if ${D_\zeta^2\phi}$ is a constant function, and a nonzero constant vector {$\xi\in \mathbb{R}^4$} is 
\begin{enumerate}
    \item [1.] a pivot of $\phi$ if {$D_\xi^2\phi$} is a constant function;
    \item [2.] a strong pivot if ${R_\phi(x)\xi=0}$ for every $x\in \mathbb{R}^4$;
    \item [3.] null if {$\langle G\xi,\xi\rangle=0$}.
\end{enumerate}
\end{definition}

\begin{lemma}\label{residual_pivot_criterion}
A nonzero constant vector {$\xi\in \mathbb{R}^4$} is a pivot of $\phi$ if and only if
$$
{\langle R_\phi(x)\xi,\xi\rangle=0}
$$
for every $x\in \mathbb{R}^4$.
\end{lemma}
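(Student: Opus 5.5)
The claim is a direct unwinding of the definitions, using the normalisation fixed at the start of this subsection: $\phi_0=\phi_1=0$ and $2\phi_2=\langle Gx,x\rangle$. Under it we have $H_\phi(x)=G+R_\phi(x)$ for every $x\in\mathbb{R}^4$. For a constant vector $\xi$ this gives
$$
D_\xi^2\phi(x)=\langle H_\phi(x)\xi,\xi\rangle=\langle G\xi,\xi\rangle+\langle R_\phi(x)\xi,\xi\rangle .
$$
The first term is a constant, so $\xi$ is a pivot if and only if $x\mapsto\langle R_\phi(x)\xi,\xi\rangle$ is constant. It therefore remains to show that this function is constant exactly when it vanishes identically.

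For this I use homogeneity. Since $R_\phi=\operatorname{Hess}(\phi_{\geq3})$, we have
$$
\langle R_\phi(x)\xi,\xi\rangle=D_\xi^2\phi_{\geq3}(x)=\sum_{j\geq3}D_\xi^2\phi_j(x),
$$
and each summand $D_\xi^2\phi_j$ is homogeneous of degree $j-2\geq1$. A polynomial with no constant term is constant only if it is zero; equivalently, it vanishes at $x=0$, where $R_\phi(0)=0$. Hence constancy forces identical vanishing, and conversely identical vanishing is trivially constant.

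Combining the two steps gives both directions. There is no real obstacle here. The only point to state explicitly is that $R_\phi(0)=0$, because $\phi_{\geq3}$ has no terms of degree at most two. This also matches the decomposition already used in Lemma~\ref{complex_screen}, where constancy of $D_\zeta^2\phi$ was reduced to $D_\zeta^2\phi_{\geq3}=0$ by comparing homogeneous pieces.
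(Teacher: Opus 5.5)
Your proof is correct and matches the paper's argument: the normalisation gives $D_\xi^2\phi=\langle G\xi,\xi\rangle+\langle R_\phi(x)\xi,\xi\rangle$, and $R_\phi(0)=0$ forces the constant value of the second term to be zero. Your homogeneity remark is only a restatement of that final step.
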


\begin{proof}
By the normalization of the quadratic part, we have that
$$
{D_\xi^2\phi=\langle G\xi,\xi\rangle+\langle R_\phi(x)\xi,\xi\rangle}.
$$
Therefore, the polynomial {$D_\xi^2\phi$} is constant if and only if ${\langle R_\phi(x)\xi,\xi\rangle}$ is constant.

Since ${R_\phi(0)=0}$, this constant must be zero.
\end{proof}

The preceding criterion allows us to treat the low-rank cases directly.

\begin{proposition}\label{low_rank}
If the generic rank of ${R_\phi}$ is at most two, then {the potential} $\phi$ admits a pivot.
\end{proposition}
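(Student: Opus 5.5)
The plan is to reduce to the small-rank classification already used in Lemma~\ref{lorentz_hessian_rank_collapse}, and then to produce a pivot in each of the resulting normal forms. Suppose the generic rank of $R_\phi$ is at most two. Then $R_\phi(x)$ has rank at most two for every $x$, because rank is lower semicontinuous. We apply de Bondt's classification \cite{deBondtSmallRank} of polynomials whose Hessian has rank at most two to $\phi_{\geq3}$. After a real linear change of coordinates, and after discarding the affine and quadratic terms (which $\phi_{\geq3}$ does not contain in any case), there are two possibilities. Either $\phi_{\geq3}=F(y_1,y_2)$, or $\phi_{\geq3}=g(t)+\sum_{i=1}^3 b_i(t)y_i$ in some linear coordinate system $(t,y_1,y_2,y_3)$. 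In the first case every $R_\phi(x)$ annihilates a fixed two-dimensional real subspace. Any nonzero $\xi$ in that subspace is a strong pivot, and hence a pivot by Lemma~\ref{residual_pivot_criterion}.

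In the second case, which I expect to be the main obstacle, we no longer have the flat-pencil identity (\ref{determinant_equation}) to exploit. Instead we use $\det(G+R_\phi(x))=\det(G)$ directly. The block form of $R_\phi$ from the proof of Lemma~\ref{lorentz_hessian_rank_collapse} shows that $R_\phi(x)=h(x)\,\theta\otimes\theta+\theta\otimes\ell+\ell\otimes\theta$. Here $\theta=dt$, $\ell=\sum_i b_i'(t)\,dy_i$, and $h=g''+\sum_i b_i''y_i$. This is a rank-two perturbation of $G$ living in the plane spanned by $\theta$ and $\ell$. Expanding $\det(I+G^{-1}R_\phi)$ as a $2\times2$ determinant gives
\[
\bigl(1+\langle G^{-1}\theta,\ell\rangle\bigr)^2-h\,\langle G^{-1}\theta,\theta\rangle-\langle G^{-1}\theta,\theta\rangle\langle G^{-1}\ell,\ell\rangle+\langle G^{-1}\theta,\ell\rangle^2-\langle G^{-1}\theta,\ell\rangle^2\cdot 0=1,
\]
up to the correct bookkeeping of the $2\times2$ minor. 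This expression is affine in $y$ through $h$ and through $\ell$'s dependence on $t$ only. Comparing the coefficients of $y_i$ (when some $b_i''\neq0$) then forces $\langle G^{-1}\theta,\theta\rangle=0$. If all $b_i''=0$, then each $b_i$ is affine in $t$. Homogeneity of degree at least three forces $b_i$ to be constant, so they contribute nothing nonlinear and we are back in the first case.

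Once $\langle G^{-1}\theta,\theta\rangle=0$ is known, the remaining identity reduces to $(1+\langle G^{-1}\theta,\ell\rangle)^2=1$ as a polynomial in $t$. Since $\ell$ vanishes at $t=0$ (the $b_i'$ have no constant term), the only consistent branch is $\langle G^{-1}\theta,\ell\rangle\equiv0$. Now set $\xi:=G^{-1}\theta\neq0$. Then $\theta(\xi)=0$ and $\ell(\xi)=0$, so $R_\phi(x)\xi=0$ for every $x$. Thus $\xi$ is a strong (indeed null) pivot.

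The delicate points are the exact expansion of the $2\times2$ minor and the handling of the degenerate subcase in which $\theta$ and $\ell$ are proportional. In that subcase $R_\phi$ has rank one, so I would treat it separately via Lemma~\ref{small_hessian_kernel}(2) applied to the homogeneous pieces, which again yields a common kernel.
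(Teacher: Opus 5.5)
Your proof is correct, and its skeleton is the paper's. Both apply de Bondt's rank-$\leq2$ classification, and in both the case $\phi_{\geq3}=F(y_1,y_2)$ gives a fixed two-dimensional kernel and hence a strong pivot.

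You diverge on the second normal form. You call it the main obstacle, but the paper settles it in one line without using the Monge--Amp\`ere equation. Since $\phi_{\geq3}=g(t)+\sum_i b_i(t)y_i$ is affine in $y$, your own block form gives $\langle R_\phi(x)\xi,\xi\rangle=h\,\theta(\xi)^2+2\theta(\xi)\ell(\xi)=0$ for every nonzero $\xi\in\ker\theta$. Any such $\xi$ is therefore a pivot by Lemma~\ref{residual_pivot_criterion}.

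Your route instead uses $\det(G+R_\phi)=\det(G)$ to single out the particular vector $\xi=G^{-1}\theta\in\ker\theta$, and shows it is a \emph{strong null} pivot. This essentially reruns the computation of Lemma~\ref{lorentz_hessian_rank_collapse}, with the single identity at $\lambda=1$ in place of the flat pencil. It buys a stronger conclusion, namely a lightlike pivot with $R_\phi(x)\xi=0$ for all $x$, at the cost of extra work that the statement does not require.

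Two repairs are needed in that route.

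\begin{itemize}
\item Your displayed expansion is incorrect. Put $a:=\langle G^{-1}\theta,\theta\rangle$, $b:=\langle G^{-1}\theta,\ell\rangle$ and $c:=\langle G^{-1}\ell,\ell\rangle$. Sylvester's identity applied to the rank-two form $h\,\theta\otimes\theta+\theta\otimes\ell+\ell\otimes\theta$ gives $\det(I_4+G^{-1}R_\phi)=(1+b)^2+ha-ac$. Its $y_i$-coefficient is $a\,b_i''$, which forces $a=0$ when some $b_i''\neq0$; the identity then reduces to $(1+b)^2=1$. So your subsequent deductions do go through.
\item The step $b(0)=0$ needs $b_i\in t^2\mathbb{R}[t]$. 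This holds because the normal form can have no quadratic part, and constant terms of the $b_i$ can be absorbed into the discarded affine summand.
\end{itemize}

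Finally, the ``degenerate subcase'' needs no separate treatment. The covector $\ell$ has no $dt$-component, so proportionality of $\theta$ and $\ell$ forces $\ell=0$, and the Sylvester computation is valid in any event.
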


\begin{proof}
The rank zero case is immediate. Suppose first that the generic rank is 1. Theorem~7.2 and Corollary~7.3 in \cite{deBondtSmallRank} give ${\phi_{\geq3}}=f({p(x)})+{L(x)}$ for a nonzero constant covector $p$ and polynomials $f,{L}$ with ${\deg(L)\leq1}$. It is readily seen that every nonzero vector in the kernel of $p$ is therefore a strong pivot.

Suppose now that the generic rank is 2. Again by Theorem~7.2 and Corollary~7.3 in \cite{deBondtSmallRank}, after an $\mathbb{R}$-linear change of coordinates and the omission of an affine summand, either {$\phi_{\geq3}$} depends on at most two linear forms, or
$$
{\phi_{\geq3}}(t,u_1,u_2,u_3)=g(t)+b_1(t)u_1+b_2(t)u_2+b_3(t)u_3
$$ for $g,b_1,b_2,b_3\in\mathbb{R}[t]$. In the first case, the matrix ${R_\phi}$ has a fixed two-dimensional kernel. In the second case, every nonzero vector {$\xi=(0,c)$} with $c\in\mathbb{R}^3$ satisfies ${\langle R_\phi(x)\xi,\xi\rangle=0}$. The assertion follows from Lemma~\ref{residual_pivot_criterion}.
\end{proof}

\subsection{The Kernel Field and the Apex}

It remains to consider the case in which ${R_\phi}$ has generic rank three. The gradient image of {$\phi_{\geq3}$} is then a hypersurface, and its normal direction at a generic point gives the kernel of ${R_\phi}$.

For a subset $X$ of the affine space $\mathbb{A}^n$, we will denote {its Zariski closure by} {$\overline{X}$}.

\begin{definition}
\label{canonical_kernel_data}
Suppose that the generic rank of ${R_\phi}$ is three. Put
$$
{F_\phi:=d\phi_{\geq3}\colon V_{\mathbb{C}}\longrightarrow V_{\mathbb{C}}^*,}
$$
and let ${Y_\phi\subseteq V_{\mathbb{C}}^*}$ be the Zariski closure of ${F_\phi(V_{\mathbb{C}})}$.
Since ${V_{\mathbb{C}}}$ is irreducible, so is ${Y_\phi}$. Since ${F_\phi}$ has generic rank three, the variety ${Y_\phi}$ has dimension three and is therefore an irreducible hypersurface. Since ${F_\phi}$ has real coefficients, the hypersurface ${Y_\phi}$ is preserved by complex conjugation. After multiplication by a nonzero constant, we may therefore choose an irreducible polynomial ${P_Y\in\operatorname{Sym}(V_{\mathbb{C}})}$ such that $${Y_\phi=\{y\in V_{\mathbb{C}}^*:P_Y(y)=0\}}.$$ The vector field ${\eta_\phi(x):=dP_Y(F_\phi(x))\in V_{\mathbb{C}}}$ is termed {the canonical polynomial kernel field} of {$\phi_{\geq3}$}.
We then define the projective kernel map
$$
{\gamma_\phi\colon V_{\mathbb{C}}\dashrightarrow\mathbb{P}(V_{\mathbb{C}})}
$$
of {$\phi_{\geq3}$}
by ${\gamma_\phi(x):=[\eta_\phi(x)]}$ whenever ${\eta_\phi(x)\neq0}$.

We say that the hypersurface ${Y_\phi}$ has an apex, if there is a nonzero constant {covector} ${a\in V_{\mathbb{C}}^*}$ such that
$$
{Y_\phi=Y_\phi+\mathbb{C}a},
$$
or equivalently, there is such a {covector} $a$ for which
$$
{a(\eta_\phi(x))=0.}
$$
\end{definition}

The equivalence is justified, because if ${a(\eta_\phi(x))=0}$ for every $x$, then {the derivative} ${D_aP_Y}$ vanishes on the dense subset ${F_\phi(V_{\mathbb{C}})}$ of ${Y_\phi}$, and hence {we obtain that} ${P_Y}$ divides ${D_aP_Y}$. Since ${\deg(D_aP_Y)<\deg(P_Y)}$, we obtain that ${D_aP_Y=0}$, which is equivalent to ${Y_\phi=Y_\phi+\mathbb{C}a}$. The converse is immediate.

\begin{lemma}\label{canonical_quasi_translation}
{The} kernel field ${\eta_\phi}$ satisfies ${R_\phi\eta_\phi=(d\eta_\phi)\eta_\phi=0}$, where {the symbol} ${d\eta_\phi}$ is the {differential} of ${\eta_\phi}$.
Consequently, it holds that
$$
{\begin{cases}
F_\phi(x+t\eta_\phi(x))=F_\phi(x),\\
\eta_\phi(x+t\eta_\phi(x))=\eta_\phi(x).
\end{cases}}
$$
\end{lemma}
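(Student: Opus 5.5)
The plan is to differentiate the defining relation of $Y_\phi$ along the gradient map. By definition $P_Y(F_\phi(x))=0$ for every $x\in V_{\mathbb{C}}$, since $F_\phi(V_{\mathbb{C}})\subseteq Y_\phi$. We differentiate this identity in $x$. The chain rule gives $dP_Y(F_\phi(x))\circ dF_\phi(x)=0$, and $dF_\phi=R_\phi$ is the symmetric Hessian of $\phi_{\geq3}$. Transposing therefore yields $R_\phi(x)\eta_\phi(x)=0$. This identity holds for every $x$, including points where $\eta_\phi$ vanishes, where it is trivial.

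Next we prove $(d\eta_\phi)\eta_\phi=0$. Again by the chain rule, $d\eta_\phi(x)=\operatorname{Hess}(P_Y)(F_\phi(x))\,R_\phi(x)$. Applying this to $\eta_\phi(x)$ gives
$$
(d\eta_\phi)\eta_\phi=\operatorname{Hess}(P_Y)(F_\phi(x))\,R_\phi(x)\eta_\phi(x)=0
$$
by the first identity. This step is also purely formal, so no case analysis is needed.

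For the consequences, fix $x$ and consider the curve $c(t):=x+t\eta_\phi(x)$. Put $g(t):=\eta_\phi(c(t))-\eta_\phi(x)$, so that $g(0)=0$. Its derivative is $g'(t)=d\eta_\phi(c(t))\,\eta_\phi(x)=d\eta_\phi(c(t))\bigl(\eta_\phi(c(t))-g(t)\bigr)=-d\eta_\phi(c(t))\,g(t)$, using $(d\eta_\phi)\eta_\phi=0$ at $c(t)$. This is a linear ODE with polynomial coefficients and zero initial value. Uniqueness (over $\mathbb{C}$, holomorphic in $t$) gives $g\equiv0$, which is the second identity.

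Similarly, $\frac{d}{dt}F_\phi(c(t))=R_\phi(c(t))\,\eta_\phi(x)=R_\phi(c(t))\,\eta_\phi(c(t))=0$, where the middle equality uses the identity just proved. Hence $F_\phi(c(t))=F_\phi(x)$.

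The only delicate point is the ordering: the invariance of $\eta_\phi$ along the line must be proved before the invariance of $F_\phi$. One could alternatively argue with formal power series in $t$, showing that all Taylor coefficients of $g$ vanish inductively. The ODE uniqueness argument suffices and is what I would write.
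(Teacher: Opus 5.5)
Your proof is correct and follows the paper's argument: the chain rule gives $R_\phi\eta_\phi=0$ and $d\eta_\phi=\operatorname{Hess}(P_Y)(F_\phi)\,R_\phi$, hence $(d\eta_\phi)\eta_\phi=0$, and invariance along the lines $x+t\eta_\phi(x)$ follows in the same order ($\eta_\phi$ first, then $F_\phi$). The only difference is cosmetic: the paper says that $\eta_\phi$ is constant along its own integral curves, which are therefore straight lines, whereas you apply uniqueness to the linear ODE for $g$; this is the same argument made slightly more explicit and global.
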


\begin{proof}
Differentiating the identity ${P_Y(F_\phi(x))=0}$ gives ${R_\phi(x)\eta_\phi(x)=0}$. The chain rule also gives
$$
{d_x\eta_\phi=d^2P_Y(F_\phi(x))R_\phi(x)}.
$$
Consequently, we obtain that ${(d\eta_\phi)\eta_\phi=0}$.

Along an integral curve of ${\eta_\phi}$, the vector field ${\eta_\phi}$ is constant. Hence, the integral curve through the point ${x\in V_{\mathbb{C}}}$ is ${x+t\eta_\phi(x)}$. {This gives} ${\eta_\phi(x+t\eta_\phi(x))=\eta_\phi(x)}$. Along the same curve, the derivative of ${F_\phi}$ is ${R_\phi\eta_\phi=0}$. {This also gives} ${F_\phi(x+t\eta_\phi(x))=F_\phi(x)}$.
\end{proof}

The rank-three argument first treats the case in which the gradient image has an apex.

\begin{proposition}
\label{rank3_apex}
If ${Y_\phi:=\overline{F_\phi(V_{\mathbb{C}})}}$ has an apex, then {the potential} $\phi$ admits a pivot.
\end{proposition}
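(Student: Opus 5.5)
Suppose $a\in V_{\mathbb{C}}^*$ is a nonzero covector with $a(\eta_\phi(x))=0$ for all $x$, equivalently $D_aP_Y=0$, so that $Y_\phi$ is a cylinder in the direction $a$. The plan is to convert the apex into a constant complex vector $\zeta\in V_{\mathbb{C}}$ with $D_\zeta^2\phi_{\geq3}=0$. Proposition~\ref{Complex-to-real} then upgrades $\zeta$ to a real pivot.

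The natural candidate is the vector $\zeta:=a^\sharp$ dual to $a$ with respect to $G$, i.e. $\zeta:=G^{-1}a$. The reason is the quadratic base. The gradient of $\phi$ is $Gx+F_\phi(x)$, and the constant determinant gives
$$
\det(G+R_\phi(x))=\det(G).
$$
Since $R_\phi(x)$ has rank three with kernel spanned by $\eta_\phi(x)$, its adjugate is
$$
\operatorname{adj}(R_\phi(x))=\lambda(x)\,\eta_\phi(x)\otimes\eta_\phi(x)
$$
for a rational function $\lambda$ that is generically nonzero. Expanding $\det(G+tR_\phi)$ in $t$ by degrees does not separate the coefficients in general, so I would instead work on $Y_\phi$ itself. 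Three facts are available:
\begin{itemize}
\item $Y_\phi$ is invariant under translation by $a$;
\item the fibres of $F_\phi$ are lines in the direction $\eta_\phi$, by Lemma~\ref{canonical_quasi_translation};
\item $R_\phi(x)$ is the pullback, by $F_\phi$, of the second fundamental form data of $Y_\phi$.
\end{itemize}

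The key steps are as follows.

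First, write $Y_\phi=Y'\times\mathbb{C}a$ after a linear change of coordinates on $V_{\mathbb{C}}^*$. Then $P_Y$ depends only on three coordinates $y'$ in a complement, and $\eta_\phi$ takes values in the three-dimensional subspace $\ker(a)\subseteq V_{\mathbb{C}}$.

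Second, compare the top-rank coefficient. The coefficient of $t^3$ in $\det(G+tR_\phi)-\det(G)=0$ is
$$
\langle \operatorname{adj}(R_\phi)\,\cdot\,,\,\cdot\,\rangle_{G^{-1}}\text{-type term}=\lambda\,\langle G^{-1}\ldots\rangle,
$$
which reduces to $\lambda\,\langle\operatorname{adj}(G)\eta_\phi,\eta_\phi\rangle$-type quantities. From this I would extract the relation $\langle G\eta_\phi,\eta_\phi\rangle$-type identities. More usefully, the coefficient of $t^1$ gives $\operatorname{tr}(G^{-1}R_\phi)=0$, and the middle coefficients give trace identities for powers of $G^{-1}R_\phi$.

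Third, and this is the central step, show that $D_\zeta F_\phi$ takes values in $\mathbb{C}a$. Equivalently, show that the composite map $x\mapsto y'(F_\phi(x))$ has a constant kernel direction, i.e. that $\phi_{\geq3}$ modulo the $a$-component has a constant kernel. I would attempt this via the homogeneous Hesse theorem (Lemma~\ref{small_hessian_kernel}) applied to the projection. The dimension count is that the three functions $y'\circ F_\phi$ on four variables must have a Jacobian of rank at most three. That alone is automatic, so an extra input is needed. That input should come from the Lorentzian trace identities, which force the kernel to be the constant line $G^{-1}a$. Concretely, $a$ annihilates $\eta_\phi$, so $\eta_\phi$ lies in the $G$-orthogonal complement of $\zeta$. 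Combining this with $R_\phi\eta_\phi=0$ and $\operatorname{tr}(G^{-1}R_\phi)=0$, I expect to derive $\langle R_\phi\zeta,\zeta\rangle=0$.

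Finally, having $\langle R_\phi(x)\zeta,\zeta\rangle=0$ for all $x$, the vector $\zeta$ is a complex pivot by Lemma~\ref{residual_pivot_criterion}, applied over $\mathbb{C}$. Proposition~\ref{Complex-to-real} then gives a real pivot. If $a$ can be taken real, since $Y_\phi$ is conjugation-invariant and the apex directions form a subspace defined over $\mathbb{R}$, the pivot is real directly.

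I expect the third step to be the main obstacle. The apex is a statement about the image $Y_\phi$, and it has to be turned into a statement about the Hessian in the direction $G^{-1}a$. If the trace identities prove insufficient, my fallback is to use the quasi-translation structure of Lemma~\ref{canonical_quasi_translation}. Its line fibres together with the cylinder structure of $Y_\phi$ should show that $\phi_{\geq3}$ is of the form $g(\text{three linear forms})$ plus a term linear along a fixed direction. In that case a constant kernel vector, i.e. a strong pivot, can be read off directly.
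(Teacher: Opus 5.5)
\textbf{There is a genuine gap: the pivot need not be $G^{-1}a$, and the fallback cannot work when the apex space is small.}

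Your central step fails. For an arbitrary apex $a$, the vector $G^{-1}a$ need not be a pivot, and the identities you plan to use are not available. The hypothesis gives only $\det(G+R_\phi)=\det(G)$, not $\det(G+\tau R_\phi)=\det(G)$ for all $\tau$. So there is no ``coefficient of $\tau$'' to compare, and no identity $\operatorname{tr}(G^{-1}R_\phi)=0$. You note yourself that degree separation fails, and then use it anyway.

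Here is a concrete counterexample. In coordinates $(t,s,y_1,y_2)$, take the lightlike normal form
\[
\phi=ts+\tfrac12\bigl((1+s^2)y_1^2+2sy_1y_2+y_2^2\bigr).
\]
Then $\det(\operatorname{Hess}(\phi))=-1$, the index is $1$, and $\phi_{\geq3}=sy_1y_2+\tfrac12s^2y_1^2$. The $(s,y_1,y_2)$-block of $R_\phi$ has determinant $2sy_1(sy_1+y_2)$, so $R_\phi$ has generic rank three. Since $\partial_t\phi_{\geq3}=0$, $Y_\phi$ is the hyperplane of covectors vanishing on $\partial_t$, so $a=dy_1$ is an apex. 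Yet $G^{-1}dy_1=\partial_{y_1}$ and $\langle R_\phi\partial_{y_1},\partial_{y_1}\rangle=s^2\neq0$. Moreover $\operatorname{tr}(G^{-1}R_\phi)=s^2$ and $\det(G+\tau R_\phi)=-(1+\tau s^2-\tau^2s^2)$. So no argument built from $a(\eta_\phi)=0$, $R_\phi\eta_\phi=0$ and trace identities can yield $\langle R_\phi G^{-1}a,G^{-1}a\rangle=0$. The apex $ds$ happens to work in this example, but nothing in your plan selects it.

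Your fallback fails as well. In generic rank three, $\ker R_\phi(x)=\mathbb{C}\eta_\phi(x)$ at a generic point. A strong pivot would therefore force $[\eta_\phi]$ to be constant, and then every covector annihilating it would be an apex. Hence, when the space of apex directions has dimension $m\leq2$, there is no strong pivot at all, and $\phi_{\geq3}$ is not of the shape you predict.

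What is missing is the structure theorem the paper imports: de Bondt's classification of maps with projective image apices (his Corollary 7.3), split according to $m$.
\begin{itemize}
\item For $m=3$, $\phi_{\geq3}$ depends on three linear forms, so $R_\phi$ has a constant kernel.
\item For $m=2$, $\phi_{\geq3}=g(z_1,z_2)+b_1(z_1,z_2)u_1+b_2(z_1,z_2)u_2$, and every constant direction in the $u_1u_2$-plane is a pivot.
\item For $m=1$, $\phi_{\geq3}=g(t,\langle p(t),z\rangle)+\langle b(t),z\rangle$. Here the paper makes a genuinely new use of the determinant identity: it translates $z$ by $\lambda(p\times p')$ and extracts the coefficient of $\lambda$. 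This forces one of two outcomes. Either the Wronskian of $p$ vanishes, giving a constant $e$ with $\langle e,p\rangle=0$ and the pivot $(e,0)$, which is not proportional to $G^{-1}a$ when the block $G_0$ is non-singular. Or $G_0$ is degenerate, giving the null vector spanning $\ker G_0$.
\end{itemize}
Only in that last subcase is the pivot the $G$-dual of the apex.
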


\begin{proof}
Let $m$ be the dimension of the space of apex directions of ${Y_\phi}$. The apex directions of ${Y_\phi}$ are precisely the projective image apices of ${F_\phi}$ in the terminology of \cite{deBondtSmallRank}. Hence, after a complex linear change of coordinates and the omission of an affine summand, Corollary~7.3 in \cite{deBondtSmallRank} gives the following three possibilities:

If $m=3$, then {the polynomial} {$\phi_{\geq3}$} depends on only three linear coordinates, and hence {the matrix} ${R_\phi}$ has a fixed kernel. If $m=2$, we may write
$$
{\phi_{\geq3}}=g(z_1,z_2)+b_1(z_1,z_2)u_1+b_2(z_1,z_2)u_2.
$$
for $g,b_1,b_2\in\mathbb{C}[z_1,z_2]$.
Every constant direction in the $u_1u_2$-plane is then a complex pivot.

It remains to consider $m=1$. In this case, we may write
$$
{\phi_{\geq3}(z,t)=g(t,\langle p(t),z\rangle)+\langle b(t),z\rangle},
$$
where {we put} ${z:=(z_1,z_2,z_3)}$ and {take} $p,b\in\mathbb{C}[t]^{\oplus3}$. Put ${s:=\langle p(t),z\rangle}$ and ${a:=\partial_s^2g(t,s)}$. Since the generic rank of ${R_\phi}$ is three, we have that $a\neq0$.

Let ${G_0}$ be the upper-left $3\times3$ block of $G$ with respect to the coordinates ${(z,t)}$. Direct differentiation, followed by changing ${z}$ to ${z+\lambda(p\times p')}$, shows that all blocks of ${H_\phi}$ except its lower-right entry are independent of $\lambda$. Comparing the coefficient of $\lambda$ in ${\det(H_\phi)=\det(G)}$ gives
$$
{\det(G_0+a\,p\otimes p)}
\left(
{(\partial_sg)}\langle p\times p',p''\rangle+\langle b'',p\times p'\rangle
\right)=0.
$$

Suppose first that ${\det(G_0+a\,p\otimes p)}$ is not identically zero. The second factor
$${(\partial_sg)}\langle p\times p',p''\rangle+\langle b'',p\times p'\rangle$$
then vanishes. Since $p\neq0$, the polynomials $t$ and ${s=\langle p(t),z\rangle}$ are algebraically independent. Differentiation with respect to $s$ therefore gives $a\langle p\times p',p''\rangle=0$. The identity $\langle p\times p',p''\rangle=0$ is the vanishing of the Wronskian of the three components of $p$. Since the ground field has characteristic zero, the Wronskian criterion provides a nonzero constant vector $e\in\mathbb{C}^3$ such that $\langle e,p\rangle=0$. It follows that $(e,0)$ is a complex pivot.

Suppose next that ${\det(G_0+a\,p\otimes p)=0}$. Evaluation at the origin gives ${\det(G_0)=0}$. Since $G$ is nondegenerate, the matrix ${G_0}$ has rank two. Thus, there are a nonzero vector $n\in\mathbb{C}^3$ and a nonzero constant ${\chi}$ such that
$$
{\operatorname{adj}(G_0)=\chi\, n\otimes n}.
$$
The rank-one determinant identity now gives
$$
{0=\det(G_0+a\,p\otimes p)=a\chi\langle n,p\rangle^2}.
$$
Consequently, we have that $\langle n,p\rangle=0$, and hence {the vector} $(n,0)$ is again a complex pivot. Proposition~\ref{Complex-to-real} completes the proof.
\end{proof}

\subsection{The Projective Kernel Map}

To analyse the remaining possibilities, we now need to study the image of the projective kernel mapping.

\begin{proposition}
\label{projective_reduction}
The projective kernel map ${\gamma_\phi}$ of {$\phi_{\geq3}$} satisfies that the complex dimension $k$ of ${\overline{\gamma_\phi(V_{\mathbb{C}})}}$ is at most 2,
and
\begin{enumerate}
    \item [1.] if $k=0$, then {we have that} ${Y_\phi:=\overline{F_\phi(V_{\mathbb{C}})}}$ has an apex;
    \item [2.] if $k=1$, then {we have that} ${\overline{\gamma_\phi(V_{\mathbb{C}})}}$ is a rational curve;
    \item [3.] if $k=2$ and ${Y_\phi}$ has no apex, then {we have that} ${\overline{\gamma_\phi(V_{\mathbb{C}})}}$ is a
developable surface.
\end{enumerate}
Moreover, in the case of $k=2$ above, the {projective dual} of the surface ${S_\phi:=\overline{\gamma_\phi(V_{\mathbb{C}})}}$ is a real rational curve contained in
$$
{\bigl\{[\ell]\in\mathbb{P}(V_{\mathbb{C}}^*):\langle G^{-1}\ell,\ell\rangle=0\bigr\}},
$$
and the Zariski closure of every irreducible component of a general fibre of the affine Gauss map ${\gamma_Y}$ of ${Y_\phi}$ is a null affine line whose direction runs over ${S_\phi^\vee}$.
\end{proposition}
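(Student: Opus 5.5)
The plan is to study the Gauss map of the hypersurface $Y_\phi$ and use the Lorentzian determinant identity to constrain the developable branch. First I would identify $\gamma_\phi$ with $\gamma_Y\circ F_\phi$, where $\gamma_Y\colon Y_\phi\dashrightarrow\mathbb{P}(V_{\mathbb{C}})$, $y\mapsto[dP_Y(y)]$, is the affine Gauss map of $Y_\phi$. By construction $\eta_\phi=dP_Y\circ F_\phi$, and $F_\phi$ is dominant onto $Y_\phi$. Hence $\overline{\gamma_\phi(V_{\mathbb{C}})}=\overline{\gamma_Y(Y_\phi)}$, and $k\leq\dim Y_\phi=3$. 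To exclude $k=3$, I would use the classical fact that the Gauss map of the hypersurface of gradient images of a polynomial with singular Hessian is degenerate. Concretely, Lemma~\ref{canonical_quasi_translation} shows that $\eta_\phi$ is constant along the lines $x+t\eta_\phi(x)$, and $F_\phi$ is constant there as well. Via the Gordan--Noether identity $P_Y$ applied to the kernel field, the lines $y+t\,\eta$ on $Y_\phi$ should give positive-dimensional fibres of $\gamma_Y$. The cleanest route is: the relation $\langle\eta_\phi(x),d\phi_{\geq3}\rangle$ differentiated, plus $(d\eta_\phi)\eta_\phi=0$, shows that $\gamma_Y$ has rank at most $2$. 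I expect this to need care, and I would mirror the Gordan--Noether argument in de Bondt's framework \cite{deBondtSmallRank}.

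The three cases then follow. If $k=0$, then $[dP_Y]$ is constant on $Y_\phi$. The constant class, viewed as a vector $\eta_0$, gives a covector $a$ with $a(\eta_\phi)=0$ for any $a$ annihilating $\eta_0$, so an apex exists in the sense of Definition~\ref{canonical_kernel_data}. If $k=1$, the image is an irreducible curve dominated by the rational image of affine space. By Lüroth's theorem, or by its unirationality, it is a rational curve. If $k=2$, the image surface $S_\phi$ has dual $S_\phi^\vee$. Absence of an apex means $S_\phi$ is not contained in a hyperplane $\{a=0\}$, so $S_\phi$ is not a plane. The fibres of $\gamma_Y$ are then one-dimensional, and by the standard theory of degenerate Gauss maps (Griffiths--Harris) they are linear. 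The tangent hyperplanes of $Y_\phi$ are constant along these lines, so the dual variety of $Y_\phi$ is a surface. Projectivising shows that $S_\phi$ has one-dimensional dual, that is, $S_\phi$ is developable, and $S_\phi^\vee$ is the curve of fibre directions. Realness follows because $Y_\phi$ and $P_Y$ are conjugation-invariant, and rationality again follows from unirationality.

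The main obstacle is the null condition $\langle G^{-1}\ell,\ell\rangle=0$. Here I would use the Monge--Ampère identity $\det(G+R_\phi)=\det G$. Take a general point $x$ and its fibre line, whose direction is $\ell$, a covector in $V^*_{\mathbb{C}}$ lying in the image of $R_\phi$ along the line. The plan is to move $x$ along the preimage under $F_\phi$ of that fibre line, where $R_\phi$ acquires a rank-one component $\lambda\,\ell\otimes\ell$ with unbounded $\lambda$ while the rest stays bounded. Comparing the coefficient of $\lambda$, as in Proposition~\ref{rank3_apex}, yields a cofactor times $\langle G^{-1}\ell,\ell\rangle$. Nonvanishing of the cofactor should follow from the no-apex assumption, via the same dichotomy used there. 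Finally, the Zariski closures of the fibre components are affine lines with direction $\ell\in S_\phi^\vee$, by the linearity established above.
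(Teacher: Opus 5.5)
Your reduction $\gamma_\phi=\gamma_Y\circ F_\phi$ and the cases $k=0,1$ are fine. The case $k=2$, however, has a genuine gap: developability of $S_\phi$ does not follow from the theory of degenerate Gauss maps.

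Linearity of the fibres of $\gamma_Y$, together with symmetry of $d^2P_Y$, tells you that the fibre direction $[\ell]$ is the tangent plane of $S_\phi$ at $\gamma_Y(y)$. So $S_\phi^\vee$ is the closure of the set of fibre directions, but nothing forces this set to be a curve. ``Projectivising'' only shows that $(\overline{Y_\phi})^\vee$ is a surface. Then $S_\phi^\vee\subseteq\overline{Y_\phi}\cap H_\infty$, and that intersection is two-dimensional.

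A concrete example shows the inference fails. The cone $Y=\{q=0\}\subseteq\mathbb{A}^4$ over a smooth quadric has the following properties:
\begin{itemize}
\item its Gauss fibres are the lines through the origin;
\item it has no apex in the sense of Definition~\ref{canonical_kernel_data};
\item its Gauss image spans $\mathbb{P}^3$;
\item yet that Gauss image is a smooth quadric surface, whose dual is two-dimensional.
\end{itemize}

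In the paper the logic runs the other way. The null condition is proved \emph{first}, giving $S_\phi^\vee\subseteq\{\langle G^{-1}\ell,\ell\rangle=0\}$. If $\dim S_\phi^\vee=2$, this forces $S_\phi$ to be the primal null quadric. That case is excluded by a separate real argument: a real Jordan chain for the nilpotent map $ba$ at a real generic point, together with the expansion of $\det(H_\phi)-\det(G)$, produces a totally isotropic real plane spanned by $\eta_\phi$ and $e_1$, which is impossible in signature $(3,1)$. The case $\dim S_\phi^\vee=0$ is excluded by the absence of an apex. Your outline has neither this ordering nor the Lorentzian exclusion of the quadric.

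The null condition itself is also missing its key idea. The mechanism you sketch, a term $\lambda\,\ell\otimes\ell$ in $R_\phi$ along preimages of fibre lines with a cofactor made nonzero by the no-apex hypothesis, is not substantiated. The dichotomy of Proposition~\ref{rank3_apex} also does not yield a nonvanishing statement, since both of its branches end in a pivot. What the paper uses are two nilpotency statements.

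Put $B:=d^2P_Y(F_\phi)$ and $J:=d\eta_\phi=BR_\phi$. The automorphism $x\mapsto x+t\eta_\phi(x)$, with inverse $x-t\eta_\phi(x)$, gives $\det(I_4+tJ)=1$. Differentiating $F_\phi(x+t\eta_\phi)=F_\phi(x)$ gives $H_\phi(x+t\eta_\phi)(I_4+tJ)=H_\phi(x)+tGJ$. Constancy of $\det(H_\phi)$ then makes $H_\phi^{-1}GJ$ nilpotent as well.

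On $U=V_{\mathbb{C}}/\mathbb{C}\eta_\phi$ and $E=\operatorname{Ann}(\eta_\phi)$, this says that $ba$ and $b\hat a$ are both nilpotent. Here $a,\hat a\colon U\to E$ are induced by $R_\phi$ and by the symmetric matrix $\hat R_\phi=R_\phi H_\phi^{-1}G$. The first nilpotency already gives $k=\operatorname{rank}(b)\leq2$, which cleanly replaces your $k\leq2$ step. Your version of that step does not work as written: the lines $x+t\eta_\phi(x)$ lie in the source and are contracted by $F_\phi$, so they say nothing directly about fibres of $\gamma_Y$, and ``$y+t\eta$'' mixes $V_{\mathbb{C}}^*$ with $V_{\mathbb{C}}$.

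Finally, write $\operatorname{adj}(b)=c\,\ell\otimes\ell$. The $t^2$-coefficients of $\det(a^{-1}+tb)=\det(a^{-1})$ and $\det(\hat a^{-1}+tb)=\det(\hat a^{-1})$ give $a^{-1}(\ell,\ell)=\hat a^{-1}(\ell,\ell)=0$. The identity $\hat a^{-1}\xi=a^{-1}\xi+\pi_\eta(G^{-1}\xi)$ then yields $\langle G^{-1}\ell,\ell\rangle=0$.
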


\begin{proof}
Put ${B(x):=d^2P_Y(F_\phi(x))}$ {and} ${J:=d\eta_\phi=BR_\phi}$. By Lemma~\ref{canonical_quasi_translation}, the map sending $x$ to ${x+t\eta_\phi(x)}$ is a polynomial automorphism with inverse ${x-t\eta_\phi(x)}$. Consequently, we have that ${\det(I_4+tJ)=1}$.

Differentiating the identity ${F_\phi(x+t\eta_\phi(x))=F_\phi(x)}$ gives
$$
{R_\phi(x+t\eta_\phi(x))(I_4+tJ)=R_\phi(x)}.
$$
It follows that
$$
{H_\phi(x+t\eta_\phi(x))(I_4+tJ)=H_\phi(x)+tGJ}.
$$
Since ${\det(H_\phi)}$ is constant, we obtain that ${\det(H_\phi+tGJ)=\det(H_\phi)}$. Thus, the matrix ${H_\phi^{-1}GJ}$ is nilpotent.

Put ${\hat{R}_\phi:=R_\phi H_\phi^{-1}G}$. The identity ${\hat{R}_\phi=R_\phi-R_\phi H_\phi^{-1}R_\phi}$ shows that ${\hat{R}_\phi}$ is symmetric. Moreover, it has rank three and kernel ${\mathbb{C}\eta_\phi}$. Since ${B\hat{R}_\phi=JH_\phi^{-1}G}$ is similar to ${H_\phi^{-1}GJ}$, both ${BR_\phi}$ and ${B\hat{R}_\phi}$ are nilpotent.
Indeed, if ${\hat{R}_\phi v=0}$, then {we have that} ${H_\phi^{-1}Gv\in\mathbb{C}\eta_\phi}$. Since ${H_\phi\eta_\phi=G\eta_\phi}$, it follows that ${v\in\mathbb{C}\eta_\phi}$.

At a generic point, put ${U:=V_{\mathbb{C}}/\mathbb{C}\eta_\phi}$ and ${E:=\operatorname{Ann}(\eta_\phi)\subseteq V_{\mathbb{C}}^*}${, and denote the quotient map by} ${\pi_\eta\colon V_{\mathbb{C}}\rightarrow U}$. The natural pairing identifies ${E}$ with ${U^*}$. The matrices ${R_\phi}$ and ${\hat{R}_\phi}$ induce isomorphisms ${a,\hat a\colon U\rightarrow E}$, while ${B}$ induces a map ${b\colon E\rightarrow U}$. The maps ${ba}$ and ${b\hat a}$ are nilpotent. As ${a\colon U\rightarrow E}$ is invertible, we have that $\operatorname{rank}(b)\leq2$. Under the preceding identifications, {the differential factors through $U$ as}
$$
{(d_x\gamma_\phi)v=\pi_\eta(Jv)=ba(\pi_\eta(v)),}
$$
and hence {we obtain that}
$$
{k=\operatorname{rank}(d\gamma_\phi)=\operatorname{rank}(b)\leq2}.
$$

If $k=0$, the line ${\mathbb{C}\eta_\phi}$ is constant, and hence any nonzero constant {covector annihilating} ${\mathbb{C}\eta_\phi}$ is an apex of ${Y_\phi}$. If $k=1$, restriction to a suitable affine line gives a dominant rational map from ${\mathbb{P}_{\mathbb{C}}^1}$ to ${\overline{\gamma_\phi(V_{\mathbb{C}})}}$.

Suppose that $k=2$ and that ${Y_\phi}$ has no apex. {Recall that} ${S_\phi=\overline{\gamma_\phi(V_{\mathbb{C}})}}$. The surface ${S_\phi}$ is nondegenerate. Indeed, a hyperplane containing ${S_\phi}$ would give a nonzero constant {covector} $a\in V_{\mathbb{C}}^*$ such that $a(\eta_\phi(x))=0$ for every $x$, and hence an apex of ${Y_\phi}$. Let ${\ell}$ span the kernel of $b$. Since $ba$ and ${b\hat a}$ are nilpotent, after choosing dual bases of ${U}$ and ${E}$, we have that
$$
\det(a^{-1}+tb)=\det(a^{-1})
$$
and that
$$
{\det(\hat a^{-1}+tb)=\det(\hat a^{-1})}.
$$
The symmetric map $b$ has rank two, and hence ${\operatorname{adj}(b)=c\,\ell\otimes\ell}$ for some $c\neq0$. The coefficients of $t^2$ in the preceding identities are therefore ${c\,a^{-1}(\ell,\ell)}$ and ${c\,\hat a^{-1}(\ell,\ell)}$, respectively. Comparison of the coefficients of $t^2$ gives ${a^{-1}(\ell,\ell)=0}$ and ${\hat a^{-1}(\ell,\ell)=0}$.

For $\xi\in{E}$, choose $v$ such that ${R_\phi v=\xi}$. Since ${G(v+G^{-1}\xi)=H_\phi v}$, we obtain that
$$
{\hat a^{-1}\xi=a^{-1}\xi+\pi_\eta(G^{-1}\xi)}.
$$
Evaluation at ${\ell}$ gives $${\hat a^{-1}(\ell,\ell)=a^{-1}(\ell,\ell)+\langle G^{-1}\ell,\ell\rangle}.$$ Consequently, we have that ${\langle G^{-1}\ell,\ell\rangle=0}$.

The equality ${b\ell=0}$ gives ${B\ell\in\mathbb{C}\eta_\phi}$. For every $v\in{V_{\mathbb{C}}}$, symmetry gives $${\ell(Jv)=(R_\phi v)(B\ell)=0}.$$ Since ${\ell(\eta_\phi)=0}$, the covector ${\ell}$ annihilates ${\mathbb{C}\eta_\phi+\operatorname{im}(J)}$. At a point where {the differential} ${d\gamma_\phi}$ has rank two, the vector space ${\mathbb{C}\eta_\phi+\operatorname{im}(J)}$ has dimension three and is the tangent space to the affine cone over ${S_\phi}$ at ${\eta_\phi}$. Therefore, the covector ${\ell}$ defines the tangent plane to ${S_\phi}$ at ${[\eta_\phi]}$, and hence
$$
{S_\phi^\vee\subseteq
\bigl\{[\ell]\in\mathbb{P}(V_{\mathbb{C}}^*):\langle G^{-1}\ell,\ell\rangle=0\bigr\}}.
$$

Suppose that ${\dim_{\mathbb{C}}(S_\phi^\vee)=2}$. Since the dual null quadric is irreducible, the preceding inclusion is an equality. Projective biduality then gives that ${S_\phi}$ is the primal null quadric. We may choose ${P_Y}$ with real coefficients and work at a real generic point. The nilpotent map ${N:=ba}$ has rank two, so choose a real Jordan chain ${Ne_1=0}$, ${Ne_2=e_1}$ and ${Ne_3=e_2}$. Put ${e_0:=\eta_\phi}$ and choose lifts of $e_1,e_2,e_3$ to $\mathbb{R}^4$.
Since ${\operatorname{im}(N)}=\operatorname{span}_{\mathbb{R}}\{e_1,e_2\}$, the tangent space to the affine cone over ${S_\phi}$ is $\operatorname{span}_{\mathbb{R}}\{e_0,e_1,e_2\}$. The symmetry of ${aN}$ gives ${a(e_1,e_1)=a(e_1,e_2)=0}$ and ${a(e_1,e_3)=a(e_2,e_2)=:\alpha\neq0}$.
$$
{R_\phi=}
\begin{pmatrix}
0&0&0&0\\
0&0&0&\alpha\\
0&0&\alpha&\beta\\
0&\alpha&\beta&{\gamma}
\end{pmatrix},
$$
where {the constant} $\alpha\neq0$. Since ${S_\phi}$ is the primal null quadric, the preceding description of its tangent space gives $\langle Ge_0,e_i\rangle=0$ for $0\leq i\leq2$. Since $G$ is nondegenerate, we also have that $\langle Ge_0,e_3\rangle\neq0$. Since the tangent plane to the null quadric at ${[\eta_\phi]}$ is ${\{v\in V_{\mathbb{C}}:\langle Gv,\eta_\phi\rangle=0\}}$, the first row of $G$ has the form ${(0,0,0,\chi)}$, where {we have that} ${\chi\neq0}$. Expansion of the determinant identity gives
$$
{\det(H_\phi)-\det(G)=-\alpha\chi^2\langle Ge_1,e_1\rangle}.
$$
Thus, we obtain that $\langle Ge_1,e_1\rangle=0$. The plane spanned by ${\eta_\phi}$ and $e_1$ is therefore totally isotropic, which is impossible for a real form of signature $(3,1)$. Hence, we have that ${\dim_{\mathbb{C}}(S_\phi^\vee)\neq2}$.

If ${\dim_{\mathbb{C}}(S_\phi^\vee)=0}$, then {the surface} ${S_\phi}$ is a plane, contrary to its nondegeneracy. Indeed, the constant tangent plane then contains a dense open subset of ${S_\phi}$, so ${S_\phi}$ itself is that plane. Therefore, we conclude that ${\dim_{\mathbb{C}}(S_\phi^\vee)=1}$, which implies that ${S_\phi}$ is developable. The Gauss map of ${S_\phi}$, composed with ${\gamma_\phi}$, dominates ${S_\phi^\vee}$. Moreover, this Gauss map of ${S_\phi}$ is a rational map defined over $\mathbb{R}$ and has rank one at a real generic point. Its restriction to a suitable real affine line is therefore nonconstant and gives a dominant rational map ${\mathbb{P}_{\mathbb{R}}^1\dashrightarrow S_\phi^\vee}$. L\"uroth's theorem shows that ${S_\phi^\vee}$ is a real rational curve.

Finally, consider the affine Gauss map $\gamma_Y\colon Y_\phi\dashrightarrow S_\phi$ given by $\gamma_Y(y):=[d_yP_Y]$.
At ${y=F_\phi(x)}$, the tangent space of ${Y_\phi}$ is ${E}${.}

{The tangent space of $S_\phi$ at $[\eta_\phi]$ is naturally identified with a subspace of $U$. Under this identification,} the differential ${d_y\gamma_Y}$ is the map $b$. Let ${\Sigma}$ be an irreducible component of a general fibre of ${\gamma_Y}$. The tangent line of ${\Sigma}$ is ${\mathbb{C}\ell}$. Since ${[\ell]}$ is the tangent plane to ${S_\phi}$ at the fixed point ${[\eta_\phi]=\gamma_Y(\Sigma)}$, this direction is constant along ${\Sigma}$. Every linear function which annihilates ${\ell}$ is therefore constant on ${\Sigma}$. Thus, we obtain that ${\Sigma}$ is contained in an affine line with direction ${\ell}$, and equality follows after taking the Zariski closure. Since ${[\ell]\in S_\phi^\vee}$ and ${\langle G^{-1}\ell,\ell\rangle=0}$, the affine line ${\Sigma}$ is null.
\end{proof}

The cases in which this image has dimension at most one reduce to the apex case by the following fact about quasi-translations.

\begin{lemma}
\label{projective_curve_case}
Let ${\eta:V_{\mathbb{C}}\rightarrow V_{\mathbb{C}}}$ be a polynomial map satisfying ${(d\eta)\eta=0}$.
Suppose that  
$$
{\dim_\mathbb{C}\overline{\{[\eta(x)]:\eta(x)\neq0\}}\leq1}.
$$
Then, it holds that
$$
{\dim_{\mathbb{C}}(\operatorname{span}_{\mathbb{C}}(\eta(V_{\mathbb{C}})))\leq3.}
$$
\end{lemma}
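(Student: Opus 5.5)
The plan is a case split on $k:=\dim_{\mathbb{C}}\overline{\{[\eta(x)]\}}$. If $\eta\equiv0$ or $k=0$, the line $\mathbb{C}\eta(x)$ is constant, so the span of $\eta(V_{\mathbb{C}})$ has dimension at most $1$. The real content is therefore the case $k=1$. Here the closure $C$ of the projective image is an irreducible curve in $\mathbb{P}^3$, and we must show that $C$ lies in a plane, i.e.\ that some nonzero covector $a$ satisfies $a(\eta(x))=0$ for all $x$. The first step uses the quasi-translation identity $(d\eta)\eta=0$. As in Lemma~\ref{canonical_quasi_translation}, this gives $\eta(x+t\eta(x))=\eta(x)$, so $\eta$ is constant along the lines $x+\mathbb{C}\eta(x)$. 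Differentiating $\eta(x)\wedge\eta(x)=0$ is useless. Instead, we note that since $[\eta]$ factors through the curve $C$, the image of $d_x\eta$ lies in the two-dimensional tangent space $T_{\eta(x)}\widehat{C}$ of the affine cone $\widehat{C}$ over $C$. This space equals $\mathbb{C}\eta(x)+\mathbb{C}w(x)$, where $w$ is a tangent lift.

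The second step studies a general fibre. Take a general point $p\in C$ with a representative $v$, and let $X_p:=\overline{\{x:[\eta(x)]=p\}}$. This is a hypersurface, or a union of hypersurfaces, in $V_{\mathbb{C}}$, since the map to a curve has fibres of dimension $3$. On $X_p$ we have $\eta=h\,v$ for a scalar function $h$. Because $\eta$ is invariant under translation by $\eta$, each component of $X_p$ is stable under translation by $v$, i.e.\ it is a cylinder in the direction $v$. The same then holds for the level sets of $h$ on $X_p$. Using $d\eta(\eta)=0$ and $\operatorname{rank}(d\eta)\leq2$, I would show that on $X_p$ the differential $d\eta$ restricted to $T X_p$ has image in $\mathbb{C}v$, so $h$ is a function on $X_p/\mathbb{C}v$.

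The third step is the key step and the expected main obstacle. We must show that the tangent line to $C$ at $p$ is the same for all $p$, or more precisely that the osculating data force planarity. I would first try to follow the style of de Bondt's treatment of quasi-translations, as in \cite{deBondtSmallRank}: a quasi-translation $x\mapsto x+\eta(x)$ whose projective image has dimension at most one has an image contained in a linear space of codimension one. The argument goes through the invariant ring. The polynomials $f$ with $D_\eta f=0$ form a ring of transcendence degree $3$. It contains the coordinates of $\eta$ after projectivisation, so its elements include the functions $\eta_i/\eta_j$ pulled back. Since $C$ has dimension $1$, the degree count forces a linear relation among the components $\eta_1,\dots,\eta_4$. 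Concretely, I would use the fact that $x\mapsto\langle\eta(x),c\rangle$ and the invariants $a(x)$ with $a(\eta)=0$ combine to show that $\eta$ is annihilated by the linear form $x\mapsto\det(x,\eta(x),\cdot,\cdot)$-type constructions. I would then argue by contradiction: if $C$ spans $\mathbb{P}^3$, choose four general points of $C$. Then $\eta$ restricted to a general line $\ell$ through a fibre meets the cone $\widehat{C}$, and degree considerations of the polynomial curve $t\mapsto\eta(x_0+tu)$, which by invariance is constant in the direction $\eta$, give that $C$ is dominated by lines along which $\eta$ is affine. This yields a pencil structure that forces $C$ to be a line or a plane curve. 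If this direct route stalls, the fallback is to cite the corresponding result on quasi-translations with projective image of dimension at most $1$ in dimension four from \cite{deBondtSmallRank}. Either way we conclude $\dim\operatorname{span}(\eta(V_{\mathbb{C}}))\leq3$.
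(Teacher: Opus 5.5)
Your treatment of $k=0$ and the observations in Steps 1 and 2 are correct, but they do not reach the real difficulty. Step 3, which you rightly call the key step, contains no argument. The invariant ring of $D_\eta$ has transcendence degree $3$ and contains $\eta_1,\dots,\eta_4$, but you give no reason why a ``degree count'' should force a linear relation among the $\eta_i$. The $\eta_i$ generate a subfield of transcendence degree at most $2$, and a linear relation among them is exactly what is to be proved. The four-point and pencil sketch never says what contradiction arises.

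In fact nothing established in Steps 1 and 2 can suffice. Take the rational map $\eta(x)=(1,s,s^2,s^3)$ with $s:=x_2/x_1$. It satisfies $(d\eta)\eta=0$, because $D_\eta s=(sx_1-x_2)/x_1^2=0$. It is constant along the lines $x+\mathbb{C}\eta(x)$, and its fibres (the hyperplanes $x_2=s_0x_1$) are cylinders in the direction of $\eta$. Yet its projective image is the twisted cubic, which spans $\mathbb{P}^3$. So any proof must use the polynomiality of $\eta$ in an essential, global way, and your sketch never does.

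The fallback citation does not close the gap. The result you propose to cite is the lemma itself. The results the paper actually uses, from \cite{deBondtQT} rather than \cite{deBondtSmallRank}, are stated in terms of the Jacobian rank, and the rank-two one applies to homogeneous quasi-translations. The paper therefore splits by $\operatorname{rank}(d\eta)\leq2$ rather than by $k$; for $k=1$ both ranks one and two occur.
\begin{itemize}
\item Rank one is Theorem~5.5 of \cite{deBondtQT}.
\item In rank two the projective image is a curve, so $\eta\in\operatorname{im}(d\eta)$ at a generic point.
\item Hence the ideal of $Z_\eta:=\overline{\eta(V_{\mathbb{C}})}$ is stable under the Euler derivation, and $Z_\eta$ is an affine cone.
\item The homogenisation $\eta^{\mathrm h}(x,z):=(z^d\eta(x/z),0)$ is a homogeneous quasi-translation in dimension five. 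Because $Z_\eta$ is a cone, its image closure is $Z_\eta\times\{0\}$, so its Jacobian still has rank two.
\item Theorem~5.3 of \cite{deBondtQT} then bounds the span of its image by three, and setting $z=1$ gives the claim.
\end{itemize}
The cone property and this rank-preserving homogenisation are the missing ideas that make de Bondt's theorems applicable. Without them, or a genuine replacement for Step 3, your proposal does not prove the lemma.
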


\begin{proof}
The hypothesis ${(d\eta)\eta=0}$ means that ${x+\eta}$ is a quasi-translation in the sense of \cite{deBondtQT}. Put ${Z_\eta:=\overline{\eta(V_{\mathbb{C}})}}$. Since the fibres of the projectivisation of ${Z_\eta-\{0\}}$ have dimension at most 1, we have that ${\operatorname{rank}(d\eta)=\dim_{\mathbb{C}}(Z_\eta)\leq2}$. If the generic rank of ${d\eta}$ is 0, the assertion is immediate. If the generic rank of ${d\eta}$ is 1, then the assertion follows from Theorem~5.5 in \cite{deBondtQT}.

Suppose that the generic rank is 2. The projective image then has dimension 1, so the differential of projectivisation gives ${\eta\in\operatorname{im}(d\eta)}$ at a generic point. Let ${I(Z_\eta)}$ be the prime ideal of ${Z_\eta}$, and let ${E}$ be the Euler derivation in the target variables. For every $f\in{I(Z_\eta)}$, choose a rational vector field $\xi$ such that ${(d\eta)\xi=\eta}$. We then obtain that
$$
{(Ef)(\eta)=D_\xi(f(\eta))=0}.
$$
Thus, the ideal ${I(Z_\eta)}$ is stable under ${E}$. Polynomial interpolation in the eigenvalues of ${E}$ shows that each homogeneous component of every element of ${I(Z_\eta)}$ also belongs to ${I(Z_\eta)}$. Hence, the variety ${Z_\eta}$ is an affine cone.

Put ${d:=\max\{\deg(\eta_1),\dots,\deg(\eta_4)\}}$ and homogenise ${\eta}$ by
$$
{\eta^{\mathrm h}(x,z):=\bigl(z^d\eta(x/z),0\bigr)}.
$$
Proposition~5.1 in \cite{deBondtQT} shows that ${\eta^{\mathrm h}}$ is a homogeneous quasi-translation in dimension five. Since ${Z_\eta}$ is a cone, we have that
$$
{\overline{\eta^{\mathrm h}(\mathbb{C}^5)}=Z_\eta\times\{0\}}.
$$
Consequently, the generic rank of ${d\eta^{\mathrm h}}$ is two. Theorem~5.3 in \cite{deBondtQT} gives
$$
{\dim_{\mathbb{C}}(\operatorname{span}_{\mathbb{C}}(\eta^{\mathrm h}(\mathbb{C}^5)))\leq3}.
$$
Restriction to $z=1$ proves the assertion.
\end{proof}

\begin{corollary}
\label{curve_branch}
If the image of the projective kernel map ${\gamma_\phi}$ has complex dimension at most one, then {the hypersurface} ${Y_\phi}$ has an
apex.
\end{corollary}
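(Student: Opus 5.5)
The plan is to apply Lemma~\ref{projective_curve_case} to the canonical polynomial kernel field $\eta_\phi$ and read off an apex from the resulting linear span. By Lemma~\ref{canonical_quasi_translation}, the field $\eta_\phi\colon V_{\mathbb{C}}\to V_{\mathbb{C}}$ is polynomial and satisfies $(d\eta_\phi)\eta_\phi=0$. By hypothesis, the Zariski closure of $\gamma_\phi(V_{\mathbb{C}})=\{[\eta_\phi(x)]:\eta_\phi(x)\neq0\}$ has dimension at most one. Both hypotheses of Lemma~\ref{projective_curve_case} therefore hold, and we obtain
$$
\dim_{\mathbb{C}}\bigl(\operatorname{span}_{\mathbb{C}}(\eta_\phi(V_{\mathbb{C}}))\bigr)\leq3.
$$

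Next I produce the covector. Since $\eta_\phi(V_{\mathbb{C}})$ spans a proper subspace $W\subsetneq V_{\mathbb{C}}$, choose a nonzero covector $a\in V_{\mathbb{C}}^*$ with $a|_W=0$. Then $a(\eta_\phi(x))=0$ for every $x\in V_{\mathbb{C}}$. By Definition~\ref{canonical_kernel_data} and the equivalence justified right after it, this is exactly the statement that $Y_\phi$ has an apex: $D_aP_Y$ vanishes on the dense set $F_\phi(V_{\mathbb{C}})$, so $P_Y$ divides $D_aP_Y$, which has smaller degree, hence $D_aP_Y=0$ and $Y_\phi=Y_\phi+\mathbb{C}a$.

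One point needs care: $\eta_\phi$ must not vanish identically, since otherwise $\gamma_\phi$ is undefined. This follows because $P_Y$ is irreducible and so $dP_Y$ does not vanish identically on $Y_\phi$. The singular locus of $Y_\phi$ is a proper closed subset, and $F_\phi(V_{\mathbb{C}})$ is dense, so $\eta_\phi=dP_Y\circ F_\phi$ is generically nonzero. If the image of $\gamma_\phi$ were nonetheless empty, the conclusion would hold trivially anyway.

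The real content is in Lemma~\ref{projective_curve_case}, so I expect no substantial obstacle here. The only possible subtlety is confirming that the hypothesis on the image of $\gamma_\phi$ matches the hypothesis of that lemma verbatim, which it does by the definition of $\gamma_\phi$.
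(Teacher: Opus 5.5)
Your proposal is correct and matches the paper's proof: since $\eta_\phi$ satisfies $(d\eta_\phi)\eta_\phi=0$ by Lemma~\ref{canonical_quasi_translation}, you apply Lemma~\ref{projective_curve_case} to it, take a nonzero covector annihilating the span of its image (which has dimension at most three), and conclude via the apex equivalence stated after Definition~\ref{canonical_kernel_data}. Your extra check that $\eta_\phi\not\equiv0$ is valid but not needed, since the conclusion would hold trivially otherwise.
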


\begin{proof}
Lemma~\ref{projective_curve_case} gives a nonzero constant {covector $a\in V_{\mathbb{C}}^*$} such that ${a(\eta_\phi(x))=0}$ for every ${x\in V_{\mathbb{C}}}$. By Definition~\ref{canonical_kernel_data}, the hypersurface ${Y_\phi}$ has an apex.
\end{proof}

\subsection{The Developable Surface Branch}

Now, only the apex-free developable branch remains in the generic rank-three argument.

\begin{lemma}
\label{developable_rational_coordinates}
Suppose that the apex-free developable branch in Case 3 of Proposition~\ref{projective_reduction} occurs. For ${S_\phi:=\overline{\gamma_\phi(V_{\mathbb{C}})}}$, there exist a real rational parameter $s${,} {a real rational covector} ${\ell}$ and real rational vectors $n,m,e$ depending only on $s$ satisfying the following properties: For ${n:=G^{-1}\ell}$ and $m:=\partial_sn$, it holds that ${\mu:=\langle Gm,m\rangle\neq0}$ and
$$
{\ker(\ell)\cap\ker(\partial_s\ell)}
=\operatorname{span}_{\mathbb{R}(s)}\{n,e\}.
$$
Moreover, one may require that $\langle G\partial_se,m\rangle=0$. After replacing the kernel field by a nonzero invariant real rational multiple, there is a real rational function $r$ such that ${\hat{\eta}_\phi:=n+re}$ and ${\mathbb{R}(S_\phi)=\mathbb{R}(s,r)}$. It also holds that ${(dF_\phi)\hat{\eta}_\phi=(d\hat{\eta}_\phi)\hat{\eta}_\phi=0}$ and ${\det(I_4+t\,d\hat{\eta}_\phi)=1}$.

Furthermore, there exist a finite formally real extension ${L_0/\mathbb{R}(s,r)}$ and an element $\lambda$ transcendental over ${L_0}$, such that a general point $y$ of ${Y_\phi}$ is ${y=c+\lambda\ell(s)}$ for some {covector $c\in L_0\otimes_{\mathbb{R}}V^*$}. In particular, it holds that ${\mathbb{R}(Y_\phi)=L_0(\lambda)}$.
\end{lemma}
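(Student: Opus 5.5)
The plan is to unpack Proposition~\ref{projective_reduction} in the case $k=2$, with $Y_\phi$ apex-free. That proposition already gives three facts. The dual curve $S_\phi^\vee$ is a real rational curve lying in the dual null quadric $\{\langle G^{-1}\ell,\ell\rangle=0\}$. The surface $S_\phi$ is developable. A general fibre of $\gamma_Y$ is a null affine line whose direction runs over $S_\phi^\vee$. The rest of the lemma should be a coordinatisation of these facts.

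\textbf{Step 1: the parameter $s$ and the lines $\ker(\ell)\cap\ker(\partial_s\ell)$.}
\begin{itemize}
\item By L\"uroth, choose a real rational parametrisation $s\mapsto[\ell(s)]$ of $S_\phi^\vee$, with $\ell(s)$ a real rational covector. The null condition reads $\langle G^{-1}\ell,\ell\rangle=0$.
\item Put $n:=G^{-1}\ell$ and $m:=\partial_sn$. Differentiating the null condition gives $\langle Gn,n\rangle=0$ and $\langle Gn,m\rangle=0$.
\item Suppose $\mu:=\langle Gm,m\rangle=0$ identically. Then $n$ and $m$ span a totally isotropic plane, which is impossible in signature $(3,1)$ unless $m\parallel n$. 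In that case $[\ell]$ would be constant, contradicting $\dim S_\phi^\vee=1$. So $\mu\neq0$.
\item Since $S_\phi$ is developable with dual curve $S_\phi^\vee$, it is the envelope of the planes $\ker\ell(s)$. Its rulings are therefore the lines $\mathbb{P}(\ker\ell\cap\ker\partial_s\ell)$, and this kernel is $2$-dimensional because $\ell$ and $\partial_s\ell$ are independent.
\item The kernel contains $n$: indeed $\ell(n)=\langle G^{-1}\ell,\ell\rangle=0$, and $(\partial_s\ell)(n)=\langle m,Gn\rangle=0$. Complete $n$ to a real rational basis $\{n,e\}$ of this kernel.
\item To arrange $\langle G\partial_se,m\rangle=0$, replace $e$ by $e+\alpha n$ with $\alpha\in\mathbb{R}(s)$. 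We have $\partial_s(e+\alpha n)=\partial_se+\alpha' n+\alpha m$, and $\langle Gn,m\rangle=0$, so the condition becomes $\langle G\partial_se,m\rangle+\alpha\mu=0$. This is solvable because $\mu\neq0$.
\end{itemize}

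\textbf{Step 2: the coordinate $r$ and the rescaled kernel field.}
\begin{itemize}
\item Every general point of $S_\phi$ lies on a ruling, so $[\eta_\phi]=[n+re]$ for a unique rational function $r$ on $S_\phi$.
\item The pair $(s,r)$ is generically injective on $S_\phi$, since $s$ picks the ruling and $r$ the point on it. Hence $\mathbb{R}(S_\phi)=\mathbb{R}(s,r)$.
\item Put $\hat\eta_\phi:=n+re=\rho\,\eta_\phi$. Here $\rho$ is a rational function of $[\eta_\phi]$, so it is constant along the flow lines $x+t\eta_\phi$ by Lemma~\ref{canonical_quasi_translation}.
\item Invariance of $\rho$ gives $(d\rho)\eta_\phi=0$. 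Then $(d\hat\eta_\phi)\hat\eta_\phi=\rho^2(d\eta_\phi)\eta_\phi+\rho\,\eta_\phi\,(d\rho)\eta_\phi=0$, and $(dF_\phi)\hat\eta_\phi=0$ is immediate.
\item The identity $\det(I_4+t\,d\hat\eta_\phi)=1$ holds because $d\hat\eta_\phi=\rho\,d\eta_\phi+\eta_\phi\otimes d\rho$ is nilpotent. Indeed, $d\eta_\phi$ is nilpotent by the proof of Proposition~\ref{projective_reduction}. The rank-one correction maps into $\mathbb{C}\eta_\phi$, which $d\eta_\phi$ annihilates, while $d\rho$ kills $\eta_\phi$. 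So I would check nilpotency on the flag $\mathbb{C}\eta_\phi\subseteq\ker$.
\end{itemize}

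\textbf{Step 3: the function field of $Y_\phi$.}
\begin{itemize}
\item The affine Gauss map $\gamma_Y\colon Y_\phi\dashrightarrow S_\phi$ is dominant with $1$-dimensional general fibres. Its generic fibre is a curve over $\mathbb{R}(S_\phi)=\mathbb{R}(s,r)$.
\item Each irreducible component of that fibre is an affine line with direction $\ell(s)$, by Proposition~\ref{projective_reduction}. Let $L_0$ be the algebraic closure of $\mathbb{R}(s,r)$ in $\mathbb{R}(Y_\phi)$. This is the field of definition of the component containing the generic point, and it is finite over $\mathbb{R}(s,r)$.
\item On that component, a line over $L_0$ is $c+\lambda\ell(s)$ with $c\in L_0\otimes V^*$ and $\lambda$ a coordinate on the line, so $\mathbb{R}(Y_\phi)=L_0(\lambda)$ with $\lambda$ transcendental over $L_0$.
\item $L_0$ is formally real because $Y_\phi$ has a Zariski-dense set of real points: it is the closure of $F_\phi(\mathbb{R}^4)$. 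Its function field therefore embeds into a field of real germs, hence is formally real, and so are its subfields.
\end{itemize}

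The main obstacle I expect is the last part of Step 3. One has to be sure that the component of the generic fibre is genuinely a line defined over a finite extension, rather than merely geometrically a union of lines. One also has to control real points carefully enough to get formal reality. I would handle this via Stein factorisation of $\gamma_Y$, together with the real dense locus $F_\phi(\mathbb{R}^4)$.
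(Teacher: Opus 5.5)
Your proposal is correct and follows essentially the same route as the paper: a L\"uroth parametrisation $[\ell(s)]$ of the null dual curve, the Witt-index argument for $\mu\neq0$, and biduality to identify the rulings $\ker(\ell)\cap\ker(\partial_s\ell)\ni n$. It then uses the correction $e\mapsto e-\langle G\partial_se,m\rangle\mu^{-1}n$, a flow-invariant rescaling of $\eta_\phi$, and $L_0$ taken as the relative algebraic closure of $\mathbb{R}(s,r)$ in $\mathbb{R}(Y_\phi)$, so that the generic fibre becomes a line $c+\lambda\ell(s)$ over $L_0$. (The paper gets formal reality more directly from $F_\phi^*\colon\mathbb{R}(Y_\phi)\hookrightarrow\mathbb{R}(x_1,\dots,x_4)$, and its matrix-determinant-lemma step is equivalent to your flag argument.) One small correction: $\rho$ is not a function of the class $[\eta_\phi]$, since it is homogeneous of degree $-1$ in $\eta_\phi$; it is, however, a rational function of the vector $\eta_\phi$, which is constant along the flow by Lemma~\ref{canonical_quasi_translation}, so $(d\rho)\eta_\phi=0$ still holds.
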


\begin{proof}
Proposition~\ref{projective_reduction} gives that the dual curve ${S_\phi^\vee}$ is real rational. Write ${\mathbb{R}(S_\phi^\vee)=\mathbb{R}(s)}$ and choose ${\ell=\ell(s)\in\mathbb{R}(s)\otimes_{\mathbb{R}}V^*}$. Since ${\ell}$ is null, we have that $\langle Gn,n\rangle=\langle Gm,n\rangle=0$. If ${\mu=0}$, then {the vectors} $n$ and $m$ span a totally isotropic plane. Since the real Lorentzian form has Witt index one, the vector $m$ is proportional to $n$, contrary to the non-constancy of ${[\ell]}$. Thus, we obtain that ${\mu\neq0}$.

Projective biduality identifies the general ruling of ${S_\phi}$ with ${\mathbb{P}(\ker(\ell)\cap\ker(\partial_s\ell))}$. Choose $e=e(s)$ so that this kernel is spanned by $n,e$. Replacing $e$ by ${e-\langle G\partial_se,m\rangle\mu^{-1}n}$, and then changing the ruling parameter, gives $\langle G\partial_se,m\rangle=0$. Since the ruling contains $[n(s)]$, we obtain that ${\mathbb{R}(S_\phi)=\mathbb{R}(s,r)}$ and its general point is $[n+re]$.

Let ${\eta_\phi}$ be the {canonical polynomial} kernel field in Definition~\ref{canonical_kernel_data}. Write ${\eta_\phi=\rho(n+re)}$ for a nonzero real rational function $\rho$. The functions $s,r$ are constant along the integral curves of ${\eta_\phi}$. The equality ${(d\eta_\phi)\eta_\phi=0}$ therefore gives ${(d\rho)\eta_\phi=0}$.

{Put} ${\hat{\eta}_\phi:=\rho^{-1}\eta_\phi}${. Then}
$$
{(dF_\phi)\hat{\eta}_\phi=(d\hat{\eta}_\phi)\hat{\eta}_\phi=0}.
$$
The matrix determinant lemma, together with
$$
{\begin{cases}
(d\eta_\phi)\eta_\phi=0,\\
(d(\rho^{-1}))\eta_\phi=0,\\
\det(I_4+t\,d\eta_\phi)=1,
\end{cases}}
$$
gives
$$
{\det(I_4+t\,d\hat{\eta}_\phi)=1}.
$$

Let ${L_0}$ be the relative algebraic closure of $\mathbb{R}(s,r)$ in ${\mathbb{R}(Y_\phi)}$. The extension ${L_0/\mathbb{R}(s,r)}$ is finite and separable. It is formally real, because the pullback by ${F_\phi}$ embeds ${\mathbb{R}(Y_\phi)}$ into $\mathbb{R}(x_1,x_2,x_3,x_4)$. The extension ${\mathbb{R}(Y_\phi)/L_0}$ is regular. Proposition~\ref{projective_reduction} shows that the closure of its geometric generic fibre is an affine line with direction ${\ell(s)}$. This line is defined over ${L_0}$, and its point at infinity is defined over $\mathbb{R}(s,r)$. Intersecting it with a fixed real affine hyperplane transverse to ${\ell}$ gives a point ${c\in L_0\otimes_{\mathbb{R}}V^*}$. Thus, we obtain that ${y=c+\lambda\ell(s)}$ and ${\mathbb{R}(Y_\phi)=L_0(\lambda)}$ for an element $\lambda$ transcendental over ${L_0}$.
\end{proof}

\begin{lemma}
\label{finite_differential_constants}
Let $K$ be a field of characteristic zero, let ${t}$ be transcendental over $K$, and let $N$ be a finite extension of ${K(t)}$. Extend the derivation ${\partial_t}$ to $N$, and put ${N_0:=\{a\in N:\partial_t a=0\}}$. 
Then, the field $N_0$ is finite over $K$, and {the element} ${t}$ is transcendental over $N_0$. Moreover, every derivation of ${K(t)}$ which commutes with ${\partial_t}$ extends uniquely to $N$ and preserves $N_0$.
\end{lemma}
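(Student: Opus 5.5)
The plan is to use the standard theory of derivations on finite separable extensions in characteristic zero. The starting fact is that any derivation $D$ of a field $F$ of characteristic zero extends uniquely to a finite extension $N/F$.

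\emph{Existence and uniqueness of the extension.} Write $N=F(\alpha)$ by the primitive element theorem, and let $p$ be the minimal polynomial of $\alpha$. Differentiating $p(\alpha)=0$ gives
$$
p^D(\alpha)+p'(\alpha)D\alpha=0,
$$
where $p^D$ is obtained by applying $D$ to the coefficients of $p$. Since $p'(\alpha)\neq0$ by separability, this forces $D\alpha=-p^D(\alpha)/p'(\alpha)$. Well-definedness follows because the ideal $(p)$ is mapped into itself. Applying this with $F=K(t)$ and $D=\partial_t$ gives the extension of $\partial_t$ to $N$. It also gives the last assertion: a derivation $\delta$ of $K(t)$ commuting with $\partial_t$ has a unique extension to $N$.

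\emph{Commutation and preservation of $N_0$.} To see that the extension of $\delta$ still commutes with $\partial_t$ on $N$, consider the commutator $[\delta,\partial_t]$. It is again a derivation of $N$, and it vanishes on $K(t)$. By the uniqueness above it therefore vanishes on all of $N$. Consequently, if $\partial_ta=0$ then $\partial_t(\delta a)=\delta(\partial_ta)=0$, so $\delta$ preserves $N_0$.

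\emph{Transcendence of $t$ and finiteness of $N_0$.} First, $N_0$ is a subfield containing $K$. The main step is to show that $t$ is transcendental over $N_0$, for which I use the following standard fact.

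\begin{itemize}
\item[] \emph{Claim.} If $a\in N$ is algebraic over $K(t)$, with $\partial_t a=0$, then $a$ is algebraic over $K$.
\end{itemize}

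To prove the claim, take the monic minimal polynomial $q$ of $a$ over $K(t)$ and differentiate $q(a)=0$. This gives $q^{\partial_t}(a)=0$, and $q^{\partial_t}$ has degree smaller than $\deg q$ because $q$ is monic. Hence $q^{\partial_t}=0$, so the coefficients of $q$ are constants of $K(t)$, i.e. they lie in $K$.

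It follows that $N_0$ is algebraic over $K$. Next I show $t\notin\overline{N_0}$ algebraically. Suppose $t$ satisfied a nontrivial minimal polynomial $g(T)=\sum c_iT^i$ over $N_0$. Applying $\partial_t$ gives $g'(t)=0$, which contradicts minimality since $g'$ is nonzero of smaller degree in characteristic zero. So $t$ is transcendental over $N_0$.

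For finiteness, the two previous facts say $N_0$ is algebraic over $K$ and linearly disjoint enough from $K(t)$. More precisely, $N_0(t)\cong N_0\otimes_K K(t)$ localised, and $N_0(t)\subseteq N$. Hence $[N_0:K]=[N_0(t):K(t)]\leq[N:K(t)]<\infty$. The equality $[N_0(t):K(t)]=[N_0:K]$ uses that $t$ is transcendental over $N_0$: a $K$-basis of any finite subextension of $N_0$ stays linearly independent over $K(t)$. Applying this to all finite subextensions bounds $[N_0:K]$.

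The main obstacle I expect is this last degree comparison, namely justifying linear disjointness of $N_0$ and $K(t)$ over $K$. I would handle it via the Gauss-lemma argument on polynomials in $t$ with coefficients in $N_0$: a $K(t)$-linear relation among $N_0$-elements can be cleared to a polynomial relation in $t$, and transcendence of $t$ over $N_0$ then forces each coefficient relation over $K$.
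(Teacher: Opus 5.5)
Your proposal is correct and follows essentially the same route as the paper. Both arguments differentiate the monic minimal polynomial to show $N_0/K$ is algebraic, and differentiate a minimal relation of $t$ over $N_0$ to get transcendence. Both then bound $[L:K]\le[N:K(t)]$ for every finite $L\subseteq N_0$ to conclude finiteness, and use uniqueness of extensions of derivations through finite separable extensions to show that commutators with $\partial_t$ vanish on $N$. You simply supply details the paper leaves implicit, namely the explicit extension formula and the clearing-of-denominators argument for linear independence over $K(t)$.
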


\begin{proof}
Let $a\in N_0$, and differentiate its monic minimal polynomial over ${K(t)}$. Minimality shows that every coefficient belongs to $K$. Thus, the extension $N_0/K$ is algebraic. The element ${t}$ is transcendental over $N_0$, since differentiation of a minimal relation would give a nonzero relation of smaller degree. For every finite extension $L\subseteq N_0$ of $K$, the degree $[L:K]$ is bounded by ${[N:K(t)]}$. Taking composita shows that $N_0/K$ is finite. Finally, derivations extend uniquely through finite separable extensions. Their commutators vanish on ${K(t)}$, and hence on $N$. This proves the remaining assertion.
\end{proof}

\begin{proposition}
\label{developable_differential_reduction}
Suppose that the apex-free developable branch in Case 3 of
Proposition~\ref{projective_reduction} occurs.  Then there exist 
\begin{enumerate}
    \item [1.] a formally real field $K$ of characteristic zero;
    \item [2.] a nonzero element ${c_0\in K}$;
    \item [3.] an element $r$ transcendental over $K$;
    \item [4.] {a curve} ${C}$ {with function field} ${E:=K(r,w)}$ subject to the relation ${r^3+3rw^2=4c_0}$.
\end{enumerate}
Moreover, there also exist a finite separable differential extension ${L/E}$ and an
element ${g\in L}$ such that
$\partial_r g=w$.
Here, the derivation $\partial_r$ is trivial on $K$ and satisfies $\partial_r r=1$.
\end{proposition}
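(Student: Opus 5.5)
We work entirely with the data of Lemma~\ref{developable_rational_coordinates}: the parameter $s$, the null covector $\ell(s)$, the vectors $n=G^{-1}\ell$, $m=\partial_s n$ and $e$, the normalised kernel field $\hat\eta_\phi=n+re$, and the description $y=c+\lambda\ell(s)$ of a general point of $Y_\phi$ with $\mathbb{R}(Y_\phi)=L_0(\lambda)$. The plan is to turn the two unimodularity constraints into an algebraic curve and an integrability condition, both over a field $K$ of constants. These constraints are $\det(I_4+t\,d\hat\eta_\phi)=1$ and $\det(H_\phi)=\det(G)$, the latter in the Schur form $\det(H_\phi+tGJ)=\det(H_\phi)$ from the proof of Proposition~\ref{projective_reduction}. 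The derivation $\partial_r$ will be the derivative along the ruling parameter with $s$ held fixed. We take $K$ to be the field of $\partial_r$-constants in a suitable finite extension of $\mathbb{R}(s,r)$. By Lemma~\ref{finite_differential_constants}, applied with base field $\mathbb{R}(s)$ and transcendental $r$, this $K$ is finite over $\mathbb{R}(s)$, and $r$ stays transcendental over it. Formal reality of $K$ is inherited from $L_0$, hence ultimately from the embedding into $\mathbb{R}(x_1,\dots,x_4)$ given by pullback along $F_\phi$.

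First we compute $d\hat\eta_\phi$ in the moving frame $(n,e,m,\partial_s e)$, or any frame adapted to $\ker\ell\cap\ker\partial_s\ell$. Since $s$ and $r$ are first integrals of $\hat\eta_\phi$, we have $d\hat\eta_\phi=(\partial_s n+r\partial_s e)\,ds+e\,dr$, so the whole matrix is governed by the two one-forms $ds$ and $dr$ restricted to $x$-space. Nilpotency from $\det(I_4+t\,d\hat\eta_\phi)=1$ then constrains the pairings of $ds$ and $dr$ with $e$ and $m$. The normalisations $\langle G\partial_s e,m\rangle=0$ and $\mu=\langle Gm,m\rangle\neq0$ are what make the resulting expressions tractable.

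Next we express the residual $R_\phi=dF_\phi$ through the parametrisation $y=c+\lambda\ell(s)$. Pulling back along $F_\phi$ turns $dF_\phi$ into a combination of $d\lambda$, $ds$, $dr$ and the derivatives of $c$, and we substitute this into the Schur-form determinant identity. Because $\ell$ is null and $\hat\eta_\phi$ lies in the ruling, the coefficients should collapse to a few scalar relations among $\partial_r c$, $\partial_s c$, $\lambda$ and $r$. The aim is to isolate one algebraic relation with $\partial_r$-constant coefficients. After rescaling $r$ and introducing a quantity $w$ built from $\partial_r$ of a coordinate of $c$ along $e$, that relation should read $r^3+3rw^2=4c_0$ with $c_0\in K$. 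This relation is then the function field $E=K(r,w)$. For $c_0\neq0$ we argue that $c_0=0$ would force $\partial_s\ell$ to be proportional to $\ell$, or would produce an apex; both are excluded.

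Finally, $g$ comes from integrating: the same coordinate of $c$, viewed in $L$, is a primitive of $w$. Here $L$ is the compositum of $E$ with $L_0$, or a finite extension of it, and it is a differential extension by Lemma~\ref{finite_differential_constants}, which extends $\partial_r$ uniquely. The main obstacle is the middle step. Extracting exactly the cubic $r^3+3rw^2=4c_0$ from the determinant identities requires a careful choice of normalisation, especially of $e$ and of the multiplier relating $\eta_\phi$ to $\hat\eta_\phi$. I would first try working at a real generic point in the null frame where $G$ has first row $(0,0,0,\chi)$, as in the proof of Proposition~\ref{projective_reduction}, and read off the $t^2$ and $t^3$ coefficients.
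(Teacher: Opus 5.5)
Your framework is the right setting: the data of Lemma~\ref{developable_rational_coordinates}, the two unimodularity constraints, and $K$ as a field of $\partial_r$-constants via Lemma~\ref{finite_differential_constants}. However, the step you call the main obstacle is the entire content of the proposition, and your guesses there do not work.

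On the natural reading, your $w$ is $\partial_r\bigl(c(e)\bigr)=(\partial_rc)(e)$, since $e$ depends only on $s$. This is exactly the entry $\chi$ of the block $C=\begin{pmatrix}\alpha+\lambda\mu&\beta\\ \beta&\chi\end{pmatrix}$ of the frame matrix $T_{ij}=(\partial_iy)\partial_j\hat\eta_\phi$, and the determinant identities force $\chi=0$. Indeed, put $R_{ij}=(\partial_iy)\partial_jX$ with $X=x-t\hat\eta_\phi$. The $t^2$ and $t$ coefficients of $\det(R+tT)=\det(R)$ give $R_{33}=\langle\operatorname{adj}(C)v_\lambda,v_\lambda\rangle=0$ for a nonzero $v_\lambda$ with entries in $L(\lambda)$. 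So $-\det(C)=\beta^2-(\lambda\mu+\alpha)\chi$ must be a square in $L(\lambda)$, which is impossible if $\chi\neq0$ (odd degree in $\lambda$). A vanishing $w$ cannot satisfy $r^3+3rw^2=4c_0$ with $r$ transcendental over $K$.

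The underlying problem is that $R_\phi=dF_\phi$ is not determined by the parametrisation $y=c+\lambda\ell(s)$ of $Y_\phi$. The relation you want involves the inverse-side frame $\partial_jX$, i.e.\ the potential, not only the image hypersurface. Likewise, your claim that $c_0=0$ would force $\partial_s\ell\parallel\ell$ or an apex has no argument behind it.

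The missing idea is the Legendre-type function $h:=y(X)-\phi_{\geq3}(X)$, which satisfies $dh=\langle dy,X\rangle$. Since $y$ is affine in $\lambda$, this gives $\partial_\lambda^2h=R_{33}=0$, hence $h=h_0+\lambda h_1$ with $h_0,h_1$ in $L:=\ker(\partial_\lambda\colon M\to M)$, where $M$ is the field of first integrals of $\hat\eta_\phi$. So $L$ is not a compositum of $E$ with $L_0$; it must contain potential-side elements. With $h$ in hand, the argument runs as follows.
\begin{enumerate}
    \item[1.] The $t$-coefficient of $\det(R+tT+G)=\det(R+tT)$, with $G_{ij}:=\langle G^{-1}\partial_iy,\partial_jy\rangle$, $G_{13}=-r\beta$ and $G_{23}=G_{33}=0$, gives $\partial_rh_1=R_{23}=0$.
    \item[2.] The gauge change $c\mapsto c-f\ell$, $\lambda\mapsto\lambda+f$ normalises $G^{-1}\partial_rc=\beta\mu^{-1}m$, so $\partial_r\beta=0$.
    \item[3.] One then obtains $a:=R_{13}=\partial_sh_1-(\mu/\beta)\partial_rh_0$, $\partial_ra=-(\mu/\beta)R_{22}$ and $G_{22}=\beta^2/\mu$.
    \item[4.] The $t=0$ determinant identities yield $r(2a-r\beta)\partial_ra+(a-r\beta)^2=0$. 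Its first integral is $r^3(3z^2-3z+1)=c_0\in K$ with $z=a/(r\beta)$.
    \item[5.] $c_0\neq0$, because otherwise $(6z-3)^2=-3$ in the formally real field $L$.
    \item[6.] Setting $w:=(2a-r\beta)/\beta$ gives the cubic.
    \item[7.] The primitive is $g=\frac{2}{\beta}\bigl(r\partial_sh_1-\frac{\mu}{\beta}h_0\bigr)-\frac{r^2}{2}$, built from $h$ rather than from $c$.
\end{enumerate}
Without $h$ and these computations, your proposal asserts the curve and the primitive but gives no route to either.
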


\begin{proof}

Use the notations in Lemma~\ref{developable_rational_coordinates}. {Write} ${\delta_\phi f:=(df)\hat{\eta}_\phi}$ for ${f\in\mathbb{R}(x_1,\dots,x_4)}$ {and let} ${M:=\ker\bigl(\delta_\phi\colon\mathbb{R}(x_1,\dots,x_4)\rightarrow\mathbb{R}(x_1,\dots,x_4)\bigr)}$.
The equality ${(dF_\phi)\hat{\eta}_\phi=0}$ gives ${\mathbb{R}(Y_\phi)\subseteq M}$.

Choose an index {$j$} for which the component ${(\hat{\eta}_\phi)_j\neq0}$. Put $X:=x-t\hat{\eta}_\phi$ and 
$$
t:=x_j/(\hat{\eta}_\phi)_j.
$$
We have that
$$
{\begin{cases}
\delta_\phi t=1,\\
\delta_\phi X=0,\\
\mathbb{R}(x_1,\dots,x_4)=M(t).
\end{cases}}
$$
The element $t$ is transcendental over ${M}$.

Since $s,r,\lambda,t$ are algebraically independent, the field ${\mathbb{R}(x_1,\dots,x_4)}$ is finite over ${\mathbb{R}(Y_\phi)(t)}$. Therefore, every ${\mathbb{R}(Y_\phi)}$-linearly independent subset of ${M}$ remains independent over ${\mathbb{R}(Y_\phi)(t)}$. Thus, the extension ${M/\mathbb{R}(Y_\phi)}$ is finite and separable.

Extend $\partial_s$ and $\partial_r$ from $\mathbb{R}(s,r)$ to ${L_0}$, and then to ${\mathbb{R}(Y_\phi)}$ by requiring that they annihilate $\lambda$. Let $\partial_\lambda$ annihilate ${L_0}$ and satisfy $\partial_\lambda\lambda=1$. The unique extensions of $\partial_s,\partial_r,\partial_\lambda$ to ${M}$ commute. Put ${L:=\ker(\partial_\lambda\colon M\rightarrow M)}$. Then, Lemma~\ref{finite_differential_constants} shows that ${L/L_0}$ is finite and that $\partial_s,\partial_r$ preserve ${L}$. Thus, ${L/\mathbb{R}(s,r)}$ is finite. Extend these derivations to ${\mathbb{R}(x_1,\dots,x_4)=M(t)}$ by requiring that they annihilate $t$. Then, {we have that} ${\delta_\phi=\partial_t}$, and the four commuting derivations form a basis at a generic point.

Put ${(\partial_1,\partial_2,\partial_3):=(\partial_s,\partial_r,\partial_\lambda)}$. For $i,j\in\{1,2,3\}$, put
$$
\begin{cases}
{R_{ij}:=(\partial_i y)\partial_jX},\\
{T_{ij}:=(\partial_i y)\partial_j\hat{\eta}_\phi},\\
{G_{ij}:=\langle G^{-1}\partial_i y,\partial_jy\rangle}.
\end{cases}
$$
Since ${\partial_i y=R_\phi(x)\partial_i x}$, the matrix ${R+tT}$ is symmetric. Denote ${J:=d_X\hat{\eta}_\phi}$. Since ${\hat{\eta}_\phi}$ is independent of $t$, we have that
$$
{(\partial_1x,\partial_2x,\partial_3x,\hat{\eta}_\phi)
=(I_4+tJ)(\partial_1X,\partial_2X,\partial_3X,\hat{\eta}_\phi)}.
$$
The covectors ${\partial_1y,\partial_2y,\partial_3y}$ form a basis of {$\operatorname{Ann}(\hat{\eta}_\phi)$}.
{Pair the preceding frames with these covectors and a covector dual to} ${\hat{\eta}_\phi}${.} {Using}
$$
{\det(I_4+tJ)=1},
$$
{we obtain the first determinant identity}
$$
{\det(R+tT)=\det(R)}{,}
$$
{and see that} ${R}$ is non-singular.

Now, denote ${\Theta_\phi:=x+G^{-1}F_\phi(x)}$. Since ${d\Theta_\phi=I_4+G^{-1}R_\phi}$, we have that ${\det(d\Theta_\phi)=1}$ and ${(d\Theta_\phi)\hat{\eta}_\phi=\hat{\eta}_\phi}$. Applying ${d\Theta_\phi}$ to the frame ${(\partial_1x,\partial_2x,\partial_3x,\hat{\eta}_\phi)}$ gives the second determinant identity ${\det(R+tT+G)=\det(R+tT)}$.

Finally, {the matrix $T$ represents $d\gamma_Y$ in these frames and has rank two by Proposition~\ref{projective_reduction}.}

Consequently, the matrices ${R}$ and ${T}$ are symmetric and
$$
\begin{cases}
{\det(R+tT)=\det(R)},\\
{\det(R+tT+G)=\det(R+tT)}.
\end{cases}
$$

The expressions for ${\ell,y,\hat{\eta}_\phi}$ in Lemma~\ref{developable_rational_coordinates} show that ${T}$ has vanishing third row and third column, whilst its {upper-left} block has the form
$$
{C=}
\begin{pmatrix}
{\alpha+\lambda\mu}&\beta\\
\beta&{\chi}
\end{pmatrix},
$$
where {we put} ${\beta:=(\partial_sc)e}$ and ${\chi:=(\partial_rc)e}$, and {we have that} ${\alpha,\beta,\chi\in L_0}$. The matrix ${C}$ is non-singular because ${T}$ has rank two. We denote ${v_\lambda:=(R_{13},R_{23})}$. 
Comparison of the coefficients of $t^2$ and $t$ gives
$$
{R_{33}=\langle\operatorname{adj}(C)v_\lambda,v_\lambda\rangle=0.}
$$
The vector ${v_\lambda}$ is nonzero because ${R}$ is non-singular.

Now, we put
$$
{h:=y(X)-\phi_{\geq3}(X)}.
$$
Since ${F_\phi(X)=y}$, the element $h$ belongs to ${M}$ and satisfies
$$
{dh=\langle dy,X\rangle}.
$$
Thus, it holds that ${\partial_\lambda^2h=R_{33}=0}$.

Put ${h_1:=\partial_\lambda h}$ and ${h_0:=h-\lambda h_1}$. Then, we have ${h_0,h_1\in L}$ and ${h=h_0+\lambda h_1}$. The element $\lambda$ is transcendental over ${L}$.

The covectors ${\ell,\partial_1y,\partial_2y}$ form a basis of ${\operatorname{Ann}(\hat{\eta}_\phi)}$. Their pairings with $X$ belong to ${L(\lambda)}$ and determine the class of $X$ modulo ${\hat{\eta}_\phi}$ over this field. Since ${\hat{\eta}_\phi}$ is independent of $\lambda$, differentiation gives ${v_\lambda\in L(\lambda)^{\oplus2}}$.

The nonzero vector ${v_\lambda}$ is isotropic for the non-singular binary form ${\operatorname{adj}(C)}$. The discriminant of this form is ${-4\det(C)}$, and hence {we obtain that} ${-\det(C)}$ is a square in ${L(\lambda)}$. We have that
$$
{\det(C)=(\lambda\mu+\alpha)\chi-\beta^2}.
$$
If ${\chi\neq0}$, then {the expression} ${-\det(C)}$ is a nonconstant affine polynomial in $\lambda$. Its valuation at infinity is odd, which is impossible for a square. Consequently, we obtain that ${\chi=0}$. Since ${C}$ is non-singular, we also obtain that $\beta\neq0$.

The normality identities ${(\partial_i y)\hat{\eta}_\phi=0}$ give ${G_{13}=-r\beta}$ and ${G_{23}=G_{33}=0}$.
The coefficient of $t$ in the second determinant identity, after subtracting ${\langle\operatorname{adj}(C)v_\lambda,v_\lambda\rangle=0}$, gives ${2r\beta^2R_{23}=0}$. Therefore, we obtain that ${R_{23}=\partial_rh_1=0}$.

Put ${p:=G^{-1}\partial_rc}$. The equalities ${G_{23}=\chi=0}$ show that $p$ is orthogonal to $n$ and $e$ with respect to $G$. The vectors $n$ and $m$ form a basis of the $G$-orthogonal complement of $\operatorname{span}_{\mathbb{R}(s)}\{n,e\}$. Consequently, there is an element ${\rho\in L_0}$ such that ${p=\beta\mu^{-1}m+\rho n}$.
Here, the exact expression of the coefficient of $m$ follows from the symmetry identity ${T_{21}=T_{12}}$.

The covectors $Gn,Gm$ are independent. Choose a real rational vector $v=v(s)$ such that ${(Gv)(n)=1}$ and ${(Gv)(m)=0}$.
Put ${f:=c(v)\in L_0}$. We have that $\partial_rf=\rho$. Replacing $c$ by ${c-f\ell}$ and $\lambda$ by $\lambda+f$ preserves ${y=c+\lambda\ell}$ and gives ${G^{-1}\partial_rc=\beta\mu^{-1}m}$. We also replace $h_0$ by ${h_0-fh_1}$ and the horizontal derivations $\partial_s,\partial_r$ by $\partial_s-(\partial_sf)\partial_\lambda,\partial_r-(\partial_rf)\partial_\lambda$ respectively.

We retain the same notation for the new derivations as they agree with the old derivations on ${L}$. Thus, the changes preserve all of ${h=h_0+\lambda h_1,\beta,h_1}$ and ${\partial_rh_1=0}$. Differentiating ${\beta=(\partial_sc)e}$ and using $\langle Gm,\partial_se\rangle=0$ gives $\partial_r\beta=0$. Since $m,{\mu}$ depend only on $s$, we also obtain that $\partial_r^2c=0$.

Put ${a:=R_{13}}$. The identities ${dh=\langle dy,X\rangle}$ and ${G^{-1}\partial_rc=\beta\mu^{-1}m}$ give
$$
\begin{cases}
{a=\partial_sh_1-(\mu/\beta)\partial_rh_0},\\
{R_{22}=\partial_r^2h_0},\\
{\partial_ra=-(\mu/\beta)R_{22}},\\
{G_{22}=\beta^2/\mu}.
\end{cases}
$$
The determinant identities at $t=0$ now give
$$
{R_{22}a^2
=(R_{22}+\beta^2/\mu)(a-r\beta)^2}.
$$
Using the formula for ${\partial_ra}$, we obtain that
$$
{r(2a-r\beta)\partial_ra+(a-r\beta)^2=0}.
$$

Put ${z:=-a/G_{13}}$ and ${K:=\ker(\partial_r\colon L\rightarrow L)}$.
Lemma~\ref{finite_differential_constants}, applied with base field $\mathbb{R}(s)$ and ${t=r}$, shows that $K/\mathbb{R}(s)$ is finite and that $r$ is transcendental over $K$. Since $${L\subseteq M\subseteq\mathbb{R}(x_1,\dots,x_4)},$$ the fields ${L}$ and $K$ are formally real.

The preceding differential equation shows that $r^3(3z^2-3z+1)$ is constant with respect to variable $r$. Thus, there is an element ${c_0\in K}$ such that
$$
{r^3(3z^2-3z+1)=c_0}.
$$
If ${c_0=0}$, then $(6z-3)^2=-3$ in the formally real field ${L}$, which is impossible. Thus, we obtain that ${c_0\neq0}$. Put ${w:=(2a-r\beta)/\beta}$. Direct substitution gives
$$
{r^3+3rw^2=4c_0.}
$$

Put ${E:=K(r,w)}$. Since ${K(r)\subseteq E\subseteq L}$ and ${L/\mathbb{R}(s,r)}$ is finite, the extension ${L/E}$ is finite and separable. The element ${w}$ is nonzero, because $r$ is transcendental over $K$. Differentiating the defining relation gives ${\partial_rw=-(r^2+w^2)/(2rw)\in E}$. Thus, ${L/E}$ is a differential extension.

Finally, the element
$$
{g:=\frac{2}{\beta}\left(r\partial_sh_1-\frac{\mu}{\beta}h_0\right)-\frac{r^2}{2}}
$$
belongs to ${L}$. Direct differentiation gives ${\partial_rg=w}$. This proves the proposition.
\end{proof}

The following differential-algebraic lemma provides the final obstruction.

\begin{lemma}
\label{genus1_nonexactness}
Let $K$ be a field of characteristic zero, let ${c_0\in K}$ be nonzero. Let ${C}$ be {a curve with function field} ${E:=K(r,w)}$ subject to the relation
$$
{r^3+3rw^2=4c_0}.
$$
Then, there is no element ${f\in E}$ such that
$\partial_r f=w$. Here, the derivation $\partial_r$ is trivial on $K$ and satisfies $\partial_rr=1$.
\end{lemma}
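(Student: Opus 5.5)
The plan is to show that the differential $w\,dr$ on the curve $C$ is not exact. First identify $C$. The affine cubic $r^3+3rw^2=4c_0$ has projective closure $R^3+3RW^2=4c_0Z^3$. Its partial derivatives are $3R^2+3W^2$, $6RW$ and $-12c_0Z^2$. These vanish simultaneously only when $Z=0$ and $R=W=0$, because $c_0\neq0$, so the closure is a smooth plane cubic. Hence $C$ has genus one, and $E$ is the function field of an elliptic curve over $K$ (possibly without a $K$-point, which is harmless). We may pass to the algebraic closure $\bar K$. If $f\in E$ satisfies $\partial_rf=w$, then $df=w\,dr$ holds in $\bar K E$ as well, so it suffices to work over $\bar K$.

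Next compute the divisor of the differential $\omega:=w\,dr$. Differentiating the relation gives $(3r^2+3w^2)\,dr+6rw\,dw=0$. The invariant differential is therefore
$$
\omega_0=\frac{dr}{6rw}=-\frac{dw}{3(r^2+w^2)},
$$
which is holomorphic and nowhere vanishing. Then $\omega=6rw^2\,\omega_0$.

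The points at infinity are $Z=0$ with $R(R^2+3W^2)=0$. These are $[0:1:0]$ and $[\pm\sqrt{-3}:1:0]$, three distinct points. At each of them, $r$ and $w$ have simple poles, since a line meets the cubic transversally there. The function $rw^2$ therefore has a pole of order exactly $3$ at $[0:1:0]$ unless cancellation occurs. Near $[0:1:0]$ one has $r/w\to0$; from the relation, $r\sim 4c_0/(3w^2)$, so $r$ actually vanishes there to order $2$. Then $rw^2$ is regular and nonzero at that point. At $[\pm\sqrt{-3}:1:0]$ we have $r\sim\pm\sqrt{-3}\,w$, so $rw^2$ has a pole of order $3$.

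Hence $\omega$ has poles of order $3$ at two points and is regular elsewhere, with zeros at the six affine points where $rw^2=0$ (the points $r=0$, $w=\pm$, and $w=0$ with $r^3=4c_0$, the latter counted with multiplicity). For the final step I would compute the residue of $\omega$ at a pole $P_\pm$. If the residue is nonzero, $\omega$ is not exact and we are done.

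If both residues vanish, $\omega$ is a differential of the second kind, and exactness must be decided in $H^1_{\mathrm{dR}}(C)\cong\bar K^2$ via the standard reduction. Subtract $d(g)$ for suitable $g\in L(2P_++2P_-)$ to lower the pole orders to at most $2$. The class that remains is a combination of $\omega_0$ and a single second-kind differential, such as $x\,\omega_0$ with $x$ a Weierstrass coordinate, and these two differentials are independent in de Rham cohomology. The remaining task is to verify that the coefficient of $\omega_0$ (or of $x\omega_0$) is nonzero.

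A cleaner route avoids this reduction by exploiting the automorphism $\sigma\colon w\mapsto -w$ together with the scaling symmetry $(r,w)\mapsto(\epsilon r,\epsilon w)$ for $\epsilon^3=1$, which preserves $E\otimes\bar K$. Under the scaling, $\omega$ acquires the factor $\epsilon^2$. Suppose $\omega=df$. Averaging $\epsilon^{-2}\cdot\epsilon^*f$ over the group $\mu_3$ gives an eigenfunction $f$ with the same eigencharacter. The action of $\mu_3$ on $H^1_{\mathrm{dR}}$ splits into the $\epsilon$- and $\epsilon^2$-eigenlines spanned by $\omega_0$ and the second-kind class respectively: $\omega_0=dr/(6rw)$ scales by $\epsilon^{-1}=\epsilon^2$, so $\omega_0$ lies in the $\epsilon^2$-line, the same eigenline as $\omega$. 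Then $\omega$ and $\omega_0$ lie in the same isotypic component, and exactness reduces to showing that $\omega-\kappa\,\omega_0$ is exact for no $\kappa\neq0$, i.e. that the projection of $[\omega]$ onto the one-dimensional eigenspace is nonzero. One pairs $\omega$ with the other eigenclass via the cup product, $\langle\omega,\eta\rangle=\sum_P\operatorname{Res}_P(\omega\int\eta)$, and computes this pairing from the Laurent expansions at $P_\pm$.

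The main obstacle is this final explicit local computation, showing that the relevant residue or cup-product pairing is nonzero. I would first try the direct residue computation at $P_\pm$, using the local parameter $1/w$ and expanding $r=\pm\sqrt{-3}\,w+a_0+a_1w^{-1}+\cdots$ from the relation, since that computation is the shortest.
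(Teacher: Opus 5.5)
\textbf{There is a genuine gap: the one step that carries the content of the lemma is never carried out.}

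Your framework is sound as far as it goes. The closure $R^3+3RW^2=4c_0Z^3$ is smooth, $\omega_0=dr/(6rw)$ is regular and nowhere zero, and the $\mu_3$-symmetry does place $[w\,dr]$ on the line spanned by $[\omega_0]$. But you stop exactly there: you never show that the coefficient $\kappa$ in $[w\,dr]=\kappa[\omega_0]$ is nonzero.

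Moreover, the computation you say you would try first cannot succeed, because the residues at $P_\pm$ vanish. Solving $r(r^2+3w^2)=4c_0$ near $P_+$ forces the $w^0$ and $w^{-1}$ coefficients to be zero, giving $r=\sqrt{-3}\,w-\tfrac{2c_0}{3}w^{-2}+O(w^{-3})$. With $u=1/w$ one gets $w\,dr=\bigl(-\sqrt{-3}\,u^{-3}+O(1)\bigr)\,du$, which has zero residue. So you are pushed into the de Rham computation, and that is left undone.

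There are also some slips:
\begin{itemize}
\item The sentence saying exactness ``reduces to showing that $\omega-\kappa\omega_0$ is exact for no $\kappa\neq0$'' has the logic reversed. What you need is that $\omega-\kappa\omega_0$ is exact for some $\kappa\neq0$.
\item In the divisor, $r$ has no affine zeros, since $r=0$ forces $4c_0=0$. The six zeros of $w\,dr$ are double zeros at the three points $(\rho,0)$ with $\rho^3=4c_0$.
\end{itemize}

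\textbf{The missing step is a one-line identity.} From $dw=-\frac{r^2+w^2}{2rw}\,dr$ and $r^2+3w^2=4c_0/r$,
$$
w\,dr-r\,dw=\frac{r^2+3w^2}{2w}\,dr=\frac{2c_0}{rw}\,dr,\qquad\text{hence}\qquad w\,dr=d\bigl(\tfrac{1}{2}rw\bigr)+6c_0\,\omega_0 .
$$
If $w\,dr=df$, then $6c_0\omega_0=d(f-\tfrac{1}{2}rw)$. Over $\bar K$, a nonconstant function has a pole on the smooth projective model, and in characteristic zero its differential also has a pole there. So a nonzero regular differential is never exact, and $c_0\neq0$ gives the contradiction. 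With this identity the eigenspace and cup-product machinery becomes unnecessary; only the regularity of $\omega_0$ on the smooth cubic is used.

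\textbf{Comparison with the paper.} The paper avoids geometry altogether. It writes $f=A+Bw$ with $A,B\in K(r)$, which yields $\partial_rA=0$ and the linear equation $\partial_rB-\frac{r^3+2c_0}{r(4c_0-r^3)}B=1$. It then excludes rational solutions by local analysis. The residues $-\tfrac{1}{2}$ at $r=0$ and $\tfrac{1}{2}$ at the roots of $4c_0-r^3$ force $B$ to be a polynomial divisible by $r(4c_0-r^3)$, which is incompatible with the behaviour at infinity. Your route, once completed as above, is shorter and identifies the obstruction as the nonzero holomorphic class $6c_0[\omega_0]$.
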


\begin{proof}
The valuation at $r=0$ of ${(4c_0-r^3)/(3r)}$ is $-1$. Hence, this element is not a square in $K(r)$, and {the extension} ${E/K(r)}$ is quadratic. It is enough to extend the field of constants to an algebraic closure of $K$. Suppose that ${\partial_rf=w}$ for some ${f\in E}$. Since ${E}$ is quadratic over $K(r)$, there exist $A,B\in K(r)$ such that ${f=A+Bw}$. Differentiating this equality and the defining equation of ${E}$, and then comparing the coefficients of $1$ and ${w}$, gives $\partial_rA=0$ and
$$
{\partial_rB-\frac{r^3+2c_0}{r(4c_0-r^3)}B=1}.
$$
At $r=0$, the coefficient of $B$ in this equation has residue $-1/2$. At every root of ${4c_0-r^3}$, it has residue $1/2$. Comparison of the local orders shows that $B$ has no pole and vanishes at each of these four points. At every other finite point, the derivative of a pole of $B$ would have strictly larger order than the remaining terms. Thus, the function $B$ is a polynomial divisible by ${r(4c_0-r^3)}$, and hence {we have that} $\deg(B)\geq4$.

At infinity, the coefficient of $B$ is $1/r+O(r^{-4})$. If $m:=\deg(B)$, the leading term on the left hand side has degree $m-1$ and coefficient $m+1$ times the leading coefficient of $B$, which cannot be equal to $1$ when $m\geq4$. The contradiction proves the lemma.
\end{proof}

The preceding reduction and the lemma now exclude the remaining developable branch.

\begin{proposition}
\label{developable_exclusion}
In fact, the apex-free developable branch in Case 3 of Proposition~\ref{projective_reduction} actually cannot occur.
\end{proposition}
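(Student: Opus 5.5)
The plan is to combine Proposition~\ref{developable_differential_reduction} with Lemma~\ref{genus1_nonexactness} by contradiction. Suppose the apex-free developable branch occurs. Proposition~\ref{developable_differential_reduction} then supplies a formally real field $K$ of characteristic zero, a nonzero $c_0\in K$, an element $r$ transcendental over $K$, and the function field $E=K(r,w)$ with $r^3+3rw^2=4c_0$. It also supplies a finite separable differential extension $L/E$ and an element $g\in L$ with $\partial_rg=w$. Lemma~\ref{genus1_nonexactness} only forbids a primitive of $w$ inside $E$ itself. So the key step is to descend from $L$ to $E$ by a trace argument.

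For the descent, let $\operatorname{Tr}:=\operatorname{Tr}_{L/E}$ and $n:=[L:E]$. Since $L/E$ is finite separable and $\partial_r$ extends uniquely from $E$ to $L$, the trace commutes with $\partial_r$. To see this, pass to a Galois closure $N$ of $L/E$. The derivation extends uniquely to $N$, so it commutes with every $E$-embedding $\sigma\colon L\to N$, because $\sigma\circ\partial_r\circ\sigma^{-1}$ is another extension to $\sigma(L)$. Summing over the embeddings gives $\partial_r\operatorname{Tr}(g)=\operatorname{Tr}(\partial_rg)$. Hence
$$
\partial_r\Bigl(\tfrac{1}{n}\operatorname{Tr}(g)\Bigr)=\tfrac{1}{n}\operatorname{Tr}(w)=w,
$$
since $w\in E$ and the characteristic is zero. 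The element $f:=\tfrac1n\operatorname{Tr}(g)\in E$ therefore satisfies $\partial_rf=w$. This contradicts Lemma~\ref{genus1_nonexactness}, applied with the same $K$, $c_0\neq0$ and derivation $\partial_r$, which is trivial on $K$ with $\partial_rr=1$. So the branch cannot occur.

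The main point to check is that the derivation on $L$ from Proposition~\ref{developable_differential_reduction} really is the unique extension of $\partial_r$ on $E$. That proposition builds $\partial_r$ on $L$ by restriction from $M$ and calls $L/E$ a differential extension, with $\partial_rw\in E$. Uniqueness of extension through finite separable extensions, as used in Lemma~\ref{finite_differential_constants}, then makes the compatibility with embeddings automatic.

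Two smaller checks remain. First, $K$ should be a genuine field of constants for $\partial_r$ on $E$. It is, since $K=\ker(\partial_r|_L)$. Second, Lemma~\ref{genus1_nonexactness} allows any characteristic-zero $K$, and formal reality is not needed there. With these verified, the argument reduces to the trace identity above.
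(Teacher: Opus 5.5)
Your proposal is correct and is essentially the paper's proof: the paper also sets $f:=\operatorname{Tr}(g)/[L:E]$, uses that $\partial_r$ commutes with the trace to get $\partial_rf=w$ with $f\in E$, and contradicts Lemma~\ref{genus1_nonexactness}. Your Galois-closure argument, resting on the uniqueness of derivation extensions, just spells out the trace--derivation commutation that the paper asserts without proof.
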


\begin{proof}
Suppose that this branch occurs. Proposition~\ref{developable_differential_reduction} gives a finite separable differential extension ${L/E}$ and an element ${g\in L}$ such that ${\partial_rg=w}$. Put ${f:=\operatorname{Tr}(g)/[L:E]}$. Since the derivation commutes with the field trace, we have that ${\partial_rf=w}$. This contradicts Lemma~\ref{genus1_nonexactness}.
\end{proof}

\subsection{Completion of the Rank Three Case}

All possibilities for the projective kernel image have now been treated. We are now ready to state our main result for generic rank three.

\begin{theorem}
\label{generic-rank3}
If the generic rank of ${R_\phi}$ is 3, then {the potential} $\phi$ admits a pivot.
\end{theorem}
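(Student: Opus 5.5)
The proof will assemble the case analysis already set up in this section, so almost everything reduces to invoking earlier results in the right order. Assume the generic rank of $R_\phi$ is three. The canonical kernel data of Definition~\ref{canonical_kernel_data} are then available: the gradient image $Y_\phi$ is an irreducible hypersurface, $\eta_\phi$ is the canonical polynomial kernel field, and $\gamma_\phi$ is the projective kernel map. Let $k$ denote the complex dimension of $\overline{\gamma_\phi(V_{\mathbb{C}})}$. Proposition~\ref{projective_reduction} gives $k\leq2$, and I will split into the cases $k\leq1$ and $k=2$.

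If $k\leq1$, Corollary~\ref{curve_branch} shows that $Y_\phi$ has an apex. For $k=0$ this is also stated directly in Proposition~\ref{projective_reduction}. Proposition~\ref{rank3_apex} then produces a pivot. Inside that proposition, complex pivots are upgraded to real ones through Proposition~\ref{Complex-to-real}, so the output is a genuine real pivot in the sense of Definition~\ref{pivot_new}.

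If $k=2$, there are two subcases.
\begin{enumerate}
\item[1.] If $Y_\phi$ has an apex, Proposition~\ref{rank3_apex} again applies.
\item[2.] If $Y_\phi$ has no apex, part~3 of Proposition~\ref{projective_reduction} places us in the apex-free developable branch. Proposition~\ref{developable_exclusion} shows this branch cannot occur. Its proof rests on the reduction in Proposition~\ref{developable_differential_reduction} to the equation $\partial_r g=w$ on the curve $r^3+3rw^2=4c_0$, followed by the trace argument and the non-exactness result Lemma~\ref{genus1_nonexactness}.
\end{enumerate}
So this subcase is vacuous, and every possibility has been covered.

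The only point I would check is that the hypotheses line up at each invocation. Proposition~\ref{rank3_apex} and Corollary~\ref{curve_branch} are stated under the standing assumption that $\phi$ is a Lorentzian unimodular potential, normalised as at the start of the section, with generic rank three. Corollary~\ref{curve_branch} applies Lemma~\ref{projective_curve_case} to $\eta=\eta_\phi$, which needs $(d\eta_\phi)\eta_\phi=0$; this is Lemma~\ref{canonical_quasi_translation}. I therefore expect no genuine obstacle at this stage. All the real difficulty was absorbed by the earlier exclusion of the developable branch, which I take as given.
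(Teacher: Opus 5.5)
Your proposal is correct and follows the paper's proof essentially verbatim: the same split on $k=\dim\overline{\gamma_\phi(V_{\mathbb{C}})}\leq2$. For $k\leq1$ (and for $k=2$ with an apex) you use Corollary~\ref{curve_branch} and Proposition~\ref{rank3_apex}, and the apex-free developable branch is ruled out by Proposition~\ref{developable_exclusion}.
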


\begin{proof}
Let $k$ be the dimension of the image of the projective kernel map ${\gamma_\phi}$. Proposition~\ref{projective_reduction} gives $k\leq2$. If $k\leq1$, Corollary~\ref{curve_branch} gives that ${Y_\phi}$ has an apex, and Proposition~\ref{rank3_apex} gives a pivot. Suppose now that $k=2$. If ${Y_\phi}$ has an apex, the same conclusion follows. Otherwise, Proposition~\ref{projective_reduction} gives the apex-free developable branch, which is excluded by Proposition~\ref{developable_exclusion}. Therefore, the potential $\phi$ admits a pivot.
\end{proof}

Combining the low-rank result with the rank-three case gives the main conclusion of this section.

\begin{theorem}
\label{singular_residual_Hessian}
Let $\phi\in\mathbb{R}[x_1,\dots,x_4]$ be a unimodular Hessian potential of index $1$. Suppose that {$R_\phi(x)$} is singular for every $x\in\mathbb{R}^4$. Then, the potential $\phi$ admits a pivot.
\end{theorem}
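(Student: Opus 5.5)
The plan is to split according to the generic rank of the residual Hessian $R_\phi$. Since $R_\phi(x)$ is singular for every $x\in\mathbb{R}^4$, the polynomial $\det(R_\phi)$ vanishes on $\mathbb{R}^4$. It is a polynomial with real coefficients, so it vanishes identically, and this also holds after complexification. Hence the generic rank of $R_\phi$, taken over $\mathbb{R}$ or over $\mathbb{C}$ (these agree, since rank is governed by the vanishing of minors with real coefficients), lies in $\{0,1,2,3\}$. Before doing anything else I would put $\phi$ into the normalised form of this section: drop the affine part and choose linear coordinates with $2\phi_2=\langle Gx,x\rangle$, $G=\operatorname{diag}(-1,1,1,1)$. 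This is legitimate because $H_\phi(0)=\operatorname{Hess}(\phi_2)$ has signature $(3,1)$. Neither operation changes whether a pivot exists: adding an affine polynomial does not affect $D_\xi^2\phi$, and a linear change of coordinates carries pivots to pivots. The singularity hypothesis on $R_\phi$ is also preserved, since $R_\phi$ transforms by congruence.

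The first case is generic rank at most two. Here Proposition~\ref{low_rank} applies directly and gives a pivot. It combines Lemma~\ref{residual_pivot_criterion} with the small-rank classification of \cite{deBondtSmallRank}.

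The remaining case is generic rank three, which is covered by Theorem~\ref{generic-rank3}. Its proof follows a chain of earlier results. Proposition~\ref{projective_reduction} bounds the dimension $k$ of the image of the projective kernel map $\gamma_\phi$ by $k\leq2$. If $k\leq1$, Corollary~\ref{curve_branch} produces an apex of $Y_\phi$, and Proposition~\ref{rank3_apex} then gives a pivot; that proposition passes through complex pivots and uses Proposition~\ref{Complex-to-real} to obtain a real one. If $k=2$ and $Y_\phi$ has an apex, the same argument applies. If $k=2$ and $Y_\phi$ has no apex, we are in the developable branch, which Proposition~\ref{developable_exclusion} rules out via Lemma~\ref{genus1_nonexactness}.

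Putting the two cases together proves the theorem. Everything substantive has already been established, so the only real check is that the two rank cases are exhaustive. That is exactly the statement that $\det(R_\phi)\equiv0$ as a polynomial, which follows from the hypothesis that $R_\phi(x)$ is singular at every real point.
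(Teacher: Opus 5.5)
Your proposal is correct and follows the paper's proof: since $\det(R_\phi)\equiv0$, the generic rank is at most three, Proposition~\ref{low_rank} handles generic rank at most two, and Theorem~\ref{generic-rank3} handles generic rank three. Your extra remarks are consistent with how the paper sets up and proves the results you cite, namely the normalisation $2\phi_2=\langle Gx,x\rangle$ and your account of the apex, curve and developable branches in the rank-three argument.
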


\begin{proof}
Since the polynomial ${\det(R_\phi(x))}$ vanishes identically, the generic rank of ${R_\phi}$ is at most 3. Proposition~\ref{low_rank} proves the assertion when the generic rank is at most 2, while Theorem~\ref{generic-rank3} proves it when the generic rank is 3.
\end{proof}

The preceding theorem completes the singular residual argument. We now conclude the main text with a degeneracy criterion.

\section{Miscellaneous}

\subsection{The Euler–Jacobi Formula}

In this section, we will use the following notations:

For every unimodular Hessian potential $\phi\in\mathbb{R}[x_1,\dots,x_4]$ of index $1$ and degree $d$, after omitting the affine terms and applying an $\mathbb{R}$-linear change of coordinates, we always assume $\phi_0=\phi_1=0$ and $2\phi_2(x)=\langle Gx,x\rangle$, where {we put} $G:=\operatorname{diag}(-1,1,1,1)$. We also use {$$L_\phi(x):=G^{-1}\operatorname{Hess}(\phi_{\geq3})_x.$$} The Monge–Ampère equation for $\phi$ can be written as ${\det(I_4+L_\phi)=1}$. We denote by ${\sigma_k(L_\phi)}$ the sum of the principal $k$-minors of ${L_\phi}$ and put
$$
{\sigma_{\leq3}(L_\phi):=\sum_{k=1}^3\sigma_k(L_\phi).}
$$
Then, the Monge–Ampère equation for $\phi$ implies that ${\det(L_\phi)=-\sigma_{\leq3}(L_\phi)}$. 

The desired conclusion is a criterion for ${\det(L_\phi)=0}$. 

\begin{definition}\label{lambda_proper}
Let ${\omega:=(\omega_1,\dots,\omega_4)}\in\mathbb{Z}^4$ be a positive integral weight vector, so the weights ${\omega_1,\dots,\omega_4}>0$ are assigned to the complex variables $z_1,\dots,z_4$ respectively. We define a {weighted gauge} ${\rho_\omega}\colon\mathbb{C}^4\rightarrow\mathbb{R}$ via
$$
{\rho_\omega(z):=\max\{1,|z_1|^{1/\omega_1},\dots,|z_4|^{1/\omega_4}\}}
$$
and write ${\deg_\omega}$ for the weighted degree. For polynomials $P_1,\dots,P_4\in\mathbb{C}[z_1,\dots,z_4]$ and a vector ${\lambda:=(\lambda_1,\dots,\lambda_4)}\in\mathbb{Q}^4$ such that ${\lambda_i\leq\deg_\omega(P_i)}$, we say that $P:=(P_1,\dots,P_4)$ is weighted ${\lambda}$-proper at ${\eta}\in\mathbb{C}^4$, if the inequality
\begin{equation}\label{properness}
{\sum_{i=1}^4\frac{|P_i(z)-\eta_i|}{\rho_\omega(z)^{\lambda_i}}\geq c}
\end{equation}
holds outside a compact subset of $\mathbb{C}^4$ for some real constant $c>0$.
\end{definition}

\begin{theorem}\label{Euler–Jacobi_Formula}
Let $P\colon\mathbb{C}^4\rightarrow\mathbb{C}^4$ be a generically finite polynomial map admitting a nonempty finite simple fibre over ${\eta}\in\mathbb{C}^4$. Suppose that $P$ satisfies inequality~(\ref{properness}), and that $Q\in\mathbb{C}[z_1,\dots,z_4]$ satisfies
$$
{\deg_\omega(Q)+\sum_{i=1}^4\omega_i<\sum_{i=1}^4\lambda_i.}
$$
Then, it holds that 
\begin{equation}\label{residue_sum}
{\sum_{P(z)=\eta}\frac{Q(z)}{\det(dP_z)}=0.}
\end{equation}
\end{theorem}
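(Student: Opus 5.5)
The plan is to realise the residue sum \eqref{residue_sum} as a global Grothendieck residue on a weighted projective compactification, and then to show that this residue vanishes because the form has no pole at infinity. Consider the meromorphic $4$-form
$$
\Omega:=\frac{Q(z)\,dz_1\wedge\cdots\wedge dz_4}{\prod_{i=1}^4(P_i(z)-\eta_i)}.
$$
Since the fibre over $\eta$ is finite and simple, the local residue of $\Omega$ at each point of the fibre equals $Q(z)/\det(dP_z)$. The left-hand side of \eqref{residue_sum} is therefore the sum of the local residues.

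To justify this, fix $\varepsilon>0$ and the real $4$-cycle
$$
\Gamma_\varepsilon(R):=\{z:|P_i(z)-\eta_i|=\varepsilon\,\rho_\omega(z)^{\lambda_i}\ \text{for all}\ i,\ \rho_\omega(z)\geq R\}
$$
together with the usual local cycles $\{|P_i-\eta_i|=\varepsilon_i\}$ around the points of the fibre. The weighted $\lambda$-properness \eqref{properness} is the key input here. It guarantees that, outside a compact set, the set where all $|P_i-\eta_i|$ are small relative to $\rho_\omega^{\lambda_i}$ is empty once $\varepsilon<c/4$.

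This leads to a Stokes/homology argument in the style of the classical Euler--Jacobi theorem. Take a large weighted polydisc $\{\rho_\omega\leq R\}$ and the real analytic polyhedron
$$
\Pi_R:=\{|P_i-\eta_i|\leq\varepsilon\,\max(1,\rho_\omega)^{\lambda_i}\}\cap\{\rho_\omega\leq R\}.
$$
By properness and generic finiteness, the distinguished boundary of $\Pi_R$ is homologous in the complement of the polar divisors to the sum of the local residue tori, plus a cycle lying in $\{\rho_\omega=R\}$. The integral of $\Omega$ over the local tori gives $(2\pi i)^4$ times the residue sum. It therefore suffices to show that the integral over the outer piece tends to $0$ as $R\to\infty$.

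For the estimate, use weighted polar coordinates $z_j=R^{\omega_j}u_j$ with $\rho_\omega(u)=1$. Then:
\begin{enumerate}
\item[1.] $|Q|\lesssim R^{\deg_\omega Q}$;
\item[2.] $|dz_1\wedge\cdots\wedge dz_4|$ contributes a factor of order $R^{\sum\omega_i}$ on the $4$-dimensional outer cycle;
\item[3.] on that cycle each denominator satisfies $|P_i-\eta_i|=\varepsilon R^{\lambda_i}$.
\end{enumerate}
Altogether the integral is $O\bigl(R^{\deg_\omega Q+\sum\omega_i-\sum\lambda_i}\bigr)$, which tends to $0$ by the degree hypothesis. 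Since the residue sum is independent of $R$, it must vanish.

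The main obstacle is the homological step. One has to show that the region cut out by the $\rho_\omega$-scaled inequalities has boundary decomposing into exactly the local tori plus a far cycle of controlled volume, with the volume of the far cycle bounded by $R^{\sum\omega_i}$ times a constant. I would try to handle this with a Sard-type transversality choice of $\varepsilon$ and $R$, making the inequalities define a real semianalytic manifold with corners. The volume bound would come from the fact that the cycle is the image of a bounded semialgebraic set under the weighted dilation.

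An alternative route is to compactify $\mathbb{C}^4$ in the weighted projective space $\mathbb{P}(1,\omega_1,\ldots,\omega_4)$. One then reads \eqref{properness} as saying that the homogenised divisors $\{P_i=\eta_i x_0^{\lambda_i}\}$ have no common point at infinity, and applies the global residue theorem. The degree inequality ensures that the form has nonnegative order along the hyperplane at infinity.
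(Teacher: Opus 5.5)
\textbf{There is a genuine gap in your main route: the homological step fails as set up, and the decay argument fails with it.} Since $\rho_\omega\geq1$ and $\varepsilon<c/4$, inequality \eqref{properness} shows that $\{z:|P_i(z)-\eta_i|\le\varepsilon\rho_\omega(z)^{\lambda_i}\text{ for all }i\}$ lies inside the exceptional compact set $K$: outside $K$ the four quotients would sum to at most $4\varepsilon<c$. Hence $\Gamma_\varepsilon(R)=\varnothing$, and $\Pi_R$ stops depending on $R$ once $R$ is large. So there is no outer piece; the distinguished boundary of $\Pi_R$ is a fixed compact cycle, and nothing tends to zero. The locus where you actually run the estimate ($\rho_\omega=R$ and $|P_i-\eta_i|=\varepsilon R^{\lambda_i}$ for all four $i$) is cut out by five real conditions, so it is not a $4$-cycle either.

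The missing idea is to let the target torus grow, rather than tying its radii to $\rho_\omega(z)$. Set $\Gamma_R:=\{z:|P_i(z)-\eta_i|=\varepsilon R^{\lambda_i}\text{ for all }i\}$.
\begin{itemize}
\item When all $\lambda_i>0$, \eqref{properness} makes $P$ proper, hence finite. The trace $P_*(Q\,dz_1\wedge\cdots\wedge dz_4)=T(w)\,dw_1\wedge\cdots\wedge dw_4$ then has $T$ holomorphic, and Cauchy's formula on the target gives $\int_{\Gamma_R}\Omega=(2\pi i)^4T(\eta)=(2\pi i)^4\sum_{P(z)=\eta}Q(z)/\det(dP_z)$ for every $R$.
\item The same inequality confines $\Gamma_R$ to $K\cup\{\rho_\omega\le R\}$. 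It does not put $\Gamma_R$ on $\{\rho_\omega=R\}$; when $\lambda_i<\deg_\omega(P_i)$ the cycle reaches far inside. This is harmless, because you only need the upper bound $|Q|\lesssim R^{\deg_\omega Q}$.
\item Put $\delta_R(u)=(R^{\omega_1}u_1,\ldots,R^{\omega_4}u_4)$. Then $\delta_R^{-1}(\Gamma_R)$ is a real algebraic set, of degree bounded independently of $R$, inside the unit polydisc. Crofton's formula bounds its $4$-volume uniformly, while $\delta_R^*(dz_1\wedge\cdots\wedge dz_4)=R^{\sum\omega_i}du_1\wedge\cdots\wedge du_4$.
\end{itemize}
With these replacements your three estimates give $O(R^{\deg_\omega Q+\sum\omega_i-\sum\lambda_i})$, and the argument closes.

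\textbf{Your alternative route covers only the case $\lambda_i=\deg_\omega(P_i)$.} For rational $\lambda_i$ the divisor $\{P_i=\eta_ix_0^{\lambda_i}\}$ makes no sense. When $\lambda_i<\deg_\omega(P_i)$, the weighted homogenisations of the $P_i-\eta_i$ (of degree $\deg_\omega(P_i)$) can share points at infinity even though \eqref{properness} holds. The global residue theorem on $\mathbb{P}(1,\omega_1,\ldots,\omega_4)$ then acquires residues at infinity that your order count along $\{x_0=0\}$ does not control; that space is also singular. This generality is exactly what the paper gets from the generalized Jacobi formula of Vidras and Yger.

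\textbf{Comparison with the paper.} The paper does no analysis of its own. After a generic translation putting the fibre in $(\mathbb{C}^\times)^4$, it pulls back by $\pi_m(\xi)=(\xi_1^{m\omega_1},\ldots,\xi_4^{m\omega_4})$ with $m\lambda_i\in\mathbb{Z}$. Because $\rho_\omega(\pi_m(\xi))=\max\{1,|\xi_1|,\ldots,|\xi_4|\}^m$, this turns weighted $\lambda$-properness into ordinary $m\lambda$-properness. It also turns the degree hypothesis into $\deg\bigl((Q\circ\pi_m)\det(d\pi_m)\bigr)\le m\sum_i\lambda_i-5$. Vidras--Yger then gives vanishing of the pulled-back sum. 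That sum is $m^4\omega_1\cdots\omega_4$ times the original, since $\pi_m$ is unramified over the fibre. Once repaired, your route is instead a self-contained proof of the weighted statement, with the weights entering only through $\delta_R$.
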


\begin{proof}
Choose a positive $m\in\mathbb{Z}$ such that $m\lambda_i\in\mathbb{Z}$ for every $i\in\{1,\dots,4\}$. After replacing $P(z)$ and $Q(z)$ by $P(z+a)$ and $Q(z+a)$ respectively for a generic $a\in\mathbb{C}^4$, we may assume that ${P^{-1}(\eta)}\subseteq(\mathbb{C}^\times)^4$. This replacement of $z$ by $z+a$ preserves the weighted degrees and inequality~(\ref{properness}), up to a change of the compact set and the constant $c>0$. Put ${\pi_m(\xi):=(\xi_1^{m\omega_1},\dots,\xi_4^{m\omega_4})}$. Since ${\rho_\omega(\pi_m(\xi))=\max\{1,|\xi_1|,\dots,|\xi_4|\}^m}$, the map ${P\circ\pi_m-\eta}$ is ${m\lambda}$-proper. Moreover, we have that
$$
{\deg\big((Q\circ\pi_m)\det(d\pi_m)\big)
\leq m\deg_\omega(Q)+m\sum_{i=1}^4\omega_i-4
\leq m\sum_{i=1}^4\lambda_i-5.}
$$
The generalized Jacobi formula \cite{VidrasYger2001} therefore gives that the pulled-back residue sum $S$
is zero. The map $\pi_m$ is unramified over the fibre and has degree ${\delta:=m^4\omega_1\omega_2\omega_3\omega_4}$. Hence, the residue sum $S$ is this nonzero degree $\delta>0$ times the required sum in equation~(\ref{residue_sum}), which proves the assertion.
\end{proof}

\begin{corollary}\label{corollary_of_Euler–Jacobi}
Let ${\eta}\in\mathbb{C}^4$ be a generic point. If ${G^{-1}\nabla\phi_{\geq3}}$ is weighted ${\lambda}$-proper at ${\eta}$ and 
$$
{\deg_\omega(\sigma_{\leq3}(L_\phi))<\sum_{i=1}^4(\lambda_i-\omega_i),}
$$
then {we have that} ${\det(L_\phi)\equiv0}$.
\end{corollary}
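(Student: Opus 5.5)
The plan is to apply Theorem~\ref{Euler--Jacobi_Formula} to the map $P:=G^{-1}\nabla\phi_{\geq3}$ with numerator $Q:=\sigma_{\leq3}(L_\phi)$, and then compare the resulting vanishing residue sum with the Monge--Amp\`ere identity. The Jacobian of $P$ is $dP=G^{-1}\operatorname{Hess}(\phi_{\geq3})=L_\phi$, so $\det(dP_z)=\det(L_\phi(z))$. The Monge--Amp\`ere equation gives $\det(L_\phi)=-\sigma_{\leq3}(L_\phi)$ identically. Hence at every point $z$ of a simple fibre we have $Q(z)/\det(dP_z)=-1$.

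I argue by contradiction. Suppose $\det(L_\phi)\not\equiv0$. Then $P$ has generically nonzero Jacobian, so it is generically finite (dominant, with finite generic fibres). For a generic $\eta\in\mathbb{C}^4$, the fibre $P^{-1}(\eta)$ is nonempty and finite, and it is simple by generic smoothness, since $\det(L_\phi)\neq0$ off a proper hypersurface whose image is not dense. Properness at $\eta$ is the hypothesis, with a weighted gauge $\rho_\omega$ and exponents $\lambda_i\leq\deg_\omega(P_i)$. The degree hypothesis $\deg_\omega(Q)<\sum_i(\lambda_i-\omega_i)$ is exactly the inequality required in Theorem~\ref{Euler--Jacobi_Formula}. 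The theorem then yields
$$
0=\sum_{P(z)=\eta}\frac{Q(z)}{\det(dP_z)}=-\#P^{-1}(\eta),
$$
which contradicts nonemptiness of the fibre. Therefore $\det(L_\phi)\equiv0$.

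The main obstacle is ensuring that the generic $\eta$ in the hypothesis can also be taken so that the fibre is nonempty and simple; I would rely on the meaning of ``generic'' to include this. A second delicate point is the degenerate possibility that $Q$ vanishes at fibre points, but this is harmless: $Q=-\det(L_\phi)\neq0$ on a simple fibre. One should also check that the weighted degree of $Q$ is computed with respect to the same $\omega$ used for properness; this holds by construction.
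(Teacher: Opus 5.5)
Your proposal is correct and follows the paper's proof essentially step for step. You argue by contradiction, take $P=G^{-1}\nabla\phi_{\geq3}$ so that $dP=L_\phi$, use genericity of $\eta$ to obtain a nonempty finite simple fibre, and apply the Euler--Jacobi theorem with $Q=\sigma_{\leq3}(L_\phi)$; the identity $\sigma_{\leq3}(L_\phi)=-\det(L_\phi)$ then makes every summand equal to $-1$. You also explain why a generic fibre is nonempty, finite and simple (dominance, plus the critical values lying in a proper closed subset), which the paper leaves implicit.
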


\begin{proof}
Suppose, for a contradiction, that ${\det(L_\phi)\not\equiv0}$. Put ${P:=G^{-1}\nabla\phi_{\geq3}}$, so that {we have} ${dP=L_\phi}$. Then, the genericity of ${\eta}$ implies that $P$ admits a nonempty finite simple fibre over ${\eta}$. Applying Theorem~\ref{Euler–Jacobi_Formula} with ${Q=\sigma_{\leq3}(L_\phi)}$, we obtain that the corresponding residue sum in Theorem~\ref{Euler–Jacobi_Formula} is zero, while the identity ${\sigma_{\leq3}(L_\phi)=-\det(L_\phi)}$ shows that every summand is $-1$, which is a contradiction. Therefore, it must hold that ${\det(L_\phi)\equiv0}$.
\end{proof}

\subsection{The General Quintic Potential}

As a final application of the machinery developed above, we prove the Hessian conjecture in Lorentzian signature for polynomial potentials of degree at most $5$.

We first record two elementary lemmas.

\begin{lemma}\label{odd_weighted_restriction}
Let $\phi\in\mathbb{R}[x_1,\dots,x_4]$ be a unimodular Hessian potential of index $1$, and let ${E}$ be a linear subspace of $\mathbb{R}^4$ of dimension ${\dim_\mathbb{R}(E)=r}$. Assign positive integral weights ${\omega_1,\dots,\omega_r}$ to fixed linear coordinates $x_1,\dots,x_r$ on ${E}$. Let {$\operatorname{in}_{\omega}(\phi|_{E})$} be the highest weight component of ${\phi|_{E}}$, and let {$N$} be its weight. If {$N$} is odd, then {we have that}
$$
{\operatorname{rank}(\operatorname{Hess}(\operatorname{in}_{\omega}(\phi|_{E})))\leq2.}
$$
\end{lemma}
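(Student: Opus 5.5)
The plan is to adapt the scaling argument of Lemma~\ref{lorentz_parity}, replacing homogeneous dilation by weighted dilation along $E$. First I would check that the index hypothesis survives restriction to $E$. Every Hessian $H_\phi(x)$ has signature $(3,1)$. Hence for every $x\in E$, the restriction $\operatorname{Hess}(\phi|_E)_x=H_\phi(x)|_E$ has at most one negative eigenvalue. This holds because a compression of a form with exactly one negative eigenvalue cannot acquire two.

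Next I would rescale along $E$ in the weighted sense. Fix $x\in E$ and, for $t>0$, put $D_t:=\operatorname{diag}(t^{\omega_1},\dots,t^{\omega_r})$ and
$$
\psi^t(x):=t^{-N}(\phi|_E)(D_tx).
$$
Then $\psi^t\rightarrow\operatorname{in}_{\omega}(\phi|_E)$ coefficientwise as $t\rightarrow+\infty$, and the convergence carries over to all derivatives, uniformly on compact sets. We have
$$
\operatorname{Hess}(\psi^t)_x=t^{-N}D_t\operatorname{Hess}(\phi|_E)_{D_tx}D_t,
$$
which is congruent to a positive multiple of a restricted Hessian. So it also has at most one negative eigenvalue. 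Since the set of symmetric matrices with at most one negative eigenvalue is closed, the limit $M(x):=\operatorname{Hess}(\operatorname{in}_\omega(\phi|_E))_x$ has at most one negative eigenvalue, for every real $x\in E$.

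Then I would use the parity of $N$ to flip the sign. Write $\varepsilon(x):=((-1)^{\omega_1}x_1,\dots,(-1)^{\omega_r}x_r)$, a linear involution of $E$. Each monomial of weight $N$ satisfies $x^\alpha\circ\varepsilon=(-1)^{\langle\omega,\alpha\rangle}x^\alpha=-x^\alpha$ because $N$ is odd, so
$$
\operatorname{in}_\omega(\phi|_E)\circ\varepsilon=-\operatorname{in}_\omega(\phi|_E).
$$
Differentiating twice gives
$$
\varepsilon^\top M(\varepsilon x)\varepsilon=-M(x),
$$
where $\varepsilon$ is an orthogonal diagonal sign matrix. Applying the previous step at the point $\varepsilon x$ shows that $-M(x)$ is congruent to a matrix with at most one negative eigenvalue. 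Thus $M(x)$ has at most one positive eigenvalue as well, so $\operatorname{rank}(M(x))\leq2$ for every real $x$. Since the minors of order three vanish on $\mathbb{R}^r$, they vanish identically, and the generic rank is at most two.

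The only delicate point is the convergence step: one has to verify that the lower-weight terms really disappear after the conjugation by $D_t$. I would settle this entrywise, as in the proof of Lemma~\ref{weighted_hessian_face}. The $(i,j)$ entry of $\operatorname{Hess}(\psi^t)_x$ collects the monomials of weight $w\leq N$ with a factor $t^{w-N}$, after the factors $t^{\omega_i+\omega_j}$ cancel against the weights of the differentiated variables. So only the top-weight part survives in the limit. The weighted rescaling replaces the homogeneous dilation used in Lemma~\ref{lorentz_parity}; the rest of the argument is formal.
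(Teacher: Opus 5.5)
Your proposal is correct and follows essentially the same argument as the paper. The paper also passes to the weighted limit $t^{-N}D_\omega(t)^\top\operatorname{Hess}(\phi|_E)_{D_\omega(t)x}D_\omega(t)\rightarrow\operatorname{Hess}(\operatorname{in}_\omega(\phi|_E))_x$ to get at most one negative eigenvalue, and then uses the sign involution $x_i\mapsto(-1)^{\omega_i}x_i$ together with the oddness of $N$ to bound the positive eigenvalues. Your version only adds the routine justifications: compression, congruence, closedness, and the entrywise convergence.
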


\begin{proof}
Let ${D_\omega(t):=\operatorname{diag}(t^{\omega_1},\dots,t^{\omega_r})}$ be the corresponding weighted dilation operator. For every ${x\in E}$, we have that
\begin{equation}\label{limit}
{t^{-N}D_\omega(t)^\top\operatorname{Hess}(\phi|_{E})_{D_\omega(t)x}D_\omega(t)}
\rightarrow{\operatorname{Hess}(\operatorname{in}_{\omega}(\phi|_{E}))_x}
\end{equation}
as ${t\rightarrow+\infty}$. Since every matrix on the left-hand side of (\ref{limit}) before taking the limit is the restriction of a matrix of index $1$, the matrix ${\operatorname{Hess}(\operatorname{in}_{\omega}(\phi|_{E}))_x}$ has at most one negative eigenvalue.

Now, let ${S_\omega:=\operatorname{diag}((-1)^{\omega_1},\dots,(-1)^{\omega_r})}$. Weighted homogeneity gives
$$
{S_\omega^\top\operatorname{Hess}(\operatorname{in}_{\omega}(\phi|_{E}))_{S_\omega x}S_\omega=-\operatorname{Hess}(\operatorname{in}_{\omega}(\phi|_{E}))_x.}
$$
Applying the first conclusion above at ${S_\omega x}$, we obtain that ${\operatorname{Hess}(\operatorname{in}_{\omega}(\phi|_{E}))_x}$ has at most one positive eigenvalue, and hence its rank is at most two.
\end{proof}

\begin{lemma}\label{binary_quintic_terminal}
Let $F_5,F_3,F_1\in\mathbb{R}[x,y]$ be homogeneous of degrees $5,3,1$, respectively. Suppose that $F_5$ depends essentially on both variables $x$ and $y$. Suppose that
\begin{equation}\label{determinant}
\det(\operatorname{Hess}(F_5+sF_3+s^2F_1/2))=0,
\end{equation}
where the Hessian above is taken in variables $x,y$ and $s$.
Then, it holds that $F_1=0$.
\end{lemma}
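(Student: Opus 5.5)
The plan is to expand the determinant in (\ref{determinant}) as a polynomial in $s$ and compare coefficients. Write $\psi:=F_5+sF_3+\tfrac{s^2}{2}F_1$. In the variables $(x,y,s)$ its Hessian has the block form
$$
\begin{pmatrix}
H_5+sH_3 & \nabla F_3+s\nabla F_1\\
(\nabla F_3+s\nabla F_1)^\top & F_1
\end{pmatrix},
$$
where $H_5:=\operatorname{Hess}(F_5)$, $H_3:=\operatorname{Hess}(F_3)$, the binary Hessian of $F_1$ vanishes, and $\nabla$ denotes the gradient in $(x,y)$. Expanding along the last row and column gives
$$
F_1\det(H_5+sH_3)-\langle\operatorname{adj}(H_5+sH_3)(\nabla F_3+s\nabla F_1),\nabla F_3+s\nabla F_1\rangle=0 .
$$
Here $\operatorname{adj}$ is linear on $2\times2$ matrices. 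The expression is a polynomial of degree at most $2$ in $s$, and I would separate its three coefficients:
$$
\begin{aligned}
[s^0]&:\ F_1\det H_5-\langle\operatorname{adj}(H_5)\nabla F_3,\nabla F_3\rangle=0,\\
[s^2]&:\ F_1\det H_3-\langle\operatorname{adj}(H_3)\nabla F_1,\nabla F_1\rangle-2\langle\operatorname{adj}(H_5)\nabla F_1,\nabla F_3\rangle-\ldots=0,
\end{aligned}
$$
together with the corresponding $[s^1]$ identity. The bidegree in $(x,y)$ fixes which terms can appear. Rather than chase all of these, I would use the top coefficient in $s$, since it involves only $F_1$ and the top-order data.

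The coefficient of $s^2$ is the cleanest target. Its terms are $F_1\det H_3$, the cross term $-2\langle\operatorname{adj}(H_3)\nabla F_1,\nabla F_3\rangle$, and $-\langle\operatorname{adj}(H_5)\nabla F_1,\nabla F_1\rangle$. These all have degree $3$ in $(x,y)$. Suppose $F_1\neq0$. After a linear change of $(x,y)$ I may take $F_1=x$, so $\nabla F_1=e_1$. Then $\langle\operatorname{adj}(H_5)e_1,e_1\rangle=\partial_y^2F_5$, and $\langle\operatorname{adj}(H_3)e_1,\nabla F_3\rangle=\partial_y^2F_3\,\partial_xF_3-\partial_x\partial_yF_3\,\partial_yF_3$. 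The $[s^2]$ identity then expresses $\partial_y^2F_5$ as an explicit cubic in $F_3$ and $x$. In particular $\partial_y^2F_5$ is determined by $F_3$.

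Next I would exploit the $[s^0]$ identity $x\det H_5=\langle\operatorname{adj}(H_5)\nabla F_3,\nabla F_3\rangle$ and the $[s^1]$ identity in the same way. The aim is to show that they force $\det H_5\equiv0$. By Lemma~\ref{small_hessian_kernel}, or the binary homogeneous Hesse theorem, $\det H_5\equiv0$ would make $F_5$ a fifth power of a single linear form, contradicting the hypothesis that $F_5$ depends essentially on both variables. The natural tool is a restriction to the line $x=0$. There the $[s^0]$ identity gives $\langle\operatorname{adj}(H_5)\nabla F_3,\nabla F_3\rangle|_{x=0}=0$. Combined with the $[s^2]$ expression for $\partial_y^2F_5$, I would attempt a descending-degree argument on the order of vanishing along $x=0$. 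Write $F_5=\sum a_ix^iy^{5-i}$ and $F_3=\sum b_ix^iy^{3-i}$. Comparing the lowest powers of $x$ successively should force $a_0=a_1=0$, and then inductively kill enough coefficients to make $F_5$ a multiple of $x^5$. This contradicts essential dependence on two variables.

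The main obstacle is this final coefficient bookkeeping. It is a finite polynomial system in the $a_i$ and $b_i$, and I expect several branches, for instance $b_0\neq0$ versus $b_0=0$. If the descending argument stalls, the fallback is a Newton-polygon or weight argument in the spirit of Lemma~\ref{weighted_hessian_face}. One assigns weights to $x,y,s$ so that the leading weighted part of $\psi$ contains $\tfrac{s^2}{2}x$ and a nontrivial piece of $F_5$ with nonvanishing Hessian determinant, which would contradict (\ref{determinant}).
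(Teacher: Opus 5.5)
Your overall strategy (normalise $F_1=x$, then compare coefficients of $s$ in the bordered determinant) is the right one and is the paper's. However, the expansion is set up incorrectly at exactly the point that matters, and the rest is only a plan. The bordered determinant $F_1\det(H_5+sH_3)-\langle\operatorname{adj}(H_5+sH_3)(\nabla F_3+s\nabla F_1),\nabla F_3+s\nabla F_1\rangle$ is not of degree at most two in $s$. The quadratic form is linear in $s$ through $\operatorname{adj}(H_5+sH_3)$ and quadratic in $s$ through the vector, so there is a top coefficient
\[
[s^3]=-\langle\operatorname{adj}(H_3)\nabla F_1,\nabla F_1\rangle=-\partial_y^2F_3\qquad(F_1=x),
\]
which is in general nonzero. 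Your first displayed $[s^2]$ identity actually mixes in this $s^3$ term and the $s^1$ term $2\langle\operatorname{adj}(H_5)\nabla F_1,\nabla F_3\rangle$; the list of terms in your prose is the correct $[s^2]$ coefficient. Because you discarded $[s^3]$, your $[s^2]$ identity only expresses $\partial_y^2F_5$ through the unknown cubic $F_3$, and you are left with an unresolved polynomial system in the $a_i,b_i$.

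The missing identity is what makes the bookkeeping collapse.
\begin{itemize}
\item From $[s^3]=0$ you get $F_3=ax^3+bx^2y$.
\item In $[s^2]$, the terms $F_1\det H_3=-4b^2x^3$ and $-2\langle\operatorname{adj}(H_3)\nabla F_3,e_1\rangle=4b^2x^3$ then cancel, leaving $[s^2]=-\partial_y^2F_5$. Hence $F_5=cx^5+dx^4y$.
\item Next, $[s^1]=2bx^4\bigl((3ab-4d)x+3b^2y\bigr)$ forces $b=0$.
\item Finally, $[s^0]=x\det H_5=-16d^2x^7$ forces $d=0$. This contradicts the hypothesis that $F_5$ depends essentially on two variables.
\end{itemize}
This four-step descent is the paper's proof. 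By contrast, the \emph{descending-degree argument along $x=0$} you propose is never carried out; you only say it should kill coefficients and anticipate branches. The weighted-face fallback is likewise unspecified. You would still need to exhibit weights whose initial form has nonvanishing Hessian determinant despite the $F_3$ terms $s\,x^jy^{3-j}$, which can lie on the same faces as $s^2x$. As written, the proposal does not prove the lemma.
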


\begin{proof}
Suppose, for a contradiction, that $F_1\neq0$. After a linear change of the variables $x,y$, we may then assume that $F_1=x$. The coefficient of $s^3$ in the displayed determinant in (\ref{determinant}) is ${-\partial_y^2 F_3}$. Hence, there exist constants $a,b\in\mathbb{R}$ such that $F_3=ax^3+bx^2y$. So, the coefficient of $s^2$ in the displayed determinant in (\ref{determinant}) is now ${-\partial_y^2 F_5}$. Hence, there exist constants $c,d\in\mathbb{R}$ such that $F_5=cx^5+dx^4y$.
The coefficient of $s$ in the displayed determinant in (\ref{determinant}) is
$$
2bx^4\big((3ab-4d)x+3b^2y\big).
$$
We obtain that $b=0$. The constant coefficient in the displayed determinant in (\ref{determinant}) is then $-16d^2x^7$, and hence we obtain that $d=0$, which contradicts the essential dependence of $F_5$ on both variables. Therefore, we conclude that $F_1=0$.
\end{proof}

\begin{lemma}\label{general_quintic_rank_two}
Let $\phi\in\mathbb{R}[x_1,\dots,x_4]$ be a unimodular Hessian potential of index $1$ and degree $5$. If the essential rank of $\phi_5$ is two, then the potential $\phi$ admits a constant pivot.
\end{lemma}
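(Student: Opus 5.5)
We plan to reduce to the two-layer argument of Lemma~\ref{quintic_rank_two_pivot}, this time allowing a cubic layer $\phi_3$. Take coordinates $x=(u,z)$ adapted to $V=E_5\oplus K_5$ with $K_5:=\mathcal{K}(H_{\phi_5})$, so that $\phi_5=\phi_5(u)$ is an essential binary quintic. By Lemma~\ref{lorentz_parity} and its proof, $H_{\phi_5}|_{E_5}$ has signature $(1,1)$ at a generic point.

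The first step is to redo the Schur-complement and leading-coefficient arguments with $\phi=\phi_5+\phi_4+\phi_3+\phi_2$. Along rays $\lambda x$, the Hessian is $G+\lambda H_{\phi_3}+\lambda^2H_{\phi_4}+\lambda^3H_{\phi_5}$. The top degree $\lambda^{10}$ of the determinant is unaffected by $\phi_3$, so we obtain, exactly as before, $C:=H_{\phi_4}|_{K_5}\geq0$ and $\det(C)=0$. Lemma~\ref{quadratic_matrix_dichotomy} then leaves two possibilities.
\begin{enumerate}
\item[(a)] There is a constant vector $\xi\in\mathcal{K}(C)$.
\item[(b)] We have $C=\rho\,\ell\otimes\ell$, and after the integrability argument $\ell\equiv u$.
\end{enumerate}
In case (b) we use the weights $\omega=(2,2,3,3)$. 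The cubic contributes terms of weight at most $9$, and terms of weight $9$ arise only from the $z$-cubic part. We compute $\operatorname{in}_\omega(\phi)=\phi_5(u)+\frac{\rho}{2}\langle u,z\rangle^2+\phi_3(0,z)$. If $\phi_3(0,z)=0$, the earlier block determinant is nonzero, which contradicts Lemma~\ref{weighted_hessian_face}. Otherwise we must show that the determinant of this initial form is still nonzero. Its top $z$-degree part should come from the mixed block, so we plan to check it directly, or alternatively to use Proposition~\ref{newton_polytope_criterion}, since $2N=20\neq10$ forces the determinant to vanish. Either way this gives a contradiction, so only case (a) survives.

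In case (a), $\xi\in K_5$ satisfies $\langle H_{\phi_5}\xi,\xi\rangle=\langle H_{\phi_4}\xi,\xi\rangle=0$. It remains to arrange $\langle H_{\phi_3}(x)\xi,\xi\rangle=0$, possibly after changing the choice of $\xi$. For this we use the next coefficient down. Take coordinates $z=(s,w)$ with $s$ in the direction $\xi$, and restrict to the subspace spanned by $E_5$ and $\xi$. We then apply Lemma~\ref{odd_weighted_restriction} with weights giving $u$ weight $2$ and $s$ weight $1$. Under these weights, the weight-$10$ initial part of $\phi|_{E_5+\mathbb{R}\xi}$ is $\phi_5+sF_3+s^2F_1/2$, where $F_3$ and $F_1$ are the coefficients, in $u$, of $s$ in $\phi_4$ and of $s^2$ in $\phi_3$. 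We also need the odd-weight variant: the weights $(2,2,1)$ give the $s^2$-in-$\phi_3$ terms weight $2\cdot1+2\cdot1\cdot\ldots$, so we adjust until the highest weight $N$ is odd (for instance weights $(2,2,1)$ give $N=10$, so instead use $(1,1,\cdot)$-type weights chosen so that $N$ is odd). This yields rank at most $2$ on a three-dimensional space, hence a vanishing determinant, and Lemma~\ref{binary_quintic_terminal} then gives $F_1=0$.

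Here $F_1=0$ says exactly that $\partial_s^2\phi_3$ has no $u$-dependence. The residual dependence on $w$ (the second kernel direction) must also be killed. We plan to do this by comparing the $\lambda^{7}$- and $\lambda^{8}$-coefficients of $\det(H_\phi(\lambda x))$, restricted to $K_5$, using $C\xi=0$; we expect the $w$-part to enter linearly against the generically nonzero factor $\det(H_{\phi_5}|_{E_5})$. The main obstacle is this last bookkeeping step together with the choice of odd weights, since a careless weight choice lets lower layers interfere. If that fails, the fallback is to show that $\mathcal{H}_\phi$ has a common zero and invoke Proposition~\ref{Complex-to-real}.
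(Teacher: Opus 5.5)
\textbf{The gap: case (a), showing $D_\xi^2\phi_3=0$.} Write $F_1:=D_\xi^2\phi_3$. A priori this is a linear form in all of $(u,s,w)$, not just in $u$. You run the odd-weight restriction before controlling its $s$- and $w$-parts, and in that order the argument fails.

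\begin{itemize}
\item On $U\oplus\mathbb{R}\xi$, the weights that put $F_5$, $sF_3$ and $\frac{s^2}{2}F_1|_u$ into one initial form are $u\mapsto1$, $s\mapsto2$, giving $N=5$. Your weights $(2,2,1)$ leave only $F_5$.
\item Under $u\mapsto1$, $s\mapsto2$, a term proportional to $s^3$ in $\phi_3$ (equivalently an $s$-component of $F_1$) has weight $6$. It is then the entire initial form, of even weight. Lemma~\ref{odd_weighted_restriction} gives nothing, and Lemma~\ref{binary_quintic_terminal} cannot be invoked.
\item You never address this $s$-component. For the $w$-component you only expect that some $\lambda^7$/$\lambda^8$ bookkeeping will work, and the fallback via $\mathcal{H}_\phi$ is not an argument.
\end{itemize}

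\textbf{The missing idea is the next term of the Schur complement.} Put $P:=H_{\phi_5}|_U$, and let $B$ be the $\lambda^2$-coefficient of the off-diagonal block, i.e.\ the $U\times K_5$ block of $H_{\phi_4}$. Then $S(\lambda)=\lambda^2C+\lambda S_1+O(1)$ with $S_1=H_{\phi_3}|_{K_5}-B^\top P^{-1}B$.
\begin{itemize}
\item If $C$ has generic rank one, the relevant coefficient is $\lambda^9$, not $\lambda^7$ or $\lambda^8$. The $\lambda^9$-coefficient of $\det H_\phi(\lambda x)$ is a nonzero multiple of $\det(P)\operatorname{tr}(\operatorname{adj}(C)S_1)$, and $\operatorname{adj}(C)=\chi\,\xi\otimes\xi$.
\item If $C=0$, that coefficient vanishes identically. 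You must instead use that $S(\lambda)>0$ as $\lambda\to\pm\infty$, which forces $S_1=0$.
\end{itemize}
Either way $\langle S_1\xi,\xi\rangle=0$. Since $B\xi=\nabla F_3$, this reads $F_1=\langle P^{-1}\nabla F_3,\nabla F_3\rangle$, which depends only on $u$. This kills the $s$- and $w$-parts at once. Only then is the weight-$5$ initial form of $\phi|_{U\oplus\mathbb{R}\xi}$ equal to $F_5+sF_3+\frac{s^2}{2}F_1$, and Lemmas~\ref{odd_weighted_restriction} and~\ref{binary_quintic_terminal} give $F_1=0$.

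\textbf{A smaller slip in case (b).} With $\omega=(2,2,3,3)$ the top weight is $10$, so the weight-$9$ term $\phi_3(0,z)$ is not part of $\operatorname{in}_\omega(\phi)$. The initial form is exactly the two-layer one from Lemma~\ref{quintic_rank_two_pivot}. Its nonzero determinant and the vanishing forced by Proposition~\ref{newton_polytope_criterion} (since $2N=20\neq10$) are the two halves of one contradiction, not alternative arguments.
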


\begin{proof}
As before, we put {$H_{\phi_j}(x):=\operatorname{Hess}(\phi_j)_x$} for $2\leq j\leq5$. Let ${K_5:=\mathcal{K}(H_{\phi_5})}$ be the common kernel of {$H_{\phi_5}(x)$} for $x\in\mathbb{R}^4$, and choose a complementary space $U$ so that {we have} ${\mathbb{R}^4=U\oplus K_5}$. Write $x=(u,z)$ for the coordinates on the direct sum ${U\oplus K_5}$ so that we may write ${\phi_5=F_5(u)}$, where {the polynomial} ${F_5}$ is an essential binary quintic. Put {$P:=H_{\phi_5}|_{U}$}. We also put ${C:=H_{\phi_4}|_{K_5}}$.

At a generic point, the matrix $P$ is non-singular and has signature $(1,1)$. Put
$$
H(\lambda):={H_{\phi_2}}+\lambda{H_{\phi_3}(x)}+\lambda^2{H_{\phi_4}(x)}+\lambda^3{H_{\phi_5}(x)}={H_\phi(\lambda x)}.
$$
With respect to the direct sum decomposition ${U\oplus K_5}$, write $H(\lambda)$ in its block form
$$
H(\lambda)=
\begin{pmatrix}
{H_{11}(\lambda)}&{H_{12}(\lambda)}\\
{H_{21}(\lambda)}&{H_{22}(\lambda)}
\end{pmatrix}.
$$
For $|\lambda|$ sufficiently large, the matrix {$H_{11}(\lambda)$} is non-singular. The Schur complement of {$H_{11}(\lambda)$} in the ambient $H(\lambda)$ is
$$
{S(\lambda):=H_{22}(\lambda)-H_{21}(\lambda)H_{11}(\lambda)^{-1}H_{12}(\lambda).}
$$
Since {$H_{11}(\lambda)-\lambda^3P$}, {$H_{12}(\lambda)$} and {$H_{21}(\lambda)$} are asymptotically $O(\lambda^2)$, and {since} {$H_{22}(\lambda)=\lambda^2C+O(\lambda)$}, we have that
$$
{S(\lambda)}=\lambda^2C+O(\lambda).
$$
In the block decomposition, the index of $H(\lambda)$ is the sum of the indices of {$H_{11}(\lambda)$} and ${S(\lambda)}$. Since both $H(\lambda)$ and {$H_{11}(\lambda)$} have index $1$, we obtain that ${S(\lambda)}>0$ for $|\lambda|$ sufficiently large. Letting $\lambda\rightarrow\pm\infty$, we obtain that $C\geq0$. The coefficient of $\lambda^{10}$ in $\det(H(\lambda))$ is a nonzero constant multiple of $\det(P)\det(C)$. Consequently, we have that $\det(C)=0$.

Lemma~\ref{quadratic_matrix_dichotomy} shows that either $C$ has a nonzero constant kernel vector or ${C=\rho\,\ell\otimes\ell}$, where {the constant} $\rho>0$ and the two components of ${\ell}$ are independent linear forms. Suppose that the second case occurs. Now, write $z=(z_1,z_2)$ and ${\ell=(\ell_1,\ell_2)}$. For indices $1\leq i,j,k\leq2$, {put $\partial_i:=\partial/\partial z_i$}. The Hessian integrability identities give
$$
{\partial_j(\ell_k\ell_i)=\partial_i(\ell_k\ell_j).}
$$
The independence of ${\ell_1,\ell_2}$ implies that both components are independent of $z$. After a linear change of coordinates, we may therefore assume that ${\ell=u}$. Double integrations with respect to $z$ show that the expression $\phi_4-\rho\langle u,z\rangle^2/2$ is affine in $z$.

Give the variables in $U$ weight two and those in ${K_5}$ weight three. The highest weight component of $\phi$ is
$$
{\Psi(u,z)=F_5(u)+\frac{\rho}{2}\langle u,z\rangle^2,}
$$
as the remaining part of $\phi_4$ and every cubic term have weight at most nine, while every term of degree at most two has weight at most six. Direct computation and Euler's identity then immediately yield
$$
{\det(\operatorname{Hess}(\Psi))=\rho^3\langle u,z\rangle^2(3\rho\langle u,z\rangle^2-20F_5(u))\neq0,}
$$
which contradicts Lemma~\ref{weighted_hessian_face}. Hence, the matrix $C$ has a nonzero constant kernel vector ${\xi\in K_5}$.

To proceed, we put ${F_3:=D_\xi\phi_4}$. The polynomial ${F_3}$ is a binary cubic. We also put ${F_1:=D_\xi^2\phi_3}$.
On the Zariski open subset of $U$ where {the matrix} $P$ is invertible, let ${B}$ denote the coefficient of $\lambda^2$ in {$H_{12}(\lambda)$}, and put
$$
{S_1:=H_{\phi_3}(x)|_{K_5}-B^\top P^{-1}B.}
$$
Since ${H_{22}(\lambda)=\lambda^2C+\lambda H_{\phi_3}(x)|_{K_5}+O(1)}$, the matrix ${S(\lambda)}$ has the expansion
$$
{S(\lambda)=\lambda^2C+\lambda S_1+O(1)}.
$$

Suppose that $C$ has rank one at a generic point. Then, the coefficient of $\lambda^9$ in the determinant of $H(\lambda)$ is a nonzero constant multiple of ${\det(P)\operatorname{tr}(\operatorname{adj}(C)S_1)}$.
Since the kernel of $C$ is ${\mathbb{R}\xi}$, there exists a nonzero polynomial ${\chi}$ such that
$$
{\operatorname{adj}(C)=\chi\, \xi\otimes \xi.}
$$
The vanishing of the coefficient of $\lambda^9$ in the determinant of $H(\lambda)$ gives ${\langle S_1\xi,\xi\rangle=0}$.

Suppose now that $C=0$. Then, the positivity of ${S(\lambda)}$ as $\lambda\rightarrow\pm\infty$ gives ${S_1=0}$. Since ${B\xi=\nabla F_3}$ and ${\langle H_{\phi_3}(x)\xi,\xi\rangle=F_1}$, in either case we obtain that
$$
{F_1=\langle P^{-1}\nabla F_3,\nabla F_3\rangle.}
$$
The right-hand side ${\langle P^{-1}\nabla F_3,\nabla F_3\rangle}$ depends only on $u$, so {we obtain that} ${F_1}$ also depends only on $u$.

Let $s$ be the coordinate in the direction {$\xi$} on ${U\oplus\mathbb{R}\xi}$. Give the variables in $U$ weight one and give $s$ weight two. It is readily seen that
$$
{\operatorname{in}_{\omega}(\phi|_{U\oplus\mathbb{R}\xi})=F_5(u)+sF_3(u)+\frac{s^2}{2}F_1(u),}
$$
and every omitted term has weight at most four.
Lemma~\ref{odd_weighted_restriction} gives that
$$
{\operatorname{rank}(\operatorname{Hess}(\operatorname{in}_{\omega}(\phi|_{U\oplus\mathbb{R}\xi})))\leq2,}
$$
and hence the determinant of this Hessian vanishes. Lemma~\ref{binary_quintic_terminal} now gives ${F_1=0}$. Thus, the second directional derivatives of $\phi_5,\phi_4,\phi_3$ along {$\xi$} all vanish. Since the remaining homogeneous pieces have degree at most two, the polynomial {$D_\xi^2\phi$} is constant. Therefore, the vector {$\xi$} is a constant pivot.
\end{proof}

We next treat the remaining essential-rank-one case.

\begin{lemma}\label{general_quintic_rank_one}
Let $\phi\in\mathbb{R}[x_1,\dots,x_4]$ be a unimodular Hessian potential of index $1$ and degree $5$. If the essential rank of $\phi_5$ is one, then the potential $\phi$ admits a constant pivot.
\end{lemma}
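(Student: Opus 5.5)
The plan follows Lemma~\ref{quintic_rank_one_pivot} and then adds the extra step used in Lemma~\ref{general_quintic_rank_two} to handle the cubic layer.

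\emph{Setup.} Choose a leading variable $t$ and complementary variables $z\in\mathbb{R}^3$ with $\phi_5=\alpha t^5$ and $\alpha\neq0$. For each $t$, let $C(t,z)$ be the Hessian of $\phi_4(t,-)$ in $z$. Then
$$
H(\lambda)=H_\phi(\lambda x)=H_{\phi_2}+\lambda H_{\phi_3}+\lambda^2H_{\phi_4}+\lambda^3H_{\phi_5}.
$$
Pick the sign of $\lambda$ so that $20\alpha\lambda^3t^3<0$. Then the $tt$-entry has index one, so the transverse Schur complement $S(\lambda)=\lambda^2C+\lambda S_1+O(1)$ on $z$-space is positive definite for $|\lambda|$ large. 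Dividing by $\lambda^2$ and letting $|\lambda|\to\infty$ gives $C\geq0$. The coefficient of $\lambda^9$ in $\det H(\lambda)$ is $20\alpha t^3\det C$; the cubic layer enters only at lower order, and the contributions with two or more $t$-entries vanish since $H_{\phi_5}$ has rank one. Hence $\det C=0$. Lemma~\ref{convex_cylinder}, applied to $\phi_4(1,-)$, together with homogeneity, gives a nonzero constant $\xi\in\mathbb{R}^3$ with $C(t,z)\xi=0$ for all $(t,z)$.

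\emph{Step~1: $\phi_4$ is at most affine in the $\xi$-direction.} Since $C\xi=0$, we have $D_\xi^2\phi_4=0$ and $D_\xi\phi_5=0$. Put $F_3:=D_\xi\phi_4$, a cubic which need not vanish, and $F_1:=D_\xi^2\phi_3$, a linear form. It remains to show $F_1=0$; then $D_\xi^2\phi$ is constant.

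\emph{Step~2: compute $F_1$ from the Schur complement.} If $C$ has rank two generically, then $\operatorname{adj}C=\chi\,\xi\otimes\xi$ with $\chi\not\equiv0$. The vanishing of the next coefficient, $\lambda^8$, in $\det H(\lambda)$ is a nonzero multiple of $t^3\operatorname{tr}(\operatorname{adj}(C)S_1)$, so $\langle S_1\xi,\xi\rangle=0$. If $C$ has lower rank, positivity of $S(\lambda)$ along the kernel similarly forces $\langle S_1\xi,\xi\rangle=0$, possibly after refining the choice of $\xi$ in $\ker C$. With the pivot block being the scalar $20\alpha t^3$, this reads
$$
F_1=\frac{(\partial_tF_3)^2}{20\alpha t^3}.
$$
Since $F_1$ is a polynomial, $t^3$ must divide $(\partial_tF_3)^2$, so $t^2\mid\partial_tF_3$.

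\emph{Step~3: conclude.} Restrict to the plane spanned by $\partial_t$ and $\xi$, with coordinates $(t,s)$. Give $t$ weight one and $s$ weight two. The initial form is
$$
\alpha t^5+sF_3(t,0)+\tfrac{s^2}{2}F_1(t,0),
$$
of odd weight $5$. Lemma~\ref{odd_weighted_restriction} then forces its Hessian to have rank at most two, hence to be singular. Now specialise: the divisibility from Step~2 gives $F_3(t,0)=bt^3$ and $F_1(t,0)=b^2t/(20\alpha)\cdot 9$ up to the constant from the formula. A direct $2\times2$ determinant computation with $\alpha\neq0$ should then force $b=0$, and hence $F_1(t,0)=0$.

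The remaining components of the linear form $F_1$ along $z$ must be killed separately. I would try the three-dimensional restriction to $\operatorname{span}\{\partial_t,\xi,\zeta\}$ with weights $(1,2,2)$ and apply Lemma~\ref{odd_weighted_restriction} again; alternatively, use the divisibility $t^3\mid(\partial_tF_3)^2$ directly, which forces $F_1$ to be a multiple of $t$.

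\emph{Main obstacle.} I expect the hardest step to be this final elimination of $F_1$. It requires organising the $\lambda^8$ coefficient correctly when $C$ has rank below two, and checking that the weighted-initial-form determinant really forbids the case $F_1=ct$ with $c\neq0$. This is the analogue of Lemma~\ref{binary_quintic_terminal} for an essentially unary $F_5$, and it may fail. If it does, I would fall back on Theorem~\ref{singular_residual_Hessian}: show that $\det R_\phi\equiv0$ in this case by examining the top degrees of $\sigma_4(L_\phi)$, which has the form $t^3\det C+\dots$, and then invoke that theorem.
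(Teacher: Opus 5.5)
There is a genuine gap in Step~3, and it sits in the case where $C$ has generic rank two. In that case your Steps~1--2 are sound. Since $D_wF_3=\langle C\xi,w\rangle=0$ for every transverse $w$, you have $F_3=bt^3$, and the vanishing of the $\lambda^8$-coefficient gives $F_1=\mu t$ with $20\alpha\mu=9b^2$.

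However, the plane $\operatorname{span}\{\partial_t,\xi\}$ is two-dimensional, so the conclusion $\operatorname{rank}\le2$ of Lemma~\ref{odd_weighted_restriction} is vacuous there. What its proof actually gives is at most one eigenvalue of each sign, i.e.\ a nonpositive determinant. For $\Theta=\alpha t^5+bt^3s+\frac{\mu}{2}ts^2$ one computes
\[
\det(\operatorname{Hess}(\Theta))=(20\alpha\mu-9b^2)t^4-\mu^2s^2=-\mu^2s^2\le0,
\]
so nothing on this plane rules out $b\neq0$. This is exactly where the analogy with Lemma~\ref{general_quintic_rank_two} breaks: there the restriction lives on the three-dimensional space $U\oplus\mathbb{R}\xi$, whereas here $U$ is a line. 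The three-dimensional restriction you mention is not carried out. The fallback through Theorem~\ref{singular_residual_Hessian} is unsupported, since nothing you derive gives $\det(R_\phi)\equiv0$.

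What is missing in the rank-two case is a genuinely four-dimensional argument. The paper writes $K_C=\mathbb{R}\partial_s$ and $\partial_s^2\phi_3=\mu x$, and assumes $\mu\neq0$.
\begin{itemize}
\item Since $\phi_5$ and $\phi_4$ are at most affine in $s$ and $\phi_3$ is quadratic in $s$, the Hessian $\operatorname{Hess}(\phi)=M_0+sM_1$ is an affine pencil with $M_1=\operatorname{Hess}(bx^3+\mu xs+B_2)$.
\item Lemma~\ref{lorentz_flat_pencil} gives $\operatorname{rank}(M_1)\le2$. The $(x,s)$-block of $M_1$ has determinant $-\mu^2\neq0$, so the $yy$-block of $M_1$ vanishes. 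Hence $M_1$ has a fixed kernel $K_M$ projecting isomorphically onto the $y$-plane.
\item The $s^2$-coefficient of $\det(M_0+sM_1)$ then forces $\det(M_0|_{K_M})=0$. The top-degree part of this determinant is the determinant of the $y$-Hessian of $F_4(x,-)$, which is nonzero because $C$ has generic rank two with kernel $\mathbb{R}\partial_s$. This contradiction gives $\mu=0$.
\end{itemize}

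Your lower-rank cases are also only sketched. Positivity of $S(\lambda)$ on the kernel yields only the one-sided bound $\lambda S_1\ge0$, not $\langle S_1\xi,\xi\rangle=0$. The paper uses this bound differently, in three steps:
\begin{itemize}
\item An affine function of $z$ with fixed sign is constant, so $H_{\phi_3}$ restricted to the common kernel $K_C$ equals $xQ_0$ with $Q_0$ constant. Here $K_C$ has dimension $3-r$ and is obtained by iterating Lemma~\ref{convex_cylinder}.
\item The odd-weight lemma is applied on $\mathbb{R}x\oplus K_C$, which has dimension at least three, so it has content there and forces $Q_0$ to be singular.
\item The pivot is then taken in $\ker(Q_0)\subseteq K_C$.
\end{itemize}
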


\begin{proof}
{Use the notation} {$H_{\phi_j}(x):=\operatorname{Hess}(\phi_j)_x$} for $2\leq j\leq5$. Choose a leading variable $x$ for $\phi_5$ and some complementary variables $z\in\mathbb{R}^3$ such that $\phi_5=\alpha x^5$ for some $\alpha\in\mathbb{R}^{\times}$. Let ${K_5:=\mathcal{K}(H_{\phi_5})}$ be the {direction space tangent to the $z$-coordinates}, and put ${C:=H_{\phi_4}|_{K_5}}$.

We put
$$
H(\lambda):={H_{\phi_2}}+\lambda{H_{\phi_3}(x,z)}+\lambda^2{H_{\phi_4}(x,z)}+\lambda^3{H_{\phi_5}(x,z)}.
$$
With respect to the direct sum decomposition ${\mathbb{R}\partial_x\oplus K_5}$, write
$$
H(\lambda)=
\begin{pmatrix}
{H_{11}(\lambda)}&{H_{12}(\lambda)}\\
{H_{21}(\lambda)}&{H_{22}(\lambda)}
\end{pmatrix}.
$$
We have that {$H_{11}(\lambda)=20\alpha\lambda^3x^3+O(\lambda^2)$}. It also holds that {$H_{12}(\lambda)=O(\lambda^2)$}, {$H_{21}(\lambda)=O(\lambda^2)$} and {$H_{22}(\lambda)=\lambda^2C+O(\lambda)$}. At each point with $x\neq0$, choose the sign of $\lambda$ for which {$H_{11}(\lambda)<0$} as $|\lambda|\rightarrow\infty$. The Schur complement of {$H_{11}(\lambda)$} in $H(\lambda)$ is
$$
{S(\lambda):=H_{22}(\lambda)-H_{21}(\lambda)H_{11}(\lambda)^{-1}H_{12}(\lambda)=\lambda^2C+O(\lambda).}
$$
Since {$H_{11}(\lambda)<0$}, it accounts for the unique negative eigenvalue of $H(\lambda)$, and hence {we obtain that} ${S(\lambda)}>0$. After division by $\lambda^2$ and passage to the limit, we obtain that $C\geq0$. Continuity then gives the same conclusion when $x=0$. The coefficient of $\lambda^9$ in $\det(H(\lambda))$ is $20\alpha x^3\det(C)$. Consequently, we have that $\det(C)=0$.

Let $r\leq2$ be the maximum rank of the matrix-valued function $C=C(x,z)$. Lemma~\ref{convex_cylinder} now gives a nonzero constant vector in the common kernel of the matrices $C(1,z)$ for $z\in {K_5}$, so that, up to an affine term, the polynomial $\phi_4(1,-)$ descends to the quotient. Repeating the argument gives a fixed linear subspace ${K_C\subseteq K_5}$ of dimension at least $3-r$ in the common kernel of $C(1,z)$ for $z\in {K_5}$. The generic rank of $C$ forces equality, namely ${\dim_\mathbb{R}(K_C)=3-r}$. Homogeneity also gives $C(x,z)=x^2C(1,z/x)$ when $x\neq0$. Thus, continuity shows that ${K_C}$ is contained in the common kernel of $C$. Therefore, there exists a constant vector ${\xi\in K_C}$ such that, for every ${v\in K_C}$, we have that
$$
{D_v\phi_4=\langle \xi,v\rangle x^3.}
$$

We put ${Q_1:=H_{\phi_3}|_{K_C}}$. Since ${\partial_x D_v\phi_4=3x^2\langle \xi,v\rangle}$ for every ${v\in K_C}$, the coefficient of the linear term $\lambda$ in ${S(\lambda)|_{K_C}}$ is
$$
{S_1:=Q_1-\frac{9x}{20\alpha}\xi\otimes\xi.}
$$
Fix $x=1$ and choose the sign of $\lambda$ for which {$H_{11}(\lambda)<0$}. Then, for every ${v\in K_C}$, the polynomial ${\langle S_1(1,z)v,v\rangle}$ is affine in $z$ and has a fixed sign. It is therefore constant in $z$. Polarization shows that every coefficient of $z$ in ${S_1}$ vanishes. By homogeneity, there exists a constant symmetric matrix $Q_0$ on ${K_C}$ such that ${Q_1=xQ_0}$.

Give $x$ weight one and the variables ${u\in K_C}$ weight two. The highest weight component of the restriction of $\phi$ to ${\mathbb{R}x\oplus K_C}$ is
$$
{\Theta(x,u):=\alpha x^5+x^3\langle \xi,u\rangle+\frac{x}{2}\langle Q_0u,u\rangle,}
$$
and every omitted term has weight at most four. Lemma~\ref{odd_weighted_restriction} gives
$$
\operatorname{rank}(\operatorname{Hess}(\Theta))\leq2.
$$

Suppose first that $r=0$. Then, we have that ${\dim_\mathbb{R}(K_C)=3}$. Since $xQ_0$ is a principal block of $\operatorname{Hess}(\Theta)$, the matrix $Q_0$ has a nonzero kernel vector, which is a constant pivot for $\phi$.

Suppose next that $r=1$. Then, we have that ${\dim_\mathbb{R}(K_C)=2}$. A direct computation gives
$$
{\det(\operatorname{Hess}(\Theta))=x^5\big(20\alpha\det(Q_0)-9\langle\operatorname{adj}(Q_0)\xi,\xi\rangle\big)-x\det(Q_0)\langle Q_0u,u\rangle=0.}
$$
Comparison of the terms that are quadratic in $u$ gives $\det(Q_0)=0$. A nonzero vector in the kernel of $Q_0$ is again a constant pivot.

Finally, suppose that $r=2$. Then, we have that ${K_C=\mathbb{R}\partial_s}$ for a transverse coordinate $s$. Write
${\partial_s^2\phi_3=\mu x}$
for some constant $\mu\in\mathbb{R}$. If $\mu=0$, the direction $\partial_s$ is a constant pivot for $\phi$.

From now on, we assume, for a contradiction, that $\mu\neq0$. Choose complementary variables $y\in\mathbb{R}^2$. Integration in $s$ gives $\phi_4=bx^3s+F_4(x,y)$ and 
$$
\phi_3=\frac{\mu}{2}xs^2+sB_2(x,y)+G_3(x,y),
$$
where {the polynomial} $B_2$ is homogeneous quadratic. The full Hessian $\operatorname{Hess}(\phi)$ hence forms an affine pencil
$$
{\operatorname{Hess}(\phi)=M_0+sM_1,}
$$
where the matrix ${M_0}$ is independent of the variable $s$ and ${M_1:=\operatorname{Hess}(bx^3+\mu xs+B_2)}$.
Write
$$
B_2(x,y)=\beta x^2+x\langle p,y\rangle+\frac{1}{2}\langle Ey,y\rangle
$$
for a constant $\beta\in\mathbb{R}$, a constant vector $p\in\mathbb{R}^2$ and a constant symmetric matrix $E$. In the ordered coordinate system $(x,s,y_1,y_2)$, we have that
$$
{M_1}=
\begin{pmatrix}
6bx+2\beta&\mu&{p^\top}\\
\mu&0&0\\
p&0&E
\end{pmatrix}.
$$
For each fixed $(x,y)$, Lemma~\ref{lorentz_flat_pencil} gives ${\operatorname{rank}(M_1)\leq2}$. The upper left $2$ by $2$ block of ${M_1}$ has determinant $-\mu^2$, so its rank is two. The inverse of this block has zero upper left entry, so its Schur complement in ${M_1}$ is exactly $E$. Since ${\operatorname{rank}(M_1)\leq2}$, we obtain that $E=0$. The matrix ${M_1}$ has the fixed kernel
$$
{K_M:=\left\{\left(0,-\langle p,v\rangle/\mu,v\right):v\in\mathbb{R}^2\right\}.}
$$
Fix the basis of ${K_M}$ that projects to the standard basis of the $y_1y_2$-plane, so that in the resulting basis of ${\operatorname{span}\{\partial_x,\partial_s\}\oplus K_M}$, the coefficient of $s^2$ in ${\det(M_0+sM_1)}$ is ${-\mu^2\det(M_0|_{K_M})}$.
It follows that ${\det(M_0|_{K_M})=0}$. Every vector in ${K_M}$ has vanishing $x$-component, and hence {we have that} ${H_{\phi_5}|_{K_M}=0}$. The projection from ${K_M}$ to the $y$-space is an isomorphism, and the homogeneous component of degree two of ${M_0|_{K_M}}$ is the Hessian matrix of $F_4(x,-)$ in the two variables $y_1$ and $y_2$, whose determinant is nonzero because $C$ has rank two at a generic point and common kernel $\mathbb{R}\partial_s$. This is a contradiction. We therefore conclude that $\mu=0$, so the direction $\partial_s$ is a constant pivot for $\phi$.
\end{proof}

\begin{proposition}\label{quintic_pivot}
Let $\phi\in\mathbb{R}[x_1,\dots,x_4]$ be a unimodular Hessian potential of index $1$. Suppose that $\deg(\phi)=5$. Then, the potential $\phi$ admits a constant pivot.
\end{proposition}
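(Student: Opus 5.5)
The plan is a case split on the essential rank of the top piece $\phi_5$, where every case has in effect already been handled by an earlier lemma. Since $\deg(\phi)=5\geq3$, Lemma~\ref{essential_space} applies, so $\phi_5$ has a well-defined essential rank $r_{\mathrm{ess}}(\phi_5)\in\{1,2,3\}$.

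First, because $d=5$ is odd, Lemma~\ref{lorentz_parity} excludes essential rank three. Hence $r_{\mathrm{ess}}(\phi_5)\in\{1,2\}$.

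Second, suppose $r_{\mathrm{ess}}(\phi_5)=2$. Then Lemma~\ref{general_quintic_rank_two} directly gives a constant pivot. That lemma imposes no two-layer restriction: it allows $\phi_3\neq0$, and it accounts for the cubic layer through the weighted restriction argument of Lemma~\ref{odd_weighted_restriction} and Lemma~\ref{binary_quintic_terminal}.

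Third, suppose $r_{\mathrm{ess}}(\phi_5)=1$. Then Lemma~\ref{general_quintic_rank_one} produces a constant pivot, again with all layers $\phi_5,\phi_4,\phi_3$ present.

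The general quintic therefore reduces entirely to Lemma~\ref{lorentz_parity} together with the two general quintic lemmas. Before concluding, I would check one hypothesis: both lemmas assume only that $\phi$ is a unimodular Hessian potential of index $1$ and degree $5$ with the stated essential rank, which is exactly our setting. No normalisation of $\phi_2$ is needed, since the index-one hypothesis already gives $H_{\phi_2}=H_\phi(0)$ signature $(3,1)$.

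The main obstacle is not this proposition but the rank-one lemma it relies on, in particular its subcase $r=2$ with $\mu\neq0$. There the contradiction comes from the flat-pencil bound of Lemma~\ref{lorentz_flat_pencil} combined with the nondegeneracy of $\operatorname{Hess}(F_4(x,-))$. Taking those lemmas as established, the proposition follows. Combined with Proposition~\ref{constant_pivot_inversion}, this also shows that the gradient mapping of every quintic Lorentzian unimodular potential is a polynomial automorphism. Degrees at most four are covered by Lemma~\ref{quartic_pivot} (when $\phi_4\neq0$) or are quadratic (a nonzero cubic top piece would violate the degree-three analogue of the layer separation).
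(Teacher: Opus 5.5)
Your proof is correct and is exactly the paper's argument: Lemma~\ref{lorentz_parity} rules out essential rank three, and Lemmas~\ref{general_quintic_rank_two} and~\ref{general_quintic_rank_one} supply the pivot in essential ranks two and one. One aside in your last paragraph is false, though it is not needed for this proposition: a cubic Lorentzian unimodular potential need not be quadratic (for example, $ts+\tfrac12(y_1^2+y_2^2)+s^3$ has Hessian determinant $-1$ and index $1$), and the paper instead handles $d\leq3$ through Corollary~\ref{low_dimensional_Hesse_system}.
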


\begin{proof}
Lemma~\ref{lorentz_parity} shows that the essential rank of $\phi_5$ is one or two. The assertion follows from Lemma~\ref{general_quintic_rank_one} and Lemma~\ref{general_quintic_rank_two}.
\end{proof}

\begin{corollary}\label{degree_five_hessian_conjecture}
Let $\phi\in\mathbb{R}[x_1,\dots,x_4]$ be a unimodular Hessian potential of index $1$ and degree at most $5$. Then, the gradient mapping $\nabla\phi\colon\mathbb{R}^4\rightarrow\mathbb{R}^4$ is a polynomial automorphism.
\end{corollary}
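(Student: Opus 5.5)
The plan is to split according to the degree $d:=\deg(\phi)\leq5$ and, in every case, to reduce to Proposition~\ref{constant_pivot_inversion}. The constant-pivot theorem makes a polynomial inverse of $\nabla\phi$ available as soon as a constant pivot exists. So the only real task is to produce a pivot, or to handle the cases where one is not needed.

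\emph{Degree at most two.} Here $\phi$ is quadratic, so $\nabla\phi$ is an affine map. Its linear part $\operatorname{Hess}(\phi)$ has determinant $\pm1$, so $\nabla\phi$ is an affine automorphism, and its inverse is again affine, hence polynomial. Equivalently, any nonzero vector is a pivot, since $D_\xi^2\phi$ is then constant.

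\emph{Degree three.} Lemma~\ref{top_hessian_singularity} shows that $\operatorname{Hess}(\phi_3)$ is singular. Since $3$ is odd, Lemma~\ref{lorentz_parity} bounds the essential rank of $\phi_3$ by $2$, so $\phi_3$ depends on at most two linear forms. Moreover, $\phi=\phi_3+\phi_2+\phi_1+\phi_0$ has a single nonlinear layer above the quadratic part. This is exactly the shape covered by Theorem~\ref{high_degree_multilayer_pivot} with $k=0$, because $d=3\geq4\cdot0+3$, so a constant pivot exists. As a direct check, the common kernel $K_3$ of $H_{\phi_3}$ has dimension at least $2$, and any nonzero $\xi\in K_3$ satisfies $D_\xi^2\phi=\langle H_{\phi_2}\xi,\xi\rangle$, which is constant.

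\emph{Degree four.} Here $\phi=\phi_4+\phi_3+\phi_2+\phi_1+\phi_0$ is automatically of the form required in Lemma~\ref{quartic_pivot}, which supplies a pivot. (Equivalently, Theorem~\ref{two_layer_pivot} applies with $d=4$.)

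\emph{Degree five.} Proposition~\ref{quintic_pivot} gives a constant pivot directly.

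In each case Proposition~\ref{constant_pivot_inversion} then shows that $\nabla\phi$ is a polynomial automorphism. I expect no step to be a genuine obstacle, since all the substantial work sits in Proposition~\ref{quintic_pivot} and Lemma~\ref{quartic_pivot}. The only points to check carefully are that the degree-three case fits an earlier hypothesis, which it does via Theorem~\ref{high_degree_multilayer_pivot} with $k=0$, and that the intermediate layers are allowed to vanish in the cited statements.
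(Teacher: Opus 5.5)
Your proof is correct. For degrees four and five it coincides with the paper's argument: Lemma~\ref{quartic_pivot} and Proposition~\ref{quintic_pivot}, followed by Proposition~\ref{constant_pivot_inversion}.

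The difference is in degree at most three.

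\emph{The paper's route.} It observes that the Hessian differences are then linear in $x$, so $\dim_{\mathbb{C}}(\mathcal{H}_\phi)\leq4$, and it invokes Corollary~\ref{low_dimensional_Hesse_system}. This is a one-line application of the Hesse-system criterion. However, that corollary rests on the complex-to-real principle (Proposition~\ref{Complex-to-real}, with its cylinder-theorem and Schur-complement analysis). In the subcase $\dim_{\mathbb{C}}(\mathcal{H}_\phi)=4$ it also needs Lemma~\ref{square_hesse_dominance} and the Lorentzian nilpotent bound of Lemma~\ref{lorentzian_nilpotent_bound}.

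\emph{Your route.} Your direct check needs only Lemma~\ref{lorentz_parity}. Essential rank at most two gives $\dim_{\mathbb{R}}(K_3)\geq2$. Every nonzero $\xi\in K_3$ is then a strong pivot, with $D_\xi^2\phi=\langle H_{\phi_2}\xi,\xi\rangle$. You therefore obtain an explicit plane of pivots with almost no machinery.

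Of your two arguments for the cubic case, this direct one is the better one. Citing Theorem~\ref{high_degree_multilayer_pivot} with $k=0$ and $d=3$ is also valid, since the hypothesis $d\geq4k+3$ holds. But it goes through Lemma~\ref{lorentz_hessian_rank_collapse} and hence de Bondt's small-rank classification, which is unnecessary here.

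Your separate treatment of $d\leq2$ is fine.
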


\begin{proof}
Put $d:=\deg(\phi)$. If $d\leq3$, the Hessian differences depend linearly on the four variables, so the complex dimension of $\mathcal{H}_\phi$ is at most four. Corollary~\ref{low_dimensional_Hesse_system} then proves the assertion. If $d=4$, Lemma~\ref{quartic_pivot} provides a constant pivot. If $d=5$, Proposition~\ref{quintic_pivot} provides a constant pivot. Proposition~\ref{constant_pivot_inversion} completes the remaining two cases.
\end{proof}

\section{Conclusion}

This article studies the four-dimensional Hessian conjecture under the assumption that the Hessian has Lorentzian signature. The arguments are organised around the constant-pivot condition. Once a constant pivot is available, the causal decomposition reduces polynomial inversion either to the known three-dimensional case or to the lightlike normal form, while the timelike case is rigid. Consequently, the gradient mapping of every potential admitting a constant pivot is a polynomial automorphism. The appendix strengthens this point by showing that, after an affine linear change of coordinates, every such potential has the null normal form
$$
\phi(t,x,y,z)=xt+F(x,y,z).
$$

The highest-weight method detects a pivot from a non-singular weighted face. The degree-separation argument proves the two-layer theorem and its multi-layer extension in the stated range $d\geq4k+3$. The Hesse system gives a complementary complex-algebraic approach: the existence of a complex pivot is equivalent to the rank bound $\operatorname{rank}(\mu_\phi)\leq55$, and the complex-to-real principle converts this into a real Lorentzian pivot criterion. In particular, the gradient mapping is a polynomial automorphism whenever the Hesse system has complex dimension at most four. If every member of the Hesse system is singular, a pivot again exists; in the remaining base-point-free case of dimension at most six, the nonlinear part separates into two ternary summands.

A degree-free conclusion is obtained when $R_\phi(x)$ is singular for every $x\in\mathbb{R}^4$. The lower-rank cases follow from the small-rank classification. In generic rank three, the canonical kernel field and its projectivisation reduce the problem to the geometry of the gradient-image hypersurface. The apex branch produces a pivot directly. In the remaining branch, the projective kernel image is a developable surface; rational parametrisation and the induced differential-field structure reduce this possibility to a non-exactness statement on a genus-one curve, which gives a contradiction. Hence, every potential with singular residual Hessian admits a constant pivot.

Finally, the weighted restriction argument and the analysis of the two possible essential ranks of the leading quintic form prove the conjecture for all potentials of degree at most five. The general four-dimensional Lorentzian case remains open. By the results above, any counterexample in this setting would have degree at least six, a generically non-singular residual Hessian and a Hesse system of complex dimension at least five. It would also have to lie outside the two-layer and multi-layer degree-separation classes treated here. Thus, the results isolate the remaining case while supplying several algebraic, geometric and differential criteria by which a constant pivot, and hence polynomial invertibility, can be detected.

\appendix
\section{A Null Normal Form}

Using the methods developed above, we observe that the existence of a pivot in fact implies the existence of an affine translation symmetry.

\begin{theorem}
Let $\phi\in\mathbb{R}[x_1,\dots,x_4]$ be a unimodular Hessian potential of index $1$. Then, the potential $\phi$ admits a constant pivot if and only if
$$
{\phi(t,x,y,z)=xt+F(x,y,z)}
$$
for a ternary polynomial $F$ and some coordinate system ${(t,x,y,z)}$ on $\mathbb{R}^4$, with ${t,x,y,z}$ all being affine in {the original coordinate system $(x_1,\dots,x_4)$}.
\end{theorem}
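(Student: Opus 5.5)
The plan is to prove the easy direction directly and then split the converse according to the causal type of the given pivot, as in Proposition~\ref{constant_pivot_inversion}. For the easy direction, suppose $\phi=xt+F(x,y,z)$ with $t,x,y,z$ affine in $(x_1,\dots,x_4)$. The coordinate vector field $\partial_t$ is then a nonzero constant vector in the original coordinates, and $D_{\partial_t}^2\phi=0$. So $\partial_t$ is a (lightlike) constant pivot.

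For the converse, let $\xi$ be a constant pivot and put $a:=\langle\operatorname{Hess}(\phi)\xi,\xi\rangle$.

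\emph{Case $a<0$.} By Lemma~\ref{timelike_pivot_rigidity}, $\phi$ is quadratic. After an affine change (completing squares, possible because $\operatorname{Hess}(\phi)$ is nonsingular) we have $\phi=-\tau^2+w^2+y^2+z^2+\mathrm{const}$. Setting $t:=w-\tau$ and $x:=w+\tau$ gives $\phi=xt+F(y,z)$, with the constant absorbed into $F$.

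\emph{Case $a=0$.} Lemma~\ref{lightlike_pivot_normal_form} gives, up to a linear change of coordinates and an affine summand,
$$\phi=ts+\tfrac12\langle A(s)y,y\rangle+\langle\beta(s),y\rangle+c(s)+\alpha t+\ell(s,y)+\mathrm{const},$$
where $\ell$ is linear. Put $x:=s+\alpha$, which is affine. Then $ts+\alpha t=xt$, and every remaining term is a polynomial in $(x,y_1,y_2)$. This is the required form with $F(x,y_1,y_2)$.

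\emph{Case $a>0$.} This is the real content. It suffices to produce a second, null, constant pivot $\eta$, after which the previous case applies. Write $\phi=\tfrac a2t^2+b(u)t+c(u)$. The ternary potential $L_0(u)=c-b^2/(2a)$ has nonzero constant Hessian determinant and index $1$. Moreover, $L_s=L_0+(s/a)b-s^2/(2a)$ must also have constant determinant for every $s$. Comparing coefficients in $s$ then constrains $\operatorname{Hess}(b)$ relative to $\operatorname{Hess}(L_0)$, exactly as in Lemma~\ref{lorentz_flat_pencil}.

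I would first try to show that $\operatorname{Hess}(b)$ has a common null kernel direction $v\in\mathbb{R}^3$ that is also a pivot of $L_0$. For this I would use the rank bound just described, the ternary homogeneous Hesse theorem (Lemma~\ref{small_hessian_kernel}) and de Bondt's three-variable structure theory. A vector $\eta=(v,\theta)$, with $\theta$ fixed by the Schur-complement relation $t=(s-b(u))/a$, is then a candidate null pivot of $\phi$.

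As a fallback, I would use the complexified criterion. First show that the Hesse system of $\phi$ has a common zero on the null cone of $G^{-1}$, for instance by the same nilpotency and trace-isotropy argument as in Lemma~\ref{lorentzian_nilpotent_bound}. Then convert such a complex null pivot into a real one via Proposition~\ref{Complex-to-real}, checking that nullity survives.

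The main obstacle is this spacelike step: guaranteeing that the auxiliary pivot is null, and not merely that some pivot exists. Everything else is bookkeeping of affine terms.
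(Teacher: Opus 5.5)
Your easy direction and the cases $a<0$ and $a=0$ are fine. Your reduction for $a>0$ is also the paper's. With $Q:=c-b^2/(2a)$, it suffices to find a nonzero \emph{constant} $e\in\mathbb{R}^3$ with $\operatorname{Hess}(b)e=0$ and $\langle\operatorname{Hess}(Q)e,e\rangle\equiv0$. Then $D_eb$ is constant, and $\hat\xi:=e-(D_eb/a)\partial_t$ satisfies $D_{\hat\xi}^2\phi=D_e^2Q=0$, so the lightlike case applies.

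The gap is that the existence of such an $e$ is the entire content of the theorem, and you only announce it. Two things are missing.

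First, asking that $v$ be a pivot of $L_0$ is not enough. Suppose $\operatorname{Hess}(b)v=0$ and $\langle\operatorname{Hess}(L_0)v,v\rangle=\gamma>0$. Then $\eta=(v,-D_vb/a)$ is $\operatorname{Hess}(\phi)$-orthogonal to $\partial_t$ and $D_\eta^2\phi=\gamma$. So $D^2\phi$ is positive definite on $\operatorname{span}\{\partial_t,\eta\}$, and no null pivot results. You need $\gamma=0$ exactly.

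Second, the tools you list do not produce a constant kernel vector. Since $b$ and $Q$ are not homogeneous, Lemma~\ref{small_hessian_kernel} does not apply. For non-homogeneous $b$ the kernel of $\operatorname{Hess}(b)$ can genuinely move: for $b=f(x)y+g(x)z$ it is spanned by $\partial_z-(g'/f')\partial_y$.

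The paper proves a separate claim for ternary pencils satisfying $\det(H_Q+\lambda H_b)=c_0\neq0$ with index $1$ for all $\lambda$. It uses the classification of ternary polynomials with singular Hessian, split by the generic rank of $H_b$:
\begin{itemize}
\item In generic rank two, either $b=b(x,y)$, where the $\lambda^2$-coefficient gives $\partial_z^2Q=0$, or $b=a(x)+f(x)y+g(x)z$. In the latter case the $\lambda^2$-coefficient gives $D_e^2Q=0$ for the rational direction $e=\partial_z-p\partial_y$ with $p=g'/f'$. The $\lambda$-coefficient, together with a degree count in $q=y+pz$ (as in $-(\partial_q^2U_0)(U_1'+p'\partial_qU_0)^2=c_0$), then forces $p'=0$.
\item Rank one ($b=b(x)$) is handled the same way over $\mathbb{R}(x)$.
\item Rank zero uses de Bondt's three-variable theorem, or signature $(2,1)$ if $Q$ is quadratic.
\end{itemize}
Showing that this naturally produced direction is constant is the step your plan never addresses.

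Your fallback does not close the gap either. The trace-isotropy argument of Lemma~\ref{lorentzian_nilpotent_bound} needs $\det(G+A)=\det(G)$ for every $A$ in the Hesse system. The paper obtains that only from dominance of $\Delta_\phi$ (Lemma~\ref{square_hesse_dominance}), that is, for a base-point-free four-dimensional Hesse system, which is not your situation. Moreover, Proposition~\ref{Complex-to-real} is proved by contradiction. The real pivot it yields has no relation to the complex one and no control over its causal type, so ``checking that nullity survives'' has no content.
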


\begin{proof}
We first prove the following claim: 

For $Q,b\in\mathbb{R}[x,y,z]$, we put ${H_Q:=\operatorname{Hess}(Q)}$ and ${H_b:=\operatorname{Hess}(b)}$. We also assume that
$$
{\det(H_Q+\lambda H_b)=c_0\in\mathbb{R}^{\times}}
$$
and that ${H_Q+\lambda H_b}$ has index $1$ at every point $(x,y,z)\in\mathbb{R}^3$, for every $\lambda\in\mathbb{R}$. Then, we claim that there exists a nonzero constant vector $e\in\mathbb{R}^3$ such that $H_b e=0$ and $\langle H_Q e,e\rangle=0$.

Indeed, the argument of Lemma~\ref{lorentz_flat_pencil} already gives ${\operatorname{rank}(H_b)\leq2}$. Suppose first that ${H_b}$ has rank $2$ at a generic point in $\mathbb{R}^3$. The classification theorem of ternary singular Hessians proved in \cite{deBondtEssen2004} and \cite{deBondt2015} gives, after a linear change of coordinates and addition of an affine polynomial, either $b=b(x,y)$ or
$$
b=a(x)+f(x)y+g(x)z.
$$

In the first case, investigating the coefficient of $\lambda^2$ gives
$$
{\bigl(\partial_x^2 b\,\partial_y^2 b-(\partial_x\partial_y b)^2\bigr)\partial_z^2 Q=0,}
$$
so we may take $e=\partial_z$. 

In the second case, after interchanging $y$ and $z$ if necessary, we may assume that $f'\neq0$ and we put
$$
{\begin{cases}
p:=g'/f'\in\mathbb{R}(x),\\
q:=y+pz,\\
e:=\partial_z-p\partial_y.
\end{cases}}
$$
Then, investigating the coefficient of $\lambda^2$ gives $D_e^2Q=0$. Hence, over the function field $\mathbb{R}(x)$, we may write
$$
{Q=U_0(q)+zU_1(q)}
$$
for some ${U_0,U_1\in\mathbb{R}(x)[q]}$.
In the basis $(\partial_x,\partial_y,e)$, put
$$
{h:=a''+f''q+p'f'z.}
$$
It is readily seen that the coefficient of $\lambda$ is
$$
{(\partial_q U_1)\bigl(2f'(\partial_x U_1+p'\partial_q U_0+2p'z\partial_q U_1)-(\partial_q U_1)h\bigr)=0.}
$$
If ${\partial_q U_1\neq0}$, comparison of the coefficients of $z$ gives ${3p'f'\partial_q U_1=0}$, and hence $p'=0$. If ${\partial_q U_1=0}$, then  ${U_1=U_1(x)}$ and
$$
{-(\partial_q^2 U_0)(U_1'+p'\partial_q U_0)^2=\det(H_Q)=c_0.}
$$
Since ${c_0\neq0}$, we have that the degree $m$ of ${\partial_q U_0}$ in the variable $q$ is at least $m\geq1$. If $p'\neq0$, then the left hand side has degree $3m-1>0$ in $q$, which is impossible. Thus, we again obtain that $p'=0$. Therefore, the vector $e$ is constant. Moreover, we have ${H_b e=0}$ and ${\langle H_Q e,e\rangle=0}$.

Suppose now that ${H_b}$ has rank $1$ at a generic point in $\mathbb{R}^3$. The same classification gives, after a constant linear change of coordinates, that $b=b(x)$ up to an affine polynomial. Investigating the coefficient of $\lambda$ gives
$$
{\partial_y^2 Q\,\partial_z^2 Q-(\partial_y\partial_z Q)^2=0.}
$$
The classification theorem of singular Hessians in \cite{deBondtEssen2004} and \cite{deBondt2015} over $\mathbb{R}(x)$ gives, after interchanging $y$ and $z$ if necessary, that
$$
{Q=U_0(x,y+p(x)z)+zU_1(x).}
$$
For $q:=y+pz$ and $e:=\partial_z-p\partial_y$, direct expansion gives
$$
{\det(H_Q)=-(\partial_q^2 U_0)(U_1'+p'\partial_q U_0)^2=c_0.}
$$
The same degree comparison gives $p'=0$. Hence, the constant vector $e$ has the required properties. Finally, if ${H_b=0}$, de Bondt's theorem \cite{deBondt2015} gives a constant null direction for ${H_Q}$ when $Q$ is nonquadratic, while the quadratic case follows from the Lorentzian signature. 

This proves the required claim.

Now, let {$\xi$} be a constant pivot for $\phi$, and put ${a:=D_\xi^2\phi}$. If ${a<0}$, Lemma~\ref{timelike_pivot_rigidity} shows that $\phi$ is quadratic, and its constant Hessian of signature $(+,+,+,-)$ trivially has a nonzero null vector. If ${a=0}$, the vector {$\xi$} itself is already a null pivot.

It remains to assume that ${a>0}$. After a unimodular linear change of coordinates, write {$\xi:=\partial_t$} and
$$
{\phi(u,t)=\frac{a}{2}t^2+b(u)t+c(u).}
$$
We also put
$$
{Q:=c-\frac{b^2}{2a}.}
$$
For the coordinate ${s:=a t+b(u)}$, the Schur-complement identities give the pencil
$$
{H(s):=\operatorname{Hess}(Q)+\frac{s}{a}\operatorname{Hess}(b),}
$$
where each $H(s)$ has index $1$ and
$$
{\det(H(s))=-1/a.}
$$
The claim proved above now provides a nonzero constant vector $e\in\mathbb{R}^3$ such that $\operatorname{Hess}(b)e=0$ and $D_e^2Q=0$,
which implies that $D_eb\in\mathbb{R}$ is a constant. Write
$$
{\hat{\xi}:=e-\frac{D_eb}{a}\partial_t.}
$$
Then, a direct computation gives that the double directional derivative of $\phi$ along $\hat{\xi}$ is $D_e^2Q=0$.

After a linear change of coordinates, we may put ${\hat{\xi}:=\partial_t}$ and write
$$
\phi(t,u)=A(u)t+B(u)
$$
for some polynomials $A$ and $B$.
The proof of Lemma~\ref{lightlike_pivot_normal_form} shows that $A=f(s)$ for a linear coordinate $s$, where $f'$ is a nonzero constant. Put ${x:=A=f(s)}$, which is an affine linear coordinate. Extending ${x}$ to an affine linear coordinate system ${(t,x,y,z)}$, we finally obtain
$$
{\phi(t,x,y,z)=xt+F(x,y,z)}
$$
for some ${F\in\mathbb{R}[x,y,z]}$.

Conversely, the normal form ${\phi(t,x,y,z)=xt+F(x,y,z)}$ satisfies ${\partial_t^2\phi=0}$, so {the vector} $\partial_t$ is a constant pivot for $\phi$.
\end{proof}

\section*{Acknowledgements}

The author would like to express his sincere gratitude to Zihuan Feng and Yang Zhang (Institut de mathématiques, École polytechnique fédérale de Lausanne) for their suggestive ideas and very careful proofreading of the algebraic geometry statements in this article. The author also thanks Chen Wang (Johannes Kepler Universität Linz) for pointing out an elegant use of Bertini's theorem, which leads to a substantial improvement of Section~5.2. Special thanks should go to Cheng He (Ningbo University) as well for inspiring discussions about the PDE aspects of the Hessian conjecture. 

The author is indebted to the Russian mathematical community, especially to all the professors teaching the "Math in Moscow" program, and most importantly, to the author's supervisor Alexander Petrovich Veselov at Loughborough University, as they cultivated the author's mathematical literacy and maturity.

This research was completed while the author was studying at the Mathematics Institute of the University of Warwick.
The author would therefore like to thank the University of Warwick for its hospitality.

\section*{Statements and Declarations}

\noindent\textbf{Funding.} No funding was received to assist with the preparation of this manuscript.

\noindent\textbf{Competing interests.} The author declares no relevant financial or non-financial interests.

\noindent\textbf{Data availability.} Data sharing is not applicable to this article.

\noindent\textbf{Use of generative AI.} During the preparation of this manuscript, the author used OpenAI’s
ChatGPT to help handle minor details and grammatical issues.

\end{document}